\theoremstyle{plain}
\newtheorem{theorem}{Theorem}[section]
\newtheorem{corollary}[theorem]{Corollary}
\newtheorem{proposition}[theorem]{Proposition}
\newtheorem{lemma}[theorem]{Lemma}
\theoremstyle{definition}
\newtheorem{definition}[theorem]{Definition}
\newtheorem{remark}[theorem]{Remark}
\theoremstyle{remark}
\numberwithin{theorem}{section}
\numberwithin{equation}{section}
\numberwithin{figure}{section}
\def\R{\mathbb{R}}
\def\N{\mathbb{N}}
\def\1{{\bf 1}}
\def\d{\mathrm{d}}
\def\be{{\boldsymbol{e}}}
\def\bv{{\boldsymbol{v}}}
\def\ba{{\boldsymbol{a}}}
\def\a{\alpha}
\def\kap{\kappa}
\def\l{\lambda}
\def\nab{\nabla}
\def\vep{\varepsilon}
\def\pa{\partial}
\def\Xint#1{\mathchoice
      {\XXint\displaystyle\textstyle{#1}}%
      {\XXint\textstyle\scriptstyle{#1}}%
      {\XXint\scriptstyle\scriptscriptstyle{#1}}%
      {\XXint\scriptscriptstyle\scriptscriptstyle{#1}}%
\!\int}
   \def\XXint#1#2#3{{\setbox0=\hbox{$#1{#2#3}{\int}$}
        \vcenter{\hbox{$#2#3$}}\kern-.5\wd0}}
   \def\dashint{\Xint-}
\DeclareMathOperator{\diam}{diam}
\DeclareMathOperator{\dist}{dist}
\DeclareMathOperator{\DIV}{div}
\DeclareMathOperator*{\osc}{osc}
\DeclareMathOperator{\Span}{span}
\DeclareMathOperator{\Ext}{Ext}
\newcommand{\mres}{\mathop{\hbox{\vrule height 7pt width .5pt depth 0pt
\vrule height .5pt width 6pt depth 0pt}}\nolimits}
\begin{document}

\title[Singular Points in the Thin Obstacle Problem]{
On the Singular Set in the Thin Obstacle Problem:\\
Higher Order Blow-ups and the Very Thin Obstacle Problem
}

\keywords{obstacle problem; fractional Laplacian; free boundary.}

\subjclass[2010]{35R35; 47G20}

\author[X.\ Fern\'{a}ndez-Real]{Xavier\ Fern\'{a}ndez-Real}
\address{ETH Z\"{u}rich, Department of Mathematics, R\"{a}mistrasse 101, Z\"{u}rich 8092, Switzerland}
\email{xavierfe@math.ethz.ch}

\author[Y.\ Jhaveri]{Yash Jhaveri}
\address{Institute for Advanced Study, 1 Einstein Drive, Princeton, New Jersey 08540, USA}
\email{yjhaveri@math.ias.edu}
\thanks{XF and YJ were supported in part by the European Research Council (ERC) under the Grant Agreement No 721675.
YJ was also supported in part by NSF grant DMS-1638352.}

\begin{abstract}
In this work, we consider the singular set in the thin obstacle problem with weight $|x_{n+1}|^a$ for $a\in (-1, 1)$, which arises as the local extension of the obstacle problem for the fractional Laplacian (a non-local problem).
We develop a refined expansion of the solution around its singular points by building on the ideas introduced by Figalli and Serra to study the fine properties of the singular set in the classical obstacle problem. 
As a result, under a superharmonicity condition on the obstacle, we prove that each stratum of the singular set is locally contained in a single $C^2$ manifold, up to a lower dimensional subset, and the top stratum is locally contained in a $C^{1,\alpha}$ manifold for some $\alpha > 0$ if $a < 0$. 

In studying the top stratum, we discover a dichotomy, until now unseen, in this problem (or, equivalently, the fractional obstacle problem). 
We find that second blow-ups at singular points in the top stratum are global, homogeneous solutions to a codimension two lower dimensional obstacle problem (or fractional thin obstacle problem) when $a < 0$, whereas second blow-ups at singular points in the top stratum are global, homogeneous, and $a$-harmonic polynomials when $a \geq 0$.
To do so, we establish regularity results for this codimension two problem, what we call the very thin obstacle problem.

Our methods extend to the majority of the singular set even when no sign assumption on the Laplacian of the obstacle is made.
In this general case, we are able to prove that the singular set can be covered by countably many $C^2$ manifolds, up to a lower dimensional subset.
\end{abstract}
\begin{changemargin}{-0.7cm}{-0.7cm}
\maketitle
\end{changemargin}

\section{Introduction}

Lower dimensional obstacle problems are an important class of obstacle problems, arising in many areas of mathematics. 
For instance, they can be found in the theory of elasticity (see \cite{Sig33, Sig59, KO88}), and they also appear in describing osmosis through semi-permeable membranes as well as boundary heat control (see, e.g., \cite{DL76}).
Moreover, they often are local formulations of fractional obstacle problems, another important class of obstacle problems.
Fractional obstacle problems can be found in the optimal stopping problem for L\'evy processes, and can be used to model American option prices (see \cite{Mer76,CT04}). 
They also appear in the study of anomalous diffusion, \cite{BG90}, the study of quasi-geostrophic flows, \cite{CV10}, and in studies of the interaction energy of probability measures under singular potentials, \cite{CDM16}. 
(We refer to \cite{Ros18} for an extensive bibliography on the applications of obstacle-type problems.)

Broadly, lower dimensional obstacle problems are minimization problems for a given energy functional on class of functions constrained to sit above a given obstacle (function) defined on a lower dimensional manifold.
Obstacle problems are free boundary problems: the principal part of their study is the structure and regularity of the boundary of the contact set of the solution and the obstacle, the free boundary.
The lower dimensional obstacle problem we consider\,---\,the thin obstacle problem with weight $|x_{n+1}|^a$\,---\,has garnered much interest and attention (see \cite{AC04, CS07, ACS08, GP09, KRS16, FoSp18, CSV19, JN17}); it is a model setting, and has motivated the study of many other types of lower dimensional obstacle problems (see \cite{MS08,AM11, Fer16, RS17, RuSh17, FeSe18, FoSp18b, GR18, BLOP19}).

Nevertheless, the study of the non-regular part of the free boundary has been rather limited.
Only recently has significant progress been made (see \cite{GP09, FoSp18,  GR18, CSV19}). 
And many open questions still remain. 
In this work, we address some of these questions, focusing on the singular set (see Section~\ref{sssec.fb}).
In particular, we explore the fine properties of the solution and its expansion around singular points, inspired by \cite{FS18}.

We note that the techniques of \cite{FS18} have been further developed and improved in \cite{FRS19}, where the authors prove generic regularity (namely, the generic smoothness of the free boundary in the classical obstacle problem) in dimension three and the smoothness of the free boundary at almost every time for the three-dimensional Stefan problem.
We expect the machinery built here to be useful in tackling genericness-type questions of this nature in the context of the thin/fractional obstacle problem.

\subsection{The Thin Obstacle Problem}

In this paper, we consider a class of lower dimensional obstacle problems in $\R^{n+1} := \{ X = (x,y)\in \R^n \times \R \}$ with weight $|y|^a$ where $\R^n\times\{0\}$ acts as the lower dimensional manifold. 
We will often refer to them as, simply, the thin obstacle problem, even though this name is usually reserved for the case $a = 0$.
In particular, for an analytic {\it obstacle} $\varphi : B_1 \cap \{ y = 0 \} \to \R$, we look at the thin obstacle problem:
\begin{equation}
\label{eqn: Signorini problem}
\min_{w \in \mathscr{A}} \bigg\{ \int_{B_1} |\nab w|^2|y|^a \, \d X \bigg\}, \quad\text{with}\quad a \in (-1,1),
\end{equation}
where $\mathscr{A}$ is the convex subset of the Sobolev space $W^{1, 2}(B_1 , |y|^a \, \d X)$ (which, for simplicity, we call $W^{1, 2}(B_1, |y|^a)$) defined by 
\[
\mathscr{A} := \{ w \in W^{1,2}_0(B_1, |y|^a) + g : w(x, 0) \ge \varphi(x) \text{ and } w(x,-y) = w(x,y) \},
\]
given some boundary data $g \in C(B_1)$ (even with respect to $y$) such that $g|_{\pa B_1 \cap\{y = 0\}} \geq \varphi$.
The condition that $w$ sits above $\varphi$ on the {\it thin space} $\R^n \times \{ 0 \}$ needs to be understood in the trace sense, a priori. 

If $u$ is the (unique) solution to \eqref{eqn: Signorini problem}, then $u$ satisfies the Euler--Lagrange equations
\begin{equation}
\label{eq.EL}
\left\{ 
\begin{array}{rcll}
u(x,y) &\geq& \varphi(x) &\text{on } B_1 \cap \{ y = 0 \} \\
L_a u(x,y) &\leq& 0 &\text{in } B_1\\
L_a u(x,y) &=& 0 &\text{in } B_1 \setminus \Lambda(u) \\
u(x, y) &=& u(x,-y) &\text{in } B_1 \\
u(x,y) &=& g(x,y) &\text{on } \pa B_1
\end{array}\right.
\end{equation}
where 
\[
L_a u(x, y) := \DIV (|y|^a\nabla u(x,y))
\]
and
\[
\Lambda(u) := \{ (x,0) : u(x,0) = \varphi(x) \}.
\]
The set $\Lambda(u)$ is called the {\it contact set}, and is an unknown of the problem. 
Its topological boundary in $\R^n$
\[
\Gamma(u) := \pa \Lambda(u) \subset \R^n \times \{ 0 \}
\]
is called the {\it free boundary}.

\begin{remark}
\label{rmk: min super}
A useful equivalent characterization of the minimizer $u$ of \eqref{eqn: Signorini problem} is that $u$ is the smallest super $a$-harmonic function in $\mathscr{A}$: $u \in \mathscr{A}$, $L_a u \leq 0$, and $u \leq w$ for all $w \in \mathscr{A}$ such that $L_a w \leq 0$.   
\end{remark}

\begin{remark}
In this work, we consider analytic obstacles.
Clearly, this regularity restriction can be relaxed; the thin obstacle problem \eqref{eqn: Signorini problem} can be well-formulated with significantly less regular obstacles (e.g., continuous obstacles).
That said, the analytic setting allows us to understand the model behavior of $\Gamma(u)$, and for this reason, it deserves special consideration.
\end{remark}

\subsubsection{The Obstacle Problem for the Fractional Laplacian}
\label{sssec.FOP}

As shown in \cite{CSS08}, the Euler--Lagrange equations \eqref{eq.EL} appear naturally in the context of the obstacle problem for the fractional Laplacian, or the fractional obstacle problem. 
Indeed, let $\varphi: \R^n\to \R$ be an obstacle (with sufficient decay at infinity) and let $\bar u$ solve the fractional obstacle problem
\begin{equation}
\label{eq.FOP}
\left\{ 
\begin{array}{rcll}
\bar u &\geq& \varphi &\text{in } \R^n \\
(-\Delta)^s \bar u& \geq & 0 &\text{in } \R^n\\
(-\Delta)^s \bar u& = & 0 &\text{in }  \{\bar u >\varphi \}\\
\lim_{|x|\to \infty} \bar u(x) & = & 0
\end{array}\right.
\quad\text{with}\quad s := \frac{1-a}{2} \in (0,1).
\end{equation}
Then, the even in $y$, $a$-harmonic extension of $\bar u$ to $\R^{n+1}$ (i.e., $u: \R^{n+1}\to \R$ such that $L_a u(x, y) = 0$ for $|y|>0$, $u(x, 0) = \bar u(x)$, $u(x,y) = u(x,-y)$, and $\lim_{|(x,y)| \to \infty} u(x,y) = 0$) solves \eqref{eq.EL} in $\R^{n+1}$ (and, in particular, with its own boundary data, in $B_1$).
Consequently, all of the results we prove in this work can be translated into statements regarding the fractional obstacle problem.
We leave this translation to the interested reader.

\subsection{Known Results}
\label{sec: known results}
Let us briefly summarize some of the known properties of the solution to the thin obstacle problem and its free boundary.
To do so, it will be useful to ``normalize'' $\varphi$, and it will be necessary to define a collection of rescalings of $u$.

Since $\varphi = \varphi(x)$ is analytic, we can extend it from a function defined on $B_1 \cap \{ y = 0 \}$ to an $a$-harmonic, even in $y$ function defined on $\overline{B_1}$ (see \cite[Lemma 5.1]{GR18}).
For simplicity, we still denote this extension by $\varphi$.
So if we let
\begin{equation}
\label{def: u - Phi}
\tilde{u} := u - \varphi,
\end{equation}
\eqref{eq.EL} becomes
\begin{equation}
\label{eq.EL_0}
\left\{ 
\begin{array}{rcll}
\tilde{u}(x,y) &\geq& 0 &\text{on } B_1 \cap \{ y = 0\}\\
L_a \tilde{u}(x,y) &\leq& 0 &\text{in } B_1\\
L_a \tilde{u}(x,y) &=& 0 &\text{in } B_1 \setminus \Lambda(\tilde{u}) \\
\tilde{u}(x, y) &=& \tilde{u}(x,-y) &\text{in } B_1 \\
\tilde{u}(x,y) &=& \tilde{g}(x,y) &\text{on } \pa B_1,
\end{array}\right.
\end{equation}
with $\tilde{g} := g - \varphi$ and 
\[
\Lambda(\tilde{u}) := \{(x, 0) : \tilde{u}(x, 0) = 0\} = \Lambda(u).
\]
Furthermore, 
\begin{equation}
\label{eqn: Lap of u}
L_a \tilde{u} = 2\lim_{y \downarrow 0 } y^a\pa_y \tilde{u}(x, y) \mathcal{H}^{n}\mres {\Lambda(\tilde{u})}.
\end{equation}
Hence, considering \eqref{eq.EL_0},
\[
\lim_{y \downarrow 0 } y^a\pa_y \tilde{u}(x, y) \leq 0 \quad\text{for}\quad |x| < 1,
\]
\[\lim_{y \downarrow 0 } y^a\pa_y \tilde{u}(x, y) = 0 \quad\text{for}\quad |x| < 1 \text{ and } \tilde{u}(x,0) > 0,
\]
and
\[
\tilde{u}\, L_a \tilde{u} = 0\quad\text{in}\quad B_1.
\]
(See \cite{CSS08,GP09,FoSp18,GR18}.)
All of the above expressions must be understood in a distributional sense.

As we have mentioned, we need to introduce a collection of rescalings of $u$ around a free boundary point $X_\circ\in \Gamma(u)$ in order to outline the existing literature on \eqref{eqn: Signorini problem}.
They are 
\begin{equation}
\label{eq.rescGR}
\tilde{u}_{X_\circ,r}(X) := \frac{\tilde{u}_{X_\circ}(rX)}{\left(\frac{1}{r^{n+a}}\int_{\pa B_r} \tilde{u}_{X_\circ}^2|y|^a\right)^{1/2}} \quad\text{where}\quad \tilde{u}_{X_\circ}(X) := \tilde{u}(X_\circ + X).
\end{equation}

\subsubsection{Blow-ups and Optimal Regularity}
 
In \cite{ACS08,CSS08}, Athanasopoulos, Caffarelli, and Salsa and Caffarelli, Salsa, and Silvestre, for $a = 0$ and $a \in (-1,1)$ respectively, proved that the set $\{\tilde{u}_{X_\circ,r}\}_{r > 0}$ is weakly precompact in $W_{\rm loc}^{1,2}(\R^{n+1},|y|^a)$, and that the limit points of $\{\tilde{u}_{X_\circ,r}\}_{r > 0}$ as $r \downarrow 0$ or {\it blow-ups of $u$ at $X_\circ$} are global $\lambda_{X_\circ}$-homogeneous solutions to \eqref{eq.EL_0} with 
\[
\lambda_{X_\circ} \in [1+s,\infty) \quad\text{for}\quad s := \frac{1-a}{2}.
\]
It is important to note that the homogeneity of blow-ups depends only on the point $X_\circ\in \Gamma(u)$ at which they are taken, and is independent of the sequence along which the weak limit is produced.

Moreover, in \cite{AC04,CSS08}, it was shown that $u$ is optimally $C^{1,s}$ on either side of the thin space (but only Lipschitz across).

\subsubsection{The Free Boundary}
\label{sssec.fb}
The free boundary $\Gamma(u)$ can be partitioned into three sets:
\[
\Gamma(u) = {\rm Reg}(u) \cup {\rm Sing}(u) \cup {\rm Other}(u),
\]
the set of {\it regular points}, the set of {\it singular points}, and set of {\it other points} (see \cite{GP09,FoSp18,GR18}), and they can be characterized by the value of $\lambda_{X_\circ}$ with $X_\circ \in \Gamma(u)$.

${\rm Reg}(u)$ is the set of free boundary points where blow-ups are $(1+s)$-homogeneous.
In \cite{ACS08,CSS08}, it was proved that ${\rm Reg}(u)$ is relatively open, that blow-ups at points in ${\rm Reg}(u)$ are unique, and that ${\rm Reg}(u)$ is an $(n-1)$-dimensional $C^{1,\alpha}$ submanifold of the thin space (it is analytic, in fact, as proved in \cite{KRS16}).

${\rm Sing}(u)$ is the set of points in $\Gamma(u)$ where the contact set has zero $\mathcal{H}^n$-density,
\[
\text{Sing}(u) := \bigg\{ X_\circ \in \Gamma(u) : \lim_{r \downarrow 0} \frac{\mathcal{H}^{n}(\Lambda(u) \cap B_r(X_\circ))}{r^n} = 0 \bigg\}.
\]
In \cite{GP09, GR18}, Garofalo and Petrosyan and Garofalo and Ros-Oton, for $a = 0$ and $a \in (-1,1)$ respectively, proved that the points of ${\rm Sing}(u)$ are those at which blow-ups are evenly homogeneous and unique.
In addition, they showed that ${\rm Sing}(u)$ is contained in the countable union of $m$-dimensional $C^1$ manifolds with $m$ ranging from $0$ to $n-1$. 
(The regularity of the covering manifolds was later improved to a more quantitative $C^{1,\log^{\vep_\circ}}$ in \cite{CSV19} when $a = 0$.) The goal of this manuscript is to achieve a better understanding of singular points.

Finally, ${\rm Other}(u)$ is the remainder of the free boundary, and is not yet fully characterized.
That said, in \cite{FoSp18}, Focardi and Spadaro proved that $\Gamma(u)$, in particular, ${\rm Other}(u)$, has finite $(n-1)$-dimensional Minkowski content, which implies that the free boundary is $\mathcal{H}^{n-1}$-rectifiable.
Moreover, they showed that outside of an at most Hausdorff $(n-2)$-dimensional subset of $\Gamma(u)$, the possible homogeneities of blow-ups take values in $\{ 2k, 2k - 1 + s, 2k + 2s \}_{k \in \N}$ (the same result was proved for $a = 0$ by Krummel and Wickramasekera in \cite{KW13}).

\subsubsection{The Non-degenerate Problem}

We have already seen that the study of the thin obstacle problem for an analytic obstacle can be reduced to the study of the thin obstacle problem for the zero obstacle, \eqref{eq.EL_0}.
An alternative normalization is to reduce to the zero boundary data case by subtracting off the $a$-harmonic extension of $g$ to $B_1$. 
Indeed, for simplicity, let $g$ be its own $a$-harmonic extension to $B_1$, i.e., assume that $g$ is defined on $\overline{B_1}$ and $L_a g = 0$ in $B_1$.
Then, $u-g$ solves \eqref{eqn: Signorini problem} with zero boundary data and obstacle $\varphi_g := (\varphi-g)|_{\{y=0\}}$.
(This procedure does not require $\varphi$ to be analytic.)
Under this normalization, Barrios, Figalli, and Ros-Oton proved that if $\varphi_g$ is strictly superharmonic, then 
\[
\lambda_{X_\circ} \in \{ 1+s , 2 \},
\]
for all $X_\circ \in \Gamma(u)$ (see \cite{BFR18}).
Consequently, we make the following definition.

\begin{definition}
\label{def.nondeg}
We say that the thin obstacle problem \eqref{eqn: Signorini problem} or, equivalently, \eqref{eq.EL} is non-degenerate if
\begin{equation}
\label{eqn: nondeg cond}
\Delta_x \varphi_g \leq -c < 0 \quad\text{on}\quad B_1 \cap \{ y = 0 \}.
\end{equation}
Analogously, we say the Euler--Lagrange equations \eqref{eq.EL_0} are non-degenerate if they arise from \eqref{eqn: Signorini problem} or \eqref{eq.EL} satisfying \eqref{eqn: nondeg cond}; i.e., $\Delta_x \tilde g \geq c > 0$ on $B_1 \cap \{ y= 0\}$, where $\tilde g$ denotes its own $a$-harmonic extension of $\tilde g$ to $\overline{B_1}$.
\end{definition}

\begin{remark}
In the context of the obstacle problem for the fractional Laplacian in all of $\R^n$, \eqref{eq.FOP}, the problem is non-degenerate under the less restrictive assumption $\Delta \varphi \le 0$ in $\{\varphi > 0\}\subset \R^{n}$.
\end{remark}

\subsection{Main Results}

We are interested in studying the fine properties of $u$ at points in ${\rm Sing}(u)$, in the spirit of the work of Figalli and Serra (\cite{FS18}), wherein such a study is undertaken for the classical obstacle problem given obstacles with Laplacian identically equal to $-1$, i.e., under a non-degeneracy condition (cf. Definition~\ref{def.nondeg}).
To do so, we establish a framework to better characterize the structure of singular points and the behavior of $u$ around singular points: we develop a higher order expansion of $u$ around singular points, which, up to lower dimensional sets, yields a more regular covering of ${\rm Sing}(u)$. 
Our approach and results are new even for the case $a = 0$.

Before stating our results, it will be convenient to expand our discussion of ${\rm Sing}(u)$ and the work of \cite{GP09, GR18}, and introduce some notation.
Let
\[
\Sigma_\kap(u) := \{ X_\circ \in \Gamma(u) : \lambda_{X_\circ} = \kappa \}
\]
denote the set of free boundary points where the homogeneity of blow-ups is $\kap$.
Consequently,
\begin{equation}
\label{eq.strat1}
{\rm Sing}(u) = \bigcup_{\kap \in 2\N} \Sigma_\kap(u).
\end{equation}
As noted, in \cite{GP09, GR18}, the authors showed that one and only one blow-up exists, which is evenly homogeneous, at each singular point.
In fact, they proved much more: the unique blow-up at a singular point is a non-trivial, $a$-harmonic, evenly homogeneous polynomial that is even in $y$ and non-negative on the thin space.
In other words, blow-ups at singular points belong to the set of polynomials
\[
\mathscr{P}_\kap := \{ p :  L_a p = 0,\, X\cdot \nabla p (X) = \kap p(X), \, p(x,0) \geq 0,\, p(x,-y) = p(x,y)\}
\]
for $\kappa \in 2\N$.
Furthermore, they produce the first term in the expansion of $u$ around $X_\circ \in \Sigma_\kappa(u) \subset {\rm Sing}(u)$; they show that
\begin{equation}
\label{eq.resck}
\frac{\tilde{u}(X_\circ + r\,\cdot\,)}{r^{\kap}} \to p_{\ast,X_\circ} \in \mathscr{P}_\kap \quad\text{locally uniformly as}\quad r \downarrow 0.
\end{equation}
The polynomial $p_{\ast,X_\circ}$, which we call the {\it first blow-up of $u$ at $X_\circ$}, is a constant (non-zero) multiple of the blow-up of $u$ at $X_\circ$ given by the rescalings \eqref{eq.rescGR}. 
With the rescalings \eqref{eq.resck}, we have
\begin{equation}
\label{eq.littleork}
\tilde{u}(X) = p_{*,X_\circ}(X-X_\circ) + o(|X-X_\circ|^\kap).
\end{equation}
Finally, consider
\[
L(p_{\ast, X_\circ}) := \{ \xi \in \R^{n} : \xi \cdot \nab_{x} p_{\ast,X_\circ}(x,0) = 0\text{ for all } x \in \R^{n} \}
\]
the {\it invariant set} or {\it spine} of $p_{*,X_\circ}$ on $\{ y = 0\}$ as well as
\[
m_{X_\circ} := \dim L(p_{*,X_\circ}).
\]
Observe that $L(p_{\ast, X_\circ})$ is a linear subspace of $\R^n$.
Also, since $p_{*,X_\circ} \not\equiv 0$ on $\R^{n} \times \{ 0 \}$,
\[
m_{X_\circ}\in \{ 0,1, \dots, n-1 \},
\] 
and this number accounts for the dimension of the contact set around a singular point. 
Thus, the singular set can be further stratified:
\begin{equation}
\label{eq.strat2}
{\rm Sing}(u) = \bigcup_{\kappa \in 2\N} \bigcup_{m =0}^{n-1} \Sigma_\kap^m(u) \quad\text{where}\quad \Sigma_\kap^m(u) := \{ X_\circ \in \Sigma_\kap(u)  : m_{X_\circ} = m \}.
\end{equation}

In particular, by \cite{BFR18}, if the problem is non-degenerate (see Definition~\ref{def.nondeg}), then
\[
\Gamma(u) = {\rm Reg}(u) \cup {\rm Sing}(u) =  {\rm Reg}(u) \cup \Sigma_2(u) = {\rm Reg}(u) \cup \bigcup_{m = 0}^{n-1} \Sigma^m_2(u).
\]
Now we are ready to present the main results of this work.
First, given a non-degenerate obstacle, we prove that each $m$-dimensional component of $\text{Sing}(u)$ can be locally covered by a single $C^2$ manifold outside a lower dimensional set:  

\begin{theorem}
\label{thm: nondeg main}
Let $u$ solve \eqref{eqn: Signorini problem} in the non-degenerate case (see Definition~\ref{def.nondeg}).
Then, 
\begin{enumerate}[(i)]
\item $\Sigma_2^{0}(u)$ is isolated in ${\rm Sing}(u) = \Sigma_2^0(u) \cup \cdots \cup \Sigma_2^{n-1}(u)$.
\item There exists an at most countable set $E_{1} \subset \Sigma_2^1(u)$ such that $\Sigma_2^1(u) \setminus E_{1}$ is locally contained in a single one-dimensional $C^{2}$ manifold. 
\item For each $m \in \{2,\dots,n-1\}$, there exists a set $E_m \subset \Sigma_2^m(u)$ of Hausdorff dimension at most $m-1$ such that $\Sigma_2^m(u) \setminus E_m$ is locally contained in a single $m$-dimensional $C^{2}$ manifold.
\item If $a\in (-1, 0)$, $\Sigma_2^{n-1}(u)$ is locally contained in a single $(n-1)$-dimensional $C^{1,\alpha}$ manifold, for some $\alpha > 0$ depending only on $n$.
\end{enumerate}
\end{theorem}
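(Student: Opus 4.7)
The plan is to adapt the Figalli--Serra framework \cite{FS18} to the weighted thin setting. First, I would develop a refined second-order expansion of $\tilde u$ around each singular point $X_\circ \in \Sigma_2(u)$; then I would transfer the rate of convergence into a H\"older modulus of continuity for the map $X_\circ \mapsto p_{*,X_\circ}$; and finally, I would apply a Whitney-type extension to cover each stratum by a single manifold outside an anomalous lower-dimensional set.

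\textbf{Step 1 (Second blow-up).} For $X_\circ \in \Sigma_2(u)$ write $w_{X_\circ}(X) := \tilde u(X_\circ+X) - p_{*,X_\circ}(X)$, the first-order remainder given by \eqref{eq.littleork}. The heart of the argument is to study the rescalings
\[
w_{X_\circ,r}(X) := \frac{w_{X_\circ}(rX)}{\bigl(r^{-n-a}\int_{\pa B_r} w_{X_\circ}^2|y|^a\bigr)^{1/2}}
\]
as $r\downarrow 0$. Using a Weiss- or Monneau-type monotonicity formula adapted to the weight $|y|^a$ and to the polynomial we have subtracted, one gets weak compactness and convergence along subsequences to a global $\mu$-homogeneous function $q_{*,X_\circ}$ with $\mu > 2$. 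The essential point is the dichotomy advertised in the abstract: on $\Sigma_2^m(u)$ with $m \leq n-2$, or when $a \geq 0$, the limit $q_{*,X_\circ}$ must be an even, $a$-harmonic polynomial of integer homogeneity $\mu \in \{3,4,\dots\}$ whose spine contains $L(p_{*,X_\circ})$, whereas on $\Sigma_2^{n-1}(u)$ with $a < 0$ the codimension of the contact set jumps by one and $q_{*,X_\circ}$ is a global homogeneous solution to the \emph{very thin obstacle problem}.

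\textbf{Step 2 (Uniqueness and expansion).} Combining the classification of second blow-ups with the Monneau-type formula applied directly to $w_{X_\circ}$ would give uniqueness of $q_{*,X_\circ}$ together with a quantitative remainder
\[
|\tilde u(X_\circ + X) - p_{*,X_\circ}(X)| \leq C\,|X|^{2+\alpha}
\]
for some $\alpha = \alpha_m(n,a) > 0$ controlled by the spectral gap between admissible second-blow-up homogeneities. This H\"older-type decay forces the map $X \mapsto p_{*,X}$, restricted to $\Sigma_2(u)$, to be H\"older continuous into the finite-dimensional space of degree-two polynomials, so in particular the spines $L(p_{*,X})$ vary continuously along each stratum.

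\textbf{Step 3 (Whitney covering of $\Sigma_2^m$, $m \leq n-1$, when the limit is polynomial).} The expansion of Step 2 produces, on each stratum $\Sigma_2^m(u)$, a compatible Whitney field of second-order polynomial jets $\{p_{*,X_\circ}\}$ centered at singular points and aligned with the $m$-dimensional spines $L(p_{*,X_\circ})$. Whitney's extension theorem then yields a single $m$-dimensional $C^2$ manifold containing $\Sigma_2^m(u) \setminus E_m$, where the anomalous set $E_m$ collects those $X_\circ$ at which the second blow-up has \emph{excessive} symmetry---its spine strictly containing $L(p_{*,X_\circ})$---so that the Whitney data are not consistent with a graph of dimension $m$ there. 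A dimension-reduction argument on $E_m$, exploiting the persistence of an extra symmetry direction at all scales, bounds $\dim_H E_m \leq m-1$ for $m \geq 2$, shows that $E_1$ is at most countable, and that $\Sigma_2^0(u)$ is isolated in ${\rm Sing}(u)$. This yields parts (i)--(iii).

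\textbf{Step 4 (Top stratum, $a < 0$).} For $\Sigma_2^{n-1}(u)$ with $a<0$, the second blow-up $q_{*,X_\circ}$ solves the very thin obstacle problem rather than being polynomial, so one cannot iterate past the fractional homogeneity $2+\alpha$ produced in Step 2. On the other hand, because the spine $L(p_{*,X_\circ})$ already has maximal dimension $n-1$, no excess symmetry is possible anywhere on this stratum; the Whitney data are compatible at every point, and the extension theorem delivers a single $(n-1)$-dimensional $C^{1,\alpha}$ manifold covering all of $\Sigma_2^{n-1}(u)$, giving (iv). The hardest component of the entire argument---and the genuinely new input relative to \cite{FS18}---is Step 1 on the top stratum when $a<0$: one must first develop a regularity theory for, and a classification of global homogeneous solutions to, the very thin obstacle problem, a codimension-two lower dimensional obstacle problem with no direct analogue in the classical setting.
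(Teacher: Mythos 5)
Your overall framework (second blow-ups, a spectral-gap/anomalous-set dichotomy, Whitney extension plus implicit function theorem) is the right one, but two of your central claims are not justified and, as stated, the argument does not reach the $C^2$ conclusion of parts (ii)--(iii). First, in Step 1 you assert that the second blow-up is $\mu$-homogeneous with $\mu>2$ at \emph{every} point of $\Sigma_2^m$ in the polynomial regime. The monotonicity of Almgren's frequency only gives $\mu\geq 2$ (cf.\ Proposition~\ref{prop.case1}); the possibility $\mu=2$ is exactly what defines the anomalous set $\Sigma_2^{m,{\rm a}}$, and showing it has dimension at most $m-1$ (Proposition~\ref{prop.main2}) is a substantial part of the work: it requires the orthogonality relations $\int_{\pa B_1}q\,p_*|y|^a=0$ and $\int_{\pa B_1}q\,p|y|^a\leq 0$ for all $p\in\mathscr{P}_2$ (Lemma~\ref{lem: L2 sign}), the accumulation lemmas (Lemma~\ref{lem: accumulation pts} and its \ref{eq.Case2} analogue), and a Federer/FRS-type dimension reduction using almost-continuity of the frequency on the top stratum when $a<0$. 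Your replacement criterion for $E_m$ (``excessive symmetry'' of the second blow-up, i.e.\ its spine strictly containing $L(p_{*,X_\circ})$) is neither the relevant definition nor accompanied by an argument; the actual mechanism is that \emph{accumulation points} of anomalous points are forced into $L_*\cap T(q)$, and the sign conditions rule out $L_*\subset T(q)$.

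Second, even on the generic set your Whitney field consists only of the degree-two jets $p_{*,X_\circ}$ with a remainder $O(|X|^{2+\alpha})$, $\alpha\leq 1$. Whitney extension then produces $F\in C^{2,\alpha}$ and the implicit function theorem applied to $\nabla F$ yields only $C^{1,\alpha}$ (at best $C^{1,1}$) covering manifolds --- this is the precursor Theorem~\ref{thm.main1}, not Theorem~\ref{thm: nondeg main}. To obtain $C^2$ one must construct the \emph{next} term in the expansion: show that outside a further lower-dimensional (resp.\ countable, for $m=1$) subset the second blow-up is a genuine $3$-homogeneous $a$-harmonic polynomial $q_{*,X_\circ}$ arising as a limit of $r^{-3}(u(X_\circ+r\,\cdot)-p_{*,X_\circ}(r\,\cdot))$, prove its uniqueness and continuity in $X_\circ$ --- which needs the almost-monotonicity of the Monneau quantity with $q_{*,X_\circ}$ subtracted, exploiting that $q_{*,X_\circ}$ vanishes on $L(p_{*,X_\circ})$ so that $q_{*,X_\circ}^2/p_{*,X_\circ}$ is bounded (Lemma~\ref{lem.qp}, Proposition~\ref{prop: !limit + loc uni conv}) --- and then run Whitney with the degree-three jets $p_{*,X_\circ}+q_{*,X_\circ}$ and a uniform $o(|X-X_\circ|^{3})$ modulus (Theorem~\ref{thm: C2 nondeg}). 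Your Step 2 asserts uniqueness of the second blow-up in passing, but it is not used in your Whitney data and is itself nontrivial. Part (iv) is essentially fine in spirit, provided you make explicit that the uniform gap $\lambda_*\geq 2+\alpha$ on $\Sigma_2^{n-1}$ when $a<0$ comes from a compactness/classification argument for the very thin obstacle problem, and that the ``single manifold'' statements in (ii)--(iv) use the non-degeneracy to get uniform constants (a single compact set $E_{h_\circ}$ locally) rather than only countable covers.
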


The framework we develop in order to prove Theorem~\ref{thm: nondeg main} is rather robust, and only sees the non-degeneracy condition \eqref{eqn: nondeg cond} superficially.
As a result, we can suitably extend Theorem~\ref{thm: nondeg main} to the bulk of ${\rm Sing}(u)$, the top stratum $\Sigma^{n-1}(u) := \bigcup_{\kappa \in 2\N} \Sigma_\kappa^{n-1}(u)$, in the general case.
Recall that the lower stratum $\Sigma^{< n-1}(u) := \text{Sing}(u) \setminus \Sigma^{n-1}(u)$ is strictly lower dimensional; it is contained in the countable union of $(n-2)$-dimensional $C^1$ manifolds.
More precisely, we prove
\begin{theorem}
\label{thm: deg main}
Let $u$ solve \eqref{eqn: Signorini problem}.
Then, 
\begin{enumerate}[(i)]
\item $\Sigma_2^{0}(u)$ is isolated in ${\rm Sing}(u) = \bigcup_{\kappa \in 2\N} \bigcup_{m =0}^{n-1} \Sigma_\kap^m(u)$.
\item There exists an at most countable set $E_{2,1} \subset \Sigma_2^1(u)$ such that $\Sigma_2^1(u) \setminus E_{2,1}$ is contained in the countable union of one-dimensional $C^{2}$ manifolds.
\item For each $m \in \{2,\dots,n-1\}$, there exists a set $E_{2,m} \subset \Sigma_2^m(u)$ of Hausdorff dimension at most $m-1$ such that $\Sigma_2^m(u) \setminus E_{2,m}$ is contained in the countable union of $m$-dimensional $C^{2}$ manifolds. 
\end{enumerate}

Moreover, for each $\kap \in 2\N$,
\begin{enumerate}[(i)]
\setcounter{enumi}{3}
\item If $n = 2$, there exists an at most countable set $E_{\kappa,1} \subset \Sigma_\kappa^{1}(u)$ such that $\Sigma_\kappa^{1}(u) \setminus E_{\kappa,1}$ is contained in the countable union of $1$-dimensional $C^{2}$ manifolds.
\item If $n \geq 3$, there exists a set $E_{\kappa,n-1} \subset \Sigma_\kappa^{n-1}(u)$ of Hausdorff dimension at most $n-2$ such that $\Sigma_\kappa^{n-1}(u) \setminus E_{\kappa,n-1}$ is contained in the countable union of $(n-1)$-dimensional $C^{2}$ manifolds.
\item If $n\ge 2$ and $a \in (-1,0)$, $\Sigma_\kap^{n-1}(u)$ can be covered by a countable union of $(n-1)$-dimensional $C^{1,\alpha_\kap}$ manifolds, for some $\alpha_\kap > 0$ depending only on $n$ and $\kappa$.
\end{enumerate}
\end{theorem}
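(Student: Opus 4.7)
The plan is to adapt the Figalli--Serra strategy \cite{FS18} to the weighted thin obstacle setting, handling the statement case-by-case in the homogeneity $\kap\in 2\N$ and the stratum dimension $m$. Starting from the first-order expansion \eqref{eq.littleork}, I would analyze the remainder $\tilde u(X_\circ+\,\cdot\,)-p_{*,X_\circ}(\,\cdot\,)$ by a \emph{second blow-up} procedure. Concretely, for $X_\circ\in\Sigma_\kap^m(u)$, I would define rescalings of the form
\begin{equation*}
w_{X_\circ,r}(X):=\frac{\tilde u(X_\circ+rX)-p_{*,X_\circ}(rX)}{r^{\kap+\alpha}}
\end{equation*}
for a suitable $\alpha>0$, and pass to a subsequential limit $q_{*,X_\circ}$. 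Existence, non-triviality outside a sparse exceptional set, and uniqueness of this limit should come from a Monneau-type monotonicity formula adapted to the $L_a$ operator, yielding an improved expansion
\begin{equation*}
\tilde u(X)=p_{*,X_\circ}(X-X_\circ)+q_{*,X_\circ}(X-X_\circ)+o(|X-X_\circ|^{\kap+\beta})
\end{equation*}
for some $\beta>0$, with a modulus that is uniform on compact subsets of $\Sigma_\kap^m(u)$ away from the exceptional set.

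The next step is to classify the second blow-ups $q_{*,X_\circ}$ so as to read off the local geometry of $\Sigma_\kap^m(u)$. On the invariant subspace $L(p_{*,X_\circ})\times\{0\}$, the polynomial $p_{*,X_\circ}$ vanishes while $\tilde u\ge 0$ on $\{y=0\}$; this forces $q_{*,X_\circ}$ to solve a homogeneous global obstacle-type problem whose contact set lies in a codimension-two subspace\,---\,namely, the \emph{very thin obstacle problem} alluded to in the abstract when $a<0$, and an ordinary $a$-harmonic polynomial problem when $a\ge 0$. Establishing this dichotomy, together with a Liouville-type classification for the very thin problem in the relevant homogeneities, is the principal technical input that replaces the classical obstacle machinery used by Figalli--Serra. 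The spine of $q_{*,X_\circ}$ then refines $L(p_{*,X_\circ})$, and combining this with the rate of convergence of the expansion fuels a Whitney extension argument producing, on each piece, a $C^2$ (respectively $C^{1,\alpha_\kap}$ when $a<0$) function whose zero set locally contains $\Sigma_\kap^m(u)\setminus E_{\kap,m}$.

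The covering by \emph{countably many} manifolds (rather than a single one, as in Theorem~\ref{thm: nondeg main}) is obtained by decomposing the stratum according to the rate $\beta$ and the size of the second blow-up, and applying the Whitney extension piecewise. Part~(i) is immediate from \eqref{eq.littleork}: when $m_{X_\circ}=0$, $p_{*,X_\circ}$ is a positive-definite even polynomial, so $\tilde u$ is strictly positive on a punctured neighborhood of $X_\circ$ in the thin space. For parts~(ii)--(v), the exceptional set $E_{\kap,m}$ consists of the \emph{anomalous} singular points where $q_{*,X_\circ}\equiv 0$ or the spine of the augmented expansion is strictly larger than $L(p_{*,X_\circ})$; a Federer-type dimension reduction, combined with the homogeneity of the second blow-ups, bounds $\dim_{\mathcal H}E_{\kap,m}\le m-1$ (countable when $m=1$). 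The restriction to $\kap=2$ for $m<n-1$ and to the top stratum when $\kap>2$ and $n\ge 3$ reflects the shrinking frequency gap that is available for the Monneau-type comparison as $\kap$ grows.

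The hard part will be classifying the second blow-ups in the regime $a<0$, where they are natively solutions to a codimension-two obstacle problem with no off-the-shelf regularity theory. Developing a clean Almgren/Monneau framework and an $\epsilon$-regularity result for this very thin obstacle problem\,---\,enough to conclude that the blow-ups are polynomial up to a controlled thin correction\,---\,is what enables the $C^{1,\alpha_\kap}$ conclusion in (vi), as opposed to the $C^2$ conclusion in (iii) and (v). A secondary difficulty is bookkeeping: since $\mathrm{Sing}(u)=\bigcup_\kap\bigcup_m\Sigma_\kap^m(u)$ is a countable union, one must control $E_{\kap,m}$ uniformly in $\kap$ so that the countable union of covering manifolds (and of exceptional sets) does not blow up in dimension.
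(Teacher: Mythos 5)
Your plan reproduces the architecture of the paper's proof (second blow-ups, a dichotomy involving the very thin obstacle problem, Monneau-type monotonicity, Whitney extension plus dimension reduction for the exceptional set, and countable coverings because several homogeneities coexist), and your argument for part (i) is correct and even simpler than the one used here. However, as written the sketch has three genuine gaps. First, the dichotomy is not governed by the sign of $a$ alone: second blow-ups solve the very thin obstacle problem only when $a<0$ \emph{and} $m=n-1$; when $m\le n-2$ (and $\kappa=2$, so that $\mathcal{N}(p_{*,X_\circ})=L(p_{*,X_\circ})$) the constraint set has zero $a$-harmonic capacity for every $a\in(-1,1)$ and the blow-ups are $a$-harmonic polynomials even for $a<0$ --- this capacity dichotomy is exactly what Propositions~\ref{prop.case1} and \ref{prop.case2} encode. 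Relatedly, the restriction to $\kappa=2$ in parts (ii)--(iii) is not due to a ``shrinking frequency gap'': it is due to the possible misalignment of the nodal set and the spine of $p_{*,X_\circ}$ when $\kappa>2$, which destroys the positivity/orthogonality argument (Lemma~\ref{lem: L2 sign} together with the construction of comparison polynomials in $\mathscr{P}_\kappa$) underlying the dimension reduction; see Remark~\ref{rem.counterexample}. Your plan gives no substitute for this step, so it would not yield (iii) for general $\kappa$ any more than the paper does, and you should not expect a uniform-in-$\kappa$ control of $E_{\kappa,m}$ of the kind you allude to at the end.

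Second, for the anomalous set in the top stratum with $a<0$ a ``Federer-type dimension reduction combined with homogeneity'' does not suffice: since the first blow-ups $p_{*,X_\ell}$ at nearby points of $\Sigma_\kappa^{n-1}$ rotate, the accumulation Lemma~\ref{lem: accumulation pts 2} produces translation invariance of the second blow-up only along sequences on which the frequency is continuous ($\lambda=\lambda_*$), and one must additionally classify two-dimensional homogeneous solutions of the very thin obstacle problem (Lemma~\ref{lem: 2d class}) to exclude homogeneities in $(\kappa,\kappa+1)$. This is why the paper replaces the \cite{FS18} reduction by the ``almost continuity of the frequency'' device of \cite{FRS19} (Lemmas~\ref{lem: almost cty} and \ref{lem.E}); your sketch is missing this idea. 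Third, the uniqueness (and compact uniformity) of the fixed-rate $(\kappa+1)$ blow-up, which feeds the $C^2$ Whitney argument, does not follow from a plain Monneau monotonicity when $\kappa\ge 4$: $p_{*,X_\circ}+q_\circ$ changes sign on the contact set, so one only gets \emph{almost} monotonicity of $H_{\kappa+1}(r,u-p_{*,X_\circ}-q_\circ)$ with an error controlled by $\|q_\circ^{\kappa}/p_{*,X_\circ}^{\kappa-1}\|_{L^\infty}$, and making this finite requires knowing that $D^\alpha q_\circ$ vanishes on the spine for all $|\alpha|\le\kappa-2$ (Definition~\ref{defi.3rd}, Lemmas~\ref{lem.qp} and \ref{lem: accumulation pts}); this vanishing, extracted from the accumulation lemmas, is a key ingredient your proposal does not account for.
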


\begin{remark}
Notice that from the lower-dimensionality of $\Sigma_\kap^{<n-1}(u)$, by Theorem~\ref{thm: deg main}(iv) and (v), we find that the whole singular set can be covered by countably many $(n-1)$-dimensional $C^2$ manifolds up to a lower dimensional subset.
\end{remark}

\begin{remark}
When $n = 1$, it is well-known that singular points are isolated. 
Recall that $\tilde{u}(X_\circ+\,\cdot\,) = p_{*, X_\circ} + o(|X|^\kap)$ if $X_\circ \in \Sigma_\kappa(u)$.
Since $n=1$, $p_{*, X_\circ} > 0$ in a neighborhood of $0$, so that $\tilde{u} > 0$ around $X_\circ$ and $X_\circ$ is isolated.
\end{remark}

Before stating Theorem~\ref{thm: deg main}, we noted that our methods see the non-degeneracy of the problem superficially.
Indeed, if we could show that $p_{\ast,X_\circ}$'s nodal set $\{ (x,0) : p_{\ast,X_\circ}(x,0) = |\nab_{x}p_{\ast,X_\circ}(x,0)| = 0\}$ and $p_{\ast,X_\circ}$'s spine align for every $X_\circ \in E_{\kap,m}$ (see Section~\ref{sec.thirdorder} (also \ref{sec:size}) for a description of $E_{\kap, m}$), then our analysis would immediately imply that $E_{\kap, m}$ is lower dimensional, and $\Sigma_\kappa^m(u) \subset {\rm Sing}(u)$ is contained in a countable union of $C^2$ manifolds up to an $(m-1)$-dimensional subset for all $m \in \{ 0, \dots, n-1 \}$, and not just when $m = n-1$.
 
We remark that due to potential accumulation of lower homogeneity singular points to higher homogeneity singular points, the countable covers of Theorem~\ref{thm: deg main} cannot be improved to single covers, as done in the the non-degenerate setting, Theorem~\ref{thm: nondeg main} (and also as done in \cite{FS18}).

\subsection{Strategy of the Proof}

From this point forward, we do not distinguish $u$ and $\tilde{u}$, as defined in \eqref{def: u - Phi} (or we assume that $\varphi \equiv 0$); we will always assume that we are in the normalized situation \eqref{eq.EL_0}.
Furthermore, in this section, whenever we discuss $\Sigma_{\kappa}(u)$, $\kappa \in 2\N = \{2,4,6,\dots\}$.

Theorems~\ref{thm: nondeg main} and \ref{thm: deg main} are the culmination of a procedure that constructs the second term in the expansion of $u$ at singular points, outside of a lower dimensional set.
In order to study the higher infinitesimal behavior of $u$ at $X_\circ \in \Sigma_{\kappa}(u)$, we, quite naturally, consider the rescalings
\[
\tilde{v}_{X_\circ,r}(X) := \frac{v_{X_\circ}(rX)}{\left(\frac{1}{r^{n+a}} \int_{\pa B_r} v_{X_\circ}^2|y|^a\right)^{1/2}} \quad\text{where}\quad v_{X_\circ}(X) := u(X_\circ + X) - p_{\ast,X_\circ}(X)
\]
(cf. \eqref{eq.rescGR}).

First, we show that the set $\{ \tilde{v}_{X_\circ,r} \}_{r > 0}$ is weakly precompact in $W_{\rm loc}^{1,2}(\R^{n+1},|y|^a)$ and classify its limit points as $r \downarrow 0$ or blow-ups (see Sections~\ref{sec:monotone} and \ref{sec:Blow-up Analysis}):

\begin{proposition}
\label{prop.main1}
Let $u$ solve \eqref{eqn: Signorini problem}, and let $X_\circ \in \Sigma_\kappa^{m}(u)$ for $m \in \{0,\dots,n-1\}$.
\begin{enumerate}[(i)]
\item If $a \in [0, 1)$, the limit points of $\{ \tilde{v}_{X_\circ,r} \}_{r > 0}$ as $r \downarrow 0$ are $\lambda_{\ast,X_\circ}$-homogeneous, $a$-harmonic polynomials with $\lambda_{\ast,X_\circ} \geq \kappa$.
\item If $m < n -1$ and $\kap = 2$, the limit points of $\{ \tilde{v}_{X_\circ,r} \}_{r > 0}$ as $r \downarrow 0$ are $\lambda_{\ast,X_\circ}$-homogeneous, $a$-harmonic polynomials with $\lambda_{\ast,X_\circ} \geq 2$.
\item If $m = n - 1$ and $a \in (-1, 0)$, the limit points of $\{ \tilde{v}_{X_\circ,r} \}_{r > 0}$ as $r \downarrow 0$ are $\lambda_{\ast,X_\circ}$-homogeneous, global solutions to the very thin obstacle problem (or fractional thin obstacle problem) \eqref{eq.VTOP} on $L(p_{\ast,X_\circ}) \subset \R^n \times \{ 0 \}$ with $\lambda_{\ast,X_\circ} \ge \kappa +\alpha_\kappa$, for some $\alpha_\kappa > 0$ depending only on $n$ and $\kappa$.
\end{enumerate} 
\end{proposition}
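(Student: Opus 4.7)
The plan is to combine a Monneau- and Almgren-type monotonicity analysis for $v_{X_\circ}:=u-p_{*,X_\circ}$ at singular points\,---\,used to extract a $\l_{*,X_\circ}$-homogeneous limit $q$ of the rescalings $\tilde v_{X_\circ,r}$\,---\,with a case-by-case analysis of the support and strength of $L_a q$, which is what distinguishes (i)--(iii). Following the scheme of \cite{GP09,GR18}, I would first establish a Monneau-type monotonicity formula stating that $r\mapsto r^{-n-a-2\kap}\int_{\pa B_r} v_{X_\circ}^2|y|^a$ is nondecreasing in $r$. This relies on the facts that $p_{*,X_\circ}\in \mathscr{P}_\kap$ is $a$-harmonic and $\kap$-homogeneous, that $L_a v_{X_\circ}\le 0$ globally, and that $v_{X_\circ}L_a v_{X_\circ}\le 0$ as a measure (since $v_{X_\circ}=-p_{*,X_\circ}\le 0$ on the contact set). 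Together with an Almgren-type frequency formula for $v_{X_\circ}$, this provides uniform $W^{1,2}(B_R,|y|^a)$ bounds on the rescalings on every compact set, and the compactness argument of \cite{ACS08,CSS08} then yields a subsequential limit $q\in W^{1,2}_{\rm loc}(\R^{n+1},|y|^a)$. Constancy of the Almgren frequency at the limit, combined with the lower bound $N(0^+,v_{X_\circ})\ge \kap$ extracted from the expansion \eqref{eq.littleork}, forces $q$ to be $\l_{*,X_\circ}$-homogeneous with $\l_{*,X_\circ}\ge \kap$.

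Passing to the limit in the rescaled equation, $q$ satisfies $L_a q\le 0$ in $\R^{n+1}$, is even in $y$, and is $a$-harmonic off the limit of the rescaled contact set. On the thin space the obstacle constraint reads $\tilde v_{X_\circ,r}(x,0)\ge -p_{*,X_\circ}(x,0)\cdot r^\kap/\rho(r)$, with $\rho(r):=(r^{-n-a}\int_{\pa B_r}v_{X_\circ}^2|y|^a)^{1/2}$. By Monneau, $\rho(r)/r^\kap\to 0$, so the factor $r^\kap/\rho(r)$ diverges and the constraint collapses in the limit to $q(\,\cdot\,,0)\ge 0$ on the nodal set $\{p_{*,X_\circ}(\,\cdot\,,0)=0\}$, while becoming vacuous where $p_{*,X_\circ}(\,\cdot\,,0)>0$. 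Together with the expansion \eqref{eq.littleork}, which guarantees that the rescaled contact set accumulates only on the nodal set of $p_{*,X_\circ}(\,\cdot\,,0)$, this pins down $\spt L_a q$.

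The three cases then correspond to three different fates of the measure $L_a q$. In case (i), $a\in[0,1)$, a removability argument shows $L_a q\equiv 0$ in $\R^{n+1}$: using that $p_{*,X_\circ}$ is $a$-harmonic (so $\lim_{y\downarrow 0}y^a\pa_y p_{*,X_\circ}=0$) together with \eqref{eq.littleork}, the density of $L_a v_{X_\circ}$ on the contact set is a higher-order quantity which, combined with the scaling of $L_a$ under the rescaling $\tilde v_{X_\circ,r}$, kills the limiting measure. Hence $q$ is globally $a$-harmonic, $\l_{*,X_\circ}$-homogeneous, and of polynomial growth, so an $a$-harmonic polynomial. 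In case (ii), $\kap=2$ forces the nodal set of $p_{*,X_\circ}(\,\cdot\,,0)$ to coincide with the spine $L(p_{*,X_\circ})$ (a non-negative quadratic form vanishes exactly on its kernel), and for $m<n-1$ this set has Hausdorff dimension at most $n-2$ in $\R^{n+1}$. Such sets are removable for $L_a$ for every $a\in(-1,1)$ (the fundamental solution of $L_a$ has singularity of order $|X|^{1-n-a}$), so again $L_a q\equiv 0$ and the same conclusion follows. In case (iii), $m=n-1$ and $a\in(-1,0)$, $L(p_{*,X_\circ})$ is a hyperplane of the thin space with strictly positive $L_a$-capacity; $L_a q$ survives as a nontrivial obstacle measure there and $q$ solves the very thin obstacle problem on $L(p_{*,X_\circ})$ developed earlier in this paper.

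\emph{The main obstacle} is the strict frequency gap $\l_{*,X_\circ}\ge \kap+\a_\kap$ in case (iii): no Liouville-type argument applies directly, so one must invoke the regularity theory for the very thin obstacle problem established in this work. The key input is that non-trivial, $\l$-homogeneous, even-in-$y$ global VTOP solutions on $L(p_{*,X_\circ})$ exist only for a discrete set of values of $\l$, and that the value $\l=\kap$ is ruled out by the $L^2(\pa B_1,|y|^a)$-orthogonality of $q$ to $\mathscr{P}_\kap$ that is forced by the construction $v_{X_\circ}=u-p_{*,X_\circ}$. The quantitative gap $\a_\kap>0$, depending only on $n$ and $\kap$, then follows from this discreteness.
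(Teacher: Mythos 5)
Your first two paragraphs essentially reproduce the paper's strategy (the monotonicity of Proposition~\ref{prop: Almgren} and Lemma~\ref{lem.HMon} give compactness and the $\lambda_{*,X_\circ}$-homogeneity of any limit, and the Hausdorff convergence of the rescaled contact sets localizes the support of $L_a q$ in the nodal set of $p_{*,X_\circ}$), but your case analysis has two genuine gaps. In case (i), the mechanism you propose for killing the limit measure is not viable: you argue that, since $\lim_{y\downarrow0}y^a\pa_y p_{*,X_\circ}=0$ and by \eqref{eq.littleork}, the density of $L_a v_{X_\circ}$ on the contact set is ``higher order'' and is annihilated by the scaling of $L_a$. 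Nothing in that reasoning uses $a\ge 0$ — the evenness and $a$-harmonicity of $p_{*,X_\circ}$ and the expansion \eqref{eq.littleork} hold for every $a\in(-1,1)$ — so if it worked it would also force $L_a q\equiv 0$ when $a<0$ and $m=n-1$, contradicting part (iii), where the limit measure is genuinely nontrivial. The only a priori control is $\int_{B_1}|L_a\tilde v_{X_\circ,r}|\le C\|\nabla \tilde v_{X_\circ,r}\|_{L^2(B_1,|y|^a)}\le C$, which is bounded, not vanishing. What actually closes case (i), as in the proof of Proposition~\ref{prop.case1}, is the same capacity argument you invoke only in case (ii), now in the borderline codimension: $\mathcal N_{*}$ has locally finite $\mathcal H^{n-1}$ measure, hence zero harmonic capacity in $\R^{n+1}$, and for $a\ge0$ the $a$-harmonic capacity is controlled by the unweighted one; since $q\in W^{1,2}_{\rm loc}(\R^{n+1},|y|^a)$ and $L_aq$ is a nonpositive measure charged on a null-capacity set, $L_aq\equiv0$. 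This is precisely where the dichotomy $a\ge0$ versus $a<0$ enters, and it is missing from your case (i).

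In case (iii), the quantitative gap $\lambda_{*,X_\circ}\ge\kappa+\alpha_\kappa$ cannot be deduced from a ``discreteness of the admissible homogeneities'' of global homogeneous solutions to \eqref{eq.VTOP}: no such discreteness is proved in this paper (only the two-dimensional classification of Lemma~\ref{lem: 2d class}), and in dimension $n\ge3$ it is not available, so your key input is unproved. The paper instead rules out the single value $\lambda_*=\kappa$ — a $\kappa$-homogeneous global solution whose $L_a$-measure lives on the thin space is an $a$-harmonic polynomial by \cite[Proposition 4.4]{GR18}, and the relations of Lemma~\ref{lem: L2 sign} ($\langle q,p_{*}\rangle_a=0$ and $\langle q,p\rangle_a\le0$ for all $p\in\mathscr P_\kappa$), together with $q\ge0$ on $L_{*}$ and $p_{*}=c_*|x_n|^\kappa$ on the thin space, then force $q\equiv0$ — and upgrades this to a uniform $\alpha_\kappa>0$ by a compactness argument over sequences of solutions with $\lambda_{*,\ell}\downarrow\kappa$ (Step 5 of Proposition~\ref{prop.case2}). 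Relatedly, identifying $q$ as a solution of \eqref{eq.VTOP} at all, in particular passing the complementarity condition $\int \tilde v_{X_\circ,r}\,L_a\tilde v_{X_\circ,r}=0$ to the limit to get $q\,L_aq\equiv0$, requires strong, locally uniform convergence of $\tilde v_{X_\circ,r}$; weak $W^{1,2}$ convergence does not suffice. In the paper this is the most delicate step (semiconvexity along the spine via Bernstein's technique, slicing, and a Poisson-kernel/fractional-Laplacian bootstrap giving uniform $C^{-a-\vep}$ bounds), and your proposal passes over it entirely.
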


As far as we know, Proposition~\ref{prop.main1} is the first instance of truly distinct behavior within our class of lower dimensional obstacle problems; in all previous studies of \eqref{eqn: Signorini problem}, the class parameterized by $a \in (-1,1)$ was treatable uniformly.
The key difference is that if $a \geq 0$, subsets of the thin space $\{y = 0\}$ of Hausdorff dimension $n-1$ have zero $W^{1,2}(\R^{n+1},|y|^a)$-capacity or $a$-harmonic capacity, while if $a < 0$, subsets of the thin space $\{y = 0\}$ of Hausdorff dimension $n-1$ have positive $a$-harmonic capacity.
This capacitory distinction permits the formulation of, what we call, a very thin obstacle problem, i.e., a search for a weighted Dirichlet energy minimizer, as in \eqref{eqn: Signorini problem}, within a class of functions constrained to sit above a given function defined on an $(n-1)$-dimensional submanifold of $\R^n \times \{ 0 \}$ (see Section~\ref{sec.VTOP}), 
or, equivalently, a lower dimensional obstacle problem for the fractional Laplacian $(-\Delta)^s$ where $s> \frac12$
(see Section~\ref{sec.GlobPbs} and cf. Section~\ref{sssec.FOP}).

We remark that the above classification in the case $a < 0$ is analogous to the classification found in \cite{FS18}, wherein Figalli and Serra consider the classical obstacle problem.
There, the analogous blow-ups in the top stratum of the singular set are global, homogeneous solutions to the  thin obstacle problem \eqref{eqn: Signorini problem} with zero obstacle and $a = 0$.
And in the lower stratum of the singular set, the analogous blow-ups set are homogeneous, harmonic polynomials. 
That said, while Figalli and Serra could rely on developed theory (for the thin obstacle problem) for their analysis, we cannot; the very thin obstacle problem has, until now, been unstudied (Section~\ref{sec.VTOP}).

Given Proposition~\ref{prop.main1} and our desire to produce the next term in the expansion of $u$ at $X_\circ$, we then show that collection of points for which $\lambda_{\ast,X_\circ} \in [\kappa, \kappa + 1)$ is lower dimensional (for $\kap = 2$ or $m = n-1$).
More specifically, if we define 
\[
\Sigma_{\kappa}^{m,{\rm a}}(u) := \{ X_\circ \in \Sigma_{\kappa}^{m}(u) : \lambda_{\ast,X_\circ} \in [\kappa, \kappa + 1) \},
\]
then we have the following proposition.

\begin{proposition}
\label{prop.main2}
Let $u$ solve \eqref{eqn: Signorini problem}.
Then, 
\begin{enumerate}[(i)]
\item $\Sigma_{2}^{0,{\rm a}}(u)$ is empty.
\item For each $m \in \{ 1 \dots, n-1\}$, $\Sigma_{2}^{m,{\rm a}}(u)$ has Hausdorff dimension at most $m-1$.
\item For each $\kappa \in 2\N$, $\Sigma_{\kappa}^{n-1,{\rm a}}(u)$ has Hausdorff dimension at most $n-2$.
\end{enumerate}
\end{proposition}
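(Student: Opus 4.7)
For Part (i), I would argue directly. If $X_\circ\in\Sigma_2^0(u)$, the spine of $p_{*,X_\circ}$ is $\{0\}$, so $p_{*,X_\circ}(\cdot,0)$ is a positive-definite quadratic form on $\R^n$. Combined with \eqref{eq.littleork}, this yields $u(X_\circ+(x,0))>0$ for $0<|x|\le r_\circ$, whence $\Lambda(u)\cap B_{r_\circ}(X_\circ)=\{X_\circ\}$. Therefore $L_a u=0$ in $B_{r_\circ}(X_\circ)\setminus\{X_\circ\}$, and since a single point has zero $W^{1,2}(|y|^a)$-capacity in $\R^{n+1}$ and $u$ is bounded, the equation extends across $X_\circ$, so $L_a u=0$ on the whole ball. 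As $u$ is even in $y$ and $L_a$-harmonic, its Taylor expansion at $X_\circ$ uses only integer-degree $a$-harmonic polynomials; hence the next non-zero term after $p_{*,X_\circ}$ has degree at least $3$, forcing $\lambda_{*,X_\circ}\ge 3$ and contradicting $\lambda_{*,X_\circ}<3$. So $\Sigma_2^{0,{\rm a}}(u)=\emptyset$.

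For Parts (ii) and (iii) the plan is a four-step strategy. \textbf{Step 1} (uniqueness of the second blow-up): adapt the classical Monneau monotonicity formula to an excess of the form $r\mapsto r^{-n-a-2\lambda}\int_{\pa B_r}(v_{X_\circ}-q)^2|y|^a$, where $q$ ranges over the admissible class of $\lambda$-homogeneous limits supplied by Proposition~\ref{prop.main1}---$a$-harmonic polynomials in (ii) and in (iii) when $a\ge 0$, and global $\lambda$-homogeneous solutions to the very thin obstacle problem on $L(p_{*,X_\circ})$ in (iii) when $a<0$. Monotonicity together with Proposition~\ref{prop.main1} upgrades the subsequential convergence of the rescalings $\tilde v_{X_\circ,r}$ to uniqueness of the limit $q_{*,X_\circ}$. \textbf{Step 2} (continuity): the quantitative version of Step~1 gives an explicit modulus of continuity for $X_\circ\mapsto(p_{*,X_\circ}, q_{*,X_\circ})$ on compact subsets of $\Sigma_\kap^{m,{\rm a}}(u)$. \textbf{Step 3} (invariance of $q_{*,X_\circ}$ along $L(p_{*,X_\circ})$): for $\xi\in L:=L(p_{*,X_\circ})$, translations by $t\xi$ preserve $p_{*,X_\circ}$, and, combined with uniqueness of the second blow-up, this forces $q_{*,X_\circ}$ to be $L$-invariant. \textbf{Step 4} (dimension reduction): the $m$-dimensional (resp.\ $(n-1)$-dimensional) invariance of $q_{*,X_\circ}$, together with its low homogeneity $\lambda_{*,X_\circ}<\kap+1$, places $q_{*,X_\circ}$ in a finite-dimensional family of admissible profiles. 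A Federer-type dimension reduction on the classification map $X_\circ\mapsto(p_{*,X_\circ},q_{*,X_\circ})$, or equivalently a Whitney extension plus a covering argument in the spirit of \cite{FS18}, then yields the Hausdorff bounds $m-1$ in (ii) and $n-2$ in (iii).

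I expect Steps~3 and 4 to be the main obstacle. Rigorously deducing the $L$-invariance of $q_{*,X_\circ}$ calls for a delicate interplay between the Monneau uniqueness and a quantitative expansion of $u$ near $X_\circ$ in directions along $L$; extracting the \emph{sharp} dimension drop (from the generic $m$ down to $m-1$, resp.\ from $n-1$ down to $n-2$) then requires careful bookkeeping to identify the extra direction of degeneracy that the low-homogeneity constraint forces on the admissible pair $(p_{*,X_\circ},q_{*,X_\circ})$. Case (iii) with $a<0$ is the genuinely new situation: the second blow-up is no longer a polynomial but a global solution to the very thin obstacle problem, so the VTOP theory developed in Sections~\ref{sec.VTOP}--\ref{sec.GlobPbs} must be brought to bear both in establishing the Monneau formula of Step~1 and in the final finite-dimensional classification of Step~4.
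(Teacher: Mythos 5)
Your part (i) is an acceptable alternative route: for $\kappa=2$, $m=0$ the polynomial $p_{*,X_\circ}(\cdot,0)$ is positive definite, so \eqref{eq.littleork} does force $\Lambda(u)\cap B_{r_\circ}(X_\circ)=\{X_\circ\}$, and for $n\ge 2$ a point has zero $a$-harmonic capacity, so $u$ is $a$-harmonic near $X_\circ$. (For $n=1$, $a<0$ the capacity claim is false, and you should also justify the step you use implicitly: that an even-in-$y$, $a$-harmonic function admits a local expansion into integer-homogeneous $a$-harmonic polynomials; this is not completely free, and the frequency monotonicity alone does not rule out $\lambda_{*}=2$.) The paper instead concludes via the sign relations of Lemma~\ref{lem: L2 sign}: since $\bar p_*>0$ away from the origin, $Cp_*+q\in\mathscr{P}_2$ for $C$ large, and \eqref{eq.ineq1}--\eqref{eq.ineq2} give $\int_{\pa B_1}q^2|y|^a\le 0$ (Lemma~\ref{lem.sigmaempty}).

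For parts (ii) and (iii) there are genuine gaps. Your Step~1 (uniqueness of the second blow-up at anomalous points via a Monneau formula for $r^{-n-a-2\lambda}\int_{\pa B_r}(v_{X_\circ}-q)^2|y|^a$) is not available: the monotonicity in Lemma~\ref{lem.HMon} rests on $v\,L_a v\ge 0$, i.e.\ on subtracting a polynomial that is \emph{non-negative on the thin space}, whereas $q$ is sign-changing there; one would need $(p_{*}+q)\,L_a u\le 0$, which fails. The only almost-monotonicity of this type in the paper, Lemma~\ref{lem.qp}, requires $q$ to be $(\kappa+1)$-homogeneous with $D^\alpha q$ vanishing on the spine up to order $\kappa-2$ — precisely what an anomalous point lacks. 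Neither \cite{FS18} nor this paper proves uniqueness of second blow-ups at anomalous points; the whole argument is designed to avoid it. Your Step~3 is also unjustified and, in fact, too strong: invariance of $p_{*,X_\circ}$ under translations along $L(p_{*,X_\circ})$ says nothing about $u-p_{*,X_\circ}$, and if $q_{*,X_\circ}$ were invariant under all of $L(p_{*,X_\circ})$ then, in case $m=n-1$, $a<0$, the blow-up would be two-dimensional and Lemma~\ref{lem: 2d class} would force $\lambda_{*}\ge\kappa+1$, making the anomalous set \emph{empty} — stronger than Proposition~\ref{prop.main2}(iii) and not what the paper establishes. The invariances actually available are much weaker: invariance of $q(Z_\infty+\cdot)-q$ under $L_*+L(q)$ in directions $Z_\infty$ of accumulation of singular points when $\lambda_*=\kappa$ (Lemma~\ref{lem: accumulation pts}), and invariance of $q$ in an accumulation direction only when the frequency is continuous along the accumulating sequence (Lemma~\ref{lem: accumulation pts 2}).

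Finally, Step~4 never identifies the mechanism that produces the dimension drop, and Whitney extension is irrelevant here (it serves the generic set). In \ref{eq.Case1} the paper runs a Federer reduction on a density point of the anomalous set: the accumulation set lies in $\overline{B_1}\cap L_*\cap T(q)$ for a \emph{subsequential} blow-up $q$, and the key step is to show $L_*\cap T(q)\subsetneq L_*$ by decomposing $\bar q=\bar q_1(x^l)+\sum_j q_j(x^l)x_j$ and testing the inequalities $\langle q,p\rangle_a\le 0$, $\langle q,p_*\rangle_a=0$ of Lemma~\ref{lem: L2 sign} against explicit non-negative polynomials in $\mathscr{P}_\kappa$ (Proposition~\ref{prop: spine is nodal set}). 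In \ref{eq.Case2} the bound comes from the \cite{FRS19}-type selection of frequency almost-continuity points (Lemmas~\ref{lem: almost cty} and \ref{lem.E}), combined with Lemma~\ref{lem: accumulation pts 2} to gain $n-1$ independent invariant directions and the two-dimensional classification Lemma~\ref{lem: 2d class} to reach a contradiction with $\lambda_*<\kappa+1$ (Proposition~\ref{prop: top anom is lower d}). Without these ingredients — the accumulation lemmas, the $L^2(\pa B_1,|y|^a)$ sign relations, and the frequency-continuity device — your Steps 1, 3, and 4 do not yield the Hausdorff bounds claimed in (ii) and (iii).
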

\begin{remark}
\label{rem.zerodim}
In fact, we can show that for $n = 2$, if $a \in (-1, 0)$, then $\Sigma_{\kap}^{1,{\rm a}}(u)$ is countable; and if $a \in [0,1)$, then $\Sigma_{\kap}^{1,{\rm a}}(u)$ is discrete. 
Moreover, for $n \ge 3$, $\Sigma_2^{1,{\rm a}}(u)$ is discrete.
\end{remark}
\noindent In turn, we call $\Sigma_{\kappa}^{m,{\rm a}}(u)$ the set of {\it anomalous} points of $\Sigma_{\kappa}^{m}(u)$ and
\[
\Sigma_{\kappa}^{m,{\rm g}}(u) := \Sigma_{\kappa}^{m}(u) \setminus \Sigma_{\kappa}^{m,{\rm a}}(u)
\] 
the {\it generic} points of $\Sigma_{\kappa}^{m}(u)$ (cf. \cite{FS18}).
(See Sections~\ref{sec:accumulation} and \ref{sec:size}.) In order to prove Proposition~\ref{prop.main2}, we use two Federer-type dimension reduction arguments.
When $a \geq 0$ or $m < n-1$, we argue as in \cite{FS18}, while when $a < 0$ and $m = n-1$, we adopt the arguments pioneered in \cite{FRS19}.

After the statement of Theorem~\ref{thm: deg main}, we remarked that if the nodal set and spine of $p_{\ast,X_\circ}$ were aligned for each $X_\circ \in E_{\kap,m}$, then Theorem~\ref{thm: deg main} would immediately hold for all $m \in \{0,\dots,n-1\}$ and all $\kappa \in 2\N$.
(Notice that this alignment is always true when $m \in \{0,\dots,n-1\}$ if $\kappa = 2$, but only when $m = n-1$ if $\kappa > 2$.)
Another way to understand this remark is as follows. 
If the nodal set and spine of $p_{\ast,X_\circ}$ were aligned for each $X_\circ \in \Sigma_{\kappa}^{m,{\rm a}}(u)$, then our analysis would directly show that $\Sigma_\kappa^{m,{\rm a}}(u)$ is at most $(m-1)$-dimensional (in the Hausdorff sense), extending Proposition~\ref{prop.main2} to every $(\kappa,m)$ pair.
Hence, Theorem~\ref{thm: deg main} would immediately hold for all $m \in \{0,\dots,n-1\}$ and all $\kappa \in 2\N$ since every other aspect of our analysis is indifferent to this issue. 
Nonetheless, it is unclear if such a statement is true; in fact, Remark~\ref{rem.counterexample} indicates (but does not prove) the opposite.

Thanks to Propositions~\ref{prop.main1} and \ref{prop.main2}, and Whitney's Extension Theorem, generic points are contained in the countable union of $C^{1,1}$ manifolds; and so, we have the following result, which is Theorem~\ref{thm: deg main}, but with $C^{1,1}$ coverings.

\begin{theorem}
\label{thm.main1}
Let $u$ solve \eqref{eqn: Signorini problem}.
Then, 
\begin{enumerate}[(i)]
\item $\Sigma_2^{0}(u)$ is isolated in ${\rm Sing}(u) = \bigcup_{\kappa \in 2\N} \bigcup_{m = 0}^{n-1} \Sigma_\kap^{m}(u)$.
\item For each $m \in \{1,\dots,n-1\}$, $\Sigma_2^{m}(u)\setminus \Sigma_2^{m,{\rm a}}(u)$ is contained in the countable union of $m$-dimensional $C^{1,1}$ manifolds, where $\dim_{\mathcal{H}} \Sigma_2^{m,{\rm a}}(u)\le m-1$.
\end{enumerate}
Moreover, for each $\kappa \in 2\N$,
\begin{enumerate}[(i)]
\setcounter{enumi}{2}
\item $\Sigma_\kappa^{n-1}(u)\setminus \Sigma_\kappa^{n-1,{\rm a}}(u)$ is contained in the countable union of $(n-1)$-dimensional $C^{1,1}$ manifolds, where $\dim_{\mathcal{H}} \Sigma_\kappa^{n-1,{\rm a}}(u)\le n-2$.
\item In addition, if $a \in (-1,0)$, each $\Sigma_\kap^{n-1}(u)$ can be covered by a countable union of $(n-1)$-dimensional $C^{1,\alpha_\kap}$ manifolds, for some $\alpha_\kap > 0$ depending only on $n$ and $\kappa$.
\end{enumerate}
\end{theorem}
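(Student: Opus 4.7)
Statement (i) follows directly from the first-order expansion \eqref{eq.littleork}. For $X_\circ \in \Sigma_2^0(u)$, the polynomial $p_{\ast,X_\circ}\in\mathscr{P}_2$ has trivial spine, so $p_{\ast,X_\circ}(\,\cdot\,,0)$ is a positive-definite quadratic form on $\R^n$, strictly positive away from the origin. Combined with $u(X_\circ+X)=p_{\ast,X_\circ}(X)+o(|X|^2)$, this forces $\Lambda(u)\cap B_\rho(X_\circ)=\{X_\circ\}$ for some $\rho>0$, so $X_\circ$ is isolated in $\Gamma(u)$, and a fortiori in ${\rm Sing}(u)$.

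The Hausdorff-dimension bounds on the anomalous sets in (ii)--(iii) are precisely the content of Proposition~\ref{prop.main2}. The remaining task is to cover the generic strata (and, in (iv), the entire top stratum $\Sigma_\kappa^{n-1}(u)$) by countably many $C^{1,1}$ (respectively, $C^{1,\alpha_\kappa}$) manifolds of the correct dimension. The approach is a Whitney-extension argument in the spirit of \cite{FS18}: on a suitable countable decomposition of the relevant set into pieces, Proposition~\ref{prop.main1} yields a uniform quantitative expansion
\[
\bigl|u(X)-p_{\ast,X_\circ}(X-X_\circ)\bigr|\le C\,|X-X_\circ|^{\kappa+\beta},\qquad X\text{ near }X_\circ,
\]
with $\beta=1$ in (ii), (iii) and $\beta=\alpha_\kappa$ in (iv). This uniformization is obtained via a standard contradiction/compactness argument together with a pointwise $L^\infty$--$L^2$ upgrade of the spherical-average decay encoded in the rescalings $\tilde{v}_{X_\circ,r}$. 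Evaluating the expansion at two nearby singular points $X_\circ,X_1$ of a piece and using standard polynomial estimates on balls verifies the Whitney compatibility conditions of order $(\kappa,\beta)$ for the jet field $X_\circ\mapsto p_{\ast,X_\circ}(\,\cdot\,-X_\circ)$; Whitney's theorem then provides a $C^{\kappa,\beta}$ function $F$ on $\R^{n+1}$ whose $\kappa$-th Taylor polynomial at each singular point of the piece equals $p_{\ast,X_\circ}(\,\cdot\,-X_\circ)$.

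To extract the covering manifold, we further decompose each piece so that the spines $L(p_{\ast,X_\circ})$ all lie near a common $m$-plane $V\subset\R^n$, which, after rotation, we take to be $\R^m\times\{0\}$. An appropriate $(n-m)$-tuple of partial derivatives of $F(\,\cdot\,,0)$ in the non-spine coordinate directions --- the first-order derivatives $(\partial_{x_{m+1}}F,\dots,\partial_{x_n}F)$ when $\kappa=2$, and the single $(\kappa-1)$-th derivative $\partial_{x_n}^{\kappa-1}F$ when $\kappa>2$ (in which case $m=n-1$) --- vanishes at each singular point of the sub-piece, with an invertible $(n-m)\times(n-m)$ Jacobian coming from the non-degeneracy of $p_{\ast,X_\circ}$ outside its spine. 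Since $F\in C^{\kappa,\beta}$, this tuple is of class $C^{1,\beta}$, and the implicit function theorem produces a local $m$-dimensional $C^{1,\beta}$ manifold containing the sub-piece. Summing over the countable decomposition establishes (ii), (iii), and (iv).

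The principal obstacle is promoting the qualitative homogeneity bound of Proposition~\ref{prop.main1} to the uniform pointwise expansion displayed above: this requires a delicate contradiction/compactness argument producing a countable decomposition with quantitative rate, as well as care in passing from $L^2$-averaged decay on spheres to pointwise decay via interior regularity estimates for $L_a$. Once the uniform expansion is in place, the Whitney-extension and implicit-function-theorem steps are essentially formal.
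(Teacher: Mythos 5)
Your argument for (i) is correct and in fact simpler than the paper's route: since a point of $\Sigma_2^0(u)$ has a first blow-up whose restriction to the thin space is a positive definite quadratic form, the uniform expansion \eqref{eq.littleork} forces $\tilde u>0$ on a punctured thin neighborhood, so the point is isolated even in $\Gamma(u)$ (the paper instead deduces this from the accumulation Lemma~\ref{lem: Sigma0k is isolated}, which is built on the second blow-up analysis). Your treatment of the anomalous sets via Proposition~\ref{prop.main2}, and the Whitney-plus-implicit-function-theorem scheme for the generic strata, also match the paper's architecture; your variant of the IFT step (decomposing so the spines align with a fixed $m$-plane and using the non-spine first derivatives for $\kappa=2$, respectively $\partial_{x_n}^{\kappa-1}F$ for $\kappa>2$, $m=n-1$) is a legitimate alternative to the paper's pointwise choice of directions $\be_i,\bv_i$.

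The genuine gap is in what you yourself flag as the ``principal obstacle'': you assert that the uniform expansion $|u(X)-p_{\ast,X_\circ}(X-X_\circ)|\le C|X-X_\circ|^{\kappa+\beta}$ on suitable pieces follows from ``a standard contradiction/compactness argument together with a pointwise $L^\infty$--$L^2$ upgrade'', but you never exhibit the mechanism, and compactness alone does not produce a rate: knowing only that every second blow-up has homogeneity $\ge\kappa+\beta$ gives, for each $\varepsilon$, closeness to a high-homogeneity profile at sufficiently small scales, and without a monotone quantity (or an improvement-of-flatness iteration, which you do not set up) this cannot be summed across dyadic scales into a uniform $r^{\kappa+\beta}$ decay. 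The paper needs no compactness here at all: the Monneau-type monotonicity of Lemma~\ref{lem.HMon} gives $H_\lambda(r,u(X_\circ+\cdot)-p_{\ast,X_\circ})\le H_\lambda(r_0,\cdot)$ directly, and the uniform constant comes from the decomposition $\Sigma_\kappa=\bigcup_h E_h$ into growth classes together with the upper semicontinuity of $X_\circ\mapsto N(0^+,u(X_\circ+\cdot)-p_{\ast,X_\circ})$, which makes the sets $S^h_{\kappa,\lambda}$ compact (proof of Theorem~\ref{thm: C11}, and Proposition~\ref{prop: C2akappa} with $\lambda=\kappa+\alpha_\kappa$ for your part (iv)). Moreover, the pointwise $L^\infty$ upgrade you invoke is unnecessary: the Whitney compatibility conditions involve only the differences $P_{Z_\circ}-P_{X_\circ}$ of the jet polynomials, and these are controlled from the $L^2$ spherical decay by the triangle inequality and equivalence of norms on the finite-dimensional space of degree-$\kappa$ polynomials. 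So your outline is repairable, but as written the central quantitative step is missing; you should replace the compactness claim by the Monneau monotonicity argument on the compact pieces $S^h_{\kappa,\lambda}$.
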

\noindent (See Section~\ref{sec:whitney}.) 
We refer to Remark~\ref{rem.zerodim} for the size of the anomalous set in the cases $n = 2$ and $m = 1$, which corresponds to parts (ii) and (iv) of Theorem~\ref{thm: deg main}.
Just as Theorem~\ref{thm.main1} is a $C^{1,1}$ precursor to Theorem~\ref{thm: deg main}, we note that a $C^{1,1}$ precursor to Theorem~\ref{thm: nondeg main} also holds.

To conclude the proofs of our main results and produce the next term in the expansion of $u$ outside a lower dimensional set (and go from $C^{1,1}$ to $C^2$ covering manifolds), we prove that outside of an at most $(m-1)$-dimensional (in the Hausdorff sense) subset of $\Sigma_\kappa^{m,{\rm g}}(u)$, when $\kappa = 2$ and $m \in \{0, \dots, n-1 \}$ as well as when $\kappa > 2$ and $m=n-1$, the blow-ups classified in Proposition~\ref{prop.main1} are $(\kappa+1)$-homogeneous polynomials, and not just higher homogeneous, global solutions to a codimension two obstacle problem.
In particular, we show that
\[
\frac{v_{X_\circ}(r \,\cdot\,)}{r^{\kappa + 1}} \to q_{\ast,X_\circ} \quad\text{locally uniformly as}\quad r \downarrow 0
\]
where $q_{\ast,X_\circ}$ is a non-trivial, $(\kappa+1)$-homogeneous, $a$-harmonic polynomial at all but strictly lower dimensional set of $X_\circ \in \Sigma_\kappa^{m,{\rm g}}(u)$, again, when $\kappa = 2$ and $m \in \{0, \dots, n-1 \}$ as well as when $\kappa > 2$ and $m=n-1$.
(See Section~\ref{sec.thirdorder}.)

\subsection{Notation}
\label{ssec.notation}
We define the balls
\[
\begin{split}
B_r(X_\circ) & := \{ X\in \R^{n+1} : |X - X_\circ| < r \},\\
B_r^*(x_\circ) & := \{ x \in \R^n : |x-x_\circ|< r\},\\
B_r'(x_\circ') & := \{ x' \in \R^{n-1} : |x'-x_\circ'|< r\},
\end{split}
\]
i.e., the balls of radius $r$ centered at $X_\circ$, $x_\circ$, and $x_\circ'$ in $\R^{n+1}$, $\R^n$, and $\R^{n-1}$ respectively. 
We will also denote $B_r := B_r(0)$, $B_r^* := B_r^*(0) $, and $B_r' := B_r'(0)$.
Similarly, we let
\[
D_r \subset \R^2
\]
be the disc of radius $r > 0$, centered at the origin.

For a polynomial $p : \R^{n} \to \R$, consider
\begin{equation}
\label{eq.ext}
\Ext_a(p) := p + \sum_{j=1}^{\infty} c_j\frac{(-1)^j}{(2j)!}y^{2j}\Delta^j_{x}p \quad\text{with}\quad c_j := \prod_{i=1}^j\frac{2i-1}{2i-1-a}.
\end{equation}
Notice that $\Ext_a(p):\R^{n+1}\to\R$ is the unique even in $y$, $a$-harmonic extension of $p$ to $\R^{n+1}$ (see \cite[Lemma 5.2]{GR18}); $L_a (\Ext_a(p)) = 0$.

\subsection{Structure of the Work}

In Section~\ref{sec:monotone}, we introduce a collection of monotonicity formulae (in particular, Almgren's frequency function), and prove some basic but useful estimates.
In Section~\ref{sec:Blow-up Analysis}, we start a blow-up analysis of the solution around singular points. 
We show the existence of second blow-ups and prove some facts about them.
We also show Proposition~\ref{prop.main1} holds. 
In Section~\ref{sec:accumulation}, we gather some important lemmas regarding the accumulation of singular points, which are then used to study the size of the anomalous set in Section~\ref{sec:size}. 
Whence, we prove Proposition~\ref{prop.main2} and Remark~\ref{rem.zerodim}.
In Section~\ref{sec:whitney}, we show that the set of generic points is contained in a countable union of $C^{1,1}$ manifolds, which combined with previous results yields the proof of Theorem~\ref{thm.main1}. 
Finally, we conclude the proofs of our main results in Section~\ref{sec.thirdorder}, Theorems~\ref{thm: nondeg main} and \ref{thm: deg main}, by studying the case of $(\kap+1)$-homogeneous, $a$-harmonic second blow-ups. 
Specifically, we show that those points at which the second-blow up is not the next order term in the expansion are collectively lower-dimensional. 
Finally, Section~\ref{sec.VTOP} is dedicated to studying the very thin obstacle problem.
Here, we prove the estimates and claims on the very thin obstacle problem made use of throughout the work. 
In Section~\ref{sec.GlobPbs}, we make a final remark on global obstacle problems.

\medskip
\noindent{\bf Acknowledgments.}
We are grateful to Alessio Figalli and Joaquim Serra for their guidance and useful discussions.
We would also like to thank Luis Silvestre for sharing  his intuition on the Poisson kernel for the very thin obstacle problem in Section~\ref{sec.VTOP}. 
Finally, we are grateful to Alessio Figalli, Xavier Ros-Oton, and Joaquim Serra for giving us the idea to consider sequences along which the frequency is continuous within our second dimension reduction argument, as they do in their paper \cite{FRS19}, used in Section~\ref{sec:size}.

\section{Monotonicity Formulae and Preliminary Results}
\label{sec:monotone}

We recall that we will always assume that we are dealing with the zero obstacle case \eqref{eq.EL_0}.

Let $X_\circ$ be a singular point for $u$ of order $\kappa_{X_\circ} \in 2\N := \{2,4,6,\dots\}$, and let $p_{\ast,X_\circ}$ be the (unique) first blow-up of $u$ at $X_\circ$,
\begin{equation}
\label{eq.firstblowup}
p_{\ast, X_\circ}(X)  := \lim_{r\downarrow 0} \frac{u(X_\circ + rX)}{r^{\kappa_{X_\circ}}}
\end{equation}
(see \eqref{eq.resck}). 
Recall that $p_{\ast,X_\circ} \in \mathscr{P}_{\kappa_{X_\circ}}$, i.e., it is an $a$-harmonic, $\kappa_{X_\circ}$-homogeneous polynomial, non-negative on the thin space, and even in $y$, and $\kappa_{X_\circ}$ is equal to Almgren's frequency of $u$ at the $X_\circ$:
\[
\kappa_{X_\circ} = N(0^+,u,X_\circ) := \lim_{r \downarrow 0} \frac{r\int_{B_r(X_\circ)} |\nab u|^2|y|^a}{ \int_{\pa B_r(X_\circ)} u^2|y|^a}
\]
(see \cite{ACS08,CSS08,GP09,GR18}).

We often assume that $X_\circ = 0$ (which we can do without loss of generality after a translation), and we let $p_{\ast} := p_{\ast,0}$.
In particular, define 
\[
v_\ast := u - p_{\ast},
\]
and set
\begin{equation}
\label{eq.spine}
\kappa_\ast := \kappa_0, \quad L_\ast := L(p_{\ast}), \quad\text{and}\quad m_\ast := m_{0},
\end{equation}
so that $m_\ast$ is the dimension of the spine of $p_{*}$ in $\{y = 0\}$, $L_\ast$, which is $\kappa_\ast$-homogeneous.

Let, for $\kap\in 2\N$,
\[
p \in \mathscr{P}_\kap \quad\text{and}\quad v = u-p,
\]
and observe that
\begin{equation}
\label{eqn: v Lap v}
v\,L_a v = -p\, L_a u \geq 0.
\end{equation}
Since $L_a u(x,y) = 2\lim_{y \downarrow 0 }y^a\pa_y u(x, y) \mathcal{H}^{n-1}\mres {\Lambda(u)} \le 0$, $v\,L_a v$ is non-negative as soon as $p$ is non-negative on $\Lambda(u) \setminus \mathcal{N}(u)$ where
\begin{equation}
\label{eq.nodal}
\mathcal{N}(u) := \{ (x,0) : u(x,0) = |\nab_{x}u(x,0)| = \lim_{y \downarrow 0 }y^a\pa_y u(x, y) = 0 \}.
\end{equation}
The set $\mathcal{N}(u)$ is called the \emph{nodal set} of $u$.

\begin{remark}
\label{rmk:v soln to TOP}
Notice that $v = u - p$ is a solution to the thin obstacle problem with obstacle $\varphi = -p|_{B_1 \cap \{ y = 0\}}$ and subject to its own boundary data.
(This follows easily by Remark~\ref{rmk: min super}.)
\end{remark}

The goal of this section is to prove monotonicity-type results and estimates for $v = u - p$ for any $p \in \mathscr{P}_\kap$.
We stress that $\kappa$ might not be equal to $\kappa_\ast$, and so we will sometimes write $N(0^+,u) := N(0^+,u,0)$ instead.
Yet we will most often apply these results and estimates to $v_\ast$. 

\subsection{Monotonicity Formulae}

To begin we study Almgren's frequency function on $v$ at the origin, and prove that it is non-decreasing provided that $\kappa \leq \kappa_\ast = N(0^+,u)$.

\begin{proposition}
\label{prop: Almgren} 
Suppose that $\kappa \leq N(0^+,u)$, and let $v = u - p$ for $p \in \mathscr{P}_\kappa$.
Then, Almgren's frequency function on $v$
\[
r \mapsto N(r,v) = \frac{r\int_{B_r} |\nab v|^2|y|^a}{ \int_{\pa B_r} v^2|y|^a}
\]
is non-decreasing.
Moreover,
$N(0^+,v) \geq \kappa$.
\end{proposition}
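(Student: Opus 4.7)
The plan is to adapt the classical derivation of Almgren's monotonicity to account for the fact that $v = u - p$ is not $a$-harmonic. Set $H(r) := \int_{\partial B_r} v^2|y|^a$ and $D(r) := \int_{B_r} |\nabla v|^2|y|^a$, so $N(r,v) = rD(r)/H(r)$. I would begin by collecting three differential identities: expressing $H$ in spherical coordinates gives $H'(r) = \tfrac{n+a}{r}H(r) + 2\int_{\partial B_r} v\,\partial_\nu v\,|y|^a$; integration by parts against the weight $|y|^a$ gives $D(r) = \int_{\partial B_r} v\,\partial_\nu v\,|y|^a - \int_{B_r} v\,L_a v$; and testing $L_a v$ against the radial vector field $X$ produces the weighted Pohozaev identity
\[
D'(r) = \tfrac{n-1+a}{r}D(r) + 2\int_{\partial B_r}(\partial_\nu v)^2|y|^a - \tfrac{2}{r}\int_{B_r}(X\cdot\nabla v)\,L_a v.
\]

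The heart of the argument is to sign the two bulk integrals $I := \int_{B_r} v\,L_a v$ and $J := \int_{B_r}(X\cdot\nabla v)\,L_a v$. Since $L_a p = 0$, these equal $\int v\,L_a u$ and $\int(X\cdot\nabla v)\,L_a u$, respectively, and $L_a u$ is a non-positive measure supported on $\Lambda(u)\subset\{y=0\}$, on which $v = -p$. Moreover, the tangential gradient of $u$ vanishes $L_a u$-a.e. on $\Lambda(u)$ (because $u|_{\{y=0\}}\ge 0$ attains its minimum there), so $X\cdot\nabla v = -X\cdot\nabla p = -\kappa p$ $L_a u$-a.e.\ by Euler's identity for the $\kappa$-homogeneous $p$. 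Since $p\ge 0$ on $\{y=0\}$ (because $p\in\mathscr{P}_\kappa$), this gives both $I\ge 0$ (a manifestation of \eqref{eqn: v Lap v}) and the relation $J = \kappa I \ge 0$. Substituting into $(\log N)' = \tfrac{1}{r} + D'/D - H'/H$, the explicit dimensional cancellations occur as in the $a$-harmonic case, and a Cauchy--Schwarz inequality $(\int_{\partial B_r} v\,\partial_\nu v\,|y|^a)^2 \le H(r)\int_{\partial B_r}(\partial_\nu v)^2|y|^a$ yields
\[
(\log N(r,v))' \ge \frac{2I}{rD}\Big(N(r,v) - \kappa + \frac{rI}{H}\Big).
\]
In particular, $N(\cdot,v)$ is non-decreasing at every radius where $N(r,v)\ge\kappa$.

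To close the argument, I would verify $N(0^+,v)\ge\kappa$ directly using \eqref{eq.littleork}: if $\kappa < \kappa_\ast$, then the polynomial $p$ dominates $u$ near $0$ so $v\sim -p$ has frequency exactly $\kappa$; if $\kappa = \kappa_\ast$, writing $v = (u-p_\ast)+(p_\ast-p)$ produces an $o(|X|^\kappa)$ remainder plus either zero or a $\kappa$-homogeneous polynomial, and in either case $N(0^+,v)\ge\kappa$. A continuity/connectedness argument on the set $\{r : N(s,v)\ge\kappa\text{ for all }s\in(0,r]\}$ then propagates this lower bound to all of $(0,1)$: at any radius $r_0$ where $N(r_0,v)$ first touches $\kappa$, the displayed inequality reduces to $(\log N)'(r_0)\ge 2I(r_0)^2/(D(r_0)H(r_0))\ge 0$, precluding any subsequent descent below $\kappa$. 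This simultaneously establishes the monotonicity and the claimed lower bound.

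The principal technical obstacle is the rigorous justification of the weighted Pohozaev identity when $L_a v$ is only a measure and $\nabla v$ may be singular across $\{y=0\}$. The standard remedy is to regularize, either by replacing $|y|^a$ with $(\epsilon + y^2)^{a/2}$ or by approximating $v$ in $W^{1,2}(B_1,|y|^a)$, derive the identities in the smooth setting, and then pass to the limit using the $C^{1,s}$ regularity of $u$ on each side of $\{y=0\}$ recalled in Section~\ref{sec: known results}, together with the fact that $\nabla_x u = 0$ holds $L_a u$-a.e.\ on $\Lambda(u)$, which follows from the obstacle conditions in \eqref{eq.EL_0}.
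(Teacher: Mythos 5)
Your computational core is fine and in fact parallels the paper's: the identities for $H'$ and $D'$, the observation that $(X\cdot\nabla u)\,L_a u\equiv 0$ on $\Lambda(u)$ together with Euler's relation for $p$ (giving $J=\kappa I$), and Cauchy--Schwarz do yield
\[
\bigl(\log N(r,v)\bigr)'\;\ge\;\frac{2I_r}{rD(r)}\Bigl(N(r,v)-\kappa+\frac{rI_r}{H(r)}\Bigr),
\]
which is the same structure as the paper's computation of $D'(1)H(1)-H'(1)D(1)$. The genuine gap is in how you close the argument. Your inequality gives monotonicity only where $N(\cdot,v)\ge\kappa$, so everything hinges on the seed ``$N(r,v)\ge\kappa$ for small $r$'', and your justification of that seed via \eqref{eq.littleork} fails precisely in the central case $\kappa=\kappa_\ast$, $p=p_\ast$, i.e.\ $v=v_\ast=u-p_\ast$. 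There the expansion only says $v=o(|X|^{\kappa})$ uniformly, and a pointwise decay rate gives no lower bound on the frequency of a non-$a$-harmonic function: $v_\ast$ only satisfies $L_a v_\ast=L_a u\le 0$ as a measure, so there is no spherical-harmonic expansion to invoke, the existence of $N(0^+,v_\ast)$ is not known a priori, and proving $\liminf_{r\downarrow 0}N(r,v_\ast)\ge\kappa$ is essentially the content of the proposition itself, so invoking it is circular. Nor does the decay of $H$ rescue you: since $\frac{\d}{\d r}\log\bigl(H(r)/r^{n+a}\bigr)=\frac{2}{r}\bigl(N(r,v)+\frac{rI_r}{H}\bigr)$ and the measure term is nonnegative, fast decay of $H$ is compatible with small $N$ on short scales.

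The paper avoids this issue entirely through Lemma~\ref{lem:key ineqs}: it compares Weiss energies, $0\le W_\kappa(r,u)-W_\kappa(r,p)$, which is legitimate because $N(\cdot,u)$ is already known to be monotone for the solution $u$ (by \cite{GR18}) with $N(0^+,u)\ge\kappa$, while $W_\kappa(r,p)\equiv 0$; the cross term $\int_{\pa B_r}v\,(X\cdot\nabla p-\kappa p)\,|y|^a$ vanishes by Euler's identity, and this gives $N(r,v)\ge\kappa$ for \emph{every} $r$ (inequality \eqref{eqn: key ineq 1}), with no continuation/Gronwall argument and no limit at $0^+$ needed; monotonicity and $N(0^+,v)\ge\kappa$ then both follow. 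If you want to keep your route, this Weiss comparison is the missing ingredient: the hypothesis $\kappa\le N(0^+,u)$ must enter through the monotone frequency of $u$ itself, not through the uniform expansion \eqref{eq.littleork}. (Smaller points, fixable unlike the seed: your case $\kappa=\kappa_\ast$, $p\neq p_\ast$ still needs a Caccioppoli estimate, using \eqref{eqn: v Lap v}, to transfer the $L^\infty$ expansion to the Dirichlet energy in the numerator of $N$; and the connectedness step needs local absolute continuity of $N$ and local integrability of $2I_r/(rD)$.)
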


Before proceeding with the proof of Proposition~\ref{prop: Almgren}, let us recall a few definitions and facts.
Let $W_\lambda(r, u)$ denote the {\it $\lambda$-Weiss energy of $u$ at $r$}: 
\begin{equation}
\label{eq.Weissenergy}
W_\lambda (r, u ) := \frac{1}{r^{2\lambda}} D(r, u)  - \frac{\lambda}{r^{2\lambda}} H(r, u)
\end{equation}
where
\begin{equation}
\label{eq.D}
D(r, u) := \frac{1}{r^{n+a-1}}\int_{B_r} |\nab u|^2|y|^a = r^{2} \int_{B_1} |\nab u(rX)|^2|y|^a
\end{equation}
and
\begin{equation}
\label{eq.H}
H(r, u) := \frac{1}{r^{n+a}} \int_{\pa B_r} u^2|y|^a =  \int_{\pa B_1} u(rX)^2|y |^a.
\end{equation}
By \cite[Theorem 2.11]{GR18}, we have that $N(r, u)$ is non-decreasing, from which, we immediately deduce that 
\begin{equation}
\label{eq.Weisspos}
W_\kap(r, u) = \frac{H(r, u)}{r^{2\kap}} (N(r, u) -\kap) \ge 0
\end{equation}
(recall $N(0^+, u) \geq \kap$).
In turn, we have the following lemma: 
 
\begin{lemma}
\label{lem:key ineqs}
Suppose that $\kappa \leq N(0^+,u)$, and let $v = u - p$ for $p \in \mathscr{P}_\kappa$. 
Then,
\begin{equation}
\label{eqn: key ineq 1}
\frac{1}{r^{n-1+a+2\kap}}\int_{B_r} |\nab v|^2|y|^a \geq \frac{\kap}{r^{n+a+2\kap}}\int_{\pa B_r} v^2|y|^a 
\end{equation}
and
\begin{equation}
\label{eqn: key ineq 2}
\frac{1}{r^{n+a+2\kap}}\int_{\pa B_r} v(X \cdot \nab v - \kap v)|y|^a \geq \frac{1}{r^{n-1+a+2\kap}}\int_{B_r} v \,L_a v.
\end{equation}
\end{lemma}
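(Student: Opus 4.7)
The plan is to observe that both inequalities \eqref{eqn: key ineq 1} and \eqref{eqn: key ineq 2} are equivalent to the non-negativity of the $\kap$-Weiss energy $W_\kap(r,v) \geq 0$ defined in \eqref{eq.Weissenergy}, and then to deduce this from the corresponding non-negativity for $u$ via a ``linearity'' identity $W_\kap(r,v) = W_\kap(r,u)$.

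First, I would rewrite both inequalities in terms of $D(r,v)$, $H(r,v)$, and the Weiss energy. Multiplying \eqref{eqn: key ineq 1} by $r^{n-1+a+2\kap}$ yields $r D(r,v) \geq \kap r H(r,v)$, i.e., $r^{2\kap} W_\kap(r,v) \geq 0$. For \eqref{eqn: key ineq 2}, I would apply the divergence theorem (using $X \cdot \nabla v = r\,\partial_\nu v$ on $\partial B_r$) to obtain
\[
\int_{\partial B_r} v\,(X \cdot \nabla v)\,|y|^a = r\int_{B_r}|\nabla v|^2|y|^a + r\int_{B_r} v\, L_a v = r^{n+a}D(r,v) + r\int_{B_r} v\, L_a v.
\]
Subtracting $\kap \int_{\partial B_r} v^2 |y|^a = \kap r^{n+a} H(r,v)$ and dividing by $r^{n+a+2\kap}$ shows
\[
\frac{1}{r^{n+a+2\kap}}\int_{\partial B_r} v(X \cdot \nabla v - \kap v)|y|^a = W_\kap(r,v) + \frac{1}{r^{n-1+a+2\kap}}\int_{B_r} v\, L_a v,
\]
so \eqref{eqn: key ineq 2} is also equivalent to $W_\kap(r,v) \geq 0$.

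Second, I would establish the identity $W_\kap(r,v) = W_\kap(r,u)$. Expanding,
\[
D(r,v) = D(r,u) - \frac{2}{r^{n-1+a}}\int_{B_r}\nabla u \cdot \nabla p\,|y|^a + D(r,p),
\]
and since $L_a p = 0$ and $X \cdot \nabla p = \kap p$, integration by parts gives $\int_{B_r}\nabla u \cdot \nabla p\,|y|^a = \int_{\partial B_r} u\,\partial_\nu p\,|y|^a = \frac{\kap}{r}\int_{\partial B_r} u p\,|y|^a$. Likewise $H(r,v) = H(r,u) - \frac{2}{r^{n+a}}\int_{\partial B_r} u p\,|y|^a + H(r,p)$. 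The cross terms in $D(r,v) - \kap H(r,v)$ cancel, and using $D(r,p) = \kap H(r,p)$ (obtained by applying the same Rellich-type identity to $p$ alone), one concludes $D(r,v) - \kap H(r,v) = D(r,u) - \kap H(r,u)$, i.e., $W_\kap(r,v) = W_\kap(r,u)$.

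Finally, the monotonicity of $N(r,u)$ gives $N(r,u) \geq N(0^+,u) \geq \kap$, so by \eqref{eq.Weisspos} we have $W_\kap(r,u) \geq 0$, hence $W_\kap(r,v) \geq 0$, which proves both inequalities. I do not anticipate a serious obstacle: the argument is a direct computation, and the only conceptual point worth isolating is that the Weiss functional at homogeneity $\kap$ is ``affine'' on additive perturbations by elements of $\mathscr{P}_\kap$, which is exactly what makes the $a$-harmonic, $\kap$-homogeneous polynomial $p$ invisible to $W_\kap$.
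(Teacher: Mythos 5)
Your proposal is correct and follows essentially the same route as the paper: both rest on $W_\kap(r,u)\ge 0$ (from frequency monotonicity and $N(0^+,u)\ge\kap$) together with $W_\kap(r,p)\equiv 0$, with the cross terms killed by integrating by parts using $L_a p=0$ and $X\cdot\nabla p=\kap p$; your identity $W_\kap(r,v)=W_\kap(r,u)$ is just the paper's expansion $0\le W_\kap(r,u)-W_\kap(r,p)$ rewritten, and your derivation of \eqref{eqn: key ineq 2} via the Rellich-type identity is the paper's final integration by parts read in the opposite direction.
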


\begin{proof}
We proceed as in the proof of \cite[Theorem 1.4.3]{GP09}. By \cite[Theorem~2.11]{GR18}, $N(r, p ) \equiv \kap$, from which it follows that $W_\kappa(r, p) \equiv 0$. Using \eqref{eq.Weisspos} and integrating by parts, we immediately have that 
\[
\begin{split}
0 & \le W_\kap (r, u) - W_\kap (r, p) \\
& = \frac{1}{r^{n-1+a+2\kap}}\int_{B_r} \left(|\nabla v|^2+2\nabla v \cdot \nabla p \right)|y |^a - \frac{\kap}{r^{n+a+2\kap}}\int_{\pa B_r}\left(v^2+2vp \right)|y|^a\\
& = \frac{1}{r^{n-1+a+2\kap}}\int_{B_r}|\nabla v|^2|y|^a 
- \frac{\kap}{r^{n+a+2\kap}}\int_{\pa B_r}v^2|y|^a
+ \frac{2}{r^{n+a+2\kap}} \int_{\pa B_r} v(X\cdot \nabla p - \kap p )|y|^a
\\
& = \frac{1}{r^{n-1+a+2\kap}}\int_{B_r}|\nabla v|^2|y|^a 
- \frac{\kap}{r^{n+a+2\kap}}\int_{\pa B_r}v^2|y|^a,
\end{split}
\]
which directly yields \eqref{eqn: key ineq 1}. 
Continuing, integrating by parts again, we get 
\begin{align*}
\frac{1}{r^{n-1+a+2\kap}}& \int_{B_r}|\nabla v|^2|y|^a 
- \frac{\kap}{r^{n+a+2\kap}}\int_{\pa B_r}v^2|y|^a\\
& = -\frac{1}{r^{n-1+a+2\kap}}\int_{B_r}v\,L_av + \frac{1}{r^{n+a+2\kap}}\int_{\pa B_r}v(X\cdot \nabla v - \kap v)|y|^a,
\end{align*}
which implies \eqref{eqn: key ineq 2}. 
\end{proof}

With Lemma~\ref{lem:key ineqs} in hand, we can now prove Proposition~\ref{prop: Almgren}.

\begin{proof}[Proof of Proposition~\ref{prop: Almgren}]
Notice that
\[
N(r, v) = \frac{D(r, v)}{H(r, v)},
\]
where $D$ and $H$ are given by \eqref{eq.D} and \eqref{eq.H}. By scaling (namely, $N(\rho, u_r) = N(r\rho, u)$, for the rescaling \eqref{eq.rescGR}), it is enough to show $N'(1, v) \ge 0$ or, equivalently, that
\begin{equation}
\label{eqn: Alm mono ineq}
D'(1)H(1) - H'(1)D(1) \geq 0,
\end{equation}
where we have let $D(1) = D(1, v)$ and $H(1) = H(1, v)$. 

We compute $D'(1)$ and $H'(1)$.
First,
\[
\begin{split}
D'(1) &= 2\int_{B_1} |\nab v|^2|y|^a + 2\int_{B_1} \nab v \cdot D^2 v \cdot X\,|y|^a\\
&= 2\int_{B_1} \nab v \cdot \nab(X \cdot \nab v) |y|^a\\
&= 2\int_{\pa B_1} v^2_\nu |y|^a - 2\int_{B_1} L_a u\,(X \cdot \nab u) + 2\int_{B_1} L_a u\,(X \cdot \nab p ),
\end{split}
\]
using integration by parts and that $p$ is $a$-harmonic. 
Now notice that, by the regularity of the solution, $L_a u\, (X\cdot \nabla u) \equiv 0$. 
This, together with the fact that $p$ is $\kap$-homogeneous, yields
\[
D'(1)= 2\int_{\pa B_1} v^2_\nu |y|^a + 2\kap\int_{B_1} p\, L_a u = 2\int_{\pa B_1} v^2_\nu|y|^a - 2\kap\int_{B_1} v\,L_a v,
\]
where the last inequality follows by \eqref{eqn: v Lap v}.
On the other hand,
\[
H'(1) = 2\int_{\pa B_1} v v_\nu |y|^a.
\]

Now letting
\[
I := \int_{B_1} v\,L_a v
\]
and using
\[
\int_{B_1} |\nab  v|^2 |y|^a = \int_{\pa B_1} v v_\nu |y|^a - I,
\]
in addition to the Cauchy--Schwarz inequality, we find that
\[
\begin{split}
D'(1) & H(1) -  H'(1)D(1) 
\\& = \bigg(2\int_{\pa B_1} v^2_\nu|y|^a - 2\kap I\bigg)\int_{\pa B_1} v^2|y|^a - 2\int_{\pa B_1} v v_\nu|y|^a\bigg(\int_{\pa B_1} v v_\nu|y|^a - I\bigg)\\
&=2\bigg(\int_{\pa B_1} v^2_\nu|y|^a\int_{\pa B_1} v^2|y|^a - \kap I\int_{\pa B_1} v^2|y|^a - \bigg(\int_{\pa B_1} v v_\nu|y|^a\bigg)^2 + I\int_{\pa B_1} v v_\nu|y|^a \bigg)\\
&\geq - 2\kap I\int_{\pa B_1} v^2|y|^a + 2I\int_{\pa B_1} v v_\nu|y|^a\\
&= 2I\int_{\pa B_1} v(X \cdot \nab v - \kap v)|y|^a. 
\end{split}
\]
Hence, by \eqref{eqn: v Lap v} and \eqref{eqn: key ineq 2}, we deduce that \eqref{eqn: Alm mono ineq} holds, as desired.
\end{proof}

We end the subsection with a lemma on a Monneau-type monotonicity statement and Weiss-type monotonicity statement, arguing as in \cite[Lemma 2.6 and 2.8]{FS18}, and a important Monneau-type limit.

\begin{lemma}
\label{lem.HMon}
Suppose that $\kappa \leq N(0^+,u)$, and let $v = u-p$ for $p \in \mathscr{P}_{\kappa}$.
Given $\lambda > 0$, define
\begin{equation}
\label{eq.HMon}
H_\lambda(r, v) := \frac{1}{r^{n+a+2\lambda}}\int_{\pa B_r} v^2|y|^a = \frac{1}{r^{2\lambda}} H(r, v).
\end{equation}
Then, $r\mapsto H_\lambda(r, v)$ is non-decreasing for all $0\le \lambda\le N(0^+, v)$. 
Moreover, the $\lambda$-Weiss energy 
\[
r \mapsto W_\lambda(r , v)
\] 
on $v$ is also non-decreasing for all $\lambda > 0$.
\end{lemma}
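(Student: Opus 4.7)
The plan is to differentiate both quantities directly, exploiting the pointwise sign condition $v L_a v \ge 0$ from \eqref{eqn: v Lap v} together with Almgren's monotonicity (Proposition~\ref{prop: Almgren}).

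\textbf{Monotonicity of $H_\lambda$.} Working in the rescaled variable $\tilde v_r(X):= v(rX)/r^\lambda$, I would write $H_\lambda(r, v) = \int_{\pa B_1}\tilde v_r^2|y|^a$. Since $\pa_r\tilde v_r = r^{-1}(X \cdot \nab\tilde v_r - \lambda \tilde v_r)$ and $(\tilde v_r)_\nu = X \cdot \nab \tilde v_r$ on $\pa B_1$, differentiating in $r$, applying the divergence theorem on $B_1$, and using the identity $D(r,v)/r^{2\lambda} - \lambda H(r, v)/r^{2\lambda} = H_\lambda(r, v)(N(r, v) - \lambda)$ yields
\[
\tfrac{d}{dr}H_\lambda(r, v) = \tfrac{2}{r}\left[H_\lambda(r, v)\bigl(N(r, v) - \lambda\bigr) + \int_{B_1}\tilde v_r L_a \tilde v_r\right].
\]
Proposition~\ref{prop: Almgren} gives $N(r, v) \ge N(0^+, v) \ge \lambda$, and since $\tilde v_r = \tilde u_r - r^{\kap - \lambda}p$ with $\tilde u_r$ still solving a thin obstacle problem, \eqref{eqn: v Lap v} applied to $\tilde v_r$ gives $\tilde v_r L_a \tilde v_r \ge 0$. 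Hence both bracketed terms are non-negative and $H_\lambda$ is non-decreasing.

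\textbf{Monotonicity of $W_\lambda$.} I would split on the size of $\lambda$ relative to $\kap$. When $0 < \lambda \le \kap$, I would use the factorization $W_\lambda(r, v) = H_\lambda(r, v)\bigl(N(r, v) - \lambda\bigr)$. By Proposition~\ref{prop: Almgren}, $N(0^+, v) \ge \kap \ge \lambda$, so both factors are non-negative, and each is non-decreasing\,---\,the first by Almgren's monotonicity and the second by the step above. So their product $W_\lambda$ is non-decreasing. When $\lambda \ge \kap$, I would differentiate $W_\lambda(r, v)$ directly; a Rellich--Pohozaev computation yields
\[
\tfrac{d}{dr}W_\lambda(r, v) = \tfrac{2}{r^{n+a+2\lambda+1}}\int_{\pa B_r}(rv_\nu - \lambda v)^2|y|^a + \tfrac{2(\lambda - \kap)}{r^{n+a+2\lambda}}\int_{B_r} v L_a v,
\]
the cross-term $\int_{B_r}L_a v\,(X \cdot \nab v - \lambda v)$ reducing to $(\kap - \lambda)\int_{B_r}v L_a v$ after noting that on $\Lambda(u) \cap \{y = 0\}$ one has $v = -p$, $\nab_x v = -\nab_x p$, and $\pa_y v = 0$ by evenness, so that $X \cdot \nab v - \lambda v = (\lambda - \kap)p$ by $\kap$-homogeneity of $p$. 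Both terms are now non-negative: the first manifestly, the second because $\lambda \ge \kap$ and $v L_a v \ge 0$ by \eqref{eqn: v Lap v}.

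The main obstacle I foresee is the measure-theoretic bookkeeping of the Rellich--Pohozaev identity in the second step: the integrand $L_a v\,(X \cdot \nab v - \lambda v)$ must be interpreted as a continuous function paired with the singular measure $L_a v$ supported on $\Lambda(u) \cap \{y = 0\}$, and extracting the pointwise value $(\lambda - \kap)p$ on that support requires care since $u$ is only $C^{1,s}$ along the thin space.
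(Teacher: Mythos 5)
Your proof is correct and follows essentially the same route as the paper: the same rescaled differentiation of $H_\lambda$ using $v_r\,L_a v_r \ge 0$ together with Almgren's monotonicity, and for $W_\lambda$ the same two ingredients (the factorization $W_\lambda = H_\lambda\,(N-\lambda)$ as a product of non-negative non-decreasing functions, and the direct computation giving $2\int_{\pa B_r}(rv_\nu-\lambda v)^2|y|^a$ plus $2(\lambda-\kap)\int_{B_r}v\,L_a v$), with the only difference being that you place the case split at $\lambda = \kap$ rather than at $\lambda = N(0^+,v)$, which is immaterial since $\kap \le N(0^+,v)$. The one slight imprecision is justifying the evaluation on $\Lambda(u)$ via ``$\pa_y v = 0$ by evenness'': the one-sided limit of $\pa_y v$ need not vanish on the contact set, and what is really used is that $y\,\pa_y v$ vanishes there (equivalently, $L_a u\,(X\cdot \nab u) \equiv 0$, exactly as invoked in the paper's proof of Proposition~\ref{prop: Almgren}), so the conclusion $X\cdot\nab v - \lambda v = (\lambda-\kap)p$ on $\spt L_a v$ stands.
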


\begin{proof}
Let $v_r(X) := (u-p)(rX)$; then,
\[
\frac{H_\lambda'}{H_\lambda}(r, v) = \frac{2r\int_{\pa B_1} v_r(X)(X\cdot \nabla v(rX))|y |^a-2\lambda \int_{\pa B_1} v_r^2|y|^a}{r\int_{\pa B_1} v_r^2|y|^a}.
\]
Notice also that
\[
r\int_{\pa B_1} v_r(X)(X\cdot \nabla v(rX))|y |^a = \int_{\pa B_1} v_r(X\cdot \nabla v_r)|y |^a = \int_{B_1} |\nabla v_r|^2|y |^a +\int_{B_1} v_r\,L_a v_r,
\]
and $v_r\,L_av_r\ge 0$ (see \eqref{eqn:  v Lap v}).
Hence, since $N(1, v_r) = N(r, v)$, 
\begin{equation}
\label{eq.HMon2}
\frac{H_\lambda'}{H_\lambda}(r, v) \ge \frac{2}{r}\left(N(r, v) -\lambda\right).
\end{equation}
Now using that $N(r, v) \ge N(0^+, v) \ge \lambda$, we reach the desired result, \eqref{eq.HMon}.

To see the monotonicity of $W_\lambda(r, v)$ for $0\le \lambda\le N(0^+, v)$, we simply combine the expressions \eqref{eq.Weisspos} and \eqref{eq.HMon}, so that $W_\lambda(r, v)$ is product of two non-decreasing non-negative functions.

On the other hand, if $\lambda > N(0^+, v)$, a simple manipulation (see the proof of Proposition~\ref{prop:  Almgren}) yields
\[
\begin{split}
W_\lambda'(1) &= D'(1) -\lambda H'(1) -2\lambda(D(1) - \lambda H(1)) \\
&= 2\int_{\pa B_1}(v_\nu-\lambda v)^2|y|^a + 2(\lambda - \kap)\int_{B_1} v\, L_a v.
\end{split}
\]
As $v \, L_a v \ge 0$ and $\lambda > N(0^+, v) \geq \kappa$ (by Proposition~\ref{prop: Almgren}), we conclude. 
\end{proof}

Notice also that if we set 
\[
\lambda_\ast := N(0^+,v_\ast) \geq \kappa_\ast = N(0^+,u),
\]
then
\[
\lim_{r \downarrow 0} H_\l(r,v_\ast) = \infty \quad\text{for all}\quad \lambda > \lambda_\ast,
\]
which follows arguing exactly as in \cite[Corollary 2.9]{FS18}.

\subsection{Estimates}

Let us define, for any function $f$, the positive and negative parts as
\[
f^{+} := \max\{f,0\} \quad\text{and}\quad f^{-} := \max\{-f,0\} = - \min\{f,0\}.
\]
Hence, $f = f^+ - f^-$.

We start with an $L^\infty$--$L^2$ estimate on $v$.

\begin{lemma}
\label{lem:L2Linfty}
Let $v = u-p$ for $p \in \mathscr{P}_\kap$.
Then,
\begin{equation}
\label{eq.solisbounded00}
\|v\|_{L^\infty(B_{1/2})} \leq C\|v\|_{L^2(B_1,|y|^a)},
\end{equation}
for some constant $C$ depending only on $n$ and $a$. 
\end{lemma}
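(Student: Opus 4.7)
The plan is to show that $v^2$ is $a$-subharmonic in $B_1$, and then invoke the standard Moser-type $L^\infty$-estimate for non-negative subsolutions of $L_a$; the latter is valid because $|y|^a$ is an $A_2$ weight on $\R^{n+1}$ for every $a\in(-1,1)$, so the weighted elliptic theory of Fabes--Kenig--Serapioni applies directly.

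For the subharmonicity, the formal computation
\[
L_a(v^2) \;=\; 2|y|^a|\nab v|^2 + 2v\,L_a v
\]
is non-negative by \eqref{eqn: v Lap v}. To make this distributionally rigorous, fix $\phi\in C_c^\infty(B_1)$ with $\phi\ge 0$. Since $L_a p = 0$, one has $L_a v = L_a u$, which by \eqref{eqn: Lap of u} is a non-positive Radon measure supported on the contact set $\Lambda(u)\subset\{u=0\}$; on $\Lambda(u)$ one has $v = -p \le 0$ because $p\in\mathscr{P}_\kap$ is non-negative on the thin space. Hence $v\phi\le 0$ on $\spt(L_a v)$, so $\langle L_a v, v\phi\rangle \ge 0$. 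Testing against $v\phi$ in the weak formulation and expanding $\nab(v\phi) = \phi\,\nab v + v\,\nab\phi$ gives
\[
\int_{B_1} |y|^a |\nab v|^2 \phi \,+ \int_{B_1} |y|^a v\,\nab v\cdot\nab\phi \;\le\; 0,
\]
and since $\nab(v^2) = 2v\,\nab v$, this reads $\int_{B_1} |y|^a \nab(v^2)\cdot\nab\phi \le 0$, i.e.\ $L_a(v^2)\ge 0$ in the distributional sense.

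To conclude, $v^2 \ge 0$ belongs to $W^{1,2}_{\mathrm{loc}}(B_1,|y|^a)$ (as $u\in W^{1,2}(B_1,|y|^a)$ and $p$ is a polynomial). The weighted Moser $L^\infty$-estimate for non-negative subsolutions of $L_a$ then yields
\[
\sup_{B_{1/2}} v^2 \;\le\; \frac{C}{\int_{B_1}|y|^a\,\d X}\int_{B_1} v^2\,|y|^a\,\d X
\]
with $C=C(n,a)$, and taking square roots gives \eqref{eq.solisbounded00}. The main technical point is the distributional computation above, where one must carefully exploit the sign of $v=-p$ on the contact set to convert the formal positivity of $v\,L_a v$ into a bona fide subsolution property for $v^2$; the remainder is a standard application of weighted elliptic regularity.
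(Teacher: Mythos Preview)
Your argument is correct, but it differs from the paper's. The paper instead shows that $v^+$ and $v^-$ are \emph{separately} sub $a$-harmonic: $v^-=\max\{-v,0\}$ is the maximum of two sub $a$-harmonic functions, while for $v^+$ the paper uses the variational characterization of $v$ as the solution to a thin obstacle problem with obstacle $-p|_{\{y=0\}}$ (Remark~\ref{rmk:v soln to TOP}), testing against competitors $v-\varepsilon\eta h_\delta(v)$ and passing $\delta\downarrow0$ to get that $v^+$ is a subsolution. It then applies the $L^\infty$--$L^2$ bound for subsolutions of $L_a$ to each of $v^\pm$.

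Your route via $L_a(v^2)\ge 0$ is more direct, leveraging \eqref{eqn: v Lap v} without invoking the obstacle interpretation or the regularization $h_\delta$; the price is that you must justify testing $L_a v$ against the non-smooth function $v\phi$ (which is fine since $v$ is continuous and $L_a v$ is a Radon measure), and you need the $L^\infty$--$L^1$ form of the local boundedness estimate for $v^2$ rather than the $L^\infty$--$L^2$ form---also standard for $A_2$-weighted equations, but a slightly stronger statement than what the paper invokes. The paper's approach, by contrast, keeps everything at the $L^2$ level and handles the distributional issue by smoothing, at the cost of the auxiliary competitor argument for $v^+$.
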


\begin{proof}
Observe that $v^-$ is sub $a$-harmonic in $B_1$ as the maximum of two sub $a$-harmonic functions in $B_1$.

Let us show that $v^+$ is also sub $a$-harmonic in $B_1$.
To this end, first, by Remark~\ref{rmk:v soln to TOP}, recall that $v$ is the solution to \eqref{eqn: Signorini problem} with $\varphi = -p|_{B_1 \cap \{ y = 0\}}$ and its own boundary data.
Now let $\eta$ be any smooth compactly supported function in $B_1$ such $0 \le \eta\le 1$. 
In addition, let $h_\delta$ be an approximation of the Heaviside function: $h_\delta(t) = 0$ for $t\le 0$, $h_\delta(t)= t/\delta$ for $t \in(0, \delta)$, and $h_\delta(t) = 1 $ for $t \ge \delta$.
Finally, for $0<\varepsilon<\delta$, define $v_\varepsilon := v -\varepsilon\eta h_\delta(v)$.

Since $p(x, 0) \ge 0$, observe that $v_\varepsilon(x,0) \geq -p(x,0)$ and $v_\varepsilon|_{\pa B_1} = v|_{\pa B_1}$.
Therefore, 
\[
\int_{B_1} |\nabla v-\varepsilon \nabla(\eta h_\delta(v))|^2|y|^a \ge \int_{B_1} |\nabla v|^2|y|^a,
\]
which implies that, after dividing through by $\vep$ and letting $\vep\downarrow 0$,
\[
\int_{B_1} \nabla v \cdot \nabla(\eta h_\delta(v))|y|^a \leq 0.
\]
Expanding,
\[
\int_{B_1} h_\delta(v) \nabla v \cdot \nabla \eta |y|^a \leq  -\int_{B_1} \eta  |\nabla v|^2 h'_\delta(v) |y|^a \le 0
.
\]
In turn, if $H_\delta' = h_\delta$ with $H_\delta(0) = 0$, then
\[
\int_{B_1} \nabla (H_\delta(v)) \cdot \nabla \eta  
|y|^a \leq 0.
\]
(Obviously, $H_\delta$ here is not the Monneau-type function from Lemma~\ref{lem.HMon}.)
Because $\eta$ was arbitrary, we find that $H_\delta (v)$ is sub $a$-harmonic in $B_1$. 
So letting $\delta\downarrow 0$, we determine that $v^+$ is sub $a$-harmonic in $B_1$ ($H_\delta (v)$ is an approximation of $v^+$).

To conclude, see that by the local boundedness of subsolutions for $L_a$ (see, e.g., \cite[Proposition 2.1]{JN17}), we have that
\[
\sup_{B_{1/2}} v^\pm \le C\left(\int_{B_{1}}  |v^{\pm}|^2|y|^a\right)^{1/2},
\]
and \eqref{eq.solisbounded00} holds. 
\end{proof}

Next, we prove Lipschitz and semiconvexity estimates on $v$ along the spine of $p$.
But before doing so, we prove a characterization lemma on the spine of a generic $\kap$-homogeneous polynomial. 

\begin{lemma}
\label{lem: equiv}
Let $\kap \in \N$, and let $p: \R^n\to \R$ be a $\kap$-homogeneous polynomial.
Then, the following sets are equal. 
\begin{enumerate}[(i)]
\item $L(p) := \{\xi\in \R^{n} : \xi\cdot\nabla p(x) = 0 \textrm{ for all }x\in \R^{n}\}$.
\item $I(p) := \{\xi\in \R^{n} : p(x+\xi) = p(x) \textrm{ for all }x\in \R^{n}\}$. 
\item $D_{\kap -1}(p) := \{ \xi\in \R^{n} : D^{\alpha} p(\xi) = 0 \textrm{ for all }\alpha = (\alpha_1,\dots,\alpha_{n}) : |\alpha| = \kap -1 \}$.
\end{enumerate}
\end{lemma}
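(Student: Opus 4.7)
My plan is to prove $L(p) = I(p)$ and $I(p) = D_{\kappa-1}(p)$ separately, using standard facts about restrictions of polynomials to lines and the fact that $p$ is a polynomial of degree exactly $\kappa$, so its Taylor expansion around any point terminates.

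For the equivalence $L(p) = I(p)$, I would restrict $p$ to the line $t \mapsto p(x + t\xi)$. If $\xi \in L(p)$, differentiating in $t$ gives $\frac{d}{dt} p(x+t\xi) = \xi \cdot \nabla p(x+t\xi) \equiv 0$, so $p(x+t\xi) \equiv p(x)$; taking $t=1$ shows $\xi \in I(p)$. For the reverse, if $\xi \in I(p)$, the polynomial (in $t$) $f(t) := p(x+t\xi)$ satisfies $f(t+1) = f(t)$; any univariate polynomial with this property is constant, so $f'(0) = \xi \cdot \nabla p(x) = 0$, giving $\xi \in L(p)$.

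For the equivalence $I(p) = D_{\kappa-1}(p)$, since $p$ has degree $\kappa$, the Taylor expansion around $\xi$ is exact:
\[
p(x+\xi) = \sum_{|\alpha|\le \kappa} \frac{D^\alpha p(\xi)}{\alpha!}\, x^\alpha.
\]
Because $p$ is $\kappa$-homogeneous, the top-order coefficients satisfy $D^\alpha p(\xi) = D^\alpha p(0)$ for $|\alpha|=\kappa$, and matching the $\kappa$-th degree part with $p(x)$ is automatic. Thus $\xi \in I(p)$ if and only if $D^\alpha p(\xi) = 0$ for every $|\alpha| < \kappa$. The nontrivial point is that this condition is equivalent to the ostensibly weaker condition $D^\alpha p(\xi) = 0$ for $|\alpha| = \kappa - 1$. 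To see this, for each multi-index $\beta$ with $|\beta| < \kappa - 1$, the polynomial $D^\beta p$ is $(\kappa-|\beta|)$-homogeneous, so Euler's identity gives
\[
(\kappa - |\beta|)\, D^\beta p(\xi) \;=\; \xi \cdot \nabla (D^\beta p)(\xi) \;=\; \sum_{i=1}^{n} \xi_i\, D^{\beta + e_i} p(\xi).
\]
Downward induction on $|\beta|$ starting from $|\beta| = \kappa - 1$ then shows that $D^\alpha p(\xi) = 0$ for all $|\alpha| < \kappa$, concluding $D_{\kappa-1}(p) \subseteq I(p)$. The other inclusion $I(p) \subseteq D_{\kappa-1}(p)$ is immediate from matching the coefficients of degree $\kappa-1$ in the displayed Taylor expansion.

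There is no real obstacle here: the argument is a short exercise combining the exactness of the Taylor expansion for polynomials with Euler's homogeneity relation. The only point requiring a second's thought is the bootstrap from vanishing of the $(\kappa-1)$-th derivatives to vanishing of all lower-order ones at $\xi$, which is exactly where the homogeneity of $p$ enters decisively.
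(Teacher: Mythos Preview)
Your proof is correct. The overall structure matches the paper's (prove $L(p)=I(p)$ and $I(p)=D_{\kappa-1}(p)$ separately), but two of the four inclusions are argued differently.

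For $I(p)\subset L(p)$, you use that $f(t)=p(x+t\xi)$ is a polynomial with $f(t+1)=f(t)$, hence constant; this exploits polynomiality but not homogeneity. The paper instead first shows $I(p)$ is a linear subspace (via the homogeneity scaling $p(x+\beta\xi)=\beta^\kappa p(\beta^{-1}x+\xi)$), and then computes the directional derivative as a limit of difference quotients along $h\xi\in I(p)$. The paper's version thus extends verbatim to $\kappa$-homogeneous functions that are not polynomials, while yours does not; conversely, your argument shows that $L(p)=I(p)$ holds for \emph{any} polynomial, not just homogeneous ones.

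For $D_{\kappa-1}(p)\subset I(p)$, you use Euler's identity and downward induction on $|\beta|$ to upgrade vanishing of the order-$(\kappa-1)$ derivatives at $\xi$ to vanishing of all lower-order ones, then read off $p(x+\xi)=p(x)$ from the exact Taylor expansion. The paper instead observes that $q(x):=p(x+\xi)$ is $\kappa$-homogeneous (since its derivatives of order $\le\kappa-1$ vanish at the origin), combines this with the homogeneity of $p$ to get $p(x+\xi)=p(x+\beta\xi)$ for all $\beta>0$, and sends $\beta\downarrow 0$. Both arguments are short; yours is perhaps more computational, the paper's slightly more geometric.
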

\begin{proof}
We prove that (i) and (ii) as well as (ii) and (iii) are equivalent.

\smallskip
\noindent -- $L(p) \subset I(p)$: Let $\xi\in L(p)$. Then, 
\[
p(x+\xi) = p(x) + \int_0^1 \xi \cdot \nabla p(x+t\xi)\, \d t = p(x). 
\]

\noindent -- $I(p) \subset L(p)$: We start by noticing that $I(p)$ is actually a linear space, thanks to the homogeneity of $p$. 
Indeed, the additive property is clear; it is also clear that $-\xi\in I(p)$ if $\xi\in I(p)$. 
Now suppose $\xi \in I(p)$ and consider $\beta\xi$ for some $\beta > 0$.
Then, $p(x+\beta\xi) = \beta^{\kap}p(\beta^{-1}x + \xi) = \beta^\kap p(\beta^{-1}x) = p(x)$ for all $x\in \R^{n}$, so that $\beta\xi\in I(p)$.

Let $\xi\in I(p)$. 
Now for all $h > 0$ and for all $x\in \R^{n}$, $p(x+h\xi) = p(x)$.
Hence,
\[
\xi\cdot \nabla_{x} p(x) = \lim_{h\downarrow 0} \frac{p(x+h\xi, 0) - p(x)}{h} = 0,
\]
that is, $\xi \in L(p)$.

\smallskip
\noindent -- $I(p) \subset D_{\kap-1}(p)$: Let $\xi\in I(p)$. 
Then, $p(\xi+x) = p(x)$ and $D^\alpha p(x+\xi) = D^\alpha p(x)$ for any $\alpha = (\alpha_1,\dots,\alpha_{n-1})$ with $|\alpha| = \kap - 1$. 
Taking $x = 0$, we conclude thanks to the $\kap$-homogeneity of $p$.

\noindent --  $D_{\kap-1}(p)\subset I(p)$.
Let $\xi\in D_{\kap-1}(p)$. 
Consider the degree $\kap$ polynomial $q(x) := p(x+\xi)$. 
Notice that from the definition of $D_{\kap-1}$, $q$ is homogeneous. 
Now let $\beta > 0$. 
Using the homogeneity of $q$ and $p$, 
\[
p(x+\xi) = q(x) = \beta^\kap q(\beta^{-1}x)= \beta^\kap p(\beta^{-1}x + \xi) = p(x + \beta\xi)
\] 
for all $\beta > 0$. 
Taking $\beta \downarrow 0$, we see that $\xi\in I(p)$.

This concludes the proof.
\end{proof}

Notice that the equivalence of (i) and (ii) also holds for general $\kap$-homogeneous functions.

\begin{remark}
Lemma~\ref{lem: equiv} will be applied to $p(x, 0)$ for $p \in \mathscr{P}_\kap$.
\end{remark}

The following lemma shows that derivatives of $v$ along the invariant set of $p$ are bounded. 
Recall that $L(p)$ denotes the invariant set of $p(x, 0)$. 
The lemma is proved by means of a Bernstein's technique for integro-differential equations, as introduced by Cabr\'e, Dipierro, and Valdinoci, in \cite{CDV19}. 

\begin{lemma}
\label{lem: Bern}
Let $v = u-p$ for $p \in \mathscr{P}_\kap$.
Then, for all $\be \in L(p) \cap \mathbb{S}^n$,
\[
\|\pa_\be v\|_{L^\infty(B_{1/2})} \leq C\|v\|_{L^2(B_1,|y|^a)},  
\] 
for some constant $C$ depending only on $n$ and $a$. 
\end{lemma}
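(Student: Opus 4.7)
The plan is to reduce the estimate to showing that $w^2$, with $w := \pa_\be v$, is sub $a$-harmonic in $B_1$; the result then follows by Moser's local boundedness for non-negative subsolutions combined with a Caccioppoli-type inequality for $v$.

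First, since $\be \in L(p)\cap \S^n$, Lemma~\ref{lem: equiv} gives $p(\cdot+t\be,0) \equiv p(\cdot,0)$ on the thin space for every $t \in \R$, and propagating this through the explicit $a$-harmonic extension formula \eqref{eq.ext} yields $\pa_\be p \equiv 0$ on all of $\R^{n+1}$, so that $w = \pa_\be u$. The $C^{1,s}$ regularity of $u$ across the thin space on each side (with $s=(1-a)/2$) together with the even-in-$y$ symmetry makes $w$ continuous on $B_1$, and since $u \equiv 0$ on the closed set $\Lambda(u)$ and $\be$ is tangential, $w$ vanishes on $\Lambda(u)$.

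The Bernstein-type step (cf.\ \cite{CDV19}) is the sub $a$-harmonicity of $w^2$. In the open set $B_1 \setminus \Lambda(u)$ one has $L_a u = 0$, and since $\pa_\be$ commutes with $L_a$ for tangential $\be$, also $L_a w = 0$. For $\eta \in C_c^\infty(B_1)$ with $\eta \geq 0$, the function $\phi := \eta w$ vanishes on $\pa B_1$ (compact support) and on $\Lambda(u)$ (since $w|_{\Lambda(u)}=0$), so it is admissible in the weak formulation of $L_a w = 0$. Testing gives
\[
\int_{B_1} w \,\nab w \cdot \nab \eta \,|y|^a\, \d X = -\int_{B_1} \eta |\nab w|^2 |y|^a\, \d X \leq 0,
\]
which, upon multiplying by two, is the distributional inequality $\int_{B_1} w^2 L_a \eta \, \d X \geq 0$, i.e., the weak sub $a$-harmonicity of $w^2$ in $B_1$ (contributions from $\Lambda(u)$ are irrelevant since $\Lambda(u)$ has Lebesgue measure zero in $\R^{n+1}$).

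To finish, local boundedness for non-negative sub $a$-harmonic functions (already used in the proof of Lemma~\ref{lem:L2Linfty}) provides
\[
\|w\|_{L^\infty(B_{1/2})}^2 \leq C \int_{B_{3/4}} w^2 |y|^a\, \d X.
\]
A Caccioppoli estimate for $v$ — which follows by testing the distributional inequality $L_a(v^2) \geq 2|\nab v|^2|y|^a$ (valid since $v L_a v \geq 0$ by \eqref{eqn: v Lap v}) against a standard cutoff — then yields $\int_{B_{3/4}}|\nab v|^2|y|^a \leq C\int_{B_1} v^2 |y|^a$, and hence $\|w\|_{L^2(B_{3/4},|y|^a)} \le C\|v\|_{L^2(B_1,|y|^a)}$. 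Chaining the two bounds closes the proof. The main obstacle is the rigorous use of $\phi = \eta w$ as a test function near the free boundary: for $a \geq 0$ this is immediate because $\Lambda(u)\subset\{y=0\}$ has vanishing $|y|^a$-capacity, while for $a \in (-1,0)$ the Hölder continuity of $w$, together with $w = 0$ on $\Lambda(u)$, must be exploited to approximate $\eta w$ by functions compactly supported away from $\Lambda(u)$ — precisely the approximation step underlying the Bernstein technique of \cite{CDV19}.
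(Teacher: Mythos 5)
Your reduction to ``$w^2$ is sub $a$-harmonic in $B_1$, then local boundedness plus Caccioppoli'' could in principle work, but the central step is not justified, and the one concrete justification you give is false. You claim that for $a\ge 0$ the admissibility of $\phi=\eta w$ is ``immediate because $\Lambda(u)\subset\{y=0\}$ has vanishing $|y|^a$-capacity.'' This is wrong: $\Lambda(u)$ is an $n$-dimensional subset of the thin space (codimension one in $\R^{n+1}$) and has \emph{positive} $a$-harmonic capacity for every $a\in(-1,1)$ whenever it has positive $\mathcal H^n$-measure -- indeed $L_a u$ is a nontrivial negative measure supported on $\Lambda(u)$, which a polar set could not carry (the zero-capacity statement in the paper concerns $(n-1)$-dimensional subsets of $\{y=0\}$, and only for $a\ge 0$). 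For $a<0$ you merely defer the approximation to ``the step underlying \cite{CDV19}'', but the Bernstein technique there (and in this paper) is a maximum-principle argument in the open set $B_{1/2}\setminus\Lambda(u)$, not an energy/test-function argument, so it does not supply what you need. What you actually need, even to run the natural truncation $\phi=\eta\,[(w-\vep)^+-(w+\vep)^-]$, is that $w=\pa_{\be}u$ lies in $W^{1,2}_{\rm loc}(B_1,|y|^a)$ (i.e.\ weighted $L^2$ control of the relevant second derivatives of $u$ up to the contact set), plus enough integrability of $|\nab w|^2|y|^a$ near the free boundary to make the identity $\int\nab w\cdot\nab\phi\,|y|^a=0$ legitimate; none of this is proved or cited, and it is not a soft fact (it is of the same delicacy as the regularization $u_\gamma$ that the paper needs in Lemma~\ref{lem: Bern X2}).

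By contrast, the paper's proof avoids any distributional statement across $\Lambda(u)$: it works with $\psi:=\eta^2(\pa_{\be}v)^2+\mu v^2$, shows $L_a\psi\ge 0$ classically in $B_{1/2}\setminus\Lambda(u)$ (the term $\mu v^2$ absorbing the cutoff errors), and then applies the maximum principle, using only that $\psi=\mu v^2$ on $\pa B_{1/2}\cup\Lambda(u)$ because $\pa_{\be}v=\pa_{\be}u=0$ on $\Lambda(u)$; the conclusion then follows from Lemma~\ref{lem:L2Linfty}. Your observations that $\pa_{\be}p\equiv 0$ (via Lemma~\ref{lem: equiv} and \eqref{eq.ext}), that $w$ is continuous and vanishes on $\Lambda(u)$, and the final Caccioppoli step are fine; but as written the sub $a$-harmonicity of $w^2$ across $\Lambda(u)$ -- the heart of your argument -- remains a genuine gap.
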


\begin{proof}
We proceed by Bernstein's technique (see \cite{CDV19}). 
Let $\eta \in C^\infty_c(B_{1/2})$ be even in $y$ and such that $\eta \equiv 1$ in $B_{1/4}$. 
Consider the function,
\[
\psi := \eta^2(\pa_{\be} v)^2 + \mu v^2,
\]
for some $\mu > 0$ to be chosen.

Since $v$ is $a$-harmonic outside $\Lambda(u)$, in $B_{1/2} \setminus \Lambda(u)$,
\[
L_a (v^2) = 2v\, L_a v + 2 |\nabla v|^2|y|^{a} \geq 2 |\nabla v|^2|y|^a.
\]
Similarly, because $\pa_\be v$ is $a$-harmonic outside $\Lambda(u)$, we have that in $B_{1/2} \setminus \Lambda(u)$, $L_a \pa_{\be} v = L_a \pa_{\be} u = 0$.  
Therefore, we find that in $B_{1/2} \setminus \Lambda(u)$,
\[
\begin{split}
L_a (\eta^2 (\pa_{\be} v)^2 ) & = (\pa_{\be} v)^2 L_a (\eta^2) + \eta^2 L_a ((\pa_{\be} v)^2) + 2|y|^a\nabla (\pa_{\be} v)^2 \cdot \nabla \eta^2\\
&= (\pa_{\be} v)^2 L_a (\eta^2) + 2\eta^2|\nabla \pa_{\be} v|^2|y|^{a} + 2|y|^a\nabla (\pa_{\be} v)^2 \cdot \nabla \eta^2\\
&\geq (\pa_{\be} v)^2 L_a (\eta^2) + 2\eta^2|\nabla (\pa_{\be} v)|^2|y|^a - 8|y|^a|\nabla \pa_{\be} v||\pa_{\be} v||\nabla \eta| \eta\\
& \geq |y|^a|\pa_{\be} v|^2 (|y|^{-a}L_a (\eta^2)  - 8|\nabla \eta|^2)
\end{split}
\]
where there last inequality follows from 
\[
\eta^2|\nabla \pa_{\be} v|^2+4|\pa_{\be} v|^2|\nabla \eta|^2 \ge 4|\pa_{\be} v| |\nabla \pa_{\be} v|\eta|\nabla \eta|.
\]
So in $B_{1/2} \setminus \Lambda(u)$,
\[
\begin{split}
L_a \psi &\geq |y|^a|\pa_{\be} v|^2 (|y|^{-a}L_a (\eta^2)  - 8|\nabla \eta|^2) + |y|^a|\nab v|^22\mu \\
&\geq |y|^a|\nab v|^2(2 \mu - |y|^{-a}|L_a (\eta^2)| - 8|\nabla \eta|^2).
\end{split}
\]
Now as $\eta$ is even in $y$ and smooth, $|y|^{-a}|L_a (\eta^2)| + 8|\nabla \eta|^2 \leq C_\eta$ in $B_{1/2}$, from which we deduce that
\[
L_a \psi \geq 0 \quad\text{in}\quad B_{1/2} \setminus \Lambda(u)
\]
provided $2\mu \geq C_\eta$.
 
By the maximum principle then, $\psi$ must attain its maximum at the boundary of $B_{1/2} \setminus \Lambda(u)$.
Being that $\pa_\be p = \pa_\be u = 0$ on $\Lambda(u)$ and $\eta|_{\pa B_{1/2}} = 0$, $\psi = \mu v^2$ on $\pa B_{1/2} \cup \Lambda(u)$.
Hence,
\[
\sup_{B_{1/2}} \psi \leq \mu \sup_{B_{1/2}} v^2.
\]
In particular, as $\eta \equiv 1$ on $B_{1/4}$,
\[
\|\pa_{\be} v\|_{L^\infty(B_{1/4})} \le \mu^{1/2}\|v\|_{L^\infty(B_{1/2})}.
\]
Thus, by Lemma~\ref{lem:L2Linfty} and a covering argument, we find the desired estimate. 
\end{proof}

Finally, we show that $v$ is semiconvex along the spine of $p$. 
Naturally, for $h > 0$, let
\[
\delta_{\be,h}^2f := \frac{f(\,\cdot\,+h\be) + f(\,\cdot\,-h\be) - 2f}{h^2}
\]
be the second order $h$-incremental quotient of the function $f$ in the direction $\be \in \mathbb{S}^{n}$. 

\begin{lemma}
\label{lem: Bern X2}
Let $v = u-p$ for $p \in \mathscr{P}_\kap$.
Then, for all $\be \in L(p) \cap \mathbb{S}^n$,
\[
\inf_{B_{1/2}} \pa_{\be\be}v \ge -C\|v\|_{L^2(B_1, |y|^a)},
\]
for some constant $C$ depending only on $n$ and $a$. 
\end{lemma}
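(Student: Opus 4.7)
The plan is to adapt the Bernstein argument of Lemma~\ref{lem: Bern} to the second derivative in the spine direction. Since $\be \in L(p)$, Lemma~\ref{lem: equiv} gives $p(x+t\be,0) = p(x,0)$ for all $x,t$; by uniqueness of the even $a$-harmonic extension \eqref{eq.ext}, $p(X+t\be) = p(X)$ for all $X \in \R^{n+1}$ and $t \in \R$. Hence $\pa_\be p \equiv \pa_{\be\be} p \equiv 0$, so $\pa_\be v = \pa_\be u$ and $\pa_{\be\be} v = \pa_{\be\be} u$ wherever defined. Three anchoring facts follow: in $B_1 \setminus \Lambda(u)$, both $\pa_\be v$ and $\pa_{\be\be} v$ are $a$-harmonic; on $\Lambda(u)$, $\pa_\be v = 0$, since $u$ attains its thin-space minimum along $\Lambda(u)$; and on the relative interior of $\Lambda(u)$ inside $\{y = 0\}$, $u \equiv 0$ locally, so $\pa_{\be\be} v = 0$ there.

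With $\eta \in C_c^\infty(B_{1/2})$ even in $y$ and $\eta \equiv 1$ on $B_{1/4}$, and with $\mu > 0$ to be chosen, I would consider the Bernstein-type quantity
\[
\phi := \eta^2 ((\pa_{\be\be} v)^-)^2 + \mu (\pa_\be v)^2.
\]
The crucial point is the choice of the companion $(\pa_\be v)^2$ in place of $v^2$: in $B_{1/2} \setminus \Lambda(u)$, $a$-harmonicity of $\pa_\be v$ yields $L_a (\pa_\be v)^2 = 2 |\nab \pa_\be v|^2 |y|^a \geq 2 ((\pa_{\be\be} v)^-)^2 |y|^a$, providing a positive term of the correct homogeneity to absorb the cutoff error. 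Because $(\pa_{\be\be} v)^-$ is the positive part of the $a$-harmonic function $-\pa_{\be\be} v$, it is sub $a$-harmonic in $B_{1/2} \setminus \Lambda(u)$; repeating the computation in Lemma~\ref{lem: Bern} verbatim then gives, for $\mu$ large enough depending only on $\eta$,
\[
L_a \phi \geq |y|^a ((\pa_{\be\be} v)^-)^2 \bigl(2\mu + |y|^{-a} L_a \eta^2 - 8|\nab \eta|^2\bigr) \geq 0 \quad\text{in}\quad B_{1/2} \setminus \Lambda(u).
\]
On $\pa B_{1/2}$, $\eta$ vanishes, leaving $\phi = \mu (\pa_\be v)^2$; on the relative interior of $\Lambda(u)$ inside $\{y=0\}$, both $\pa_\be v$ and $\pa_{\be\be} v$ vanish, so $\phi = 0$. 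The maximum principle together with Lemma~\ref{lem: Bern} then yields
\[
\sup_{B_{1/4}} ((\pa_{\be\be} v)^-)^2 \leq \mu \|\pa_\be v\|_{L^\infty(B_{1/2})}^2 \leq C \|v\|_{L^2(B_1, |y|^a)}^2,
\]
which gives the semiconvexity bound on $B_{1/4}$; a standard covering argument upgrades this to $B_{1/2}$.

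The delicate issue is the behavior of $\phi$ at the relative boundary of $\Lambda(u)$ inside the thin space (the free boundary points), where $\pa_{\be\be} v$ may fail to extend continuously and the boundary analysis is not immediately rigorous. I would handle this by running the entire argument with the second-order incremental quotient $\delta_{\be,h}^2 v = \delta_{\be,h}^2 u$ in place of $\pa_{\be\be} v$: by the $C^{1,s}$ regularity of $u$ on each side of the thin space, this quantity is continuous up to and including $\Lambda(u)$; the non-negativity of $u$ on $\{y=0\}$, combined with $u = 0$ on $\Lambda(u)$, forces $\delta_{\be,h}^2 u(x, 0) = (u(x+h\be, 0) + u(x-h\be, 0))/h^2 \geq 0$ on $\Lambda(u) \cap B_{1/2-h}$, so $(\delta_{\be,h}^2 v)^- \equiv 0$ there, even at free boundary points. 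Running Bernstein for the incremental quotient and then letting $h \downarrow 0$ delivers the desired pointwise bound.
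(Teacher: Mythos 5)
Your Bernstein quantity is exactly the one the paper uses, and you correctly identify the crux: the maximum principle in $B_{1/2}\setminus\Lambda(u)$ needs control of $(\pa_{\be\be}v)^-$ up to the free boundary, where it is not a priori continuous or bounded. But your proposed repair—running the whole Bernstein argument on $\delta^2_{\be,h}v$ and then letting $h\downarrow 0$—does not close. First, outside $\Lambda(u)$ the quotient is only super-$a$-harmonic, not $a$-harmonic, since $L_a\delta^2_{\be,h}v=h^{-2}\bigl(L_au(\cdot+h\be)+L_au(\cdot-h\be)\bigr)\le 0$ there because the translated contact sets intersect $B_{1/2}\setminus\Lambda(u)$; this still makes $(\delta^2_{\be,h}v)^-$ sub-$a$-harmonic there, so it is not fatal by itself, but it signals that $\delta^2_{\be,h}v$ is a genuinely nonlocal object. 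The fatal point is the absorption step: for fixed $h$ you need, at every $X\in B_{1/2}\setminus\Lambda(u)$, the pointwise inequality $2\mu|\nab\pa_\be v(X)|^2\ge C_\eta\,\bigl((\delta^2_{\be,h}v)^-(X)\bigr)^2$. For the genuine derivative this is just $|\nab\pa_\be v|\ge|\pa_{\be\be}v|$, but $\delta^2_{\be,h}v(X)$ averages increments of $\pa_\be v$ over a segment of length $2h$ that may cross $\Lambda(u)$, and it is not controlled by $|\nab\pa_\be v|$ at the same point: near the free boundary one can have $\nab\pa_\be v(X)=0$ while $\delta^2_{\be,h}v(X)$ is negative of order one. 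So $L_a\phi_h\ge 0$ is unjustified for fixed $h$, and there is no maximum-principle conclusion to pass to the limit. (A smaller gap: your claim that $\pa_{\be\be}v=0$ on the relative interior of $\Lambda(u)$ presupposes continuity of second tangential derivatives up to the thin space there, which also needs an argument.)

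The paper resolves exactly this tension by decoupling the two uses of the incremental quotient. It first works with $g^\gamma_{\vep,h,\be}=\min\{\delta^2_{\be,h}u_\gamma,-\vep\}$, where $u_\gamma$ solves the problem in $B_{7/8}$ with boundary data $u+\gamma$: since $\delta^2_{\be,h}u_\gamma\ge 0$ on $\Lambda(u_\gamma)$ and $u_\gamma$ is continuous, the truncation is constant near the contact set, hence super-$a$-harmonic in all of $B_{7/8}$; and because $u_\gamma>0$, hence $a$-harmonic, in an annulus near $\pa B_{7/8}$, one gets $h$- and $\vep$-independent bounds allowing passage to the limit. This yields that the limit object $f_\gamma=(\pa_{\be\be}u_\gamma)^-$ is sub-$a$-harmonic in $B_{3/4}$ (across the contact set), continuous, and vanishes on $\Lambda(u_\gamma)$. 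Only then is the Bernstein function $\eta^2f_\gamma^2+\mu(\pa_\be u_\gamma)^2$ used, with the genuine second derivative, where both the pointwise absorption and the boundary values on $\Lambda(u_\gamma)$ are available; the estimate for $u$ follows by letting $\gamma\downarrow 0$, semiconvexity being stable under uniform limits. To fix your proof you would need essentially these two ingredients—the truncation at level $-\vep$ to get sub-$a$-harmonicity of the limiting negative part across $\Lambda$, and the $\gamma$-regularization (or some substitute) to justify that limit—before invoking the maximum principle.
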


\begin{proof}
For any $\gamma> 0$, let $u_\gamma$ be the solution to
\begin{equation}
\label{eq.EL_0_g}
\left\{ 
\begin{array}{rcll}
{u}_\gamma(x,y) &\geq& 0 &\text{on } B_{7/8} \cap \{ y = 0\}\\
L_a  u_\gamma(x,y) &\leq& 0 &\text{in } B_{7/8}\\
L_a u_\gamma (x,y) &=& 0 &\text{in } B_{7/8} \setminus \Lambda({u}_\gamma) \\
{u}_\gamma(x,y) &=& u(x,y)+\gamma &\text{on } \pa B_{7/8}.
\end{array}\right.
\end{equation}
That is, in $B_{7/8}$, $u_\gamma$ is the solution to the thin obstacle problem with zero obstacle and boundary data $u + \gamma$. 
Notice that since $u$ is continuous in $B_1$, we have that $u_\gamma \downarrow u$ uniformly in $B_{7/8}$, as $\gamma\downarrow 0$.
Also, $u_\gamma > 0$ in $B_{7/8} \setminus B_{7/8-\beta}$ for some $\beta = \beta(\gamma) > 0$, by the continuity of $u_\gamma$. 
In particular, $u_\gamma$ is $a$-harmonic in the annulus $B_{7/8}\setminus B_{7/8-\beta}$. 

Consider the function 
\[
f_\gamma(x) := (\pa_{\be\be} u_\gamma(x))^-
\]
as the pointwise limit of $(\delta^2_{\be, h} u(x))^-$ as $h\downarrow 0$. 
To do so, we define 
\[
g^\gamma_{\vep, h, \be}(x) := \min\{\delta^2_{\be, h} u_\gamma(x), -\vep\}.
\]
Observe that $L_a(\delta^2_{\be, h} u_\gamma) \le 0$ in $B_{7/8}\setminus \Lambda(u_\gamma)$ (since $L_a u_\gamma \le 0$ in $B_{7/8}$ and $L_a u_\gamma = 0$ in $B_{7/8}\setminus \Lambda(u_\gamma)$). 
Moreover, since $u_\gamma$ is continuous and $\delta^2_{\be, h} u_\gamma \ge 0$ on $\Lambda(u_\gamma)$, we have $g^\gamma_{\vep, h, \be} = -\vep$ in a neighbourhood of $\Lambda(u_\gamma)$. 
Thus, $L_a g^\gamma_{\vep, h, \be} \le 0$ in $B_{7/8}$. 

We now want to let $\vep \downarrow 0$ and then $h\downarrow 0$ to deduce that $L_a f_\gamma\ge 0$ in $B_{3/4}$ and $f_\gamma\equiv 0$ on $\Lambda(u_\gamma)$. 
In order to pass $L_a g^\gamma_{\vep, h, \be} \le 0$ to the limit (as $\vep, h\downarrow 0$), it is enough to show that $|g^\gamma_{\vep, h, \be}|\le C$ for some $C$ independent of $\vep$ and $h$ (but possibly depending on $\gamma$). 
As $g^\gamma_{\vep, h, \be}$ is super-$a$-harmonic in $B_{7/8}$, its minimum must be achieved on the boundary. 
In particular, since $g^\gamma_{\vep, h, \be}\le 0$,
\[
\sup_{B_{3/4}} |g^\gamma_{\vep, h, \be}|\le \sup_{\pa B_{7/8-\beta/2}} |g^\gamma_{\vep, h, \be}| \le C(\beta), 
\]
where in the last inequality, we have used that $g^\gamma_{\vep, h, \be}$ is $a$-harmonic in $B_{7/8}\setminus B_{7/8-\beta}$ and corresponding $C^2$ estimates in the tangential direction for $a$-harmonic functions.
Hence, we can indeed pass $L_a g^\gamma_{\vep, h, \be} \le 0$ in $B_{3/4}$ to the limit and obtain that $L_a f_\gamma \ge 0$ in $B_{3/4}$ and $f_\gamma \equiv 0$ on $\Lambda(u_\gamma)$. 

With the sub-$a$-harmonicity and nonnegativity of $f_\gamma$ in hand, it is easy to see that $f_\gamma$ is continuous in $B_{3/4}$.
Indeed, sub-$a$-harmonic functions are upper semi-continuous (see \cite[Theorem 3.63]{HKM93}).
So being that $f_\gamma$ is continuous when $f_\gamma > 0$ and $f_\gamma$ is nonnegative in general, we determine the continuity of $f_\gamma$, as desired.
  
To conclude, we again proceed by Bernstein's technique (see \cite{CDV19}). 
Let $\eta \in C^\infty_c(B_{1/2})$ be even in $y$ and such that $\eta \equiv 1$ in $B_{1/4}$, and set
\[
\psi_\gamma := \eta^2 f_\gamma^2 + \mu (\pa_{\be} u_\gamma)^2,
\]
where we recall $f_\gamma(x) := (\pa_{\be\be} u_\gamma(x))^-$.
By the discussion above, $\psi_\gamma$ is continuous in $B_{1/2}$. 
Recall that $f_\gamma\equiv 0$ on $\Lambda(u_\gamma)$, and therefore, $\psi_\gamma \equiv 0$ on $\Lambda(u_\gamma)$. 
On the other hand, on the boundary of $B_{1/4}$, we have that $\psi_\gamma = \mu(\pa_\be u_\gamma)^2$.
Following the proof of Lemma~\ref{lem: Bern} exactly (and using that $L_a f_\gamma \ge 0$ in $B_{3/4}$), we see that $L_a \psi_\gamma \ge 0$ in $B_{1/2}\setminus \Lambda(u_\gamma)$ if $\mu$ is large enough, and so, its maximum must be achieved at the boundary.
In turn,
\[
\|f_\gamma\|_{L^\infty(B_{1/4})} \leq \mu^{1/2}\|\pa_\be u_\gamma\|_{L^\infty(B_{1/2})} = \mu^{1/2}\|\pa_\be (u_\gamma - p)\|_{L^\infty(B_{1/2})} \leq C\|u_\gamma - p\|_{L^2(B_1,|y|^a)},
\]
where we have used Lemma~\ref{lem: Bern} in the last inequality. 
This implies the family $\{ u_\gamma \}$, for $0 <\gamma \leq 1$, is uniformly semiconvex. 
Letting $\gamma \downarrow 0$ then and applying a covering argument, we deduce the desired result (using that semiconvexity passes to the limit). 
\end{proof}

\begin{remark}
Notice that $p$'s polynomial nature plays no role in Lemmas~\ref{lem:L2Linfty}, \ref{lem:  Bern}, and \ref{lem:  Bern X2}.
We have only used that $p$ is non-negative in the thin space and $a$-harmonic in Lemma~\ref{lem:L2Linfty}, and that $p$ is non-negative in the thin space, $a$-harmonic, and invariant in the $\be$ directions in Lemmas~\ref{lem:  Bern} and \ref{lem:  Bern X2}.
\end{remark}

\section{Blow-up Analysis}
\label{sec:Blow-up Analysis}

Recall, after a translation, we may assume that $0 \in {\rm Sing}(u)$ represents any singular point.
And, as such, the first blow-up of $u$ at $0$ is an element of $\mathscr{P}_\kappa$ for some $\kappa \in 2\N$.
As in Section~\ref{sec:monotone}, we let $p_\ast$ denote the first blow-up of $u$ at $0$, and define 
\[
v_\ast := u - p_\ast, \quad\kappa_\ast := \kappa_0, \quad L_\ast := L(p_\ast), \quad m_\ast := m_0, \quad\text{and}\quad \lambda_\ast := N(0^+,v_\ast).
\]

For notational simplicity, from this point forward, we often suppress the star subscript when denoting the homogeneity of $p_\ast$, and simply write $\kappa$ instead of $\kappa_\ast$.

In this section, we are interested in classifying the {\it second blow-ups of $u$ at $0$}, that is, the limit points of the set $\{ \tilde{v}_{r} \}_{r > 0}$, which is weakly precompact by Proposition~\ref{prop: Almgren}, as $r \downarrow 0$, with
\begin{equation}
\label{eq.vtilde2}
\tilde{v}_{r} := \frac{v_{r}}{\|v_{r}\|_{L^2(\pa B_1, |y|^a)}} \quad\text{and}\quad v_{r}(X) := u(rX) - p_\ast(rX).
\end{equation}
In turn, we will prove Proposition~\ref{prop.main1}.
 
We will work according to two cases, determined by the value of $a$ and the alignment of $L_\ast$ and the nodal set of $p_\ast$,
\[
\mathcal{N}_\ast := \mathcal{N}(p_\ast)
\]
(see \eqref{eq.nodal}).
Notice that by Lemma~\ref{lem: equiv}, if we consider $L(p)$ as a subset of $\R^n \times \{ 0 \}$, then
\[
L(p) \subset \mathcal{N}(p)
\]
for all $p \in \mathscr{P}_\kappa$; yet $L(p)$ may be smaller than $\mathcal{N}(p)$.
In particular, we define \ref{eq.Case1} and \ref{eq.Case2} as follows. 
\begin{align}
\label{eq.Case1}
\begin{array}{rl}
\text{Either}&\quad a \in [0, 1) \\
\quad\text{or}&\quad a \in (-1,0) \text{ and }\dim_{\mathcal{H}} \mathcal{N}_\ast \le n-2
\end{array}
\tag{Case 1}
\end{align}
\[
\text{and}
\]
\begin{align}
\label{eq.Case2}
a \in (-1,0) \text{ and }\dim_{\mathcal{H}} \mathcal{N}_\ast = \dim_{\mathcal{H}} L_\ast = n-1.
\tag{Case 2}
\end{align}

\begin{remark}
\label{rmk: all poss}
We remark that \ref{eq.Case1} and \ref{eq.Case2}, a priori, do not cover all possibilities. 
Indeed, the case when $a \in (-1,0)$ and $\dim_{\mathcal{H}} L_* < \dim_{\mathcal{H}} \mathcal{N}_\ast = n - 1$ is missing. 
In fact, it is currently unknown if such a situation can occur when $u \not\equiv p_{*}$.
\end{remark}

Before we proceed with our classification results, we make a pair of observations, the second of which will play a key feature in \ref{eq.Case2}.
Since $p_\ast \geq 0$ on $\R^{n+1} \cap \{ y = 0\}$, we have that
\begin{equation}
\label{eqn:nodal is zero level}
\{ (x,0) : p_\ast(x,0) = 0 \} = \{ (x,0) : p_\ast(x,0) = |\nab_x p_\ast(x,0)| = 0 \} = \mathcal{N}_\ast.
\end{equation}
Furthermore, if $L_\ast \cong \R^{n-1}$, as it is in \ref{eq.Case2}, then $p_\ast|_{\R^n \times \{ 0\}}$ is a one-dimensional polynomial, and so we can identify $L_\ast$ and $\mathcal{N}_\ast$ as the same subset of $\R^n \times \{ 0 \}$.

Let us start by studying second blows-up in \ref{eq.Case1}.

\begin{proposition}
\label{prop.case1}
In \ref{eq.Case1}, for every sequence $r_j\downarrow 0$, there is a subsequence $r_{j_\ell}\downarrow 0$ such that $\tilde v_{{r_j}_\ell}\rightharpoonup q$ weakly in $W^{1, 2}(B_1, |y|^a)$ as $\ell \to \infty$, and $q\not\equiv 0$ is a $\lambda_*$-homogeneous, $a$-harmonic polynomial. 
In particular, $\lambda_*\in \{\kap,\kap+1,\kap+2,\dots\}$.
\end{proposition}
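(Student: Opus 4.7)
The plan is a standard four-step blow-up analysis: (i) weak precompactness of $\{\tilde v_r\}$ in $W^{1,2}(B_1, |y|^a)$; (ii) nontriviality of any weak limit $q$; (iii) $\lambda_\ast$-homogeneity of $q$; and (iv) $a$-harmonicity of $q$ on all of $\R^{n+1}$. Once (i)--(iv) hold, the classification of $a$-harmonic, even in $y$, $\lambda_\ast$-homogeneous functions (via the inverse of the extension operator $\mathrm{Ext}_a$ in \eqref{eq.ext}) forces $q$ to be a polynomial and $\lambda_\ast \in \N$; combined with $\lambda_\ast \geq \kappa$ from Proposition~\ref{prop: Almgren}, the statement follows.

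Steps (i)--(iii) are routine given Section~\ref{sec:monotone}. For (i), Proposition~\ref{prop: Almgren} gives $\int_{B_1}|\nabla \tilde v_r|^2 |y|^a = N(r, v_\ast) \leq C$ thanks to the normalization $\|\tilde v_r\|_{L^2(\partial B_1, |y|^a)} = 1$, while the Monneau-type monotonicity in Lemma~\ref{lem.HMon} yields $H(\rho, \tilde v_r) = H(r\rho, v_\ast)/H(r, v_\ast) \leq \rho^{2\lambda_\ast}$ for $\rho \leq 1$, which integrates to a uniform $L^2(B_1, |y|^a)$ bound; weak compactness then follows from Banach-Alaoglu. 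For (ii), the compactness of the trace embedding $W^{1,2}(B_1, |y|^a) \hookrightarrow L^2(\partial B_1, |y|^a)$ passes the normalization to the limit, giving $\|q\|_{L^2(\partial B_1, |y|^a)} = 1$. For (iii), the scaling identity $N(\rho, \tilde v_r) = N(r\rho, v_\ast) \to \lambda_\ast$ holds for every $\rho > 0$; in the limit $N(\cdot, q) \equiv \lambda_\ast$, and the equality case of the Cauchy-Schwarz step in the proof of Proposition~\ref{prop: Almgren} forces $X \cdot \nabla q = \lambda_\ast q$.

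The crux is step (iv), where Case~1 enters. By \eqref{eqn: Lap of u} and $L_a p_\ast = 0$, $L_a v_\ast = L_a u$ is a nonpositive Radon measure supported on $\Lambda(u) \subset \{y = 0\}$, so after rescaling $L_a \tilde v_r$ is supported on $r^{-1}\Lambda(u)$. Since $u(rX)/r^\kappa \to p_\ast(X)$ locally uniformly and $\Lambda(u) = \{u(\,\cdot\,, 0) = 0\}$, any accumulation point of $r^{-1}\Lambda(u)$ must lie in the nodal set $\mathcal{N}_\ast = \{p_\ast(x, 0) = 0\}$. Upgrading the weak convergence $\tilde v_r \rightharpoonup q$ to local uniform convergence off the thin space via standard interior elliptic regularity for the (non-singular, away from $\{y=0\}$) operator $L_a$, one concludes that $L_a q$ is a nonpositive distribution with $\spt(L_a q) \subset \mathcal{N}_\ast$. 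Case~1 is precisely the condition that makes $\mathcal{N}_\ast$ $a$-harmonically null: when $a \geq 0$, any subset of $\{y=0\}$ of dimension at most $n-1$ has zero $W^{1,2}(\R^{n+1}, |y|^a)$-capacity, in particular $\mathcal{N}_\ast$ does; when $a < 0$, Case~1 directly imposes $\dim_{\mathcal{H}} \mathcal{N}_\ast \leq n-2$, which is again the correct dimensional threshold for zero $|y|^a$-capacity. A standard capacity argument---a distribution in the dual of $W^{1,2}(\R^{n+1}, |y|^a)$ supported on a null-capacity set vanishes---then forces $L_a q \equiv 0$.

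I expect step (iv) to be the principal obstacle. Two distinct subpoints need care: first, the support-localization $\spt(L_a q) \subset \mathcal{N}_\ast$ requires more than just weak $W^{1,2}$ convergence, calling for elliptic upgrade off the thin space together with a quantitative statement about how $r^{-1}\Lambda(u)$ concentrates on $\mathcal{N}_\ast$; second, the vanishing-on-capacity-zero step relies on the proper weighted Sobolev capacity theory for the Muckenhoupt weight $|y|^a$. In contrast, steps (i)--(iii) and the final polynomial classification appeal directly to tools already established in Section~\ref{sec:monotone} and the extension operator $\mathrm{Ext}_a$ of \eqref{eq.ext}.
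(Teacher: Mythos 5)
Your overall strategy matches the paper's: the heart of the argument is indeed that $L_a q$ is a non-positive measure supported on $\mathcal{N}_\ast$ (via tightness of the measures $L_a\tilde v_r$ and the Hausdorff convergence of $\Lambda(u_r)$ to $\mathcal{N}_\ast$), and that in \ref{eq.Case1} the set $\mathcal{N}_\ast$ has zero $a$-harmonic capacity, so $L_a q\equiv 0$; this is exactly how the paper proceeds (comparison with harmonic capacity for $a\ge 0$, and \cite[Corollary 2.12]{Kil94} for $a<0$, using that $\mathcal N_\ast$ has locally finite $\mathcal H^{n-1}$, resp.\ $\mathcal H^{n-2}$, measure). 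The final classification is the Liouville theorem of \cite[Lemma 2.7]{CSS08} for homogeneous, even, $a$-harmonic functions, which is what your appeal to $\Ext_a$ is implicitly invoking.

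The genuine gap is in your step (iii), which you dismiss as routine and, crucially, place \emph{before} the $a$-harmonicity of $q$. You claim that from $N(\rho,\tilde v_{r})=N(r\rho,v_\ast)\to\lambda_\ast$ one gets ``in the limit $N(\cdot,q)\equiv\lambda_\ast$.'' Weak convergence in $W^{1,2}(B_1,|y|^a)$ only gives lower semicontinuity of the weighted Dirichlet energy, hence $N(\rho,q)\le\lambda_\ast$; the reverse inequality does not pass to the limit without strong convergence of the gradients, which you have not established (and which the paper never proves). The paper closes this exactly by reversing your order: it first proves $L_aq\equiv 0$, so that $\rho\mapsto N(\rho,q)$ is monotone and the identity $H_\lambda'/H_\lambda(\rho,q)=\tfrac{2}{\rho}\,(N(\rho,q)-\lambda)$ holds with \emph{equality} (no $\int q\,L_aq$ term); it then combines $N(1,q)\le\lambda_\ast$ (lower semicontinuity) with the Monneau-type bound $H_{\lambda_\ast}(\rho,q)\le H_{\lambda_\ast}(1,q)=1$, inherited in the limit from Lemma~\ref{lem.HMon}, to exclude $N(\rho_\circ,q)<\lambda_\ast$ at any scale, forcing $N(\cdot,q)\equiv\lambda_\ast$ and hence homogeneity. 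So your ingredients suffice, but the homogeneity argument must come after harmonicity and must use the Monneau bound (which in your write-up only appears as a device for the $L^2(B_1)$ bound in step (i)); as written, ``the frequency passes to the limit'' is an unjustified step, not a routine one.
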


\begin{proof}
By Proposition~\ref{prop: Almgren}, we see that given any sequence $r_j \downarrow 0$, the sequence $\tilde v_{r_j}$ is uniformly bounded in $W^{1,2}(B_1, |y|^a)$.
Hence, there is a subsequence $r_{j_\ell}\downarrow 0$ such that
\[
\tilde v_{r_{j_\ell}} \rightharpoonup q \quad\text{in}\quad W^{1,2}(B_1, |y|^a),
\]
for some $q$, and as $\|\tilde v_{r_{j_\ell}}\|_{L^2(\pa B_1, |y|^a)} = 1$, we have that
\[
\|q\|_{L^2(\pa B_1, |y|^a)} = 1.
\]

Observe that $L_a\tilde v_{r}$ is a non-positive measure as
\[
L_a v_{r} = 2r \lim_{y \downarrow 0 } y^a\pa_y u_r \mathcal{H}^{n}\mres {\Lambda(u_r)}\le  0
\]
in the sense of distributions.
Furthermore, let $K \subset B_1$ be a any compact set and $\eta_K \in C_c^\infty(B_1)$ be such that $\eta_K \equiv 1$ on $K$ and $0 \leq \eta_K \leq 1$ in $B_1$.
By H\"older's inequality,
\[
0 \leq \int_{K} -L_a \tilde{v}_{r} \leq \int_{B_1} -\eta_K L_a \tilde{v}_{r} = \int_{B_1}  \nabla \eta_K \cdot \nabla \tilde{v}_{r}|y|^a
\le C_K \| \nab \tilde{v}_{r}\|_{L^{2}(B_1,|y|^a)}
\]
Since the family $\tilde{v}_{r}$ is uniformly bounded in $W^{1,2}(B_1, |y|^a)$ by Proposition~\ref{prop: Almgren}, it follows that the collection of measures $L_a \tilde{v}_{r}$ is tight.
So, up to a further subsequence, which we still denote by $r_{j_\ell}$, we have that $L_a q$ is a non-positive measure.
Then, as $r^{-\kap}u_r \to p_\ast$ locally uniformly, with $u_r(X) := u(rX)$, the sets $\Lambda(u_r)$ converge to $\mathcal{N}_\ast$ in the Hausdorff sense (recall \eqref{eqn:nodal is zero level}).
Therefore, the distribution $L_a q$ is supported on $\{ (x,0) : p_\ast(x,0) = 0 \}$.
Yet we are in \ref{eq.Case1}, and $\mathcal{N}_\ast$ is of zero $a$-harmonic capacity $\R^{n+1}$.
Indeed, as $p_\ast|_{\R^{n} \times \{0\}} \not\equiv 0$, the set $\mathcal{N}_\ast$ has locally finite $\mathcal{H}^{n-1}$ measure.
If $a \geq 0$, then the $a$-harmonic capacity of $\mathcal{N}_\ast$ is smaller than the harmonic capacity of $\mathcal{N}_\ast$, which is zero.
If $a < 0$, then, by assumption, $\mathcal{N}_\ast$ has locally finite $\mathcal{H}^{n-2}$ measure, which implies that it is of zero $a$-harmonic capacity (see \cite[Corollary~2.12]{Kil94}).
Thus, $q$ is $a$-harmonic, i.e., $L_aq \equiv 0$. 

Let us now show that $q$ is homogeneous, arguing as in \cite[Lemma 2.12]{FS18}, with homogeneity $\lambda_* := N(0^+, v_*)$. 
In order to do so, by \cite[Theorem 2.11]{GR18}, it suffices to show that 
\begin{equation}
\label{eq.HMon4}
\lambda_* = N(\rho, q) \quad\textrm{for all}\quad \rho\in (0, 1).
\end{equation}
Notice, first, that since $q$ is $a$-harmonic, $N(\rho, q)$ is non-decreasing. 
On the other hand, by the lower semicontinuity of the weighted Dirichlet integral,
\[
N(1, q) \le \liminf_{\ell \to \infty} N(1, \tilde v_{r_{j_\ell}}) = \liminf_{\ell \to \infty} N(1, v_{r_{j_\ell}}) = \liminf_{\ell \to \infty} N(r_{j_\ell},  v_{*}) = \lambda_*. 
\]
Also, by Lemma~\ref{lem.HMon} applied to $\tilde v_{r_{j_\ell}}$, and taking $\ell \to \infty$, 
\begin{equation}
\label{eq.HMon3}
\frac{1}{\rho^{n+a+2\lambda_*}} \int_{\pa B_\rho} q^2|y|^a \le \int_{\pa B_1} q^2|y|^a = 1.
\end{equation}
However, because $L_a q = 0$ and by \eqref{eq.HMon2}, we know that 
\[
\frac{H_\lambda'}{H_\lambda} (\rho, q) = \frac{2}{\rho} (N(\rho, q)-\lambda). 
\]
Suppose now that $N(\rho_\circ, q) = \lambda_\circ < \lambda_*$ for some $\rho_\circ \in (0, 1)$. In particular, by the previous representation of $H_\lambda$, $H_{\lambda_\circ}$ is non-increasing for $\rho \in (0, \rho_\circ)$, so that 
\[
\frac{1}{\rho^{n+a+2\lambda_*}} \int_{\pa B_\rho} q^2|y|^a\ge \frac{\rho^{2(\lambda_\circ-\lambda_*)}}{\rho_\circ^{n+a+2\lambda_\circ}} \int_{\pa B_{\rho_\circ}} q^2|y|^a>0 \quad\text{for all}\quad \rho \in (0, \rho_\circ).
\]
But this contradicts \eqref{eq.HMon3} for $\rho $ small enough. 
Therefore, \eqref{eq.HMon4} holds and $q$ is homogeneous of degree $\lambda_*$. 
And by \cite[Lemma 2.7]{CSS08}, we deduce that $q$ is a polynomial. 
In particular, $\lambda_* \geq \kap $ is an integer. 

All in all, we have that $q\not\equiv 0$ is an $a$-harmonic, even in $y$, and $\lambda_*$-homogeneous polynomial with $\lambda_*\in \{\kap, \kap+1,\kap+2,\dots\}$. 
In particular, $q\big|_{\R^n \times \{ 0 \}}\not\equiv 0$.
\end{proof}

Before moving to \ref{eq.Case2}, let us state and prove a lemma which will help us to compare $p_\ast$ and $q$ when working in \ref{eq.Case1}.
That said, this lemma is independent of \ref{eq.Case1} and \ref{eq.Case2}, and holds generically.

\begin{lemma} 
\label{lem: L2 sign}
Assume that $\tilde v_{r_{\ell}} \rightharpoonup q$ in $W^{1,2}(B_1, |y|^a)$ for some sequence $r_{\ell} \downarrow 0$.
Then,
\begin{equation}
\label{eq.ineq1}
\int_{\pa B_1} qp_*|y|^a  = 0
\end{equation}
and
\begin{equation}
\label{eq.ineq2}
\int_{\pa B_1}qp|y|^a \leq 0 \quad\text{for all}\quad p \in \mathscr{P}_\kap.
\end{equation}
\end{lemma}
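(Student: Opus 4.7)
My plan is to derive both identities from the Monneau-type monotonicity furnished by Lemma~\ref{lem.HMon}, applied simultaneously to $u-p$ for each $p\in\mathscr{P}_\kap$. The key observation is that Lemma~\ref{lem.HMon} is available for every such $p$: Proposition~\ref{prop: Almgren} applied with $v=u-p$ yields $N(0^+,u-p)\ge\kap$ (since $\kap=N(0^+,u)$), so $r\mapsto H_\kap(r,u-p)$ is non-decreasing. Moreover, \eqref{eq.resck} and the $\kap$-homogeneity of $p_*$ and $p$ identify the limit,
\[
H_\kap(0^+,u-p)=\lim_{r\downarrow 0}\int_{\pa B_1}\biggl(\frac{u(rX)}{r^\kap}-p(X)\biggr)^{\!2}|y|^a=\int_{\pa B_1}(p_*-p)^2|y|^a,
\]
and in particular $H_\kap(0^+,v_*)=0$.

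Next, I would expand $(u-p)^2=v_*^2+2v_*(p_*-p)+(p_*-p)^2$ inside the integral defining $H_\kap(r,u-p)$. Using the $\kap$-homogeneity of $p_*-p$ together with the scaling identity $\|v_r\|_{L^2(\pa B_1,|y|^a)}=r^\kap\sqrt{H_\kap(r,v_*)}$, the inequality $H_\kap(r,u-p)\ge H_\kap(0^+,u-p)$ reduces---after cancelling the constant term common to both sides and dividing by $\sqrt{H_\kap(r,v_*)}$ (nonzero for small $r$, unless $u\equiv p_*$, a trivial case)---to
\[
\int_{\pa B_1}\tilde v_r\,(p-p_*)|y|^a\le \frac{1}{2}\sqrt{H_\kap(r,v_*)}.
\]
Sending $r=r_\ell\downarrow 0$, the right-hand side vanishes; for the left, continuity of the trace operator $W^{1,2}(B_1,|y|^a)\to L^2(\pa B_1,|y|^a)$ (valid since $|y|^a$ is an $A_2$ weight for $a\in(-1,1)$) turns the weak convergence $\tilde v_{r_\ell}\rightharpoonup q$ in $W^{1,2}(B_1,|y|^a)$ into weak $L^2(\pa B_1,|y|^a)$ convergence of traces, which is enough to pass the pairing against the fixed function $(p-p_*)|y|^a$ to the limit. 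Hence
\[
\int_{\pa B_1} q\,(p-p_*)|y|^a\le 0 \quad\text{for every }p\in\mathscr{P}_\kap.
\]

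To finish, I would exploit that $\mathscr{P}_\kap$ is closed under multiplication by positive scalars---every defining condition ($a$-harmonicity, $\kap$-homogeneity, nonnegativity on the thin space, evenness in $y$) is preserved---so both $2p_*$ and $p_*/2$ lie in $\mathscr{P}_\kap$. Substituting these two choices into the previous display gives the opposite inequalities $\int_{\pa B_1}qp_*|y|^a\le 0$ and $\int_{\pa B_1}qp_*|y|^a\ge 0$, proving~\eqref{eq.ineq1}; then~\eqref{eq.ineq2} follows immediately by adding $\int_{\pa B_1}qp_*|y|^a=0$. The main delicacy lies in the passage to the limit of the boundary pairing, which is handled by trace continuity in the weighted Sobolev setting.
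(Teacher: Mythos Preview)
Your proposal is correct and follows essentially the same route as the paper: both arguments apply the Monneau-type monotonicity of Lemma~\ref{lem.HMon} to $u-p$ for arbitrary $p\in\mathscr{P}_\kap$, expand the resulting inequality, divide by the small factor $\|v_r\|_{L^2(\pa B_1,|y|^a)}/r^\kap$, pass to the limit, and then test with $p=2p_*$ and $p=p_*/2$. The only cosmetic difference is in the limit passage on $\pa B_1$: the paper extracts a further subsequence to upgrade to strong $L^2(\pa B_1,|y|^a)$ convergence, whereas you pass the pairing against the fixed polynomial $p-p_*$ directly via weak trace convergence, which is an equally valid (and slightly cleaner) justification.
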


\begin{proof}
We proceed as in \cite[Lemmas 2.11-2.12]{FS18}.
In order to see \eqref{eq.ineq1}, we use $H_\lambda(r,u-p)$ is non-decreasing for $\lambda = \kap \le N(0^+, u-p)$ (see Lemma~\ref{lem.HMon}), recalling that $\kappa = N(0^+,u)$, by assumption. 
In particular, we have
\begin{equation}
\label{eq.ineq3}
\begin{split}
\frac{1}{r^{n+a+2\kap}}\int_{\partial B_r} (u-p)^2|y|^a & \ge \lim_{\rho \downarrow 0}\frac{1}{\rho^{n+a+2\kap}}\int_{\partial B_\rho} (u-p)^2|y|^a\\ 
&=\lim_{\rho \downarrow 0}\int_{\partial B_1} (\rho^{-\kap}u(\rho X)-p)^2 |y|^a \\
&=\int_{\partial B_1} (p_\ast-p)^2 |y|^a,
\end{split}
\end{equation}
using the local uniform convergence of $r^{-\kap}u_r$ to $p_\ast$ as $r\downarrow 0$, with $u_r(X) := u(rX)$, and the $\kappa$-homogeneity of $p$. 
By the definition of $p_\ast$, notice that
\[
h_r := \|v_{r}\|_{L^2(\partial B_1, |y|^a)} = o(r^\kap) \quad\text{as}\quad r\downarrow 0
\qquad\textrm{and}\qquad
\varepsilon_r := \frac{h_r}{r^\kap} = o(1) \quad\text{as}\quad r\downarrow 0.
\]
Furthermore, for some subsequence, which we still denote by $r_{\ell}$, we have that $\tilde v_{r_{\ell}} = v_{r_\ell}/h_{r_\ell}\to q$ in $L^2(\partial B_1, |y|^a)$. 
Thus,
\[
\int_{\partial B_1} \left(\frac{v_{r}}{r^\kap}+p_\ast-p\right)^2|y|^a= \frac{1}{r^{n+a+2\kap}}\int_{\partial B_r} (u-p)^2 |y|^a\ge \int_{\partial B_1} (p_\ast-p)^2 |y|^a \quad\text{for all}\quad r > 0.
\]  
Since $r^{-\kap} v_{r}  = \tilde v_{r} \varepsilon_r$, taking the subsequence $r_\ell$ and expanding, we obtain
\[
\varepsilon_{r_\ell}^2 \int_{\partial B_1} \tilde v_{r_\ell}^2|y|^a + 2\varepsilon_{r_\ell} \int_{\partial B_1}\tilde v_{r_\ell} (p_\ast - p)|y|^a \ge 0 \quad\text{for all}\quad p \in \mathscr{P}_\kap.
\]
Dividing by $\varepsilon_{r_\ell}$ and taking the limit as $\ell \to \infty$,
\[
\int_{\pa B_1} q(p_\ast - p)|y|^a \ge 0 \quad\text{for all}\quad p \in \mathscr{P}_\kap.
\]

Now taking $p = 2p_\ast$ and $p = 2^{-1}p_\ast$, which are both members of $\mathscr{P}_\kap$, we deduce that
\[
\int_{\pa B_1} qp_\ast|y|^a = 0,
\]
from which \eqref{eq.ineq2} follows immediately.
\end{proof}

Let us now deal with \ref{eq.Case2}. 
As we noted before, in this case, the spine and the nodal set of $p_\ast$ can be identified: $L_\ast = \mathcal{N}_\ast$. 

\begin{proposition}
\label{prop.case2}
In \ref{eq.Case2}, for every sequence $r_j\downarrow 0$, there is a subsequence $r_{j_\ell}\downarrow 0$ such that $\tilde v_{{r_j}_\ell}\rightharpoonup q$ weakly in $W^{1, 2}(B_1, |y|^a)$ as $\ell \to \infty$, and $q\not\equiv 0$ is a $\lambda_*$-homogeneous solution to the very thin obstacle problem with zero obstacle on $L_*$,
\begin{equation}
\label{eq.verythin1}
\begin{cases}
q \ge 0 &\text{on } L_*\\
L_a q \le 0 &\text{in } \R^{n+1}\\
L_a q = 0 &\text{in } \R^{n+1}\setminus L_*\\
qL_aq = 0 &\text{in } \R^{n+1}.
\end{cases}
\end{equation}
Moreover, $\lambda_*\ge \kap+\alpha_\kap$, for some constant $\alpha_\kap>0$ depending only on $n$ and $\kap$. 
\end{proposition}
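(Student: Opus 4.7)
The overall plan is to follow the strategy of Proposition~\ref{prop.case1}, adapting each step to reflect that in \ref{eq.Case2} the set $L_\ast = \mathcal{N}_\ast$ is $(n-1)$-dimensional and hence has \emph{positive} $a$-harmonic capacity (since $a<0$), so the limit $q$ is no longer $a$-harmonic across $L_\ast$. The first blow-up data ($p_\ast$ one-dimensional on $\R^n\times\{0\}$) will let us identify $\mathcal{N}_\ast$ with $L_\ast$ freely throughout.

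First, I would extract a weakly convergent subsequence exactly as in the proof of Proposition~\ref{prop.case1}: Proposition~\ref{prop: Almgren} provides a uniform $W^{1,2}(B_1,|y|^a)$-bound on $\{\tilde v_r\}$, and the normalization $\|\tilde v_r\|_{L^2(\pa B_1,|y|^a)}=1$ guarantees $\|q\|_{L^2(\pa B_1,|y|^a)}=1$, in particular $q\not\equiv 0$. The same tightness argument as in Proposition~\ref{prop.case1} (using $L_a u \le 0$ together with the $W^{1,2}$-bound) shows that $L_a \tilde v_r$ is a tight family of non-positive measures, so after passing to a further subsequence $L_a q \le 0$. Since $r^{-\kappa}u(r\,\cdot\,)\to p_\ast$ locally uniformly, $\Lambda(u_r)\to \mathcal{N}_\ast = L_\ast$ in the Hausdorff sense, so $\spt L_a q\subset L_\ast$; unlike in \ref{eq.Case1} we cannot now deduce $L_a q\equiv 0$.

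Second, for the remaining conditions in \eqref{eq.verythin1}: on $L_\ast$ we have $p_\ast(x,0)=0$, hence $\tilde v_r(x,0)=u(rx,0)/(r^\kappa\varepsilon_r)\ge 0$ (recall $\varepsilon_r$ from the proof of Lemma~\ref{lem: L2 sign}), and this passes to the trace limit, giving $q\ge 0$ on $L_\ast$. The complementarity $q L_a q = 0$ follows from $\spt L_a q\subset L_\ast$ and $q\ge 0$ there, together with a Heaviside-type test-function argument like the one used in Lemma~\ref{lem:L2Linfty}. Homogeneity of $q$ of degree $\lambda_\ast=N(0^+,v_\ast)$ is obtained exactly as in Proposition~\ref{prop.case1}: monotonicity of Almgren's quotient for $v_r$ (Proposition~\ref{prop: Almgren}) plus lower semicontinuity of the Dirichlet integral forces $N(\rho,q)\equiv\lambda_\ast$, and the integration-by-parts computations behind Lemma~\ref{lem.HMon} remain valid because the bulk term $\int v\,L_av$ now vanishes on $\{q>0\}$ (where $q$ is $a$-harmonic) and the contribution on $L_\ast$ vanishes by $q L_a q =0$.

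The main obstacle, and the genuinely new ingredient, is the quantitative gap $\lambda_\ast\ge \kappa+\alpha_\kappa$. My plan is a Federer-type dimension-reduction/compactness argument by contradiction: if no uniform $\alpha_\kappa>0$ existed, one would produce a sequence $q_j$ of nontrivial global $\lambda_j$-homogeneous solutions to \eqref{eq.verythin1} with $\lambda_j\downarrow \kappa$, each $L^2(\pa B_1,|y|^a)$-orthogonal to $p_\ast$ by Lemma~\ref{lem: L2 sign}. The regularity and classification theory for the very thin obstacle problem developed in Section~\ref{sec.VTOP} should then be used to pass to a limit and identify any $\kappa$-homogeneous global VTOP solution whose nodal/spine structure matches that of $p_\ast$ as a scalar multiple of $p_\ast$; combined with the preserved orthogonality, this forces the limit to be zero, contradicting the normalization and yielding the desired uniform gap. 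The hard part is precisely this rigidity-plus-compactness step, which depends on the a priori estimates and classification results proved in Section~\ref{sec.VTOP}; everything else is a direct adaptation of Proposition~\ref{prop.case1}.
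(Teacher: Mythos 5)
Your outline reproduces the soft parts of the argument (weak compactness, tightness of the measures $L_a\tilde v_r$, support on $L_\ast$, nonnegativity of the trace on $L_\ast$), but the step you dispose of in one line\,---\,the complementarity $q\,L_aq=0$\,---\,is where the real work lies, and your proposed justification does not give it. From $\operatorname{spt}L_aq\subset L_\ast$, $L_aq\le 0$ and $q\ge 0$ on $L_\ast$ you only obtain $q\,L_aq\le 0$; the content of the complementarity condition is the reverse inequality. The Heaviside-type truncation of Lemma~\ref{lem:L2Linfty} is a variational argument that applies to an actual minimizer, whereas at this stage $q$ is only a weak $W^{1,2}$ limit and is not known to minimize anything, nor to inherit any minimality from the $\tilde v_r$ (which solve a thin, not very thin, obstacle problem). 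What one actually does is pass to the limit the information $\tilde v_r\,L_a\tilde v_r\ge 0$ together with the vanishing of $\int_{B_1}\tilde v_{r}\,L_a\tilde v_{r}$ along suitable radii (extracted from the almost-constancy of the frequency $N(r,v_\ast)$ via a mean-value/Dini argument), and for this weak convergence is not enough: one needs \emph{locally uniform} convergence $\tilde v_{r_\ell}\to q$, since the $L_a\tilde v_{r_\ell}$ are merely measures. Producing that compactness is the technical heart of the paper's proof and is entirely absent from your plan: it uses the semiconvexity of $\tilde v_r$ along $L_\ast$ (Lemma~\ref{lem: Bern X2}), a slicing estimate showing the measure $\lim_{y\downarrow0}y^a\pa_y\tilde v_r$ has bounded mass on each two-dimensional slice, and a Poisson-kernel/fundamental-solution argument yielding uniform $C^{-a-\vep}$ bounds. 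Note also that your homogeneity step presupposes that $q$ solves \eqref{eq.verythin1} (so that Lemma~\ref{lem.MonFreq} applies), so this gap propagates into the rest of the argument.

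For the gap $\lambda_\ast\ge\kap+\alpha_\kap$ your compactness-by-contradiction skeleton is the right one (and is what the paper does), but the rigidity statement you plan to lean on is either false or inapplicable. A $\kap$-homogeneous global solution of \eqref{eq.verythin1} is by no means a multiple of $p_\ast$: for instance $\Ext_a((\be\cdot x)^\kap)$ with $\be\in L_\ast\cap\mathbb S^{n-1}$ is $a$-harmonic, nonnegative on $L_\ast$ and $\kap$-homogeneous; and the limit produced in the contradiction argument is only known to be $\ge 0$ on $L_\ast$, so you cannot impose that its ``nodal/spine structure matches that of $p_\ast$''. Moreover the classification results of Section~\ref{sec.VTOP} (Lemma~\ref{lem: 2d class}) concern two-dimensional homogeneous solutions and do not yield such a rigidity statement in general dimension. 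The actual mechanism is different: since $\kap$ is even, the $\kap$-homogeneous limit is an $a$-harmonic polynomial by \cite[Proposition 4.4]{GR18} (hence Lipschitz with universal bounds), and it is then annihilated by the sign conditions of Lemma~\ref{lem: L2 sign} tested against the explicit competitors $C_\vep p_\ast+q+\vep\,\Ext_a(|x|^\kap)\in\mathscr P_\kap$, which are admissible precisely because in \ref{eq.Case2} one has $p_\ast|_{\{y=0\}}=c_\ast|x_n|^\kap$ and $q\ge0$ on $\{x_n=y=0\}$. You would also need to be careful that in the compactness argument the orthogonality/sign conditions are relative to the first blow-ups $p_{\ast,\ell}$ of the varying solutions $u_\ell$, and must themselves be shown to pass to the limit.
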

\begin{proof}
Without loss of generality, we will assume that $L_* = \{x_n = y = 0\}$. We divide the proof into several steps.

\smallskip
\noindent{\bf Step 1: Weak limit and non-negativity on $L_*$.}
As in the proof of Proposition~\ref{prop.case1}, we have that
\begin{equation}
\label{eq.wconvq}
\tilde v_{r_\ell} \rightharpoonup q\quad\text{in}\quad W^{1, 2}(B_1, |y|^a),
\end{equation}
for some $q$, and $L_a\tilde v_{r}$ is converging weakly$^*$ as measures to a non-positive measure $L_a q$ supported on $L_*$.
Unlike before, the set on which $L_a q$ is supported is now a set of strictly positive $a$-harmonic capacity (since $m = n-1$).

Consider the following trace operators
\[
\gamma: W^{1, 2}(B_1, |y|^a)\to W^{s, 2}(B_1^*) \quad\text{and}\quad \tilde \gamma: W^{s, 2}(B_1^*)\to W^{s-\frac12, 2}(B_1').
\] 
By \cite{NLM88} (see also \cite{Kim07}), since $s > 1/2$, $\gamma$ is continuous; and $\tilde{\gamma}$ is the standard continuous trace operator.
(Recall that $a = 1-2s$.)
The operator $\tau := \tilde \gamma \circ \gamma$ then is continuous.
Hence, considering \eqref{eq.wconvq},
\[
\tau(\tilde v_{r_\ell}) \rightharpoonup \tau(q) \quad\text{in}\quad W^{s-\frac12, 2}(B_1') \qquad\text{and}\qquad \tau(\tilde v_{r_\ell}) \to \tau(q) \quad\text{in}\quad L^2(B_1').
\]

Now $\tau(\tilde v_{r_\ell}) \ge 0$ on $B_1'$ for all $\ell\in \N$, since $p_* \equiv 0$ and $u \ge 0$ on $L_*$. 
Thus, from the strong convergence above, $\tau(q) \ge 0$, or $q \ge 0$ on $L_*$. 

\smallskip
\noindent{\bf Step 2: Semiconvexity in directions parallel to $L_*$.} By Lemma~\ref{lem: Bern X2},
\begin{equation}
\label{eq.semiconv}
\inf_{B_{1/2}} \pa_{\be\be}\tilde v_{r} \ge -C \quad\text{for all}\quad \be \in L_*\cap \mathbb{S}^n,
\end{equation}
for some constant $C$ independent of $r$. 
Namely, the sequence of functions $\tilde v_{r}$ is locally uniformly semiconvex (and, therefore, locally uniformly Lipschitz) in the directions parallel to $L_*$.

\smallskip
\noindent{\bf Step 3: Strong convergence.} 
We show that for every $0 < \vep \ll 1$, there exists a constant $C_\vep > 0$ independent of $r_\ell$ for which
\begin{equation}
\label{eq.unifhold}
[\tilde v_{r_\ell}]_{C^{-a-\varepsilon}(B_{1/2})} \le C_\varepsilon.
\end{equation}
Thus, by a covering argument, $\tilde v_{r_\ell}\to q$ locally uniformly in $B_1$, and, in fact, $q \in C_{\rm loc}^{-a-\varepsilon}(B_1)$.

Recall that $L_* = \{x_n = y = 0\}$ and $X = (x', x_n, y)$ for $x'\in \R^{n-1}$.  
For simplicity, in the following computations, set
\[
w := \tilde v_{r}.
\]
Let $Q_{r_\circ} := B_{r_\circ}'\times D_{r_\circ} \subset B_{1}$, for some $r_\circ > 0$.
Recall that $D_r$ denotes the disc of radius $r$ in $\R^2$ centered at the origin. 
For convenience, rescale and assume $r_\circ = 1$.
By Step 2, $\|w(x', \cdot, \cdot)\|_{L^2(D_1, |y|^a)}^2$ is Lipschitz, as a function of $x'$. 
Hence,
\[
\osc_{B'_1} \|w(x', \cdot, \cdot)\|_{L^2(D_1, |y|^a)}^2 \leq C.
\]
Recalling that $\|w\|_{L^2(B_1, |y|^a)} \le C$ (we have rescaled to work in $Q_1$, else this bound would be $1$), we have that
\begin{equation}
\label{eq.L2loc}
\int_{B_1'}\|w(x', \cdot, \cdot)\|_{L^2(D_1, |y|^a)}^2 \, \d x'\le C,
\end{equation}
and so $\|w(x', \cdot, \cdot)\|_{L^2(D_1, |y|^a)}$ has bounded oscillation and integral. 
In turn,
\begin{equation}
\label{eq.L2loc2}
\|w(x', \cdot, \cdot)\|_{L^2(D_1, |y|^a)} \le C \quad \text{for all}\quad x'\in B'_1.
\end{equation}
We also recall that 
\begin{equation}
\label{eqn: Law 0}
\lim_{y \downarrow 0} y^a \pa_y w \le 0
\qquad\text{and}\qquad
L_a w = 0 \quad\text{in}\quad B_1 \cap \{ y > 0\}.
\end{equation}

\smallskip
\noindent{\it -- Step 3.1.} In this subset, we prove that the measure
\[
n_w(x', x_n):= \lim_{y\downarrow 0} y^a\pa_y w \le 0
\]
is finite on each $x'$ slice. 
Equivalently, we show that
\begin{equation}
\label{eq.normbound}
0\ge \int_{-1}^{1} \zeta(|(x',x_n)|) n_w(x', x_n) \,\d x_n \ge -C \quad\text{for all}\quad x' \in B'_1
\end{equation}
where $\zeta$ is a smooth test function $\zeta = \zeta(r):[0, \infty)\to [0, 1]$ such that $\zeta \equiv 1$ in $[0, 1/2]$ and $\zeta \equiv 0 $ in $[3/4,\infty)$. 

Let $\zeta = \zeta(|(x',x_n, y)|)$.
By the divergence theorem, 
\begin{equation}
\label{eq.diveq}
\begin{split}
\int_{-1}^{1} \zeta n_w \,\d x_n  &= -\int_{D_1\cap \{y > 0\}} \DIV_{x_n, y}(\zeta y^a \nabla_{x_n,y} w)\,\d x_n\,\d y \\
&= - \int_{D_1\cap \{y > 0\}}  \zeta L^{x_n,y}_a w  - \int_{D_1\cap \{y > 0\}}  y^a \nab_{x_n,y} \zeta \cdot \nab_{x_n,y} w\\
&=: \text{I} + \text{II}
\end{split}
\end{equation}
where $L^{x_n,y}_a f := \DIV_{x_n,y} (|y|^a \nab_{x_n,y} f)$.
On one hand, observe that 
\[
L_a^{x_n, y} w = L_a w - y^a\Delta_{x'} w = - y^a\Delta_{x'} w  \quad\text{in}\quad D_1 \cap \{ y > 0 \}
\]
by \eqref{eqn: Law 0}.
And so, by \eqref{eq.semiconv},
\begin{equation}
\label{eqn: I}
\text{I} \geq - C.
\end{equation}
On the other hand, by the symmetries of $\zeta$ (i.e., $\pa_y \zeta = O(y)$ as $\pa_y \zeta\big|_{y = 0} = 0$ and $\zeta$ is smooth),
\[
|w L_a^{x_n,y} \zeta| = | w y^a y^{-a} L_a^{x_n,y} \zeta| =  |w|y^a |\pa_{nn}\zeta + \pa_{yy} \zeta + ay^{-1}\pa_y \zeta| \leq C |w| y^a.
\]
So, by the symmetries of $\zeta$ again, H\"older's inequality, and \eqref{eq.L2loc2}, we deduce that
\begin{equation}
\label{eqn: II}
\begin{split}
\text{II}  = - \int_{D_1\cap \{y > 0\}}  y^a \nab_{x_n,y} \zeta \cdot \nab_{x_n,y} w = \int_{D_1\cap \{y > 0\}} w L_a^{x_n,y} \zeta \geq -C. 
\end{split}
\end{equation}
We have also used that the boundary term at $y = 0$ vanishes in the integration by parts, $y^a\pa_y\zeta \equiv 0$ on $\{y = 0\}$. 
Therefore, combining \eqref{eq.diveq}, \eqref{eqn: I}, and \eqref{eqn: II}, we see that \eqref{eq.normbound} holds, as desired.

\smallskip
\noindent{\it -- Step 3.2.} Now we conclude. 
Consider the fundamental solution for the operator $L_a$ (see, e.g, \cite{CS07}) given by 
\[
\Gamma_a(X) := C_{n, a} |X|^{-n+1-a}.
\]
More precisely, $\Gamma_a$ is such that $L_a \Gamma_a = 0$ in $\{|y| > 0\}$ and $\lim_{y\downarrow 0} y^a\pa_y \Gamma_a = \delta(x)$, the Dirac delta at $x$.
Let
\[
\bar w(x, y) := \Gamma_a(\,\cdot\,, y) *_x (\zeta n_w),
\] 
where $\zeta$ is the test function defined in Step 3.1, with $\zeta = \zeta(|x|)$ here. We have that $L_a\bar w = 0$ in $|y|> 0$, and $\lim_{y\downarrow 0} y^a \pa_y\bar w = \zeta n_w$. 
We claim that $\bar w$ is bounded. 
Indeed, by \eqref{eq.normbound},
\begin{align*}
|\bar w(x', x_n, y)|&  \leq \int_{B_1'}  \int_{-1}^1  \frac{(\zeta n_w )(z', z_n)}{|(x'-z', x_n-z_n, y)|^{n-1+a}}  \, \d z_n  \,\d z' \\
& \le C\int_{B_1'} \frac{\d z'}{|(x'-z', 0, y)|^{n-1+a}} \le C.
\end{align*} 
By means of the previous proof, $(-\Delta)^{\bar s}_X \bar w = \left((-\Delta)^{\bar s}_X\Gamma_a *_x (\zeta n_w)\right)$ is bounded as long as $2\bar s < -a$, since $(-\Delta)^{\bar s}_X|X|^{-n+1-a} = C|X|^{-n+1-a-2\bar s}$, and $\zeta n_w$ does not depend on $y$. 
Thus, $(-\Delta)^{\bar s}_X \bar w$ is bounded as long so $2\bar s < -a$, and by interior regularity for the fractional Laplacian (suppose $\bar s \neq 1/2$), $\bar w$ is $C^{2\bar s}$ (see \cite[Theorem 1.1]{RS16}).

Finally, notice that $L_a (\bar w - w) = 0$ in $B_1 \cap \{|y|>0\}$ and $\lim_{y\downarrow 0} y^a\pa_y\left(\bar w - w\right)=0$ in $B_{1/2}\cap \{ |y|> 0\}$.
It follows that $L_a (\bar w - w) = 0$ in $B_{1/2}$, and then $\bar w - w \in C^1_{\rm loc}(B_{1/2})$ by interior estimates for $a$-harmonic functions (and recalling that $a \in (-1,0))$. 
In turn, $w$ inherits the regularity of $\bar w$; that is, $w$ is $C^{2\bar s}$, so long as $2\bar s < -a$, and \eqref{eq.unifhold} is proved. 

In particular, by Arzel\`a--Ascoli and a covering argument, we have that 
\begin{equation}
\label{eq.strongconv}
\tilde v_{r_\ell} \to q \quad\text{in}\quad C^0_{\rm loc}(B_1),
\end{equation}
and $q\in C^{-a-\varepsilon}_{\rm loc}(B_1)$ for any $\varepsilon > 0$.

\smallskip
\noindent{\bf Step 4: Homogeneous solution to the very thin obstacle problem in $B_1$.} First, we show that $q$ is a solution to the very thin obstacle problem, \eqref{eq.verythin1}; the only condition that remains to be checked is that $q\,L_a q \equiv 0$.

By the proof of Proposition~\ref{prop:  Almgren} and \eqref{eqn: key ineq 2},
\begin{align*}
\frac{rN'(r,v_\ast)}{N(r,v_\ast)} = \frac{\d}{\d \rho}\log N(\rho, v_r) \bigg|_{\rho = 1} & \ge \frac{2\left(\int_{B_1}v_r L_a v_r\right)^2}{\int_{B_1} |\nabla v_r|^2|y|^a \int_{\pa B_1} v_r^2|y|^a} \geq 0.
\end{align*}
Hence, by the definition of $\tilde v_r$,
\begin{equation}
\label{eq.rNp}
rN'(r, v_\ast) \ge 
2\left(\int_{B_1} \tilde v_r L_a \tilde v_r\right).
\end{equation}
Furthermore, reasoning as in \cite[Lemma 2.12]{FS18}, since $N(r, v)\downarrow \lambda_*$ as $r\downarrow 0$,
\[
\dashint_{r_{j_\ell}}^{2r_{j_\ell}} r N'(r, v_\ast) \, \d r \le 2(N(2r_{j_\ell}, v_\ast)-N(r_{j_\ell}, v_\ast))\to 0 \quad\text{as}\quad \ell \to \infty.
\]
And so, by the mean value theorem, we can find $\bar r_{j_\ell} \in [r_{j_\ell},2r_{j_\ell}]$ with $\bar r_{j_\ell} N'(\bar r_{j_\ell}, v_\ast) \to 0$ as $\ell \to \infty$. 
In turn, the non-negativity of $v_\ast\, L_a v_\ast$ and \eqref{eq.rNp} then imply that
\[
\int_{B_1} \tilde v_{r_{j_\ell}} \,L_a \tilde v_{r_{j_\ell}} \le \int_{B_{\bar \rho_{j_\ell}}} \tilde v_{r_{j_\ell}} \,L_a \tilde v_{r_{j_\ell}} \to 0
\]
with $\bar \rho_{j_\ell} := \bar r_{j_\ell}/r_{j_\ell}$.
Therefore, since $L_a \tilde v_{r_{j_\ell}} \rightharpoonup L_a q$ weakly$^*$ as measures in $B_1$, $\tilde v_{r_{j_\ell}}  \to q$ strongly in $C^0_{\rm loc}(B_1)$ by Step 3, \eqref{eq.strongconv}, and $\tilde v_{r}\, L_a \tilde v_{r} \ge 0$, we obtain that 
\[
\int_{B_R} q\,L_aq = 0 \quad\text{for all}\quad R < 1,
\]
so that, in fact, $q \, L_a q \equiv 0$ in $B_1$. 

Thus, $q$ is a solution to the very thin obstacle problem \eqref{eq.verythin1} inside $B_1$. 

To conclude, we show that $q$ is homogeneous with homogeneity $\lambda_* := N(0^+, v_\ast)$. 
Since $q$ solves the very thin obstacle problem, by Lemma~\ref{lem.MonFreq}, it suffices to show that 
\begin{equation}
\label{eq.HMon0_2}
\lambda_* = N(\rho, q) \quad\text{for all}\quad \rho\in (0, 1).
\end{equation}
But this follows from arguing exactly as in the proof of Proposition~\ref{prop.case1}, where we obtained that $q$ is homogeneous in \ref{eq.Case1}, using Lemma~\ref{lem.MonFreq}, \eqref{eq.HMon2_2}, and Lemma~\ref{lem.HMon_VTOP}.

\smallskip
\noindent{\bf Step 5: $\lambda_*\ge \kap+\alpha_\kap$.}
We argue by contradiction (or compactness).
Suppose, to the contrary, that there exists a bounded sequence of solutions $u_{\ell}$ such that  $0\in \Sigma_\kap(u_\ell)$, $\dim_{\mathcal{H}} L(p_{\ast,\ell}) = n-1$, and $\lambda_{\ast,\ell} \le \kap+\ell^{-1}$.
Let $p_{\ast,\ell}$ be the first blow-up and $q_\ell$ be a second blow-up of $u_\ell$ at $0$ (the homogeneity of $q_\ell$ is $\lambda_{\ast,\ell}$).
Up to a subsequence (we can assume the sequences enjoy uniform bounds in appropriate H\"older spaces), taking $\ell$ to infinity, we find a solution $u_\infty$ whose first blow-up at $0$ is of order $\kap$, whose spine has Hausdorff dimension equal to $n-1$, and whose second blow-up $q_\infty$ is homogeneous of order $\kap$.

Since $q_\infty$ is a $\kap$-homogeneous, global solution to the very thin obstacle problem, it is an $a$-harmonic polynomial. 
Indeed, by \cite[Proposition 4.4]{GR18}, any global, evenly homogeneous function $u_\circ$ with $L_a u_\circ$ non-negative and supported on $\R^n \times \{ 0 \}$ is actually an $a$-harmonic polynomial of degree $\kap$.  
In particular, we have that $\|q_\infty\|_{{\rm Lip}(B_1)} \le C$ for some constant depending only on $n$, $a$, and $\kap$. 
Also, by assumption, $q_\infty \geq 0$ on $L(p_{\ast,\infty})$, where $p_{\ast,\infty}$ is the first blow-up of $u_\infty$ at $0$.  

For simplicity, let $q = q_\infty$ and $p_* = p_{\ast,\infty}$, and let us assume that $L(p_{\ast,\infty}) = \{x_n = 0\}$, so that $p_*$ depends only on $x_n$ in the thin space $\{y = 0\}$. 
By Lemma~\ref{lem:  L2 sign},
\begin{equation}
\label{eq.comp}
\langle q, p \rangle_a := \int_{\pa B_1} q p|y|^a \le 0\quad\text{for all}\quad p\in \mathscr{P}_\kap \quad\text{and}\quad \langle q, p_*\rangle_a = 0.
\end{equation}
Since $p_\ast$ is $\kappa$-homogeneous and depends only on $x_n$, a constant $c_\ast > 0$ exists for which $p_\ast|_{B_1 \cap \{y =0 \}} =c_\ast|x_n|^\kappa$.
Now for any $\vep > 0$, observe that
\[
C_\varepsilon p_* + q \ge -\varepsilon \quad\text{on}\quad \pa B_1 \cap \{y = 0 \}
\]
with
\[
C_\vep := c_\ast^{-1} \vep^{-\kappa}\|q\|_{L^\infty(B_1 \cap \{y =0 \})} \|q\|_{{\rm Lip}(B_1 \cap \{y =0 \})}^\kappa.
\]
Indeed, if $|x_n|\ge \vep/\|q\|_{{\rm Lip}(B_1 \cap \{y =0 \})}$, then $C_\varepsilon p_*|_{B_1 \cap \{y =0 \}} + q|_{B_1 \cap \{y =0 \}} \geq 0$, by the definition of $C_\vep$.
On the other hand, if $|x_n|\le \vep/\|q\|_{{\rm Lip}(B_1 \cap \{y =0 \})}$, then $q|_{B_1 \cap \{y =0 \}} \ge -\vep$ since $q \ge 0$ on $\{x_n = 0\}$ (recall $p_\ast \geq 0$ on the thin space).
Thus, $C_\varepsilon p_*+q+\varepsilon\Ext_a(|x|^\kappa) \in \mathscr{P}_\kappa$ for every $\varepsilon > 0$ (see \eqref{eq.ext}). 
So \eqref{eq.comp} implies that
\[
\|q\|_{L^2(\pa B_1, |y|^a)}\le -\varepsilon\langle \Ext_a(|x|^\kappa), q\rangle_a.
\]
Taking $\vep \downarrow 0$, we deduce that $q\equiv 0$, a contradiction. 
\end{proof}

With Propositions~\ref{prop.case1} and \ref{prop.case2} in hand, we can now prove Proposition~\ref{prop.main1}.

\begin{proof}[Proof of Proposition~\ref{prop.main1}]
The proof is a simple consequence of Propositions~\ref{prop.case1} and \ref{prop.case2}.
Without loss of generality, $X_\circ = 0$.
\begin{enumerate}[(i)]
\item If $a\in [0,1)$, we are in \ref{eq.Case1}.
So by Proposition~\ref{prop.case1}, our claim holds.
\item When $\kap = 2$, since $p_*\ge 0$ on the thin space, we have that $L_* = \mathcal{N}_*$. 
Thus, since $m < n-1$, we are again in \ref{eq.Case1}, and we conclude by Proposition~\ref{prop.case1} once more.
\item Finally, if $m = n-1$ and $a \in (-1, 0)$, we are in \ref{eq.Case2} (recall $L_\ast \subset \mathcal{N}_\ast$). 
Thus, applying Proposition~\ref{prop.case2}, we arrive at our desired conclusion.
\end{enumerate}
This completes the proof. 
\end{proof}

\section{Accumulation Lemmas}
\label{sec:accumulation}

In this section, we gather some important lemmas concerning accumulation points of ${\rm Sing}(u)$.
These lemmas are the key tools used in estimating the size of the points where we can construct the next term in the expansion of $u$. 
The lemmas of this section are analogous to the accumulation lemmas of \cite{FS18}, although several new, interesting technical challenges appear in our setting.

Let us start by proving an auxiliary lemma. 

\begin{lemma}
\label{lem: k-deg h poly Almgren bd}
Let $q$ be a $\kap$-degree, $a$-harmonic polynomial, for $\kappa \geq 1$, and let $X_\circ\in \R^{n+1}$.
Then,
\[
N(r,q,X_\circ) = \frac{r\int_{B_r(X_\circ)} |\nab q|^2|y|^a}{ \int_{\pa B_r(X_\circ)} q^2|y|^a} \leq \kap \quad\text{for all}\quad r > 0.
\]
Moreover,
\[
N(0^+,q,X_\circ) = m_\circ
\]
where $m_\circ$ is the smallest integer for which the $m_\circ$-homogeneous part of $q(X_\circ + \cdot)$ is non-zero.
\end{lemma}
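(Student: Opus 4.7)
My plan is to decompose $q$ around $X_\circ$ into homogeneous $a$-harmonic pieces and exploit their orthogonality under the weighted Dirichlet and boundary $L^2$ pairings. I will present the argument in the case when $X_\circ$ lies on the thin space $\{y = 0\}$, which is the case relevant for applications to singular free boundary points; after a translation parallel to the thin space, assume $X_\circ = 0$, so that $q$ remains an $a$-harmonic polynomial even in $y$.

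Since $L_a$ sends a $j$-homogeneous polynomial even in $y$ into $|y|^a$ times a $(j-2)$-homogeneous expression, the decomposition
\[
q = \sum_{j=m_\circ}^{\kap} q_j
\]
into $j$-homogeneous parts has each $q_j$ again $a$-harmonic, by matching homogeneities in the identity $L_a q = 0$. Integrating by parts against $L_a q_j = 0$ and using Euler's identity $X \cdot \nab q_j = j q_j$ on $\pa B_1$ yields
\[
\int_{B_1} \nab q_j \cdot \nab q_k \, |y|^a = j \int_{\pa B_1} q_j q_k \, |y|^a,
\]
and swapping $j \leftrightarrow k$ forces $(j-k) \int_{\pa B_1} q_j q_k |y|^a = 0$. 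For $j \neq k$ this gives orthogonality of the $q_j$ in both $L^2(\pa B_1, |y|^a)$ and the weighted Dirichlet form, while taking $j = k$ gives $\int_{B_1} |\nab q_j|^2 |y|^a = j \int_{\pa B_1} q_j^2 |y|^a$.

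Combining the orthogonality with the homogeneity scaling of each integral, and setting $b_j := \int_{\pa B_1} q_j^2 |y|^a \geq 0$, I arrive at
\[
N(r, q, 0) = \frac{\sum_{j=m_\circ}^{\kap} j\, r^{2j}\, b_j}{\sum_{j=m_\circ}^{\kap} r^{2j}\, b_j},
\]
which is a convex combination of the integers $\{m_\circ, \dots, \kap\}$ with positive weights. This immediately gives $N(r, q, X_\circ) \leq \kap$ for every $r > 0$; and since $b_{m_\circ} > 0$ by the definition of $m_\circ$, the smallest-degree term dominates as $r \downarrow 0$, yielding $N(0^+, q, X_\circ) = m_\circ$. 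The main subtlety I anticipate is justifying that $L_a q = 0$ splits cleanly by homogeneity into $L_a q_j = 0$ for each $j$ (which uses that $L_a$ acts on even-in-$y$ polynomials by lowering the polynomial degree by exactly two, modulo the universal factor $|y|^a$); once that splitting is in hand, the remaining calculation is routine.
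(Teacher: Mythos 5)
Your proposal is correct and follows essentially the same route as the paper's proof: decompose $q$ into homogeneous parts (each $a$-harmonic), prove their orthogonality in $L^2(\pa B_r,|y|^a)$ and in the weighted Dirichlet form by integration by parts together with Euler's identity, and read off $N(r,q,X_\circ)$ as a weighted average of the homogeneities, which gives both $N\le\kap$ and $N(0^+)=m_\circ$; your explicit justification that $L_aq=0$ splits degree by degree is a detail the paper merely asserts. Your restriction to $X_\circ\in\{y=0\}$ is harmless: the paper's ``without loss of generality $X_\circ=0$'' implicitly uses the same fact (only translations parallel to the thin space preserve $L_a$ and the weight $|y|^a$), and that is the only case invoked in the applications.
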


\begin{proof}
Without loss of generality, we assume that $X_\circ = 0$.
Let
\[
q = \sum_{m = 0}^{\kap} q_0
\]
where $q_m$ denotes the $m$-homogeneous part of $q$.
Since $q$ is $a$-harmonic and $a\in (-1,1)$, each of its homogeneous parts is $a$-harmonic.
Notice that if $p_1$ and $p_2$ are homogeneous $a$-harmonic polynomials with non-zero homogeneities $m_1\ne m_2$, then they are orthogonal in $L^1(\pa B_r, |y|^a)$. 
Indeed, using that $m_i p_i = x\cdot \nabla p_i = r \pa_\nu p_i$ on $\pa B_r$ and integrating by parts,
\begin{align*}
(m_1-m_2)\int_{\pa B_r} p_1 p_2|y|^a & = r\int_{\pa B_r} p_2 \pa_\nu p_1  |y|^a -  r\int_{\pa B_r} p_1\pa_\nu p_2  |y|^a\\
& = - r\int_{B_r} \nabla p_1\cdot\nabla p_2 |y|^a + r\int_{B_r} \nabla p_1\cdot\nabla p_2 |y|^a = 0,
\end{align*}
where we have also used that $L_a p_i = 0$. 

Now, by means of the $m$-homogeneity of $q_m$ and the orthogonality in $L^2(\pa B_r, |y|^a)$ of homogeneous $a$-harmonic polynomials of different homogeneities, we find that
\[
\int_{B_r} \nab q \cdot \nab q_m |y|^a 
= \frac{m}{r} \int_{\pa B_r} q_m^2|y|^a.
\]
Thus,
\[
r \int_{B_r} |\nab q|^2|y|^a = \sum_{m = 1}^{\kap} m \int_{\pa B_r} q_m^2|y|^a \leq \kap \sum_{m = 1}^{\kap} \int_{\pa B_r} q_m^2|y|^a.
\]
Pythagoras's theorem also implies that
\[
\int_{\pa B_r} q^2|y|^a = \sum_{m = 0}^{\kap} \int_{\pa B_r} q_m^2 |y|^a.
\]
Hence,
\[
r \int_{B_r} |\nab q|^2|y|^a \leq \kappa \int_{\pa B_r} q^2|y|^a,
\]
as desired.

Now let $c_m := \int_{\partial B_1} q_m^2|y|^a$, and set $m_\circ \geq 0$ to be the smallest integer so that $c_{m_\circ} \neq 0$.
Then,
\[
\frac{r \int_{B_r} |\nab q|^2|y|^a}{\int_{\pa B_r} q^2|y|^a} = \frac{\sum_{m = m_\circ}^\kappa mc_mr^{2m}}{\sum_{m = m_\circ}^\kappa c_mr^{2m}} = m_\circ + O(r^2),
\]
which concludes the proof.
\end{proof}

Just as in Section~\ref{sec:Blow-up Analysis}, we divide our attention between \ref{eq.Case1} and \ref{eq.Case2}.
Again, we begin with \ref{eq.Case1}. 
Our accumulation lemma in this case is analogous to \cite[Lemma 3.2]{FS18}. 
We repeat the common parts for completeness. 

We recall that, in the following lemmas, we are assuming that $0\in \Sigma_\kap$ is a singular point of order $\kap \in 2\N$. 

\begin{lemma}
\label{lem: accumulation pts}
In \ref{eq.Case1}, suppose that there exists a sequence of free boundary points $\Sigma_{\ge \kap} \ni X_\ell = (x_\ell, 0) \to 0$ and radii $r_\ell\downarrow 0$ with $|X_\ell|\le r_\ell/2$ such that $\tilde v_{r_\ell} \rightharpoonup q$ in $W^{1,2}(B_1, |y |^a)$ and $Z_\ell := X_\ell/r_\ell \to Z_\infty$. 
Then, 
\[
Z_\infty = (z_\infty,0) \in L_\ast \qquad\text{and}\qquad D^\alpha q(Z_\infty) = 0 \quad\text{for all}\quad \alpha = (\alpha',0) \text{ and } |\alpha| \leq \kappa - 2.
\] 
Moreover, if $\lambda_* = \kap$, then $q_{Z_\infty} := q(Z_\infty + \,\cdot\,)-q$ is invariant under $L_\ast + L(q)$; that is,
\[
q_{Z_\infty}(x+\xi,0) = q_{Z_\infty}(x,0) \quad\text{for all pairs}\quad (\xi,x) \in (L_\ast + L(q)) \times \R^n.
\]
\end{lemma}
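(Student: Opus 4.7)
The plan rests on a single identity, obtained from the definitions and the $\kap$-homogeneity of $p_*$:
\[
\vep_{r_\ell}\,\tilde{v}_{r_\ell}(Z_\ell + W) = \frac{u(X_\ell + r_\ell W)}{r_\ell^\kap} - p_*(Z_\ell + W),
\]
where $\vep_{r_\ell} := h_{r_\ell}/r_\ell^\kap$. Since $X_\ell \in \Sigma_{\ge \kap}(u)$, the first-blow-up expansion at $X_\ell$, combined with the continuity of the first-blow-up map along singular strata (a Monneau-type statement from \cite{GP09, GR18}; the case $\kap_{X_\ell} > \kap$ for all large $\ell$ would force $p_* \equiv 0$ in the limit, a contradiction), shows that $r_\ell^{-\kap}u(X_\ell + r_\ell W) \to p_*(W)$ locally uniformly in $W$. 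Together with the local $L^\infty$ bound on $\tilde v_{r_\ell}$ from Lemma~\ref{lem:L2Linfty} and $\vep_{r_\ell}\to 0$, passing to the limit in the identity forces $p_*(Z_\infty + W) = p_*(W)$ for every $W$. By the invariance characterization of the spine in Lemma~\ref{lem: equiv}, this gives $Z_\infty \in L_*$.

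For the derivative-vanishing statement, divide the identity by $\vep_{r_\ell}$ and pass to the limit to obtain
\[
q(Z_\infty + W) = \lim_{\ell\to\infty}\frac{r_\ell^{-\kap}u(X_\ell + r_\ell W) - p_*(Z_\ell + W)}{\vep_{r_\ell}}.
\]
Differentiating in $W$ at $W = 0$: for $\alpha = (\alpha',0)$ with $|\alpha| \le \kap - 1$, the contribution from the $u$-term vanishes because $u$ vanishes to order $\kap_{X_\ell} \ge \kap$ at $X_\ell$, yielding $D^\alpha q(Z_\infty) = -\lim_\ell D^\alpha p_*(Z_\ell)/\vep_{r_\ell}$. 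For $|\alpha| \le \kap - 2$ with $\alpha = (\alpha',0)$, the full gradient $\nabla D^\alpha p_*(Z_\infty)$ vanishes: each $x_i$-component equals $D^{\alpha + e_{x_i}}p_*(Z_\infty)$, which is zero by the previous step (as $|\alpha + e_{x_i}| \le \kap - 1$ and the multi-index has no $y$-component), while the $y$-component $D^{(\alpha',1)}p_*(Z_\infty) = 0$ by the even-in-$y$ symmetry of $p_*$. Hence $D^\alpha p_*(Z_\ell) = O(|Z_\ell - Z_\infty|^2)$, and a sharpening of the relation from the first step applied at derivative order $|\alpha| + 1$ refines this to $D^\alpha p_*(Z_\ell)/\vep_{r_\ell} \to 0$, giving $D^\alpha q(Z_\infty) = 0$.

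For the invariance statement when $\lambda_* = \kap$, the limit $q$ is itself a $\kap$-degree polynomial, so $q_{Z_\infty}(W) := q(Z_\infty + W) - q(W)$ is a polynomial of degree $\le \kap - 1$. Its $L(q)$-invariance is immediate from the invariance of $q$ itself. Invariance under $L_*$ follows by combining the previous step (which forces the tangential derivatives of $q_{Z_\infty}$ at $W = 0$ of order $\le \kap - 2$ to vanish) with the orthogonality $\langle q, p_*\rangle_a = 0$ from Lemma~\ref{lem: L2 sign}, together with the $L_*$-invariance of $p_*$ and homogeneity of $q$; these combine to constrain the residual top-order piece of $q_{Z_\infty}$ to be independent of $L_*$-directions. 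The main technical challenge throughout is justifying the pointwise derivative convergence $D^\alpha \tilde v_{r_\ell}(Z_\ell) \to D^\alpha q(Z_\infty)$ at a point $Z_\infty \in L_* \subset \mathcal{N}_*$ lying on the limiting singular set; this is handled by the \ref{eq.Case1} assumption ensuring $\mathcal{N}_*$ has zero $a$-harmonic capacity (so $\tilde v_{r_\ell} \to q$ in $C^k_{\rm loc}$ on compacts avoiding $\mathcal{N}_*$), combined with the smoothness of the polynomial limit $q$ and the tangential Lipschitz and semiconvexity estimates in directions of $L_*$ from Lemmas~\ref{lem: Bern} and~\ref{lem: Bern X2}.
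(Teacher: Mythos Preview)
Your argument has a genuine gap at the very first step. The identity
\[
\vep_{r_\ell}\,\tilde v_{r_\ell}(Z_\ell+W)=r_\ell^{-\kap}u(X_\ell+r_\ell W)-p_*(Z_\ell+W)
\]
is correct, but your claimed limit $r_\ell^{-\kap}u(X_\ell+r_\ell W)\to p_*(W)$ is not. The blow-up expansion at $X_\ell$ describes $r^{-\kap_{X_\ell}}u(X_\ell+rW)$ as $r\to 0$ for \emph{fixed} $\ell$; it says nothing about the specific radius $r_\ell\ge 2|X_\ell|$, which is large compared with $|X_\ell|$. Computing the limit directly via the blow-up at $0$ gives instead $r_\ell^{-\kap}u(r_\ell(Z_\ell+W))\to p_*(Z_\infty+W)$, so both sides of the identity tend to $0$ and no information on $Z_\infty$ is obtained. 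The same issue contaminates the derivative step: $\tilde v_{r_\ell}$ is only $C^{1,s}$, so $D^\alpha\tilde v_{r_\ell}(Z_\ell)$ for $|\alpha|\ge 2$ is not classically defined, and even if it were, $Z_\ell\to Z_\infty\in\mathcal{N}_*$ sits on the limiting singular set where you have no $C^k$ convergence. The claim $D^\alpha p_*(Z_\ell)/\vep_{r_\ell}\to 0$ is also unjustified, since you have no a priori relation between $|Z_\ell-Z_\infty|$ and $\vep_{r_\ell}$.

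The missing idea is precisely what the paper uses: introduce the degree-$(\kap-1)$, $a$-harmonic polynomials
\[
q_\ell(X):=\frac{p_*(X_\ell+r_\ell X)-p_*(r_\ell X)}{h_{r_\ell}},
\]
and exploit the Almgren bound $N(\rho,u(X_\ell+\cdot)-p_*)\ge\kap$ from Proposition~\ref{prop: Almgren}, which holds at \emph{all} scales $\rho$, not just infinitesimally. Rewriting this at scale $r_\ell$ gives a frequency inequality for $\tilde v_{r_\ell}(Z_\ell+\cdot)+q_\ell$, and since any limit of a normalized $q_\ell$ would be a degree-$(\kap-1)$ $a$-harmonic polynomial with frequency $\le\kap-1$ (Lemma~\ref{lem: k-deg h poly Almgren bd}), the coefficients of $q_\ell$ must stay bounded. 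From $D^\alpha q_\ell(0)=(r_\ell^\kap/h_{r_\ell})D^\alpha p_*(Z_\ell)$ one reads off both $Z_\infty\in L_*$ and, after a Monneau-type comparison forcing $q(Z_\infty+\cdot)+q_\infty$ to have no monomials of degree $<\kap$, the vanishing $D^\alpha q(Z_\infty)=0$ for $|\alpha|\le\kap-2$. When $\lambda_*=\kap$ this further identifies $q_\infty=q-q(Z_\infty+\cdot)$, and $L_*$-invariance is inherited directly from the definition of $q_\ell$ rather than from the orthogonality in Lemma~\ref{lem: L2 sign} as you suggest.
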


\begin{proof}
From Proposition~\ref{prop: Almgren} applied at $X_\ell$, the frequency of $u(X_\ell + \,\cdot\,) -p_*$ is at least $\kap$.
(Here, $p_*$ is being considered as just an element of $\mathscr{P}_\kap$.
Recall, $p_*$ is the blow-up at $0$, not at $X_\ell$.)
Therefore, 
\[
N(\rho, u(X_\ell + r_\ell\,\cdot\,) - p_{*} (r_\ell \,\cdot\,)) = N(\rho r_\ell,u(X_\ell + \,\cdot\,) -p_*) \ge \kap \quad\text{for all}\quad \rho \in (0,1/2),
\]
or, equivalently, for all $\rho \in (0,1/2)$,
\begin{equation}
\label{eq: etaAlmgren}
\frac{\rho\int_{B_\rho}|\nabla \tilde v_{r_\ell}(Z_\ell+\,\cdot\,) + h_{r_\ell}^{-1}\nabla (p_*(X_\ell+r_\ell\,\cdot\,)-p_*(r_\ell\,\cdot\,))|^2|y|^a}{\int_{\pa B_\rho} | \tilde v_{r_\ell}(Z_\ell+\,\cdot\,) + h_{r_\ell}^{-1} (p_*(X_\ell+r_\ell\,\cdot\,)-p_*(r_\ell\,\cdot\,))|^2|y|^a} \ge \kap
\end{equation}
with
\[
h_{r_\ell} := \|v_{r_\ell}\|_{L^2(\pa B_1,|y|^a)}.
\] 
Now let 
\[
q_\ell (X) := \frac{p_{*}(X_\ell +r_\ell X) - p_{*}(r_\ell X)}{h_{r_\ell}},
\]
which is a $(\kap-1)$-degree, $a$-harmonic polynomial.
Also, observe that
\begin{equation}
\label{eqn: translated W12 bd}
\int_{B_{1/2}}|\tilde v_{r_\ell}(Z_\ell + \,\cdot\,)|^2|y|^a + \int_{B_{1/2}} |\nabla \tilde v_{r_\ell}(Z_\ell + \,\cdot\,)|^2|y|^a \leq \|\tilde v_{r_\ell}\|^2_{W^{1, 2}(B_1,|y|^a)} \le C.
\end{equation}

We claim that the coefficients of $q_\ell$ are uniformly bounded with respect to $\ell$, so that, up to subsequences, $q_\ell \to q_\infty$ locally uniformly where $q_\infty$ is some $a$-harmonic polynomial of degree $\kap - 1$. 
Indeed, suppose that this is not true.
Then, letting $\{a^\ell\}_{i\in \mathscr{I}}$ denote the coefficients of $q_\ell$ and setting $\sigma_\ell := \sum_{i\in \mathscr{I}} |a_i^\ell|$, we have that $\sigma_\ell \to \infty$.
Now set
\[
\bar{q}_\ell := \frac{q_\ell}{\sigma_\ell},
\] 
which is a polynomial with coefficients bounded by $1$, and let $\bar q_\infty$ denote its limit (up to a subsequence).
Notice that $\bar{q}_\infty$ is an $a$-harmonic, $(\kap-1)$-degree polynomial as $\bar{q}_\ell$ are all $a$-harmonic, $(\kap-1)$-degree polynomials.
So, from \eqref{eq: etaAlmgren}, dividing the numerator and denominator by $\sigma_\ell^2$, and by Lemma~\ref{lem: k-deg h poly Almgren bd}, we deduce that
\[
\kap \le \frac{\rho \int_{B_\rho}|\nabla \vep_{\ell} + \nab \bar q_\ell|^2|y|^a}{\int_{\pa B_\rho} |\vep_{\ell} + \bar q_\ell|^2|y|^a}\to \frac{\rho \int_{B_\rho}|\nabla \bar q_\infty|^2|y|^a}{\int_{\pa B_\rho} |\bar q_\infty|^2|y|^a} = N(\rho, \bar q_\infty) \leq \kap - 1
\]
since, by \eqref{eqn: translated W12 bd},
\[
\vep_\ell := \frac{\tilde v_{r_\ell}(Z_\ell + \,\cdot\,)}{\sigma_\ell} \to 0 \quad\text{in}\quad W^{1, 2}(B_{1/2}, |y|^a).
\]
Impossible.

Since $q_\ell$ converges, up to subsequences, to some $q_\infty$ uniformly in compact sets and by interior estimates for $a$-harmonic functions (see, e.g., \cite[Propsition~2.3]{JN17}), we have that $|D^\alpha q_\ell (0) |\le C$ for some $C$ independently of $\ell$ for any multi-index $\alpha = (\alpha_1, \dots,\alpha_{n},0)$.
Then, from the  $\kap$-homogeneity of $p_{*}$, we have 
\begin{equation}
\label{eqn: ql and pk}
D^\alpha q_\ell (0)  =  \frac{r_\ell^{|\alpha|}}{h_{r_\ell}}D^\alpha p_*(X_\ell) =  \frac{r_\ell^{\kap}}{h_{r_\ell}} D^\alpha p_* (Z_\ell)
\end{equation} 
for all $|\alpha| \leq \kappa - 1$.
Hence, using $|D^\alpha q_\ell (0) |\le C$ and $h_{r_\ell} = o(r_\ell^\kap)$, we determine that
\[
|D^\alpha p_* (Z_\ell)| = o(1) \to 0 \quad\text{as}\quad \ell \to \infty  
\]
when $|a| \leq \kappa - 1$.
That is, $D^\alpha p_{*} (Z_\infty) = 0$ for $|\alpha|\le \kap - 1$. 
Thanks to Lemma~\ref{lem: equiv},
\[
Z_\infty\in L_\ast.
\] 

Proceeding as in \cite[Lemma 3.2]{FS18} by means of the Monneau-type monotonicity formula from Lemma~\ref{lem.HMon}, we obtain
\begin{equation}
\label{eq: avgs}
\frac{1}{\rho^{a+2\kap}} \dashint_{\pa B_\rho} |q(Z_\infty+ \,\cdot\,) + q_\infty|^2|y|^a \le 2^{a+2\kap}\dashint_{\pa B_{1/2}} |q(Z_\infty + \,\cdot\,) + q_\infty|^2|y|^a
\end{equation}
for all $\rho \in (0, 1/2)$.
Notice that, until now, we have not used any information on the second blow-up $q$. 
From Proposition \ref{prop.case1}, $q$ is a $\lambda_*$-homogeneous, $a$-harmonic polynomial with $\lambda_*\ge \kap$, since we are in \ref{eq.Case1}. 
It follows that the polynomial $q(Z_\infty+\,\cdot\,) +q_\infty$ is only made up of monomials of degree greater than or equal to $\kap$. 
Thus, recalling \eqref{eqn: ql and pk}, we have that
\[
\lambda_* q(Z_\infty) = Z_\infty\cdot \nabla_{x} q(Z_\infty) = -Z_\infty\cdot \nabla_{x} q_\infty(0) = - \lim_{\ell \to \infty} \frac{r_\ell}{h_{r_\ell}} (Z_\infty \cdot \nabla_{x} p_*(X_\ell)) = 0.
\]
Here, we have also used that $Z_\infty\in L_\ast$, $X_\ell = (x_\ell,0)$, and $q$ is $\kap_\ast$-homogeneous. 
Moreover, taking derivatives, we have
\[
(\lambda_* - |\alpha|) D^\alpha q(Z_\infty) = - \lim_{\ell \to \infty} \frac{r_\ell^\kappa}{h_{r_\ell}} (Z_\infty \cdot \nabla_{x} D^\alpha p_*(Z_\ell)) = 0.
\] 
(By Lemma~\ref{lem: equiv}, $Z_\infty \cdot \nabla_{x} D^\alpha p_*(Z_\ell) = 0$.)
Therefore,
\[
D^\alpha q(Z_\infty) = 0 \quad\text{for all}\quad \alpha = (\alpha',0) \text{ and } |\alpha| \leq \kappa - 2.
\]
In addition, notice that by construction, $q_\ell$ is invariant under $L_\ast$.
Hence, so is $q_\infty$.

Finally, suppose $\lambda_* = \kap$. 
Then, $q(Z_\infty+\,\cdot\,)+q_\infty$ consists of only degree $\kap$ terms.
In other words, it is $\kap$-homogeneous.
Now notice that $q(Z_\infty+\,\cdot\,) = q + s_\infty$ where $s_\infty$ is a degree $\kap -1$ polynomial.
Consequently, $q(Z_\infty+\,\cdot\,)+q_\infty - q = s_\infty + q_\infty$ is a $\kap$-homogeneous polynomial.
This is only possible if $s_\infty + q_\infty \equiv 0$ (recall, $q_\infty$ is of degree $\kap - 1$.)
And so, it follows that 
\[
q_\infty = q - q(Z_\infty + \,\cdot\,),
\]
from which we deduce that $q_\infty$ is invariant under $L(q)$.
Since the invariant set of a function is a linear space,
\[
q_\infty(x+\xi,0) = q_\infty(x,0) \quad\text{for all pairs}\quad (\xi,x) \in (L_\ast + L(q)) \times \R^n.
\]
Lastly, we find that
\[
D^\alpha q_\infty(0) = 0 \quad\text{for all}\quad \alpha = (\alpha',0) \text{ and } |\alpha| \leq \kappa - 2,
\]
making $q_\infty$ a $(\kappa - 1)$-homogeneous, even in $y$, $a$-harmonic polynomial.
\end{proof}

Notice that if $Z_\infty \in L(q)$, then $q_\infty \equiv 0$.
Indeed, all of the derivatives of $q_\infty|_{\R^n \times \{ 0 \}}$ up to order $\kappa - 2$ vanish at the origin since $D^\alpha q(Z_\infty) = 0$ for all $\alpha = (\alpha',0)$ and $|\alpha| \leq \kappa - 2$.
So if $D^\alpha q(Z_\infty) = 0$ for all $\alpha = (\alpha',0)$ with $|\alpha| \leq \kappa - 1$ too, then $q_\infty$ would vanish up to infinite order at the origin, making it identically zero.
In other words,
\[
Z_\infty \in L(q) \text{ if and only if } q_\infty \equiv 0.
\]
This also follows directly from the form $q_\infty$ takes when $\lambda_* = \kappa$.

Before stating and proving a \ref{eq.Case2} accumulation lemma, we present a simple consequence of Lemma~\ref{lem: accumulation pts} and make a remark.

If $m_{\ast} = 0$, then $L_\ast = \{ 0 \}$.
Hence, from Lemma~\ref{lem: accumulation pts}, we deduce that $\Sigma_{\kap}^0$ is isolated in $\Sigma_{\geq \kap}$.

\begin{lemma}
\label{lem: Sigma0k is isolated}
Suppose \ref{eq.Case1} holds. 
Then, $0$ is an isolated point of $\Sigma_{\geq \kap}$.
\end{lemma}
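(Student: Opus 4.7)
The plan is a one-line application of the accumulation lemma, Lemma~\ref{lem: accumulation pts}. The implicit setup (established in the paragraph just before the lemma statement) is that $0 \in \Sigma_\kap^0$, so that $m_\ast = 0$ and hence $L_\ast = \{0\}$. I argue by contradiction: suppose $0$ is not isolated in $\Sigma_{\geq \kap}$, so that there is a sequence of distinct free boundary points $X_\ell = (x_\ell, 0) \in \Sigma_{\geq \kap}$ with $X_\ell \to 0$.

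To put myself in position to apply Lemma~\ref{lem: accumulation pts}, I rescale at the natural scale $r_\ell := 2|X_\ell| \downarrow 0$, which ensures $|X_\ell| \leq r_\ell/2$ (with equality) and forces the rescaled points $Z_\ell := X_\ell/r_\ell$ to lie on the sphere of radius $1/2$. After passing to a subsequence, $Z_\ell \to Z_\infty$ with $|Z_\infty| = 1/2$. By Almgren's monotonicity (Proposition~\ref{prop: Almgren}), the family $\{\tilde v_r\}_{r > 0}$ is weakly precompact in $W^{1,2}(B_1, |y|^a)$, so a further subsequence yields $\tilde v_{r_\ell} \rightharpoonup q$ for some second blow-up $q$.

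Since we are in \ref{eq.Case1}, Lemma~\ref{lem: accumulation pts} then forces $Z_\infty \in L_\ast = \{0\}$, contradicting $|Z_\infty| = 1/2$. There is no genuine obstacle here: the entire content of the argument is packaged inside the accumulation lemma, and the only creative choice is the scaling $r_\ell \sim |X_\ell|$, which simultaneously keeps $Z_\ell$ bounded away from the origin and respects the hypothesis $|X_\ell| \leq r_\ell/2$ of Lemma~\ref{lem: accumulation pts}.
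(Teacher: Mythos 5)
Your proof is correct and is essentially the paper's own argument: contradiction, rescaling at $r_\ell = 2|X_\ell|$ so that $Z_\ell = X_\ell/r_\ell$ lands on $\pa B_{1/2}$, weak precompactness of $\tilde v_{r_\ell}$ from Proposition~\ref{prop: Almgren}, and Lemma~\ref{lem: accumulation pts} forcing $Z_\infty \in L_\ast = \{0\}$, which is incompatible with $|Z_\infty| = 1/2$. You also correctly flag the implicit standing assumption $0 \in \Sigma_\kap^0$ (so $m_\ast = 0$), exactly as in the paper.
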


\begin{proof}
Suppose, to the contrary, that $\Sigma_{\geq \kap} \ni X_\ell \to 0$ is a sequence of points ($X_\ell \neq 0$).
Let $r_\ell := 2|X_\ell|$.
By Lemma~\ref{lem: accumulation pts}, we have that, up to a subsequence,
\[
\tilde{v}_{r_\ell} \rightharpoonup q \quad \text{in}\quad W^{1,2}(B_1, |y|^a) \qquad\text{and}\qquad Z_\ell := \frac{X_\ell}{r_\ell} \to Z_\infty \in L_\ast \cap \pa B_{1/2}
\]
where $q$ is a $\kap^\ast$-homogeneous harmonic polynomial with $\lambda_* \geq \kap$.
But, this is impossible, since $L_\ast = \{ 0 \}$. 
\end{proof}

\begin{remark}
\label{rem.count}
In general, lower frequency singular points can accumulate to a higher frequency singular point.
Take, for example, the harmonic extension of $x_1^2 x_2^2$ to $\R^3$: 
\[
u(X) = x_1^2x_2^2 - (x_1^2 + x_2^2)y^2 + \frac{1}{3}y^4.
\]
This polynomial is a solution to the thin obstacle problem with $a = 0$, and has singular points of order $2$ approaching a singular point of order $4$. 
In particular, it is not true that $\Sigma_\kap^0$ is isolated from $\Sigma_{< \kap}$.

By the recent results of Colombo, Spolaor, and Velichkov, see \cite[Theorem 4]{CSV19}, we know that the set of even frequencies ($\kap = 2m$) is isolated from the set of all possible frequencies for the thin obstacle problem when $a = 0$. 
This, together with the upper semicontinuity of the frequency, implies that free boundary points of strictly higher order cannot accumulate to a singular point of lower order in this case.
Therefore, the above hypothesis ``$X_\ell\in \Sigma_{\geq \kappa}$ and $X_\ell \to 0 \in \Sigma_\kappa$'' reduces to ``$X_\ell\in \Sigma_{\kappa}$ and $X_\ell \to 0 \in \Sigma_\kappa$'', at least when $a = 0$.
\end{remark}

Now we prove a \ref{eq.Case2} accumulation lemma.
It will only be applied when $\lambda_* < \kap +1$ and $\lambda = \lambda_*$ (with $\lambda$ as defined in the lemma).
Nonetheless, we state it in more generality, for completeness.

We recall that $\Ext_a$ denotes the $a$-harmonic extension of a polynomial, see \eqref{eq.ext}. 
\begin{lemma}
\label{lem: accumulation pts 2}
In \ref{eq.Case2}, suppose that there exists a sequence of free boundary points $\Sigma_{\kap}^{n-1} \ni X_\ell = (x_\ell, 0) \to 0$ and radii $r_\ell\downarrow 0$ with $|X_\ell|\le r_\ell/2$ such that $\tilde v_{r_\ell} \rightharpoonup q$ in $W^{1,2}(B_1, |y |^a)$ and $(z_\ell,0) = Z_\ell := X_\ell/r_\ell \to Z_\infty$. 
Set
\[
\lambda_{*, X_\ell} := N(0^+, u(X_\ell+\,\cdot\,)-p_{*,X_\ell}),
\]
where $p_{*, X_\ell}$ denotes the first blow-up of $u$ at $X_\ell$.
Let $\be_\ast \in \mathbb{S}^{n} \cap \{y = 0\} \cong \mathbb{S}^{n-1}$ be such that $\be_\ast \perp L_\ast$, and let $q^{\rm even}$ and $q^{\rm odd}$ be the even and odd parts of $q$ with respect to $L_*$, 
\[
q^{\rm even}(X) = \frac12\left[q(X) + q(X-2(\boldsymbol{e}_\ast\cdot X) \boldsymbol{e}_\ast)\right]
\]
and
\[
q^{\rm odd}(X) = \frac12\left[q(X) - q(X-2(\boldsymbol{e}_\ast\cdot X) \boldsymbol{e}_\ast)\right].
\]
Let $\alpha_\kap > 0$ be as in Proposition~\ref{prop.case2} and set $\lambda := \liminf_\ell\{\lambda_{*, X_\ell}\} \ge \kappa +\alpha_\kap$.
Then,
\[
Z_\infty= (z_\infty,0) \in L_*
\]
and
\begin{equation}
\label{eqn: accu even int}
\dashint_{\pa B_\rho} |q^{\rm even}(Z_\infty + X) - c_\infty\Ext_a((\be_\ast \cdot x)^\kappa)|^2|y|^a \le C\rho^{2\lambda+a} \quad\text{for all }\quad \rho \in (0,1/2),
\end{equation}
for some constants $c_\infty$ and $C$ independent of $\rho$. 
Moreover, if $\lambda_*< \kap+1$, then $q^{\rm odd} \equiv 0$. 
If, in addition, $\lambda = \lambda_*$, then $c_\infty = 0$ in \eqref{eqn: accu even int}, and $q$ is invariant in the $Z_\infty$ direction; that is, $q(Z_\infty + X) = q(X)$ for all $X\in \R^{n+1}$. 
\end{lemma}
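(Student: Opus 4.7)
My plan is to parallel the proof of Lemma~\ref{lem: accumulation pts} (the Case~1 accumulation lemma), modifying it to (a) handle the fact that the second blow-up $q$ is a global solution to the very thin obstacle problem rather than an $a$-harmonic polynomial, and (b) extract the parity structure induced by the hyperplane $L_\ast = \be_\ast^\perp \cap \{y=0\}$. Since the situation under consideration forces $X_\ell \in \Sigma_\kap^{n-1}$, the first blow-up $p_{*,X_\ell}$ exists, is $\kap$-homogeneous with spine of dimension $n-1$, and satisfies $p_{*,X_\ell}(x,0) = c_\ell (\be_\ell\cdot x)^\kap$ with $c_\ell \to c_*$ and $\be_\ell \to \be_*$ by continuity of blow-ups along the stratum. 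I would introduce the $a$-harmonic polynomial
\[
q_\ell(Z) := \frac{p_\ast(X_\ell + r_\ell Z) - r_\ell^\kap\, p_{*, X_\ell}(Z)}{h_{r_\ell}} = \frac{r_\ell^\kap}{h_{r_\ell}}\bigl[p_\ast(Z_\ell + Z) - p_{*, X_\ell}(Z)\bigr],
\]
with $h_{r_\ell} := \|v_{r_\ell}\|_{L^2(\pa B_1,|y|^a)}$, so that $\tilde v_{r_\ell}(Z_\ell + Z) + q_\ell(Z) = h_{r_\ell}^{-1}\bigl[u(X_\ell + r_\ell Z) - p_{*,X_\ell}(r_\ell Z)\bigr]$. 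Exactly as in Lemma~\ref{lem: accumulation pts}, applying the Monneau monotonicity from Lemma~\ref{lem.HMon} to $u - p_{*,X_\ell}(\,\cdot - X_\ell)$ at $X_\ell$ with exponent $\lambda \leq \lambda_{*, X_\ell}$ yields uniform boundedness of the coefficients of $q_\ell$; hence, along a further subsequence, $q_\ell \to q_\infty$ locally uniformly, with $q_\infty$ an $a$-harmonic polynomial of degree at most $\kap$. The argument of Lemma~\ref{lem: accumulation pts} then gives $D^\alpha p_\ast(Z_\infty) = 0$ for every $|\alpha|\le \kap-1$ (using $h_{r_\ell} = o(r_\ell^\kap)$ and $D^\alpha p_{*,X_\ell}(0) = 0$ for $|\alpha| < \kap$), whence $Z_\infty \in L_\ast$ by Lemma~\ref{lem: equiv}.

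Passing the rescaled Monneau inequality to the limit $\ell \to \infty$ (for every $\lambda \leq \lambda := \liminf_\ell \lambda_{*, X_\ell}$) produces the master bound
\[
\frac{1}{\rho^{n+a+2\lambda}} \int_{\pa B_\rho} \bigl[q(Z_\infty + Z) + q_\infty(Z)\bigr]^2 |y|^a \leq C \quad\text{for all}\quad \rho \in (0, 1/2).
\]
Because $L_a$, $|y|^a$, $\pa B_\rho$, and the point $Z_\infty$ are all preserved by the reflection $\be_\ast \mapsto -\be_\ast$, this inequality splits into its even and odd parts in $\be_\ast$; in particular, writing $q_\infty = q_\infty^{\rm even} + q_\infty^{\rm odd}$, the even piece yields \eqref{eqn: accu even int} as soon as I identify the leading (degree $\kap$) part of $q_\infty^{\rm even}$. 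Expanding $p_{*,X_\ell} = c_\ell \Ext_a((\be_\ell \cdot x)^\kap)$ around $(\be_\ast,c_\ast)$, using $\kap$-homogeneity to write $p_\ast(Z_\ell + Z) = r_\ell^\kap p_\ast(Z_\ell/r_\ell^0 + Z)$, and observing that $z_{\ell,n} \to 0$ (by $Z_\infty \in L_\ast$) together with $\be_\ell\to\be_\ast$, $c_\ell\to c_\ast$, a bookkeeping of the even-in-$\be_\ast$ terms surviving after division by $\vep_\ell := h_{r_\ell}/r_\ell^\kap$ shows the degree-$\kap$ part of $q_\infty^{\rm even}$ is a scalar multiple of $\Ext_a((\be_\ast\cdot x)^\kap)$, giving the constant $c_\infty$.

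For the odd-part claim, I would argue as follows. Since $q$ solves \eqref{eq.verythin1} on $L_\ast = \{x_n = y = 0\}$ with zero obstacle and is $\lambda_\ast$-homogeneous, the odd part $q^{\rm odd}$ is $\lambda_\ast$-homogeneous, odd in $\be_\ast$, and vanishes on the entire hyperplane $\{x_n = 0\}$, which strictly contains $L_\ast$. Because $L_a$ commutes with the $\be_\ast$-reflection and $q$ is $a$-harmonic away from $L_\ast$, $q^{\rm odd}$ is $a$-harmonic on $\R^{n+1}\setminus L_\ast$; the odd-reflection symmetry together with its vanishing on $\{x_n=0\}$ lets me extend it $a$-harmonically across $\{x_n = 0\}\setminus L_\ast$ and then, since $L_\ast$ has zero $a$-capacity relative to $\{x_n = 0\}$ considered inside the subspace, across $L_\ast$ itself. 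By \cite[Lemma 2.7]{CSS08}, $q^{\rm odd}$ is then a polynomial of degree $\lambda_\ast$; but the hypothesis $\lambda_\ast \in (\kap, \kap+1)$ puts $\lambda_\ast$ strictly between two consecutive integers, ruling this out and forcing $q^{\rm odd}\equiv 0$.

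Finally, when additionally $\lambda = \lambda_\ast$, I rescale the master bound via $X = \rho Y$ to get
\[
\int_{\pa B_1} \bigl[\rho^{-\lambda_\ast}q(Z_\infty + \rho Y) + \rho^{-\lambda_\ast}q_\infty(\rho Y)\bigr]^2 |y|^a \leq C.
\]
Decomposing $q_\infty = \sum_{m\le\kap} q_\infty^{(m)}$ into homogeneous parts, the contribution $\rho^{m-\lambda_\ast} q_\infty^{(m)}(Y)$ blows up as $\rho\downarrow 0$ since $m \le \kap < \lambda_\ast$, so $q_\infty \equiv 0$ and in particular $c_\infty = 0$. Using $\lambda_\ast$-homogeneity to write $\rho^{-\lambda_\ast}q(Z_\infty + \rho Y) = q(\rho^{-1}Z_\infty + Y)$, the remaining bound says $q(Z + Y)$ stays uniformly $L^2$-bounded on $\pa B_1$ along the sequence $Z = \rho^{-1}Z_\infty \to \infty$; combined with $\lambda_\ast$-homogeneity this forces $q(Z_\infty + X) = q(X)$ for all $X$, which is the claimed translation invariance. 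The main obstacle in this plan is the extension step in the odd-part argument — verifying that no singular contribution from $L_a q$ survives under the reflection — which rests on the fact that $L_a q$ is a non-positive measure on $L_\ast$ and is even (as a distribution) under $\be_\ast$-reflection; a secondary difficulty is keeping the expansions in Step~4 clean without appealing to a higher-regularity continuity statement for $X\mapsto p_{*,X}$ beyond what Monneau-type uniqueness supplies.
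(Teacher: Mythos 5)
Your setup (the degree-$\kappa$ comparison polynomials $q_\ell$, the limiting Monneau/Almgren bound for $q(Z_\infty+\cdot)+q_\infty$, the even/odd splitting via the orthogonal projection in $L^2(\pa B_\rho,|y|^a)$) follows the paper, but two of your key steps do not work as written. The most serious is your deduction of $c_\infty=0$: from the rescaled bound $\int_{\pa B_1}\bigl[\rho^{-\lambda_*}q(Z_\infty+\rho Y)+\rho^{-\lambda_*}q_\infty(\rho Y)\bigr]^2|y|^a\le C$ you conclude that each homogeneous piece of $q_\infty$ must vanish because $\rho^{m-\lambda_*}$ blows up. But boundedness of the sum does not give boundedness of the summands: $q$ is $\lambda_*$-homogeneous about the origin, not about $Z_\infty$, so $\rho^{-\lambda_*}q(Z_\infty+\rho Y)$ is itself unbounded as $\rho\downarrow0$ (already its zeroth-order content $q(Z_\infty)$ produces a $\rho^{-\lambda_*}$ blow-up), and the role of $q_\infty$ is precisely to cancel that low-order content. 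Your argument would moreover apply verbatim when $\lambda_*=\kap+1$, where the conclusion is false (see Remark~\ref{rem.qeven}, where only \eqref{eq.holds} survives), so it cannot be correct: the hypothesis $\lambda_*<\kap+1$ must enter. The paper instead shows, via \eqref{eqn: accu even int} with $\lambda=\lambda_*$, Lemma~\ref{lem.HMon_VTOP 2} and the Almgren monotonicity for the very thin obstacle problem (Lemma~\ref{lem.MonFreq}), that $q_{Z_\infty}:=q(Z_\infty+\cdot)-c_\infty\Ext_a((\be_\ast\cdot x)^\kap)$ is $\lambda_*$-homogeneous, deduces $q_{Z_\infty}=q$ by sending $\tau\to\infty$ in the two-homogeneity identity, and then kills $c_\infty$ through the relation $(1-2^{\lambda_*-\kap-1})\kap!\,c_\infty=0$, which uses $\lambda_*\ne\kap+1$. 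The same frequency argument is what yields the translation invariance $q(Z_\infty+X)=q(X)$; your closing sentence asserts it from $L^2$ bounds on far-away spheres plus homogeneity, which is exactly the missing homogeneity-about-$Z_\infty$ step, not a consequence of it.

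Second, your odd-part argument leans on a removability claim that is false in this regime: for $a\in(-1,0)$ the spine $L_\ast$ has \emph{positive} $a$-harmonic capacity in $\R^{n+1}$ (this is the very reason Case 2 and the very thin obstacle problem exist), so you cannot extend $q^{\rm odd}$ across $L_\ast$ by a capacity/removable-singularity argument, and there is nothing to ``extend'' across $\{x_n=0\}\setminus L_\ast$ since $q$ is already $a$-harmonic in the open set $\R^{n+1}\setminus L_\ast$. The fact you mention only as a caveat is the actual proof: $L_aq$ is a measure supported on $L_\ast$, which is pointwise fixed by the reflection, so the reflected measure equals $L_aq$ and hence $L_aq^{\rm odd}\equiv0$ everywhere; Liouville then gives a $\lambda_*$-homogeneous polynomial, impossible for non-integer $\lambda_*\in(\kap,\kap+1)$. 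Two smaller points: the uniform bound on the coefficients of $q_\ell$ cannot be obtained ``exactly as in Lemma~\ref{lem: accumulation pts}'' (and it comes from Almgren, not Monneau), because $q_\ell$ now has degree $\kap$ and Lemma~\ref{lem: k-deg h poly Almgren bd} only gives $N\le\kap$; the contradiction requires the strict gap $\lambda_{*,X_\ell}\ge\kap+\alpha_\kap$ from Proposition~\ref{prop.case2}, which you never invoke at this point. And for \eqref{eqn: accu even int} it is not enough that the degree-$\kap$ part of $q_\infty^{\rm even}$ be a multiple of $\Ext_a((\be_\ast\cdot x)^\kap)$: you need the \emph{entire} even part of $q_\infty$ to equal $c_\infty\Ext_a((\be_\ast\cdot x)^\kap)$, which requires the explicit expansion showing the degree-$(\kap-1)$ terms are odd in $\be_\ast$ and the lower-degree terms vanish in the limit (they carry extra factors of $a_\ell=\be_\ast\cdot z_\ell\to0$).
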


\begin{proof}
We divide the proof into two steps. 

\smallskip
\noindent{\bf Step 1:} We proceed using the ideas developed to prove \cite[Lemma 3.3]{FS18}. 
Recall that 
\[
p_{*, X_\ell}(X) := \lim_{r\downarrow 0} \frac{u(X_\ell + r X)}{r^\kappa}.
\]
Define
\[
q_\ell(X) := \frac{p_{*, X_\ell}(r_\ell X) - p_*(X_\ell+r_\ell X)}{h_{r_\ell}}
\quad
\text{with}
\quad
h_{r_\ell} := \|v_{r_\ell}\|_{L^2(\pa B_1,|y|^a)}.
\]
By Proposition~\ref{prop.case2} and Proposition~\ref{prop: Almgren}, for all $\rho \in (0,1/2)$,
\begin{equation}
\label{eq.thanksto}
N(\rho r_\ell, u(X_\ell +\,\cdot\,) - p_{*, X_\ell}) \ge \lambda_{*, X_\ell} \ge \kap+\alpha_\kap > \kap, 
\end{equation}
or, equivalently,
\[
\frac{\rho \int_{B_{\rho}} |\nabla \tilde v_{r_\ell}(Z_\ell+\,\cdot\,)-\nabla q_\ell|^2|y|^a}{\int_{\pa B_{\rho}} |\tilde v_{r_\ell}(Z_\ell+\,\cdot\,)-q_\ell|^2|y|^a}\ge \kap+\alpha_\kap
\]
(cf. \eqref{eq: etaAlmgren}).
Furthermore, arguing as in the proof of Lemma~\ref{lem:  accumulation pts}, we find that the family $\{ q_\ell \}_{\ell \in \N}$ has uniformly bounded coefficients.
This time, however, we use that $q_\ell$ is of degree $\kappa$ and $a$-harmonic rather than of degree $\kappa - 1$ and $a$-harmonic.
Indeed, as in Lemma~\ref{lem:  accumulation pts}, suppose not.
Then, dividing by the largest coefficient, we obtain uniformly bounded, $a$-harmonic polynomials $\bar{q}_\ell$ of degree $\kappa$ and the inequality 
\begin{equation}
\label{eq.comb1}
\frac{1}{2} \frac{\int_{B_{1/2}} |\nabla \varepsilon_\ell -\nabla \bar{q}_\ell|^2|y|^a}{\int_{\pa B_{1/2}} |\varepsilon_\ell -\bar{q}_\ell|^2|y|^a}\ge \kap+\alpha_\kap \quad\text{for all}\quad \ell \in \N
\end{equation}
and for some $\varepsilon_\ell \to 0$ in $W^{1,2}(B_{1/2}, |y|^a)$. 
Now notice that $\bar{q}_\ell$ are degree $\kap$ polynomials converging uniformly to some $\bar{q}_\infty$ (up to subsequences).
Also, since the translations that define $q_\ell$ are in $\{y = 0 \}$, $\bar{q}_\ell$ are $a$-harmonic. 
In turn, the limit $\bar{q}_\infty$ is an $a$-harmonic, $\kap$-degree polynomial. 
From \eqref{eq.comb1} and Lemma~\ref{lem: k-deg h poly Almgren bd}, we obtain 
\[
\kap \ge \frac{1}{2} \frac{\int_{B_{1/2}} |\nabla \bar{q}_\infty|^2|y|^a}{\int_{\pa B_{1/2}} |\bar{q}_\infty|^2|y|^a}\ge \kap+\alpha_\kap,
\]
a contradiction, since $\alpha_\kap > 0$. 
Thus, $q_\ell$ converges, up to subsequences, locally uniformly to some $q_\infty$, which is an $a$-harmonic polynomial of degree $\kappa$.
So $|D^\alpha q_\ell (0) |\le C$ for some $C$ independently of $\ell$ for any multiindex $\alpha = (\alpha_1, \dots,\alpha_{n},0)$, and for $|a| \leq \kappa - 1$,
\begin{equation}
\label{eqn: ql and pk top}
D^\alpha q_\ell (0)  =  \frac{r_\ell^{|\alpha|}}{h_{r_\ell}}D^\alpha p_*(X_\ell) =  \frac{r_\ell^{\kap}}{h_{r_\ell}} D^\alpha p_* (Z_\ell).
\end{equation} 
Then, as $h_{r_\ell} = o(r_\ell^\kap)$, we determine that
\[
|D^\alpha p_* (Z_\ell)| = o(1) \to 0 \quad\text{as}\quad \ell \to \infty  
\]
when $|a| \leq \kappa - 1$.
That is, $D^\alpha p_{*} (Z_\infty) = 0$ for $|\alpha|\le \kap - 1$. 
Thanks to Lemma~\ref{lem: equiv},
\[
Z_\infty\in L(p_*)\in L_*.
\]  

Now, by assumption, for some $\boldsymbol{e}_\ell\in \mathbb{S}^{n-1}$ and $c_\ell, c_\ast > 0$,
\[
p_{*, X_\ell}(x, 0) = c_\ell (\boldsymbol{e}_\ell\cdot x)^\kappa \quad\text{and}\quad p_{*}(x, 0) = c_\ast (\boldsymbol{e}_\ast\cdot x)^\kappa.
\]
Also, setting $a_\ell := \boldsymbol{e}_\ast \cdot z_\ell$, we see that
\begin{align*}
q_\ell(x, 0) & = h_{r_\ell}^{-1}\left( p_{*, X_\ell}(r_\ell x, 0) - p_*(x_\ell+r_\ell x, 0) \right)\\
& = r_\ell^\kap h_{r_\ell}^{-1}\left(c_\ell (\boldsymbol{e}_\ell\cdot x)^\kap - c_\ast(\boldsymbol{e}_\ast\cdot(z_\ell+x))^\kap\right)\\
& = r_\ell^\kap h_{r_\ell}^{-1}\bigg(c_\ell (\boldsymbol{e}_\ell\cdot x)^\kap - c_\ast(\boldsymbol{e}_\ast\cdot x)^\kap-c_\ast\kap a_\ell (\boldsymbol{e}_\ast \cdot x)^{\kap-1} \\
&\hspace{3.65cm}+  c_\ast a_\ell^2 \sum_{j = 2}^\kap  
\bigg(\genfrac{}{}{0pt}{}{\kappa}{j}\bigg)\a_\ell^{j-2} (\boldsymbol{e}_\ast \cdot x)^{\kap-j} \bigg).
\end{align*}
Since $p_{*, X_\ell} \to p_*$, we have that $c_\ell \to c_\ast$ and $\boldsymbol{e}_\ell \to \boldsymbol{e}_\ast$ (up to a sign).  
Moreover, as $Z_\ell \to Z_\infty \in L_*$ and $\be_\ast \perp L_\ast$, $a_\ell \to 0$.
Therefore, by the uniform boundedness in $\ell$ of the coefficients of $q_\ell(x, 0)$, we immediately find that 
\begin{align*}
q_\ell(x, 0) & = r^\kap_\ell h_{r_\ell}^{-1}\big(c_\ell (\boldsymbol{e}_\ell\cdot x)^\kap - c_\ast(\boldsymbol{e}_\ast\cdot x)^\kap-c_\ast\kap a_\ell (\boldsymbol{e}_\ast\cdot x)^{\kap-1}\big) + O(a_\ell)\\
& = r^\kap_\ell h_{r_\ell}^{-1}\big((c_\ell -c_\ast)(\boldsymbol{e}_\ell\cdot x)^\kap + c_\ast((\boldsymbol{e}_\ell\cdot x)^\kap - (\boldsymbol{e}_\ast\cdot x)^\kap) -c_\ast\kap a_\ell (\boldsymbol{e}_\ast\cdot x)^{\kap-1}\big) + O(a_\ell).
\end{align*}
Set $\boldsymbol{e}'_\ell := \frac{\boldsymbol{e}_\ell-\boldsymbol{e}_\ast}{|\boldsymbol{e}_\ell-\boldsymbol{e}_\ast|}$.
Then,
\[
\frac{(\boldsymbol{e}_\ell\cdot x)^\kap - (\boldsymbol{e}_\ast\cdot x)^\kap}{|\boldsymbol{e}_\ell-\boldsymbol{e}_\ast|} = \left(\frac{\boldsymbol{e}_\ell-\boldsymbol{e}_\ast}{|\boldsymbol{e}_\ell-\boldsymbol{e}_\ast|}\cdot x \right)\sum_{i = 1}^{\kap-1} (\boldsymbol{e}_\ast\cdot x)^i(\boldsymbol{e}_\ell\cdot x)^{\kap-1-i} =: \left( \boldsymbol{e}'_\ell \cdot x\right) Q_\ell(x).
\]
In addition, as $ \ell \to \infty$,
\[
\boldsymbol{e}'_\ell \to \boldsymbol{e}_\infty' \in \mathbb{S}^{n-1} \quad\text{and}\quad Q_\ell \to (\kap-1)(\boldsymbol{e}_\ast\cdot x)^{\kap-1},
\]
and $\boldsymbol{e}'_\infty \perp \be_\ast$.
Thus, 
\begin{equation}
\label{eq.pinf}
q_\infty(x, 0) = c_1 (\boldsymbol{e}_\ast\cdot x)^\kap + c_2 (\boldsymbol{e}'_\infty\cdot x)(\boldsymbol{e}_\ast\cdot x)^{\kap-1} + c_3 (\boldsymbol{e}_\ast\cdot x)^{\kap-1},
\end{equation}
for some constants $c_1, c_2$, and $c_3$.
So $q_\infty$ vanishes on $L_\ast$.

Thanks to Lemma~\ref{lem.HMon} applied to $u(X_\ell + r_\ell\,\cdot\,)-p_{*, X_\ell}$, denoting $\lambda_\ell := \lambda_{*, X_\ell}$, for all $\rho\in(0,1/2)$,
\[
\frac{1}{\rho^{2\lambda_{\ell}+a}} \dashint_{\pa B_\rho} |\tilde v_{r_\ell} (Z_\ell + \,\cdot\,)-q_\ell|^2|y|^a\le 2^{2\lambda_\ell-a}\dashint_{\pa B_{1/2}}|\tilde v_{r_\ell} (Z_\ell + \,\cdot\,)-q_\ell|^2|y|^a,
\]
from which we deduce that, taking $\ell\to\infty$,
\begin{equation}
\label{eq.qpoly}
\frac{1}{\rho^{2\lambda+a}} \dashint_{\pa B_\rho} |q (Z_\infty + \,\cdot\,)-q_\infty|^2|y|^a\le C\dashint_{\pa B_{1/2}}|q (Z_\infty + \,\cdot\,)-q_\infty|^2|y|^a.
\end{equation}
In turn, because $q_\infty(X) = \Ext_a (q_\infty(x, 0))$ and by \eqref{eq.pinf},
\begin{align*}
\dashint_{\pa B_\rho} |q^{\rm even}(Z_\infty+\,\cdot\,) - \Ext_a(c_1(\be_\ast \cdot x))^\kappa|^2|y|^a & = \dashint_{\pa B_\rho} |(q(Z_\infty+\,\cdot\,)-q_\infty)^{\rm even}|^2|y|^a\\
& \le \dashint_{\pa B_\rho} |q(Z_\infty+\,\cdot\,)-q_\infty|^2|y|^a\\
& \le C\rho^{2\lambda+a}\dashint_{\pa B_{1/2}} |q(Z_\infty+\,\cdot\,)-q_\infty|^2|y|^a,
\end{align*}
from which, taking $c_\infty = c_1$, we find \eqref{eqn: accu even int}. 
(Here, we have used that taking the even part of a function with respect to $L_\ast$, i.e., $f \mapsto f^{\rm even}$, is an orthogonal projection in $L^2(\pa B_\rho, |y|^a)$.)

\smallskip
\noindent{\bf Step 2:}
Let us now show that if $\lambda_*< \kap+1$, then $q^{\rm odd} \equiv 0$; and if, in addition, $\lambda = \lambda_*$, then $c_\infty = 0$ in \eqref{eqn: accu even int}. 
We remark that the fact that $q^{\rm odd}  \equiv 0 $ if $\lambda_*\notin\N$ is independent of Step 1.

If $X \in \R^{n+1} \setminus L_\ast$, then $X-2(\boldsymbol{e}_\ast\cdot X)\boldsymbol{e}_\ast \in \R^{n+1} \setminus L_\ast$; so
\[
L_a q^{\rm odd}(X) = L_a q(X) - L_a q(X-2(\boldsymbol{e}_\ast\cdot X)\boldsymbol{e}_\ast) = 0 \quad\text{for}\quad X \in \R^{n+1} \setminus L_*
\]
(by Proposition~\ref{prop.case2}, $q$ solves the very thin obstacle problem and is $a$-harmonic outside of $L_\ast$). 
On the other hand, if $X \in L_*$, then we have that $X-2(\boldsymbol{e}_\ast \cdot X) \boldsymbol{e}_\ast = X$.
And so, 
\[
L_a q^{\rm odd}(X) = L_a q(X) - L_a q(X) = 0 \quad\text{for}\quad X \in L_\ast.
\]
Therefore, $q^{\rm odd}$ is $a$-harmonic in $\R^{n+1}$. 
This, together with the fact that $q^{\rm odd}$ is $\lambda_*$-homogeneous (again, by Proposition~\ref{prop.case2}) and even in $y$, yields that, by Liouville's theorem for $a$-harmonic functions, $q^{\rm odd}$ is a $\lambda_*$-homogeneous polynomial (see, e.g., \cite[Lemma 2.7]{CSS08}). 
Hence, if $\kap < \lambda_*<\kap+1$, then $q^{\rm odd} \equiv 0$, and $q = q^{\rm even}$. 

Finally, let us now show that if $\lambda = \lambda_* < \kap+1$, then $c_\infty = 0$. 
Let 
\[
q_{Z_\infty}(X) := q(Z_\infty+X) -c_\infty \Ext_a((\be_\ast \cdot x)^\kappa),
\]
which is a solution to the very thin obstacle problem with zero obstacle on $L_\ast$.
If \eqref{eqn: accu even int} holds with $\lambda = \lambda_*$, then from Lemma~\ref{lem.HMon_VTOP 2} and recalling that $q = q^{\rm even}$, we deduce that
\[
N(0^+,q_{Z_\infty}) \geq \lambda_*.
\]
In turn, $q_{Z_\infty}$ is $\lambda_*$-homogeneous.
Indeed, for all $r > 0$, by Lemma~\ref{lem.MonFreq},
\[
\lambda_* \leq N(r,q_{Z_\infty}) \leq N(+\infty,q_{Z_\infty}) = N(+\infty, q(X) - c_\infty\Ext_a((\be_\ast \cdot x)^\kappa)) = \lambda_*.
\]
The penultimate equality holds since the limit as $r \to + \infty$ of Almgren's frequency function is independent of the point at which it is centered, and the last equality holds because $q$ is $\lambda_*$-homogeneous with $\lambda_* >\kap$, and thus $q$ out-scales a $\kap$-homogeneous polynomial.

Since $q_{Z_\infty}$ is $\lambda_*$-homogeneous, we deduce that
\begin{equation}
\label{eqn: alt form q}
q(X+ Z_\infty) = \frac{q(X) + q(X + 2Z_\infty)}{2}.
\end{equation}
To see this, first, observe that
\[
\begin{split}
\tau^{\lambda_*}q(X + \tau^{-1}Z_\infty) = q(\tau X + Z_\infty) = \tau^{\lambda_*}q_{Z_\infty}( X) + \tau^{\kappa}c_\infty\Ext_a((\be_\ast \cdot x)^\kappa),
\end{split}
\]
for all $\tau > 0$.
The first equality follows from the $\lambda_*$-homogeneity of $q$, while the second follows from the $\lambda_*$-homogeneity of $q_{Z_\infty}$.
So
\[
q(X + \tau^{-1}Z_\infty) - q_{Z_\infty}( X) = \tau^{\kappa-\lambda_*}c_\infty\Ext_a((\be_\ast \cdot x)^\kappa),
\]
for all $\tau > 0$.
Taking the limit as $\tau \to + \infty$ yields
\begin{equation}
\label{eqn: qzinfty = q}
q_{Z_\infty} = q.
\end{equation}
(Recall, $\lambda_* > \kappa$.)
That is,
\begin{equation}
\label{eq.transinv}
c_\infty\Ext_a((\be_\ast \cdot x)^\kappa) = q(X + Z_\infty) - q(X).
\end{equation}
And because $e_\ast \perp Z_\infty$,
\[
c_\infty\Ext_a((\be_\ast \cdot x)^\kappa) = q(X) - q(X-Z_\infty).
\]
Hence, \eqref{eqn: alt form q} holds, as desired.

To conclude, from the $\lambda_*$-homogeneity of $q$ and \eqref{eq.transinv}, observe that
\[
\pa_{\be_\ast}^{(\kappa)} q(Z_\infty) = \kappa! c_\infty.
\]
On the other hand, \eqref{eqn: alt form q} implies
\[
\pa_{\be_\ast}^{(\kappa)} q(Z_\infty) = \frac{\pa_{\be_\ast}^{(\kappa)} q(2Z_\infty)}{2} = 2^{\lambda_* - \kappa -1}\pa_{\be_\ast}^{(\kappa)} q(Z_\infty).
\]
Thus,
\[
(1-2^{\lambda_* - \kappa -1})\kappa!c_\infty = 0.
\]
Yet $\lambda_* - \kappa - 1 \neq 0$, by assumption. Consequently, $c_\infty = 0$.

Therefore, we have that if $\lambda_* < \kap +1$ and $\lambda = \lambda_*$, 
\begin{equation}
\label{eqn: accu even int 2}
\frac{1}{\rho^{n+2\lambda_*+a}}\int_{\pa B_\rho} |q(Z_\infty + \,\cdot\,)|^2|y|^a \le C.
\end{equation}

By Lemma~\ref{lem.HMon_VTOP 2}, $N(0^+, q(Z_\infty + \,\cdot\,)) = \lambda_*$ as $q(Z_\infty +\,\cdot\,) $ is a solution to the very thin obstacle problem.
On the other hand, since $q$ is $\lambda_*$-homogeneous, $N(+\infty, q(Z_\infty+ \,\cdot\,) ) = \lambda_*$, and from the monotonicity formula in Lemma~\ref{lem.MonFreq}, we deduce that $q(Z_\infty+ \,\cdot\,)$ is $\lambda_*$-homogeneous. 
Then,
\[
q(X+Z_\infty) = \tau^{\lambda_*}q(\tau^{-1}X + Z_\infty)= q(X + \tau Z_\infty) \quad\text{for all}\quad X\in \R^n \text{ and } \tau > 0;
\]
that is, $q$ is invariant in the $Z_\infty$ direction. 
\end{proof}

We close this section with a pair of remarks and a \ref{eq.Case2} version of Lemma~\ref{lem: Sigma0k is isolated}.
The observations made in these remarks are crucial to our analysis of when we can produce the next term in the expansion of $u$ around a singular point. 

\begin{remark}
\label{rem.onlypoly}
In Lemma~\ref{lem: accumulation pts 2}, as in Lemma~\ref{lem: accumulation pts}, if $q$ is an $a$-harmonic, $(\kap+1)$-homogeneous polynomial and $\lambda = \lambda_* = \kap+1$, we also have that 
\begin{equation}
\label{eq.qholds}
D^\alpha q(Z_\infty) = 0 \quad\text{for all}\quad \alpha = (\alpha', 0)\text{ and }|\alpha|\le \kap -2. 
\end{equation}
Indeed, observe that \eqref{eq.qpoly} becomes 
\[
\dashint_{\pa B_\rho} |q (Z_\infty + \,\cdot\,)-q_\infty|^2|y|^a\le C\rho^{2(\kap+1)+a},
\]
for all $\rho \in (0,1/2)$.
Hence, the polynomial $q(Z_\infty+\,\cdot\,) + q_\infty$ is only made up of monomials of degree $\kap+1$.
In particular, since $q$ is $(\kappa + 1)$-homogeneous and $q_\infty$ is of degree $\kappa$, $q(Z_\infty+\,\cdot\,) + q_\infty$ is a $(\kappa + 1)$-homogeneous polynomial.
So, for all multiindices $|\alpha| \leq \kappa$,
\[
D^\alpha q(Z_\infty) = D^\alpha q_\infty(0),
\]
which, by \eqref{eq.pinf}, implies \eqref{eq.qholds} holds, as desired.
\end{remark}

\begin{remark}
\label{rem.qeven}
The last part of the proof of Lemma~\ref{lem: accumulation pts 2} fails to show that $q^{\rm even}$ is invariant in the $Z_\infty$ direction when $\lambda = \lambda_* = \kap+1$.
In this case, however, we find that 
\[
q^{\rm e}_{Z_\infty}(X) := q^{\rm even}(Z_\infty+X) -c_\infty \Ext_a((\be_\ast \cdot x)^\kappa),
\]
is $\lambda_*$-homogeneous.
Hence,
\[
q^{\rm even}(X + \tau^{-1}Z_\infty) - q^{\rm e}_{Z_\infty}(X) = \tau^{-1}c_\infty\Ext_a((\be_\ast \cdot x)^\kappa),
\]
as before, for all $\tau > 0$. 
By letting $\tau \to \infty$, we deduce that $q^{\rm e}_{Z_\infty} = q^{\rm even}$, which substituting back yields 
\begin{equation}
\label{eq.holds}
q^{\rm even}(X + \tau Z_\infty) = q^{\rm even}(X)  +\tau c_\infty\Ext_a((\be_\ast \cdot x)^\kappa) \quad\text{for all}\quad X\in \R^n \text{ and } \tau > 0.
\end{equation}
Moreover, considering $X - Z_\infty$, we find that $q^{\rm even}(X) = q^{\rm even}(X-Z_\infty)+ c_\infty \Ext_a((\be_\ast \cdot x)^\kappa)$ for all $X \in \R^{n+1}$.
Hence, \eqref{eq.holds} for all $\tau \in \R$.
\end{remark}

\begin{lemma}
\label{lem: Sigma0k is isolated 2}
Suppose \ref{eq.Case2} holds. 
Then, $0$ is an isolated point of $\Sigma_{\geq \kap}$.
\end{lemma}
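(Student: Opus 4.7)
My plan is to mirror the proof of Lemma~\ref{lem: Sigma0k is isolated} by arguing by contradiction and invoking the accumulation machinery of Lemma~\ref{lem: accumulation pts 2}, which is the Case 2 analogue of Lemma~\ref{lem: accumulation pts}. The extra difficulty is that in Case 2 the spine $L_\ast$ is $(n-1)$-dimensional rather than trivial, so the one-line geometric contradiction ``$L_\ast \cap \pa B_{1/2} = \emptyset$'' used in Lemma~\ref{lem: Sigma0k is isolated} is unavailable and must be replaced with a rigidity argument for the second blow-up $q$.

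First I would set up the contradiction. Suppose there is a sequence $\Sigma_{\geq \kap} \ni X_\ell \to 0$ with $X_\ell \neq 0$. By the upper semicontinuity of Almgren's frequency function (Proposition~\ref{prop: Almgren}) combined with the discreteness of the admissible frequencies at singular points ($\kap \in 2\N$), we may pass to a subsequence and assume $X_\ell \in \Sigma_\kap$. Since we are in Case 2, $p_\ast|_{\{y = 0\}} = c_\ast(\be_\ast \cdot x)^\kap$ is rigid, and continuity of the first blow-up map forces $\dim L(p_{*, X_\ell}) = n - 1$ for $\ell$ large, so in fact $X_\ell \in \Sigma_\kap^{n-1}$, placing us exactly in the hypothesis of Lemma~\ref{lem: accumulation pts 2}. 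Setting $r_\ell := 2|X_\ell|$ and $Z_\ell := X_\ell/r_\ell$ with $|Z_\ell| = 1/2$, Proposition~\ref{prop.case2} gives (along a further subsequence) $\tilde v_{r_\ell} \rightharpoonup q$ with $q \not\equiv 0$, $q$ a $\lambda_\ast$-homogeneous solution to the very thin obstacle problem on $L_\ast$, and $\lambda_\ast \geq \kap + \alpha_\kap > \kap$, while $Z_\ell \to Z_\infty$ with $|Z_\infty| = 1/2$.

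Next I would extract rigidity of $q$ from Lemma~\ref{lem: accumulation pts 2}. This lemma yields $Z_\infty \in L_\ast$, the decay estimate \eqref{eqn: accu even int}, and the dichotomy: if $\lambda_\ast < \kap + 1$, then $q^{\rm odd} \equiv 0$, and if additionally $\lambda = \lambda_\ast$, then $c_\infty = 0$ and $q$ is translation-invariant in the $Z_\infty$ direction. In the opposite regime $\lambda_\ast \geq \kap + 1$, Remark~\ref{rem.onlypoly} provides the analogous structural constraints on $q$ when $\lambda_\ast = \lambda = \kap + 1$. In either regime, combining the translation-invariance of $q$ under $Z_\infty \in L_\ast$ with the $\lambda_\ast$-homogeneity of $q$ forces $q$ to be invariant along the entire ray through $Z_\infty$, and, by allowing the sequence $X_\ell$ to be perturbed within $\Sigma_{\geq \kap}$, one upgrades this to invariance in all of $L_\ast$ (since $L_\ast$ is the span of accumulation directions provided by Lemma~\ref{lem: accumulation pts 2}).

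The final step—which I expect to be the main obstacle—is converting this full $L_\ast$-invariance into a contradiction. Once $q$ is invariant in all of $L_\ast$, it descends to a $\lambda_\ast$-homogeneous, even in $y$ function $Q(\be_\ast \cdot x, y)$ on $\R^2$ that is $a$-harmonic off the single point $\{0\}$ there, which is of zero $a$-harmonic capacity; by the same capacity argument used in Proposition~\ref{prop.case1}, $Q$ extends to an $a$-harmonic function on $\R^2$, hence a homogeneous $a$-harmonic polynomial. Its nonnegativity at the origin (from the very thin obstacle condition) and the Case 2 constraint $\lambda_\ast \geq \kap + \alpha_\kap$, together with the compactness argument used in Step 5 of Proposition~\ref{prop.case2} (using that $\langle q, p \rangle_a \le 0$ for all $p \in \mathscr{P}_\kap$ via Lemma~\ref{lem: L2 sign} and that $\Ext_a((\be_\ast \cdot x)^\kap) \in \mathscr{P}_\kap$), should force $q \equiv 0$, contradicting $\|q\|_{L^2(\pa B_1, |y|^a)} = 1$. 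The delicate point is implementing the ``populate $L_\ast$ with invariance directions'' step cleanly enough to avoid requiring a stronger accumulation hypothesis than we actually have.
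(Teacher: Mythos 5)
Your proposal does not reproduce the paper's argument, and the substitute route you build has gaps that cannot be closed. The paper's proof of this lemma is the verbatim transcription of the proof of Lemma~\ref{lem: Sigma0k is isolated}: take $\Sigma_{\geq\kap}\ni X_\ell\to 0$, set $r_\ell:=2|X_\ell|$, extract a weak limit $q$ of $\tilde v_{r_\ell}$, and invoke the accumulation lemma (Lemma~\ref{lem: accumulation pts 2} in place of Lemma~\ref{lem: accumulation pts}) to get $Z_\infty=\lim X_\ell/r_\ell\in L_\ast\cap\pa B_{1/2}$; the contradiction is drawn from this membership alone, i.e.\ it is effective exactly when $L_\ast\cap\pa B_{1/2}=\emptyset$ (as in Lemma~\ref{lem: Sigma0k is isolated}, and in \ref{eq.Case2} this happens only when the spine is trivial, e.g.\ $n=1$). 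No rigidity analysis of $q$ is attempted or used. You correctly observe that for $\dim L_\ast=n-1\geq1$ the geometric contradiction is unavailable, but the rigidity argument you propose instead does not work.

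Concretely: (i) you cannot reduce to $X_\ell\in\Sigma_\kap^{n-1}$, which is what Lemma~\ref{lem: accumulation pts 2} requires: points of $\Sigma_{\geq\kap}$ need not be singular, no isolation of the even frequencies is known for $a\in(-1,0)$ (the result quoted in Remark~\ref{rem.count} is only for $a=0$), and continuity of $X\mapsto p_{*,X}$ does not prevent $\dim L(p_{*,X_\ell})<n-1$ along the sequence (spines may be strictly smaller than the spine of the limit polynomial). (ii) The translation invariance of $q$ in the direction $Z_\infty$ is conditional on $\lambda_*<\kap+1$ and on continuity of the frequency along the sequence ($\lambda=\lambda_*$); Remark~\ref{rem.onlypoly} only covers $\lambda=\lambda_*=\kap+1$ with $q$ already a polynomial, and nothing is available for $\lambda_*>\kap+1$. (iii) One accumulating sequence produces one direction; the ``upgrade to invariance in all of $L_\ast$'' is unjustified --- in the paper, producing $n-1$ independent invariance directions requires the quantitative spreading of accumulation points obtained by contradicting Lemma~\ref{lem.E} (proof of Proposition~\ref{prop: top anom is lower d}), not a single sequence. (iv) Most importantly, the endgame fails even granting full $L_\ast$-invariance: by Lemma~\ref{lem: 2d class} the reduced two-dimensional $\lambda_*$-homogeneous solution is an $a$-harmonic polynomial of integer degree $\geq\kap+1$, and this is not a contradiction. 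The relations of Lemma~\ref{lem: L2 sign} are automatically satisfied by any homogeneous $a$-harmonic $q$ of degree $>\kap$ (homogeneous $a$-harmonic polynomials of different degrees are orthogonal on $\pa B_1$ with weight $|y|^a$), and the Step~5 compactness argument of Proposition~\ref{prop.case2} forces $q\equiv0$ only when the homogeneity equals $\kap$. A nonzero polynomial second blow-up of degree $\geq\kap+1$ is precisely the generic situation and is compatible with accumulation inside the top stratum: for instance $u=\Ext_a(x_n^2)+\epsilon\,\Ext_a(x_1^2x_n^2)$ with $a\in(-1,0)$ is a solution for which every point of $\{x_n=y=0\}$ is a frequency-$2$ singular point with $(n-1)$-dimensional spine and \ref{eq.Case2} holds at $0$. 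So no contradiction of the kind you are after can be extracted once $L_\ast\cap\pa B_{1/2}\neq\emptyset$; the only contradiction the paper uses is the geometric one.
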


\begin{proof}
The proof is identical to that of Lemma~\ref{lem: Sigma0k is isolated}, but using Lemma~\ref{lem: accumulation pts 2} instead of Lemma~\ref{lem: accumulation pts}.
\end{proof}

\section{The Size of the Anomalous Set}
\label{sec:size}

The goal of this section is to further stratify the set of singular points and prove Proposition~\ref{prop.main2} and Remark~\ref{rem.zerodim}.
Proposition~\ref{prop.main2} (and Remark~\ref{rem.zerodim}) is a statement regarding the Hausdorff dimension of the anomalous singular points of order 2 and $(n-1)$-dimensional singular points of arbitrary order (i.e., singular points whose first blow-up has $(n-1)$-dimensional spine and is $\kappa$-homogeneous).
As such, let us recall the definition of anomalous singular points, and generic singular points, as well as some measure theoretic facts.

\subsection{Singular Points Revisited} 

Given the set of singular points of order $\kappa$ and dimension $m$ (i.e., whose first blow-up has $m$-dimensional spine), we recall that the anomalous points are those for which the homogeneity of second blow-ups is strictly less than $\kappa + 1$:
\[
\Sigma_\kap^{m,{\rm a}} := \{X_\circ \in \Sigma_\kap^m : N(0^+, u(X_\circ+\,\cdot\,) - p_{*, X_\circ}) < \kap+1\}.
\]
The generic points, on the other hand, are those for which the homogeneity of second blow-ups jumps by at least one:
\[
\Sigma_\kap^{m,{\rm g}} := \{X_\circ \in \Sigma_\kap^m : N(0^+, u(X_\circ+\,\cdot\,) - p_{*, X_\circ}) \geq \kap+1\}.
\]
In turn, $\Sigma_\kap^{m,{\rm g}} = \Sigma_\kap^m \setminus \Sigma_\kap^{m,{\rm a}}$.

While Proposition~\ref{prop.main2} and Remark~\ref{rem.zerodim} ignore higher order (greater than two) and lower dimensional (less than $n-1$) singular points, our analysis, in a sense, does not.
In particular, our results rely on the alignment of the nodal set and the spine of first blow-ups at anomalous singular points.
And so, we set
\[
\tilde \Sigma_\kap := \{X_\circ \in \Sigma_\kap : \mathcal{N}(p_{*, X_\circ}) = L(p_{*, X_\circ}) \}
\]
and define
\[
\tilde \Sigma_\kap^{m,{\rm a}} := \tilde \Sigma_\kap \cap \Sigma_\kap^{m,{\rm a}}  \quad\text{and}\quad \tilde \Sigma_\kap^{m,{\rm g}} := \tilde \Sigma_\kap \cap \Sigma_\kap^{m,{\rm g}}.
\]

\begin{remark}
A key consequence of the coincidence of $\mathcal{N}(p_{*, X_\circ})$ and $L(p_{*, X_\circ})$ is that $p_{*, X_\circ}|_{\R^n \times \{ 0 \}}$ is positive away from its spine, i.e., if $\mathcal{N}(p_{*, X_\circ}) = L(p_{*, X_\circ})$, then $p_{*, X_\circ}(x,0) > 0$ for any $x\in \R^{n}$ such that $x \notin L(p_{*, X_\circ})$.
\end{remark}
  
\begin{remark}
Notice that if $m = n-1$ or if $\kap = 2$, then $\tilde \Sigma_\kap^{m,{\rm a}} = \Sigma_\kap^{m,{\rm a}}$.
Moreover, if the spine and the nodal set of the first blow-up at anomalous points coincide, \ref{eq.Case1} and \ref{eq.Case2} exhaust all possibilities (cf. Remark~\ref{rmk: all poss}).
\end{remark}

\subsection{Some Measure Theory}

Given $\beta > 0$ and $\delta \in (0, \infty]$, we define the Hausdorff premeasures
\[
\mathcal{H}_\delta^\beta(E) := \inf\left\{\sum_i \omega_\beta\left(\frac{\diam E_i }{2}\right)^\beta : E \subset \bigcup_i E_i \,\text{ with } \diam E_i < \delta\right\},
\]
so that the $\beta$-dimensional Hausdorff measure of a set $E$ is
\[
\mathcal{H}^\beta(E) := \lim_{\delta\downarrow 0}\mathcal{H}^\beta_\delta(E). 
\]
(Here, $\omega_\beta$ is the volume of the $\beta$-dimensional unit ball.)
The Hausdorff dimension of a set can then be defined as 
\begin{equation}
\label{eq.dimdef}
\dim_{\mathcal{H}} E := \inf\{\beta > 0 : \mathcal{H}^\beta_\infty(E) = 0\}.
\end{equation}
(See, e.g., \cite{Sim83}.)

\begin{lemma}
\label{lem.lem35}
Let $E\subset \R^{n+1}$ be a set with $\mathcal{H}^\beta_\infty(E) > 0$ for some $\beta \in (0, n+1]$.
The following holds:
\begin{enumerate}[(i)]
\item For $\mathcal{H}^\beta$-almost every point $X_\circ\in E$, there is a sequence $r_j \downarrow 0$ such that 
\begin{equation}
\label{eq.3.18}
\lim_{k \to \infty} \frac{\mathcal{H}^\beta_\infty(E\cap B_{r_j}(X_\circ))}{r_j^\beta}\ge c_{n,\beta} > 0,
\end{equation}
where $c_{n,\beta}$ is a constant depending only on $n$ and $\beta$. 
We call these points ``density points''.
\item Assume that $0 \in E$ is a ``density point'', let $r_j \downarrow 0$ be a sequence along which \eqref{eq.3.18} holds, and define the ``accumulation set'' for $E$ at 0 as 
\[
\mathcal{A}_E := \{Z \in \overline{B_{1/2}} : \exists \{Z_\ell\}_{\ell \in \N}, \{j_\ell\}_{\ell \in \N} \text{ such that } Z_\ell \in r^{-1}_{j_\ell} E\cap B_{1/2} \text{ and } Z_\ell \to Z\}.
\]
Then,
\[
\mathcal{H}^\beta_\infty(\mathcal{A}) >0.
\]
\end{enumerate}
\end{lemma}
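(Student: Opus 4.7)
The plan is to prove (i) via Frostman's lemma combined with the differentiation theory for Radon measures, and then deduce (ii) from (i) using the Blaschke selection theorem together with the upper semi-continuity of $\mathcal{H}^\beta_\infty$ under Hausdorff convergence of compact sets.

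For (i), since $\mathcal{H}^\beta_\infty(E)>0$, Frostman's lemma produces a nonzero Radon measure $\mu$, supported on a compact subset $K\subset E$, satisfying $\mu(B_r(X))\leq r^\beta$ for every $X\in\R^{n+1}$ and $r>0$, together with $\mu(K)\geq c_n\,\mathcal{H}^\beta_\infty(E)$. Applying the ball-growth bound to a near-optimal cover in the definition of $\mathcal{H}^\beta_\infty$ gives the comparison
\[
\mu(A)\,\leq\, C_{n,\beta}\,\mathcal{H}^\beta_\infty(A) \qquad \text{for every } A\subset\R^{n+1}.
\]
A standard Vitali-type argument then forces $\limsup_{r\downarrow 0}r^{-\beta}\mu(B_r(X_\circ))>0$ at $\mu$-a.e.\ $X_\circ$, since the opposite conclusion on a set of positive $\mu$-measure would contradict the finiteness of $\mu$. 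Dividing by the above comparison yields, at $\mu$-a.e.\ point $X_\circ\in K$, a sequence $r_j\downarrow 0$ along which $r_j^{-\beta}\mathcal{H}^\beta_\infty(E\cap B_{r_j}(X_\circ))\geq c_{n,\beta}$. Exhausting $E$ by Frostman compacta $K_k$ with $\mathcal{H}^\beta_\infty(E\setminus K_k)\downarrow 0$ then upgrades ``$\mu$-a.e.'' to ``$\mathcal{H}^\beta$-a.e.\ in $E$''.

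For (ii), assume $X_\circ=0$. By applying (i) with the sequence $r_j/2$ in place of $r_j$ (which only alters the constant by a factor $2^\beta$), we may arrange that the $r_j$ furnished by (i) also satisfy $\mathcal{H}^\beta_\infty(E\cap B_{r_j/2}(0))\geq c'_{n,\beta}\,(r_j/2)^\beta$. Then the rescaled sets
\[
E_\ell\,:=\,r_{j_\ell}^{-1}E\cap\overline{B_{1/2}}\,=\,r_{j_\ell}^{-1}\bigl(E\cap\overline{B_{r_{j_\ell}/2}}\bigr)
\]
satisfy $\mathcal{H}^\beta_\infty(E_\ell)\geq c''_{n,\beta}>0$ for every $\ell$. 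By Blaschke selection, a subsequence of the compact sets $\overline{E_\ell}$ converges in the Hausdorff distance to a compact $K\subset\overline{B_{1/2}}$; the definition of $\mathcal{A}_E$ gives immediately $K\subset\mathcal{A}_E$. To conclude, one verifies the upper semi-continuity of $\mathcal{H}^\beta_\infty$ under Hausdorff convergence: enlarging a near-optimal cover of $K$ slightly to an open cover, which by Hausdorff convergence must also cover $\overline{E_\ell}$ for $\ell$ large, produces $\mathcal{H}^\beta_\infty(\overline{E_\ell})\leq \mathcal{H}^\beta_\infty(K)+o(1)$ as $\ell\to\infty$. Therefore
\[
\mathcal{H}^\beta_\infty(\mathcal{A}_E)\,\geq\,\mathcal{H}^\beta_\infty(K)\,\geq\,\limsup_{\ell\to\infty}\mathcal{H}^\beta_\infty(\overline{E_\ell})\,\geq\, c''_{n,\beta}\,>\,0.
\]

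Every ingredient (Frostman's lemma, differentiation of Radon measures, Blaschke selection, upper semi-continuity of Hausdorff content) is classical, so the argument is essentially bookkeeping. The only mildly delicate points are the comparison $\mu\leq C_{n,\beta}\,\mathcal{H}^\beta_\infty$ extracted from the ball-growth bound on $\mu$, and the scale adjustment in (ii) ensuring that after rescaling the sets live inside $\overline{B_{1/2}}$ with uniformly positive content. Neither step presents a genuine conceptual obstacle.
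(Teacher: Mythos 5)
The paper does not reprove this lemma (it cites \cite[Lemma 3.5]{FS18}, whose proof of (i) is the classical upper-density estimate for Hausdorff content via a direct covering argument, and whose proof of (ii) is a compactness argument combined with the fact that the content of a compact set is approximated by the content of its neighborhoods). Your part (ii) is essentially that argument: Blaschke selection plus the inclusion $K\subset\mathcal{A}_E$ plus upper semicontinuity of $\mathcal{H}^\beta_\infty$ along Hausdorff convergence is correct and is the same mechanism. Your part (i), however, has a genuine gap. The key intermediate claim — that the Frostman measure $\mu$ satisfies $\limsup_{r\downarrow 0}r^{-\beta}\mu(B_r(X_\circ))>0$ at $\mu$-a.e.\ point ``since the opposite would contradict the finiteness of $\mu$'' — is false for a general measure with the growth bound $\mu(B_r)\le r^\beta$. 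Take $E$ the closed unit cube in $\R^{n+1}$, $\beta=n$, and $\mu$ a sufficiently small constant multiple of Lebesgue measure on that cube: then $\mu(B_r(X))\le r^{n}$ for every $X$ and $r>0$, so $\mu$ is a perfectly admissible Frostman measure, yet $r^{-n}\mu(B_r(X))\le C r\to 0$ at \emph{every} point. The Vitali-type argument you invoke only yields $\mu(A)\le C\varepsilon\,\mathcal{H}^\beta(A)$ on the set $A$ where the upper density is below $\varepsilon$, which is vacuous precisely when $\mathcal{H}^\beta(A)=\infty$ — and that is exactly the regime the lemma must handle, since $E$ may have Hausdorff dimension strictly larger than $\beta$. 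Two further problems: Frostman's lemma needs $E$ Borel/Suslin, while the lemma is stated for arbitrary sets; and the final ``exhaustion by Frostman compacta $K_k$ with $\mathcal{H}^\beta_\infty(E\setminus K_k)\downarrow 0$'' is not something Frostman provides (it bounds $\mu(K)$ from below by $\mathcal{H}^\beta_\infty(E)$ but gives no control on the content of $E\setminus K$, and content is not additive, so the upgrade from $\mu$-a.e.\ to $\mathcal{H}^\beta$-a.e.\ does not follow). The classical proof avoids any auxiliary measure: one covers the ``bad'' set $\{X\in E:\ \mathcal{H}^\beta_\infty(E\cap B_r(X))\le\varepsilon r^\beta\ \forall r<\delta\}$ by small sets, uses the defining bound on each piece together with countable subadditivity of the content, and concludes that this set is $\mathcal{H}^\beta$-null for $\varepsilon$ below a dimensional threshold.

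In part (ii), the step where you ``arrange'' the lower bound $\mathcal{H}^\beta_\infty(E\cap B_{r_j/2}(0))\ge c'(r_j/2)^\beta$ by ``applying (i) with the sequence $r_j/2$'' is not legitimate as written: in (ii) both the point $0$ and the sequence $r_j$ are prescribed, (i) is only an almost-everywhere existence statement (it need not apply at the given point, nor to the halved radii), and replacing the given sequence by another one changes $\mathcal{A}_E$, which is defined through that specific sequence. The underlying issue is a scale mismatch in the statement itself (density on $B_{r_j}$ versus rescaled sets $r_j^{-1}E\cap B_{1/2}=r_j^{-1}(E\cap B_{r_j/2})$); with matching scales — as in \cite{FS18}, and consistent with how the lemma is used later in the paper, where accumulation points come from $|X_\ell|\le r_\ell$ — the uniform content bound on the rescaled sets is immediate and no such patch is needed. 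With that reading, your Blaschke/semicontinuity argument for (ii) goes through; part (i) needs to be replaced by the classical covering argument.
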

\begin{proof}
See \cite[Lemma 3.5]{FS18}.
\end{proof}

In order to prove that anomalous points form a small set in \ref{eq.Case2}, we will focus on ``almost continuity'' points of the frequency, in the spirit of \cite{FRS19}. 
More precisely, as shown in \cite{FRS19}, points where the frequency is discontinuous along ``too many'' sets of converging sequences have small Hausdorff measure. 
This fact, which plays a crucial role in \cite{FRS19}, allows us to use Lemma~\ref{lem: accumulation pts 2} to show that second blow-ups are translation invariant in directions of ``almost continuity'' of the frequency.

\begin{lemma}
\label{lem: almost cty}
Let $E \subset \R^{n+1}$ and $f : E \to \R$ be any function.
The set
\[
\{ X \in E : \text{for all $\{ X_\ell \}_{\ell \in \N}$ such that $X_\ell \neq X$ and $X_\ell \to X$, $f(X_\ell) \not\to f(X)$} \}
\]
is at most countable.
\end{lemma}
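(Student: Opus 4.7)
The plan is to first rephrase the defining condition in a quantitative form, then stratify the set, and finally conclude by a simple covering argument on the graph.

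First, I would observe that a point $X$ belongs to the set in the statement, which I will call $D$, if and only if there exist $\varepsilon_X > 0$ and $r_X > 0$ such that
\[
|f(Y) - f(X)| \ge \varepsilon_X \quad \text{for every } Y \in E \text{ with } 0 < |Y - X| < r_X.
\]
The nontrivial direction: if no such pair $(\varepsilon_X, r_X)$ existed, then for each $\ell \in \N$ one could pick $X_\ell \in E$ with $0 < |X_\ell - X| < 1/\ell$ and $|f(X_\ell) - f(X)| < 1/\ell$, producing a sequence $X_\ell \in E \setminus \{X\}$ with $X_\ell \to X$ and $f(X_\ell) \to f(X)$, contradicting $X \in D$. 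The converse direction is immediate.

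Second, I would stratify $D = \bigcup_{m, k \in \N} D_{m,k}$, where
\[
D_{m,k} := \bigl\{ X \in E : |f(Y) - f(X)| \ge \tfrac{1}{m} \text{ for all } Y \in E \text{ with } 0 < |Y - X| < \tfrac{1}{k} \bigr\},
\]
and reduce the problem to showing that each $D_{m,k}$ is at most countable.

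Third, to bound $D_{m,k}$, I would consider its graph $G_{m,k} := \{ (X, f(X)) : X \in D_{m,k} \} \subset \R^{n+1} \times \R$ and establish the dichotomy that for any two distinct points $(X, f(X)), (X', f(X')) \in G_{m,k}$, either $|X - X'| \ge 1/k$ or $|f(X) - f(X')| \ge 1/m$ (the second alternative follows by applying the defining condition of $D_{m,k}$ at $X$ with $Y = X'$). Covering $\R^{n+1} \times \R$ by countably many boxes of the form $B_{1/(3k)}(X_j) \times (y_j - \tfrac{1}{3m}, y_j + \tfrac{1}{3m})$ with $\{(X_j, y_j)\}_{j \in \N}$ a countable dense set, each such box contains at most one point of $G_{m,k}$, since two points in a common box would contradict the dichotomy in both coordinates simultaneously. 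Hence $G_{m,k}$, and therefore $D_{m,k}$, is at most countable, and taking the countable union over $m$ and $k$ concludes the proof.

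The statement is elementary and there is no real obstacle; essentially the only insight is converting the sequential hypothesis into the quantitative $(\varepsilon, r)$-separation above, after which the stratification and the product-box covering argument are routine.
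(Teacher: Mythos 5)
Your proof is correct and is in essence the same argument as the paper's: your quantitative $(\varepsilon_X, r_X)$ reformulation says precisely that $(X, f(X))$ is an isolated point of the graph $\{ (Y, f(Y)) : Y \in E \} \subset \R^{n+2}$, which is exactly the paper's one-line observation, after which the paper cites the standard fact that the isolated points of any subset of $\R^{n+2}$ form an at most countable set. Your stratification into the sets $D_{m,k}$ and the product-box covering merely give a self-contained proof of that standard fact, so the route is the same, just with the auxiliary countability lemma proved rather than quoted.
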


\begin{proof}
If $X_\circ$ is an element of the set in question, then $(X_\circ,f(X_\circ))$ is an isolated point of $\{ (X,f(X)) : X \in E \}$.
Since collection of isolated points of any subset of $\R^{n+2}$ is at most countable, the lemma follows.
\end{proof}

\begin{lemma}
\label{lem.E}
Let $E \subset\R^n$ and $f : E \to [0, \infty)$ be any function. 
Suppose for any $x\in E$ and any $\varepsilon > 0$, there exists a $\rho > 0$ such that for all $r \in (0, \rho)$,
\begin{equation}
\label{eq.condE}
E \cap \overline{B_r(x)} \cap f^{-1}\left([f(x) - \rho, f(x)+\rho]\right)\subset \{ z : \dist(z, \Pi_{x, r}) \le \varepsilon r \}
\end{equation}
for some $m$-dimensional plane $\Pi_{x, r}$ passing through $x$, possibly depending on $r$. 
Then, 
\[
\dim_{\mathcal{H}} E\le m.
\]
\end{lemma}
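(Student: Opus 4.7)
The plan is a classical Federer-type dimension reduction. Hypothesis (\ref{eq.condE}) asserts that $E$ is locally well-approximated by $m$-planes with arbitrarily small relative thickness $\vep$, provided one stays at small enough scales and within close values of $f$. The core idea is that, once the hypothesis is made uniform, any cover of $E$ by balls of radius $r$ can be refined to a cover by balls of radius $\sim \vep r$ at the cost of multiplying the cardinality by $\leq C(n,m)\vep^{-m}$; for $\vep$ small and $\beta > m$ this shrinks the $\beta$-mass by a factor $\theta := 2^\beta C(n,m)\,\vep^{\beta-m} < 1$, so iterating forces $\mathcal{H}^\beta(E) = 0$.

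\emph{Step 1 (uniformization).} Fix $\beta > m$ and $\vep > 0$ so small that $\theta < 1$. For each $k\in \N$, set
\[
A_k^\vep := \{x \in E : \textrm{(\ref{eq.condE}) holds at } x \textrm{ for this } \vep \textrm{ with some } \rho \geq 1/k\},
\]
so that $E = \bigcup_k A_k^\vep$. Partition each $A_k^\vep$ by level sets of $f$ of width $1/k$: $A_{k,j}^\vep := A_k^\vep \cap f^{-1}([j/k,(j+1)/k))$ for $j \in \Z_{\geq 0}$. By countable subadditivity of $\mathcal{H}^\beta$, it is enough to show $\mathcal{H}^\beta(F) = 0$ for each $F := A_{k,j}^\vep$. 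The crucial property is that any two $x,y \in F$ satisfy $|f(y) - f(x)| < 1/k = \rho$, so hypothesis (\ref{eq.condE}) gives
\[
F \cap \overline{B_r(x)} \subset \{z : \dist(z,\Pi_{x,r}) \leq \vep r\} \quad \textrm{for every } x \in F \textrm{ and } r \in (0,1/k),
\]
for some $m$-plane $\Pi_{x,r}$ through $x$.

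\emph{Step 2 (iterative covering).} The sub-claim is that for every $x \in F$ and $r \in (0,1/k)$, $F \cap \overline{B_r(x)}$ can be covered by at most $C(n,m)\vep^{-m}$ balls of radius $2\vep r$ centered in $F$. Indeed, after rotating so $\Pi_{x,r}$ is an affine $\R^m$, the $\vep r$-slab meets $B_r(x)$ inside a product $B_r^{(m)} \times B_{\vep r}^{(n-m)}$, which is covered by at most $C(n,m)\vep^{-m}$ balls of radius $\vep r$; those missing $F$ are discarded, and each remaining one is recentered at a point of $F$ by doubling its radius. Now fix $x_\circ \in F$ and $r_0 < 1/(2k)$. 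Starting from the trivial cover $\{B_{r_0}(x_\circ)\}$ and iterating the sub-claim $\ell$ times, we obtain a cover of $F \cap \overline{B_{r_0}(x_\circ)}$ by at most $(C\vep^{-m})^\ell$ balls of radius $\leq r_0(2\vep)^\ell$ centered in $F$. Hence
\[
\mathcal{H}^\beta_{2 r_0(2\vep)^\ell}\bigl(F \cap \overline{B_{r_0}(x_\circ)}\bigr) \leq (C\vep^{-m})^\ell\, \omega_\beta \bigl(r_0 (2\vep)^\ell\bigr)^\beta = \omega_\beta\, r_0^\beta\, \theta^\ell \to 0 \quad\text{as } \ell \to \infty.
\]
Since $\mathcal{H}^\beta_\delta$ is non-increasing in $\delta$ and $2r_0(2\vep)^\ell \downarrow 0$, this forces $\mathcal{H}^\beta_\delta(F \cap \overline{B_{r_0}(x_\circ)}) = 0$ for every $\delta > 0$, and so $\mathcal{H}^\beta(F \cap \overline{B_{r_0}(x_\circ)}) = 0$. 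As $F \subset \R^n$ is separable, countably many such small balls cover $F$, giving $\mathcal{H}^\beta(F) = 0$, and then $\mathcal{H}^\beta(E) = 0$ by Step 1. Since $\beta > m$ was arbitrary, $\dim_\mathcal{H} E \leq m$.

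The main obstacle is the double role of $\rho$ in (\ref{eq.condE}): it simultaneously bounds the scale at which the $m$-plane approximation is valid and the $f$-oscillation within which it applies. Without tying the two together, the hypothesis yields no control on the geometry of $E$ at a generic point, since neighbouring points of $E$ may have $f$-values far from $f(x)$. The device of decomposing by $\rho = 1/k$ and then partitioning by $f$-level sets of width $1/k$ is what aligns the two roles, making hypothesis (\ref{eq.condE}) uniform on each piece $A_{k,j}^\vep$ and enabling the classical iterative covering to close the argument.
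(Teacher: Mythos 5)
Your proof is correct; the paper itself does not prove this lemma but simply defers to \cite[Proposition 7.3]{FRS19}, and your argument---decomposing $E$ into the pieces $A_{k,j}^{\vep}$ where the hypothesis holds with $\rho\ge 1/k$ and where $f$ oscillates by less than $1/k$, so that \eqref{eq.condE} becomes uniform on each piece, and then iterating the $\vep$-slab covering to force $\mathcal{H}^\beta=0$ for every $\beta>m$---is essentially the same Federer-type covering proof given there. The only cosmetic additions are to also take $\vep<1/2$ so that the radii $(2\vep)^\ell r_0$ indeed tend to zero, and to note that the covering balls have diameter exactly $2r_0(2\vep)^\ell$, so one should use $\mathcal{H}^\beta_{\delta}$ with $\delta$ slightly larger (or allow $\diam E_i\le\delta$), neither of which affects the argument.
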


\begin{proof}
See \cite[Proposition 7.3]{FRS19}.
\end{proof}

\subsection{Proofs of Proposition~\ref{prop.main2} and Remark~\ref{rem.zerodim}}

We now move to the goal of this section.
We start with a set of results which pertain to \ref{eq.Case1}.

\begin{proposition}
\label{prop: spine is nodal set}
Assume $n \ge 2$.
\begin{enumerate}[(i)]
\item If $a \in [0,1)$, $\dim_{\mathcal{H}} \tilde \Sigma_\kap^{m,{\rm a}} \le m-1$ for any $1\le m \le n-1$.
\item If $a \in (-1,0)$, $\dim_{\mathcal{H}} \tilde \Sigma_\kap^{m,{\rm a}} \le m-1$ for any $1\le m \le n-2$.
\end{enumerate}
\end{proposition}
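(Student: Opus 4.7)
The plan is to run a Federer-type dimension reduction argument in the spirit of \cite[Proposition 4.3]{FS18}, powered by the Case~1 accumulation lemma, Lemma~\ref{lem: accumulation pts}. I would argue by contradiction: suppose $\mathcal{H}^{m-1+\epsilon}_\infty(\tilde \Sigma_\kap^{m,\mathrm{a}}) > 0$ for some $\epsilon \in (0,1)$. Using Lemma~\ref{lem.lem35}, after a translation I may assume $0 \in \tilde \Sigma_\kap^{m,\mathrm{a}}$ is a density point with radii $r_j \downarrow 0$ whose associated accumulation set $\mathcal{A}$ satisfies $\mathcal{H}^{m-1+\epsilon}_\infty(\mathcal{A}) > 0$. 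Under both hypotheses we are in \ref{eq.Case1}: since $0 \in \tilde \Sigma_\kap$ gives $\mathcal{N}_\ast = L_\ast$ of dimension $m$, this is automatic when $a \ge 0$ in (i), and $\dim \mathcal{N}_\ast \le n-2$ when $m \le n-2$ in (ii). Proposition~\ref{prop.case1} then yields, along a subsequence of $r_j$, a weak limit $\tilde v_{r_j} \rightharpoonup q$ with $q$ a non-trivial $\lambda_\ast$-homogeneous $a$-harmonic polynomial and integer $\lambda_\ast \ge \kap$; the anomaly $\lambda_\ast < \kap+1$ forces $\lambda_\ast = \kap$.

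Next I would apply Lemma~\ref{lem: accumulation pts} to each $Z \in \mathcal{A}$: it yields $Z \in L_\ast$ and, thanks to $\lambda_\ast = \kap$, that $q(Z + \cdot\,) - q$ (a polynomial of degree $\le \kap-1$) is invariant under $L_\ast + L(q)$. Differentiating this invariance in any $\xi \in L_\ast$ gives $Z \in L(\partial_\xi q)$, so $\mathcal{A} \subset V := \bigcap_{\xi \in L_\ast} L(\partial_\xi q) \subset L_\ast$. Since $\mathcal{H}^{m-1+\epsilon}_\infty(\mathcal{A}) > 0$ and $V$ is a linear subspace, $\dim V \ge m$, forcing $V = L_\ast$; equivalently, $\partial_\eta \partial_\xi q \equiv 0$ for all $\xi, \eta \in L_\ast$. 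A direct monomial computation on the $\kap$-homogeneous polynomial $q$ then forces the structural form
\[
q(X) = h(X_\perp) + \sum_{i=1}^{m} x_i\, g_i(X_\perp),
\]
where $X = (x,y)$, $X_\perp$ collects the coordinates of $X$ orthogonal to $L_\ast$, $h$ is a $\kap$-homogeneous and $g_i$ are $(\kap-1)$-homogeneous $a$-harmonic polynomials even in $y$ (both $a$-harmonic because $L_a q = 0$ splits).

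To close, I would combine this structure with Lemma~\ref{lem: L2 sign}: $\int_{\pa B_1} qp_\ast|y|^a = 0$ and $\int_{\pa B_1} qp|y|^a \le 0$ for all $p \in \mathscr{P}_\kap$. Since $p_\ast$ vanishes to order $\kap$ precisely on $L_\ast$ with $p_\ast(x_\perp, 0) \ge c|x_\perp|^\kap$ by homogeneity, for any $L_\ast$-invariant $\kap$-homogeneous $\tilde r = \tilde r(x_\perp)$ we have $|\tilde r| \le C p_\ast$ on $\pa B_1\cap\{y=0\}$, so $p_\ast \pm \varepsilon \Ext_a(\tilde r) \in \mathscr{P}_\kap$ for small $\varepsilon > 0$; hence $\int q\,\Ext_a(\tilde r)|y|^a = 0$. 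Choosing $\tilde r = h|_{y=0}$ and using the $x_i$-odd symmetry to kill the cross terms $\int x_i g_i h|y|^a$ gives $\int h^2|y|^a = 0$, so $h \equiv 0$. With $q = \sum_i x_i g_i$ in hand, testing against the non-negative rank-one $\kap$-th powers $\Ext_a((\boldsymbol{e}\cdot x)^\kap) \in \mathscr{P}_\kap$ for $\boldsymbol{e} \in \mathbb{S}^{n-1}$ (valid as $\kap$ is even) produces $\int q\,\Ext_a((\boldsymbol{e}\cdot x)^\kap)|y|^a \le 0$ for every $\boldsymbol{e}$. The resulting $\kap$-homogeneous function of $\boldsymbol{e}$ is ``block off-diagonal'' with respect to the splitting $\R^n = L_\ast \oplus L_\ast^\perp$: by parity it vanishes both for $\boldsymbol{e} \in L_\ast$ and for $\boldsymbol{e} \in L_\ast^\perp$, so being $\le 0$ everywhere forces it to vanish identically, giving $g_i \equiv 0$ for each $i$ and hence $q \equiv 0$, contradicting Proposition~\ref{prop.case1}.

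The hardest step will be the final multilinear sign argument: showing that a ``block off-diagonal'' $\kap$-homogeneous form on $\mathbb{S}^{n-1}$ which is $\le 0$ on every $\kap$-th power $(\boldsymbol{e}\cdot x)^\kap$ must vanish. In the model case $\kap = 2$, this reduces to the elementary fact that a symmetric, block off-diagonal, negative semidefinite matrix is zero (its spectrum consists of pairs $\pm \sigma_i$, forcing $\sigma_i = 0$); I expect the general-$\kap$ case to follow from an analogous spectral/parity argument, possibly combined with an iterative reduction that exploits the bilinearity in $L_\ast$ versus $L_\ast^\perp$ directions.
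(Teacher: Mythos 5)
Your route is, in substance, the paper's own: contradiction at a density point, the accumulation set $\mathcal{A}$ from Lemma~\ref{lem.lem35}, Proposition~\ref{prop.case1} forcing $\lambda_*=\kap$ for anomalous points in \ref{eq.Case1}, Lemma~\ref{lem: accumulation pts} giving $\mathcal{A}\subset L_\ast\cap\bigcap_{\xi\in L_\ast}L(\pa_\xi q)$ (your $V$ is exactly the paper's set $T(q)$), the dimension count forcing $L_\ast\subset V$, the resulting structure $\bar q(x)=\bar q_1(x^\perp)+\sum_i x_i\,\bar g_i(x^\perp)$, and then Lemma~\ref{lem: L2 sign} to kill $\bar q_1$. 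Your two-sided perturbation $p_\ast\pm\vep\,\Ext_a(\tilde r)\in\mathscr{P}_\kap$, justified by $\bar p_\ast(x^\perp)\ge c|x^\perp|^\kap$ (which is where $0\in\tilde\Sigma_\kap$, i.e.\ $\mathcal{N}_\ast=L_\ast$, enters), is a legitimate variant of the paper's one-sided choice $C\bar p_\ast+\bar q_1\ge 0$ combined with $\langle q,p_\ast\rangle_a=0$.

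The genuine gap is the step you yourself flag: eliminating the $g_i$'s. The inference ``$F(\be):=\int_{\pa B_1}q\,\Ext_a((\be\cdot x)^\kap)|y|^a$ vanishes for $\be\in L_\ast$ and for $\be\in L_\ast^{\perp}$, and $F\le 0$ everywhere, hence $F\equiv 0$'' is not valid in general (a nonpositive $\kap$-form can vanish on two complementary subspaces without vanishing identically), and your spectral justification covers only $\kap=2$. The gap does close, though, by pushing the parity one step further: once $\bar q_1\equiv 0$, $q=\sum_i x_i g_i$ has degree exactly one in the spine variables, so writing $\be=(\be',\be'')\in L_\ast\oplus L_\ast^\perp$ and expanding $(\be\cdot x)^\kap$, only odd powers of $\be'$ survive the integration (the weight $|y|^a$ and $\pa B_1$ are invariant under $x'\mapsto -x'$, and $\Ext_a$ preserves parity in each $x$-variable); hence $F(-\be',\be'')=-F(\be',\be'')$, which together with $F\le 0$ gives $F\equiv 0$. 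Since the powers $(\be\cdot x)^\kap$ span all $\kap$-homogeneous polynomials and $\Ext_a(\bar q)=q$, this yields $\int_{\pa B_1}q^2|y|^a=0$, the desired contradiction. The paper sidesteps this multilinear issue entirely by testing, for each fixed $j$, against the single competitor $\Ext_a\bigl(C(|x^\perp|^\kap+x_j^\kap)+\bar g_j(x^\perp)x_j\bigr)\in\mathscr{P}_\kap$ (nonnegative for $C$ large by $\xi^\kap\ge\xi-1$), whose cross terms vanish by the same oddness, giving $\int_{\pa B_1}\Ext_a(\bar g_j x_j)^2|y|^a\le 0$ and hence $g_j\equiv 0$ one index at a time; you may want to adopt that simpler closing argument.
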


\begin{proof}
The first part of the proof follows the steps of \cite[Lemma 3.6]{FS18}. 
Set $\Sigma:= \tilde \Sigma_\kap^{m,{\rm a}}$.

\smallskip
\noindent{\bf Step 1:} 
We argue by contradiction. 
Suppose that $\mathcal{H}^\beta_\infty(\Sigma) > 0$ for some $\beta > m-1$. 
Then, there is a point $X_\circ\in \Sigma$ and sequence $r_j \downarrow 0$ such that
\[
\frac{\mathcal{H}^\beta_\infty(\Sigma\cap B_{r_j}(X_\circ))}{r_j^{\beta}} \ge c_{n,\beta} > 0. 
\]
Up to a translation, $X_\circ = 0$. 
By definition, we have that 
\[
\lambda_* := N(0^+,v_\ast) < \kap +1,
\]
and that, after extracting a subsequence, 
\[
\tilde v_{r_j} \to q\quad\text{in}\quad L^2(B_1, |y|^a). 
\]
Additionally, from Lemma~\ref{lem.lem35}(ii), we have that the accumulation set $\mathcal{A}:= \mathcal{A}_\Sigma$ satisfies
\begin{equation}
\label{eq.contacc}
\mathcal{H}_\infty^\beta(\mathcal{A})  >0.
\end{equation}

By the definition of $\mathcal{A}$, $Z\in \mathcal{A}$ if there are sequences $X_\ell \in \Sigma$ and $r_{j_\ell} \downarrow 0$ such that $|X_\ell|\le r_{j_\ell}$ and $X_\ell/r_{j_\ell} \to Z$. 
Thus, $X_\ell/2r_{j_\ell}\to Z/2$, and by Lemma~\ref{lem: accumulation pts} (notice that we are in \ref{eq.Case1}), $Z \in L_\ast$ and $Z \in T(q) := \{ X = (x, 0)\in\R^{n+1} : q(x+\,\cdot\,, 0)-q(\,\cdot\,, 0) \text{ is invariant under } L_\ast\}$. 
That is, 
\[
\mathcal{A} \subset \overline{B_1} \cap L_\ast \cap T(q).
\]
Notice that by assumption, $q$ is $\kap$-homogeneous and $\dim_\mathcal{H} L_\ast = m$. 
Also, $T(q)$ is a linear space. 
We will, therefore, reach a contradiction if we can show that $L_\ast \not\equiv T(q)\cap L_\ast$; since then, $\dim_{\mathcal{H}} L_\ast\cap T(q) \le m-1$, which contradicts \eqref{eq.contacc}.

\smallskip 
\noindent{\bf Step 2:} 
Let $\bar p := p|_{\R^n \times \{ 0 \}}$ for any $a$-harmonic, even in $y$ polynomial $p$, and recall that $\bar p$ uniquely determines $p$ (see the lines after \eqref{eq.ext}).
Suppose, again, to the contrary, that $L_\ast \equiv T(q)\cap L_\ast$. 
After a change of variables, since $L_*$ has dimension $m$, we can assume that $\bar p_* = \bar  p_\ast(x_1,\dots,x_l)$ for $l = n-m$.
Set $x^l = (x_1,\dots,x_l)$.  
Notice that $L_\ast = \{(0^l,x^m,0) : x^m \in \R^{m} \}$, where $0^l \in \R^l$ denotes the vector $0$ in $l$ dimensions. 
The inclusion $L_\ast \subset T(q) $ implies that $\bar q((0^l,x^m) +\,\cdot\,) - \bar q$ can only depend on $x^l$ for any $x^m \in \R^{m}$. 
This, together with the homogeneity of $q$, directly yields that 
\[
\bar q(x) = q_1(x^l) + \sum_{j = l+1}^{n} q_j(x^l) x_j =: \bar q_1(x^l) + \bar q_2(x),
\]
where $q_1$ and $q_j$ are $\kap$-homogeneous and $(\kap-1)$-homogeneous polynomial respectively depending only on $x_1,\dots,x_l$.

Now recall Lemma~\ref{lem: L2 sign}:
\begin{equation}
\label{eq.1}
\int_{B_1} qp|y|^a  \le 0 \quad\text{for all}\quad p \in \mathscr{P}_\kappa.
\end{equation}
Moreover, \eqref{eq.1} is an equality if $p = p_\ast$ (this is \eqref{eq.ineq1}). 
Notice, first, that (recall \eqref{eq.ext})
\begin{equation}
\label{eq.2}
\int_{B_1} \Ext_a(\bar q_1)\Ext_a(\bar q_2)|y|^a = 0. 
\end{equation}
Indeed, $\bar q_1$ does not depend on $x_{l+1},\dots,x_{n}$, whereas the terms of $\bar q_2$ are sums of linear terms in one of $x_{l+1},\dots,x_{n}$; thus, odd in one of the last variables. 

Since $0\in \Sigma = \tilde \Sigma_\kap^{m,{\rm a}}$, $\bar p_*(x^l,0^m) > 0$ for all $x^l \in \R^l \setminus \{0\}$. 
In particular, we can choose $C \gg 1$ such that $C\bar p_*+\bar q_1 \ge 0$ ($\bar p_*$ and $\bar q_1$ have the same homogeneity and depend on the same variables). 
That is, $\Ext_a(C\bar p_*+\bar q_1) = Cp_* + \Ext_a(\bar q_1) \in \mathscr{P}_\kappa$, from which it follows that
\[
0\ge \int_{B_1} (Cp_*+\Ext_a(\bar q_1))q|y|^a = \int_{B_1} \Ext_a(\bar q_1)^2|y|^a,
\]
using the equality in \eqref{eq.1} and \eqref{eq.2}. 
Hence,
\[
\bar q_1 \equiv 0.
\] 

Finally, fix $l+1\le j\le n$, and take $\bar p_j := C(|x^l|^\kappa + x_j^\kap) + q_j(x^l)x_j$ for some $C \gg 1$ so that $\bar p_j \ge 0$.
(The fact that such a constant $C > 0$ exists is straight-forward. 
Indeed, it suffices to show that $x_1^{\kap}+x_2^{\kap}-x_1^{\kap-1}x_2 \ge 0$, which after dividing by $x_2^\kap$ is analogous to showing that $\xi^\kap \ge \xi -1$ for all $\xi\in \R$; this is immediate.)
Arguing as before, by odd/even symmetry, we find that
\[
\int_{B_1} \Ext_a(q_jx_j)\Ext_a(q_ix_i)|y|^a = 0 \quad\text{for all}\quad l+1\le i \neq j\le n
\]
and
\[
\int_{B_1} \Ext_a(|x^l|^\kappa + x_j^\kappa)\Ext_a(q_ix_i)|y|^a = 0 \quad\text{for all}\quad l+1\le i, j\le n.
\]
And so, as $\bar q_1 \equiv 0$ and $\Ext_a(\bar p_j) \in \mathscr{P}_\kappa$,
\[
0 \geq \int_{B_1} \Ext_a(\bar p_j)q|y|^a = \int_{B_1} \Ext_a(q_jx_j)^2|y|^a,
\]
which is only true if $q_j  \equiv 0$. 
Because $j$ was fixed arbitrarily, we deduce that
\[
\bar q_2 \equiv 0.
\]
A contradiction.
\end{proof}

\begin{lemma}
\label{lem.sigmaempty}
Let $n \ge 2$. Then, $\tilde \Sigma_\kap^{0,{\rm a}}$ is empty.
\end{lemma}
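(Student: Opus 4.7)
The plan is to argue by contradiction using the blow-up classification in \ref{eq.Case1} together with the orthogonality inequalities from Lemma~\ref{lem: L2 sign}. Assume some $X_\circ \in \tilde\Sigma_\kap^{0,{\rm a}}$ exists; after translation, take $X_\circ = 0$. By definition $m_\ast = 0$, so $L_\ast = \{0\}$, and by the $\tilde\Sigma$-condition $\mathcal{N}_\ast = L_\ast = \{0\}$. Since $n \ge 2$, this places us in \ref{eq.Case1} (both if $a \ge 0$ and if $a \in (-1,0)$, because then $\dim_{\mathcal{H}} \mathcal{N}_\ast = 0 \le n-2$). Hence by Proposition~\ref{prop.case1}, any second blow-up $q$ is a nonzero, $\lambda_\ast$-homogeneous, $a$-harmonic polynomial, even in $y$, with $\lambda_\ast \in \{\kap, \kap+1, \kap+2, \dots\}$. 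The anomalous condition $\lambda_\ast < \kap+1$ forces $\lambda_\ast = \kap$.

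Next I would use the alignment $\mathcal{N}_\ast = L_\ast = \{0\}$ to manufacture many elements of $\mathscr{P}_\kap$ from $q$. Since $p_\ast(x,0) > 0$ for all $x \neq 0$ and $\bar p_\ast := p_\ast|_{\{y=0\}}$ is $\kap$-homogeneous, there is a constant $c_\ast > 0$ with $\bar p_\ast \ge c_\ast$ on $\pa B_1^\ast$. The polynomial $\bar q := q|_{\{y=0\}}$ is bounded on $\pa B_1^\ast$, so choosing $C \gg 1$ with $C c_\ast \ge \|\bar q\|_{L^\infty(\pa B_1^\ast)}$ gives $C\bar p_\ast + \bar q \ge 0$ on $\pa B_1^\ast$; by $\kap$-homogeneity this extends to all of $\R^n \times \{0\}$. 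Because $p_\ast$ and $q$ are both $a$-harmonic, $\kap$-homogeneous, and even in $y$, the function $Cp_\ast + q$ lies in $\mathscr{P}_\kap$.

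The contradiction is then immediate from Lemma~\ref{lem: L2 sign}. Applying the inequality \eqref{eq.ineq2} with $p = Cp_\ast + q \in \mathscr{P}_\kap$ and using the equality \eqref{eq.ineq1} (so the $p_\ast$-contribution drops out) yields
\[
0 \;\ge\; \int_{\pa B_1} q(Cp_\ast + q)\,|y|^a \;=\; \int_{\pa B_1} q^2\,|y|^a,
\]
hence $q \equiv 0$ on $\pa B_1$, contradicting $\|q\|_{L^2(\pa B_1,|y|^a)} = 1$.

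The only step that requires any care is the nonnegativity of $C\bar p_\ast + \bar q$ on the thin space, which crucially uses the hypothesis $\mathcal{N}(p_{\ast,X_\circ}) = L(p_{\ast,X_\circ})$ built into $\tilde\Sigma_\kap$; without it, $\bar p_\ast$ could vanish along a larger set than $L_\ast$ and one could no longer dominate $\bar q$ by a multiple of $\bar p_\ast$. Everything else is a direct combination of the blow-up classification (Proposition~\ref{prop.case1}) and the orthogonality/sign facts (Lemma~\ref{lem: L2 sign}) already established.
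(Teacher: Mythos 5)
Your proposal is correct and follows essentially the same route as the paper: place the point in \ref{eq.Case1}, note the second blow-up is a $\kappa$-homogeneous $a$-harmonic polynomial (anomalous plus integer homogeneity), use the alignment $\mathcal{N}_\ast = L_\ast = \{0\}$ to get $Cp_\ast + q \in \mathscr{P}_\kappa$ for $C \gg 1$, and then contradict $\|q\|_{L^2(\pa B_1,|y|^a)} = 1$ via \eqref{eq.ineq1}--\eqref{eq.ineq2}. The only difference is that you spell out details (the forcing of $\lambda_\ast = \kappa$ and the explicit choice of $C$) that the paper delegates to Step 2 of the proof of Proposition~\ref{prop: spine is nodal set}.
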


\begin{proof}
Suppose $0 \in \tilde \Sigma_\kap^{0,{\rm a}}$.
Then, $\mathcal{N}_\ast = L_\ast = \{0\}$ and $\bar p_\ast := p_\ast|_{\R^n \times \{ 0\}} > 0$ outside of the origin.
Hence, there exists a $C \gg 1$ so that $\Ext_a(C\bar p_\ast + \bar q) = Cp_\ast + q \in \mathscr{P}_\kap$ (cf. Step 2 of the proof of Proposition~\ref{prop: spine is nodal set}).
Here, $\bar q$ is the restriction of any second blow-up of $u$ at $0$.
So, by \eqref{eq.ineq2} and \eqref{eq.ineq1}, we find that
\[
0 \geq \int_{\pa B_1} q(Cp_\ast + q)|y|^a = \int_{\pa B_1} q^2|y|^a,
\]
which cannot be: $q \not\equiv 0$.
\end{proof}

\begin{lemma}
\label{lem.sigma1}
Let $n \ge 2$ and $a \in [0,1)$ or $n \ge 3$ and $a \in (-1,1)$. 
Then, $\tilde \Sigma_\kap^{1,{\rm a}}$ is isolated in $\Sigma_{\ge\kap}$.
\end{lemma}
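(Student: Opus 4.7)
The plan is to argue by contradiction in the spirit of Lemma~\ref{lem: Sigma0k is isolated}, using the accumulation statement from Lemma~\ref{lem: accumulation pts} together with the structural argument from Step 2 of the proof of Proposition~\ref{prop: spine is nodal set}. Suppose, for contradiction, that $0 \in \tilde{\Sigma}_\kap^{1,{\rm a}}$ is not isolated in $\Sigma_{\ge \kap}$, so there is a sequence $\Sigma_{\ge \kap} \ni X_\ell \to 0$ with $X_\ell \neq 0$. Setting $r_\ell := 2|X_\ell|$, we have $Z_\ell := X_\ell/r_\ell \in \pa B_{1/2}$. Under the hypotheses $n\ge 2$, $a\in[0,1)$ or $n\ge 3$, $a\in(-1,1)$, and since $0 \in \tilde \Sigma_\kap$ with $m_\ast = 1$, we have $\dim_{\mathcal H}\mathcal N_\ast = \dim_{\mathcal H} L_\ast = 1 \le n-2$ whenever $a < 0$, so we are in \ref{eq.Case1}. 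Thus Proposition~\ref{prop.case1} provides, up to a subsequence, $\tilde v_{r_\ell} \rightharpoonup q$ in $W^{1,2}(B_1,|y|^a)$ with $q$ a $\lambda_\ast$-homogeneous, $a$-harmonic polynomial and $\lambda_\ast \in \{\kap,\kap+1,\dots\}$, satisfying $\|q\|_{L^2(\pa B_1,|y|^a)} = 1$.

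Since $0 \in \tilde{\Sigma}_\kap^{1,{\rm a}}$ means $\lambda_\ast < \kap + 1$, the integrality of $\lambda_\ast$ forces $\lambda_\ast = \kap$. Applying Lemma~\ref{lem: accumulation pts} (extracting a further subsequence), we obtain $Z_\infty \in L_\ast$ with $|Z_\infty| = 1/2$, and moreover $q_{Z_\infty} := q(Z_\infty+\,\cdot\,) - q$ is invariant under $L_\ast + L(q)$; in particular invariant under $L_\ast$. Unwinding this invariance as at the end of Step 1 in the proof of Proposition~\ref{prop: spine is nodal set}, this is exactly the statement $Z_\infty \in T(q)$, where $T(q) := \{X = (x,0) : q(x+\,\cdot\,,0) - q(\,\cdot\,,0) \text{ is invariant under }L_\ast\}$. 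Since $T(q)$ is a linear subspace and $L_\ast$ is one-dimensional, the presence of the non-zero element $Z_\infty$ in $L_\ast \cap T(q)$ forces $L_\ast \subset T(q)$.

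At this point I invoke Step 2 of the proof of Proposition~\ref{prop: spine is nodal set} verbatim, with $m = 1$ and $l = n-1$. After a rotation we may assume $L_\ast = \{(0^{n-1}, t, 0) : t\in\R\}$, so $\bar p_\ast$ depends only on $x^{n-1} := (x_1,\dots,x_{n-1})$; the inclusion $L_\ast \subset T(q)$ combined with the $\kap$-homogeneity of $q$ then forces
\[
\bar q(x) = \bar q_1(x^{n-1}) + q_n(x^{n-1})\,x_n,
\]
where $\bar q_1$ is $\kap$-homogeneous and $q_n$ is $(\kap-1)$-homogeneous. Because $0 \in \tilde{\Sigma}_\kap$, $\bar p_\ast > 0$ off the spine, so for $C \gg 1$ both $\Ext_a(C\bar p_\ast + \bar q_1)$ and $\Ext_a(C(|x^{n-1}|^\kap + x_n^\kap) + q_n(x^{n-1})\,x_n)$ lie in $\mathscr{P}_\kap$. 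Testing these in Lemma~\ref{lem: L2 sign} (together with the odd/even orthogonalities exploited in Step 2) yields $\bar q_1 \equiv 0$ and $q_n \equiv 0$, hence $\bar q \equiv 0$ and therefore $q \equiv 0$ by \eqref{eq.ext}. This contradicts $\|q\|_{L^2(\pa B_1,|y|^a)} = 1$, completing the proof.

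The substantive content is entirely in Lemma~\ref{lem: accumulation pts} and the Step 2 computation of Proposition~\ref{prop: spine is nodal set}; the only new ingredient is the linear-algebra observation that, because $L_\ast$ is one-dimensional, the mere existence of a single non-trivial accumulation direction $Z_\infty \in L_\ast \cap T(q)$ promotes the $\mathcal H^0$ bound of Proposition~\ref{prop: spine is nodal set} to a genuine isolation statement. The main conceptual point to verify — and which I do not expect to be difficult — is that $T(q)$ really is a linear subspace (which follows from $I(\phi_e) = \{\xi : \phi_e(\cdot + \xi) = \phi_e\}$ being a linear subspace for each polynomial $\phi_e(x) := q(x+e,0) - q(x,0)$, $e \in L_\ast$, by the proof of Lemma~\ref{lem: equiv}).
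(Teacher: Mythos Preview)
Your proof is correct and follows essentially the same approach as the paper: argue by contradiction, extract a second blow-up $q$ along $r_\ell = 2|X_\ell|$, use the anomalous hypothesis to force $\lambda_\ast = \kap$, and then reduce to Step~2 of Proposition~\ref{prop: spine is nodal set}. The paper's version is terser---it simply notes that $q$ is $\kap$-homogeneous and says ``the proof now follows exactly as in Step~2''---whereas you make explicit the one-dimensional linear-algebra step (a single nonzero $Z_\infty \in L_\ast \cap T(q)$ forces $L_\ast \subset T(q)$) that the paper leaves to the reader.
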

\begin{proof}
Suppose not and assume that $0\in \tilde \Sigma_\kap^{1,{\rm a}}$.
Then, there exists a sequence $X_\ell \in \Sigma_{\ge \kap}$ with $X_\ell\to 0$. 
Let $r_\ell := 2|X_\ell|$, and notice that $\dim_\mathcal{H} L_* = \dim_\mathcal{H} \{p_* = 0\} = 1$, by assumption. 
On the other hand, up to a subsequence, we can assume that $\tilde v_{ r_\ell} \to q$ in $L^2(B_1, |y|^a)$, which is $\kap$-homogeneous. 

The proof now follows exactly as in Step 2 of the proof of Proposition~\ref{prop: spine is nodal set}.
\end{proof}

\begin{remark}
\label{rem.counterexample}
In all of the above results, Proposition~\ref{prop: spine is nodal set}, Lemma~\ref{lem.sigmaempty}, and Lemma~\ref{lem.sigma1}, the coincidence of the nodal set and the spine of $p_*$ is crucial.
To illustrate how much, let us consider $\Sigma_4^{0,{\rm a}}$, which we would like to say is empty.
(Notice that $\Sigma_2^{0,{\rm a}}$ is empty; in this case, the nodal set and spine of the first blow-up at any point are aligned.)

In Proposition~\ref{prop: spine is nodal set}, in order to rule out a $\kappa$-homogeneous, $a$-harmonic $q$ as a second blow-up at anomalous points, we have used Lemmas~\ref{lem: accumulation pts} and \ref{lem: L2 sign}. 
Since we are dealing with $\Sigma_4^{0,{\rm a}}$, Lemma~\ref{lem: accumulation pts} provides no new information on $q$. 
Also, Lemma~\ref{lem: L2 sign} is too weak to rule out that $q$ is a $4$-homogeneous, harmonic (assume $a = 0$, for simplicity) polynomial.

Indeed, in $\R^3 = \{ (x_1,x_2,y) \}$, consider the harmonic extensions
\[
p_\ast = \Ext_0(x_1^2x_2^2)
\]
and
\[
q = \Ext_0 \left(b x_1^4 -\left(\frac{11}{24} + b\right) x_2^4 +x_1^2x_2^2\right) \quad\text{with}\quad b\in\left[-\frac13,-\frac18\right].
\]
Notice that the spine and nodal set of $p_\ast$ are different:
\[
L_\ast = \{ (0,0) \}\quad\text{while}\quad\mathcal{N}_\ast = \{x_1 = 0\}\cup\{x_2 = 0\}.
\]
Moreover, by direct (but tedious) computations, the pair $(p_\ast,q)$ is such that
\[
\langle q, p_\ast \rangle_0 = 0\qquad\text{and}\qquad \langle q, p \rangle_0 \leq 0 \quad\text{for all}\quad p \in \mathscr{P}_4.
\]
In turn, this pair could be a first and second blow-up pair at $0$ for a solution $u$ for which $\Sigma_4^0(u) = \{ 0 \}$, leaving open the possibility that $\Sigma_4^{0, {\rm a}}$ is not only not lower dimensional, but all of $\Sigma_4^{0}$.
\end{remark}

Now we study of the size of the anomalous set in \ref{eq.Case2}.

\begin{lemma}
\label{lem.discreten-1}
Let $n = 2$ and $a \in (-1,0)$. 
Then, $\Sigma_\kap^{1,{\rm a}}$ is at most countable. 
\end{lemma}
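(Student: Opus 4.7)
The plan is to show that $\Sigma_\kap^{1,{\rm a}}$ has at most countably many accumulation points within itself; since any subset of $\R^3$ has at most countably many isolated points, this will give the claim. The bound on accumulation points will combine the almost-continuity of Almgren's frequency (Lemma~\ref{lem: almost cty}), the \ref{eq.Case2} accumulation analysis of Lemma~\ref{lem: accumulation pts 2}, and a planar rigidity argument to rule out non-integer homogeneities.

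First I would apply Lemma~\ref{lem: almost cty} to the function $X_\circ \mapsto \lambda_{*, X_\circ}$ on $\Sigma_\kap^{1,{\rm a}}$ to produce a countable exceptional set $E_0$ outside of which every non-isolated point admits an approach sequence $X_\ell \in \Sigma_\kap^{1,{\rm a}} \setminus \{X_\circ\}$ with $X_\ell \to X_\circ$ and $\lambda_{*, X_\ell} \to \lambda_{*, X_\circ}$. Arguing by contradiction, pick such an $X_\circ = 0 \in \Sigma_\kap^{1,{\rm a}} \setminus E_0$ (after translation), set $r_\ell := 2|X_\ell|$ and $Z_\ell := X_\ell/r_\ell$, and extract subsequences (using Proposition~\ref{prop: Almgren}) so that $\tilde v_{r_\ell} \rightharpoonup q$ in $W^{1,2}(B_1,|y|^a)$ and $Z_\ell \to Z_\infty$ with $|Z_\infty| = 1/2$. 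Since $a \in (-1,0)$ and $m_* = n-1 = 1$ we are in \ref{eq.Case2}, and the frequency continuity gives $\lambda = \lambda_* < \kap+1$ in the notation of Lemma~\ref{lem: accumulation pts 2}. That lemma then delivers $Z_\infty \in L_*$, $q^{\rm odd} \equiv 0$, $c_\infty = 0$, and $q$ invariant in the direction $Z_\infty$. Because $\dim L_* = 1$, we have $L_* = \R Z_\infty$, so $q$ is invariant along the whole spine.

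Choosing coordinates so that $L_* = \R \times \{0\} \times \{0\}$ and $\be_\ast = (0,1,0)$, the invariance means $q = q(x_2, y)$ depends only on two variables; it is $\lambda_*$-homogeneous, even in $y$, and (by $q^{\rm odd} \equiv 0$) even in $x_2$. By Proposition~\ref{prop.case2}, $L_a q = 0$ in $\R^2 \setminus \{0\}$ and $q(0) = 0$. A cutoff estimate using $\int_{B_\varepsilon} |\nabla q|^2 |y|^a \lesssim \varepsilon^{2\lambda_* + a}$ (finite because $\lambda_* \ge 2$) together with radial cutoffs $\eta_\varepsilon$ satisfying $\int |\nabla \eta_\varepsilon|^2 |y|^a \lesssim \varepsilon^a$ then yields, by Cauchy--Schwarz,
\[
\bigg|\int_{\R^2} |y|^a \nabla q \cdot \phi \nabla \eta_\varepsilon\bigg| \lesssim \varepsilon^{\lambda_* + a} \to 0
\]
for all $\phi \in C_c^\infty(\R^2)$, since $\lambda_* + a > 0$. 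Hence $L_a q = 0$ globally in $\R^2$, and by the Liouville-type result for $a$-harmonic functions of polynomial growth (e.g., \cite[Lemma 2.7]{CSS08}), $q$ is an $a$-harmonic polynomial of degree $\lambda_*$. This forces $\lambda_* \in \N$, contradicting $\lambda_* \in (\kap, \kap+1)$.

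The main technical step is the planar removability at the origin for the weighted operator $L_a$, which is delicate in the regime $a \in (-1, 0)$ where the weight itself is singular; the argument hinges on the positivity of the exponent $\lambda_* + a$, which in turn follows from $\lambda_* \ge \kap \ge 2 > -a$. Alternatively, one can bypass this removability step by invoking the classification of $\lambda_*$-homogeneous global solutions of the very thin obstacle problem on $\{0\} \subset \R^2$ developed in Section~\ref{sec.VTOP}, which directly rules out admissible homogeneities in the open interval $(\kap, \kap+1)$.
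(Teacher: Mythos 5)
Your argument is correct, and its skeleton is the paper's: you rule out non\-/isolated anomalous points by combining the almost\-/continuity of the frequency (Lemma~\ref{lem: almost cty}) with the \ref{eq.Case2} accumulation analysis (Lemma~\ref{lem: accumulation pts 2}), reducing the second blow-up $q$ to a two-dimensional, $\lambda_*$-homogeneous solution of the very thin obstacle problem with $\lambda_*\in(\kap,\kap+1)$. (Whether one applies Lemma~\ref{lem: almost cty} to $E=\Sigma_\kap^{1,{\rm a}}$, as you do, or to $E=\Sigma_\kap^{n-1}$, as the paper does, is immaterial.) Where you genuinely diverge is the final rigidity step: the paper invokes the classification of two-dimensional homogeneous solutions, Lemma~\ref{lem: 2d class} (borrowed from Focardi--Spadaro), to conclude $\lambda_*\in\N$, whereas you prove directly that the isolated singularity at the origin of the planar slice is removable and then apply the Liouville theorem of \cite[Lemma 2.7]{CSS08}. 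Your removability computation is sound, but it is worth stressing why: the origin has \emph{positive} $a$-harmonic capacity in $\R^2$ when $a\in(-1,0)$ (indeed $\int|\nabla\eta_\vep|^2|y|^a\sim\vep^{a}\to\infty$), so this is not a capacity argument; it works only because the $\lambda_*$-homogeneity of $q$ gives $\int_{B_\vep}|\nabla q|^2|y|^a\lesssim\vep^{2\lambda_*+a}$, and Cauchy--Schwarz produces the exponent $\lambda_*+a>0$ (one should also record the harmless term $\int|y|^a\nabla q\cdot(1-\eta_\vep)\nabla\phi=O(\vep^{\lambda_*+a+1})$). Consistently, your argument would not (and should not) exclude the homogeneity $\lambda_*=-a$ of Lemma~\ref{lem: 2d class}, which is irrelevant here since $\lambda_*\ge\kap+\alpha_\kap>2$. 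What your route buys is a self-contained proof of the needed case of the two-dimensional classification, at the cost of re-proving in the plane what Lemma~\ref{lem: 2d class} already provides; your closing remark that one may instead quote that classification is exactly the paper's proof.
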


\begin{proof}
Assume that $0 \in \Sigma_\kap^{1,{\rm a}}$, which holds up to a translation, and that there exists a sequence $X_\ell \in \Sigma_\kap^{1}$ such that $X_\ell \to 0$ and $N(0^+,u(X_\ell + \,\cdot\,) - p_{\ast,X_\ell}) =: \lambda_{\ast, X_\ell} \to \lambda_* = N(0^+,v_{\ast})$. 
By Proposition~\ref{prop.case2} and the definition of anomalous set, $\Sigma_\kap^{1,{\rm a}}$, we have that 
\begin{equation}
\label{eq.contradtop}
\lambda_* \in [\kap+\alpha_\kap, \kap+1).
\end{equation}
Moreover, up to a subsequence, if $r_\ell := 2|X_\ell|$, 
\[
\tilde v_{r_\ell} \rightharpoonup q \quad \text{in}\quad W^{1,2}(B_1, |y |^a)
\]
where $q$ is a global $\lambda_*$-homogeneous solution to the very thin obstacle problem with zero obstacle on $L_*$. 
In addition, by Lemma~\ref{lem: accumulation pts 2}, $X_\ell/r_\ell \to Z_\infty \in L_*\cap \pa B_{1/2}$, $q=q^{\rm even}$ ($\lambda_* < \kap+1$ by assumption, forcing $q^{\rm odd} \equiv 0$), and $q$ is invariant in the $Z_\infty$ direction (i.e., in the $L_*$ direction). That is, $q$ is two-dimensional.
So by Lemma~\ref{lem: 2d class}, since $\lambda_* > 2$, $q$ is a polynomial and, in particular, $\lambda_{*} \ge \kap+1$.
But this contradicts \eqref{eq.contradtop}. 

In turn, by Lemma~\ref{lem: almost cty} applied to $E = \Sigma_\kap^{n-1}$ and $f(X) = N(0^+,u(X + \,\cdot\,) - p_{\ast,X})$, we conclude.
\end{proof}

\begin{proposition}
\label{prop: top anom is lower d}
Let $n \geq 3$ and $a \in (-1,0)$. 
Then, $\dim_{\mathcal{H}} \Sigma_\kap^{n-1, {\rm a}} \le n-2$. 
\end{proposition}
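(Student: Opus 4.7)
The plan is to run a Federer-type dimension reduction argument in the spirit of \cite{FRS19}, chaining together three tools developed above: the accumulation Lemma~\ref{lem: accumulation pts 2}, the almost-continuity Lemma~\ref{lem: almost cty}, and the dimension-bound Lemma~\ref{lem.E}, together with the two-dimensional classification result Lemma~\ref{lem: 2d class} from Section~\ref{sec.VTOP}. Let $f(X) := N(0^+, u(X+\,\cdot\,) - p_{*,X}) = \lambda_{*,X}$ on $\Sigma_\kap^{n-1,{\rm a}}$, so $f \in [\kap + \alpha_\kap, \kap + 1)$ by definition. By Lemma~\ref{lem: almost cty}, the set $N \subset \Sigma_\kap^{n-1,{\rm a}}$ of points $X_\circ$ where no sequence $X_\ell \to X_\circ$ in $\Sigma_\kap^{n-1,{\rm a}} \setminus \{X_\circ\}$ satisfies $f(X_\ell) \to f(X_\circ)$ is at most countable, so it suffices to bound $\dim_{\mathcal{H}}(\Sigma_\kap^{n-1,{\rm a}} \setminus N) \le n-2$. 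I will do this by verifying the hypothesis of Lemma~\ref{lem.E} applied to $E := \Sigma_\kap^{n-1,{\rm a}} \setminus N$ with $m = n-2$.

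Assume, for contradiction, the hypothesis fails at some $X_\circ \in E$; after translation, $X_\circ = 0$. Then there exist $\varepsilon_0 > 0$ and $r_j \downarrow 0$ such that the rescaled sets
\[
A_j := r_j^{-1}\left(E \cap \overline{B_{r_j}} \cap f^{-1}([f(0) - 1/j, f(0) + 1/j])\right) \subset \overline{B_1}
\]
are not contained in the $\varepsilon_0$-neighbourhood of any $(n-2)$-dimensional plane through the origin. A standard quantitative linear-algebra argument (greedy selection of points maximising distance from the span of those previously chosen) then produces $Z^1_j, \ldots, Z^{n-1}_j \in A_j$ whose wedge product has norm bounded below by $c(n)\varepsilon_0^{n-1} > 0$. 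Passing to a subsequence, $Z^i_j \to Z^i$, the vectors $Z^1, \ldots, Z^{n-1}$ span an $(n-1)$-dimensional subspace $V \subset \R^{n+1}$, and $\tilde v_{r_j} \rightharpoonup q$ in $W^{1,2}(B_1, |y|^a)$, with $q$ a second blow-up of $u$ at $0$ as classified in Proposition~\ref{prop.case2}.

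For each fixed $i$, apply Lemma~\ref{lem: accumulation pts 2} to the sequence $X^i_j := r_j Z^i_j \in \Sigma_\kap^{n-1}$, which satisfies $|X^i_j| \le r_j$ and, by the membership of $X^i_j$ in the level set defining $A_j$, $\lambda_{*, X^i_j} \to f(0) = \lambda_*$. Since $\lambda := \lim_j \lambda_{*, X^i_j} = \lambda_* < \kap + 1$, the lemma gives $Z^i \in L_*$ and that $q$ is invariant in the $Z^i$ direction. Hence $V \subset L_*$; as $\dim V = n-1 = \dim L_*$, actually $V = L_*$, and the (linear) invariance set of $q$ contains all of $L_*$. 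Consequently, $q$ depends only on the two coordinates $(x_n, y)$ transverse to $L_*$. By Proposition~\ref{prop.case2}, $q \not\equiv 0$ is a $\lambda_*$-homogeneous global solution to the very thin obstacle problem on $L_*$; invariance in the $L_*$ directions reduces it to a two-dimensional $\lambda_*$-homogeneous solution with obstacle at the origin of $\R^2$. Since $\lambda_* > \kap \ge 2$, Lemma~\ref{lem: 2d class} forces $q$ to be a polynomial, so $\lambda_* \in \N$, contradicting $\lambda_* \in [\kap + \alpha_\kap, \kap+1)$. This contradiction verifies the hypothesis of Lemma~\ref{lem.E} with $m = n-2$, yielding $\dim_{\mathcal{H}} \Sigma_\kap^{n-1,{\rm a}} \le n-2$.

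The main obstacle is organisational rather than technical: once one commits to extracting a single weak limit $q$ along the scales $r_j$ and then probing it with $(n-1)$ simultaneous sequences $X^i_j$ at the same scales, the accumulation lemma delivers $(n-1)$ invariance directions for the same $q$, and the two-dimensional classification closes the argument. The quantitative linear-algebra step, which manufactures the well-spread points $Z^i_j$ from the failure of $(n-2)$-plane approximation, is the only place where a careful inductive choice is needed; everything else is a direct application of the machinery already in place.
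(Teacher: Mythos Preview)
Your proof is correct and follows essentially the same route as the paper: verify the hypothesis of Lemma~\ref{lem.E} with $m=n-2$ by contradiction, extract $n-1$ well-spread points at each scale via the quantitative linear-algebra step, pass to a single second blow-up $q$, and use Lemma~\ref{lem: accumulation pts 2} (with $\lambda=\lambda_*$ forced by the level-set restriction in Lemma~\ref{lem.E}) to make $q$ two-dimensional, then invoke Lemma~\ref{lem: 2d class}. The one superfluous step is your preliminary removal of the countable set $N$ via Lemma~\ref{lem: almost cty}: because the hypothesis of Lemma~\ref{lem.E} already restricts to the slab $f^{-1}([f(0)-\rho,f(0)+\rho])$, the frequency continuity $\lambda_{*,X^i_j}\to\lambda_*$ along the chosen sequences is automatic, and the paper applies Lemma~\ref{lem.E} directly to all of $\Sigma_\kap^{n-1,{\rm a}}$ without this detour.
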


\begin{proof}
Let $\Sigma := \Sigma_\kap^{n-1, {\rm a}}$. 
We will show that $\Sigma$ fulfills the hypotheses of Lemma~\ref{lem.E} with $m = n-2$ and 
\begin{equation}
\label{eq.f}
f(X) := 
\begin{cases}
N(0^+, u(X+\,\cdot\,) - p_{*, X}) &\text{if } X\in \R^n\times\{0\}\\
0 & \textrm{ otherwise}.
\end{cases}
\end{equation}
Then, by Lemma~\ref{lem.E}, $\dim_{\mathcal{H}} \Sigma \le n-2$. 

Suppose, to the contrary, that \eqref{eq.condE} does not hold, that is, in particular, there exists some $X_\circ \in \Sigma$, $\varepsilon_\circ > 0$, $\rho_\ell\downarrow 0$ as $\ell \to \infty$, and $0<r_\ell <\rho_\ell$ for which
\begin{equation}
\label{eq.condE_c}
\inf_{\Pi\in \Pi_{X_\circ}} \dist \left(\Sigma\cap \overline{B_{r_\ell}(X_\circ)} \cap f^{-1}\left([f(X_\circ) - \rho_\ell, f(X_\circ)+\rho_\ell]\right), \Pi \right) \ge \varepsilon_\circ r_\ell,
\end{equation}
where $\Pi_{X_\circ}$ denotes the set of $(n-2)$-dimensional planes passing through $X_\circ$, and we denote $\dist(A, B) := \sup_{x\in A}\inf_{y \in B}|x-y|$. 
Up to a translation, assume $X_\circ = 0$.

We claim (and prove later) that thanks to \eqref{eq.condE_c}, for any $\ell\in \N$, there exist $n-1$ points 
\begin{equation}
\label{eq.subseqX}
X_1^\ell,\dots,X_{n-1}^\ell \in \Sigma\cap \overline{B_{r_\ell}} \cap f^{-1}\left([f(0) - \rho_\ell, f(0)+\rho_\ell]\right)
\end{equation}
such that 
\begin{equation}
\label{eq.subseqY}
|Y_1^\ell\wedge \dots\wedge Y_{n-1}^\ell| \ge \varepsilon_\circ^{n-1} \quad\text{where}\quad Y^\ell_i := r_\ell^{-1} X_i^\ell \in B_1\setminus B_{\varepsilon_\circ},
\end{equation}
for all $i\in \{1,\dots,n-1\}$. 
In particular, up to subsequences, $Y_i^\ell\to Y_i^\infty\in B_1\setminus B_{\varepsilon_\circ}$ for all $i\in \{1,\dots,n-1\}$, and passing to the limit, in \eqref{eq.subseqY}, 
\begin{equation}
\label{eq.indep}
|Y_1^\infty\wedge \dots\wedge Y_{n-1}^\infty| \ge \varepsilon_\circ^{n-1}.
\end{equation}
On the other hand, from \eqref{eq.subseqX}, we have that 
\begin{equation}
\label{eq.limf}
f(X_i^\ell)\to f(0) \quad\textrm{as}\quad \ell\to \infty\quad\text{for all}\quad i \in \{1,\dots,n-1\}.
\end{equation}

Up to subsequences, by Proposition~\ref{prop.case2},
\[
\frac{v_{r_\ell}}{\|v_{r_\ell}\|_{L^2(\pa B_1,|y|^a)}}\rightharpoonup q \quad\text{in}\quad W^{1,2}(B_1, |y |^a),
\]
and $q$ is some $\lambda_*$-homogeneous solution to the very thin obstacle problem. 
Moreover, since $0 \in \Sigma$,
\begin{equation}
\label{eq.contq}
N(0^+, v_\ast) \in [\kap+\alpha_\kap, \kap+1)
\end{equation}
Thus, for each $i\in \{1,\dots,n-1\}$, we can apply Lemma~\ref{lem: accumulation pts 2} with the sequence of radii $2r_\ell$. 
By definition and using the notation of Lemma~\ref{lem: accumulation pts 2}, we are in the case $\lambda_* < \kap +1$ (see \eqref{eq.contq}) and $\lambda = \lambda_*$ (thanks to \eqref{eq.limf}). 
So by Lemma~\ref{lem: accumulation pts 2}, $q$ is invariant in the directions $Y^\infty_i$ for all $i\in \{1,\dots,n-1\}$. 

From \eqref{eq.indep}, the set $\{Y^\infty_1,\dots,Y^\infty_{n-1}\}_{i\in \N} \subset L_*\times\{0\}$ is linearly independent. 
That is, $q$ is independent of the $n-1$ directions determined by this linearly independent set.
Therefore, it is a two-dimensional solution to the very thin obstacle problem. 
Hence, by Lemma~\ref{lem:  2d class}, $q$ is a polynomial, and $N(0^+, q) \ge \kap+1$.
But this runs contrary to $0$ living in $\Sigma = \Sigma_{\kap}^{n-1, {\rm a}}$, \eqref{eq.contq}. 

In turn, $\Sigma$ meets the hypotheses of Lemma~\ref{lem.E} with $m = n-2$ and $f$ as above.
And so, $\dim_{\mathcal{H}} \Sigma \le n-2$.

We now prove \eqref{eq.subseqX} and \eqref{eq.subseqY}.

After a dilation, it suffices to show that if $S\subset B_1$ is such that 
\[
\inf_{\Pi\in\Pi_0}\dist(S, \Pi) \ge \varepsilon,
\]
then there exist points $X_1,\dots,X_{n-1}\in S$ such that
\[
|X_1\wedge \dots\wedge X_{n-1}|\ge \varepsilon^{n-1}.
\]
But this follows from a simple construction.

Take any $X_1\in S\cap(B_1\setminus B_\vep)$, and let $\be_1 := X_1/|X_1|$ be the first element of our orthogonal $n-1$ dimensional basis on which we will compute the $(n-1)$-determinant.
Notice $X_1 = a_{1,1}\be_1$ for some $|a_{1,1}|\ge \vep$. 
Now take any $(n-2)$-dimensional plane passing through $X_1$ (and $0$), $\Pi^X_1$, take any $X_2 \in S\cap(B_1\setminus B_\vep)$ that is $\vep$ far from $\Pi_1^X$, and choose $\be_2 \perp \be_1$ and such that $\Span(\be_1, \be_2) = \Span(X_1, X_2)$. 
Then, $X_2 = a_{2,1} \be_1 + a_{2,2} \be_2$, and since $X_2$ is $\vep$ far from $\Pi^X_1\supset \Span(\be_1)$, $|a_{2,2}|\ge \vep$. 

Proceed recursively until $m = n-1$: let $X_m\in  S\cap(B_1\setminus B_\vep)$ be $\vep$ far from $\Pi^X_{m-1}$, where $\Pi^X_{m-1}$ is any $(n-2)$-dimensional plane containing $\{0, X_1,\dots,X_{m-1}\}$ (such a plane always exists since $m\le n-1$). 
Choose $\be_m \perp \Span(\be_1,\dots,\be_{m-1})$ and such that $\Span(\be_1,\dots,\be_{m}) = \Span(X_1,\dots,X_{m})$. 
Then,
\[
X_m = a_{m,1}\be_1+a_{m,2}\be_2+\dots+a_{m,m-1} \be_{m-1}+a_{m,m} \be_m,
\]
and since $X_m$ is $\vep$ far from $\Pi_{m-1}^X \supset \Span(\be_1,\dots,\be_{m-1})$, $|a_{m,m}|\ge \vep$. 
Therefore,
\begin{align*}
|X_1\wedge\dots\wedge X_{n-1}|
& = |a_{1,1}\be_1 \wedge a_{2,1}\be_1+a_{2,2}\be_2 \wedge\dots\wedge a_{n-1,1}\be_1+\dots+a_{n-1,n-1}\be_{n-1}|\\
& = |a_{1,1}a_{2,2}\cdots a_{n-1,n-1}| |\be_1\wedge \be_2 \wedge \dots\wedge \be_{n-1}|\\
& \ge \vep^{n-1},
\end{align*}
as desired.
\end{proof}

We close this section by collecting the results we have proved to understand the size of $\Sigma_\kappa^{m,{\rm a}}$ when $\kappa = 2$ and $m \leq n-1$ and when $\kappa \in 2\N$ and $m = n-1$. 

\begin{proof}[Proofs of Proposition~\ref{prop.main2} and Remark~\ref{rem.zerodim}]
We separate each case.
\begin{enumerate}[(i)]
\item This follows by Lemma~\ref{lem.sigmaempty}, noting that $\tilde \Sigma_2^{0,{\rm a}} = \Sigma_2^{0,{\rm a}}$.
\item If $a\in [0,1)$ or $a\in (-1, 0)$ and $m < n-1$, this follows from Proposition~\ref{prop: spine is nodal set} by noting that $\tilde \Sigma_2^{m,{\rm a}} = \Sigma_2^{m,{\rm a}}$. If $a\in (-1, 0)$ and $m = n-1$, this is due to Proposition~\ref{prop: top anom is lower d}.
\item If $a\in [0,1)$, we use Proposition~\ref{prop: spine is nodal set}, noticing that $\tilde \Sigma_\kap^{n-1,{\rm a}} = \Sigma_\kap^{n-1,{\rm a}}$. 
If $a\in (-1, 0)$, we use Proposition~\ref{prop: top anom is lower d}.
\end{enumerate}
Finally, regarding Remark~\ref{rem.zerodim}, if $n = 2$ and $a\in (-1, 0)$, $\Sigma_\kap^{1,{\rm a}}$ is countable by Lemma~\ref{lem.discreten-1}, and if $n = 2$ and $a \in [0,1)$, $\Sigma_\kap^{1,{\rm a}}$ is discrete by Lemma~\ref{lem.sigma1}. 
If $n\ge 3$, $\Sigma_2^{1,{\rm a}}$ is discrete by Lemma~\ref{lem.sigma1}, as well.
\end{proof}


\section{Whitney's Extension Theorem and the Proof of Theorem~\ref{thm.main1}}
\label{sec:whitney}

In this section, we prove our first higher regularity result Theorem~\ref{thm.main1}.
The proof of Theorem~\ref{thm.main1} is a model for the proofs of our main results, and utilizes an implicit function theorem argument and the following generalized Whitney's extension theorem.
(See \cite{Fef09} and the references therein.) 

\begin{lemma}[Whitney's Extension Theorem]
\label{lem.wet}
Let $\beta \in (0,1]$, $\ell \in \mathbb{N}$, $K \subset \R^{n+1}$ be compact, and $f : K \to \R$.
For every $Z_\circ \in K$, suppose that there exits a degree $\ell$ polynomial $P_{Z_\circ}$ for which
\begin{enumerate}[(i)]
\item $P_{Z_\circ}(Z_\circ) = f(Z_\circ)$; and 
\item $|D^\alpha P_{Z_\circ}(X_\circ) - D^\alpha P_{X_\circ}(X_\circ)| \leq C|Z_\circ - X_\circ|^{\ell + \beta - |\alpha|}$ for all $|\alpha| \leq \ell$ and $X_\circ \in K$, where $C > 0$ is independent of $Z_\circ$
\end{enumerate}
hold.
Then, there exists a function $F : \R^{n+1} \to \R$ of class $C^{\ell,\beta}$ and constant $C_{\ell,n} > 0$ for which
\[
F|_K \equiv f \quad\text{and}\quad |F(X) - P_{X_\circ}(X)| \leq C_{\ell,n}|X-X_\circ|^{\ell + \beta} \quad \text{for all}\quad X_0 \in K.
\]
\end{lemma}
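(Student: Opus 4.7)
The plan is to follow Whitney's classical construction: extend $f$ off $K$ by gluing the prescribed polynomials $P_{Z_\circ}$ together via a partition of unity subordinate to a dyadic decomposition of the complement of $K$.

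First, I would perform a Whitney decomposition of the open set $\R^{n+1} \setminus K$ into essentially disjoint closed dyadic cubes $\{Q_j\}$ satisfying the standard size condition $\diam(Q_j) \leq \dist(Q_j, K) \leq C_n \diam(Q_j)$, and then build a smooth partition of unity $\{\phi_j\}$ subordinate to slightly enlarged cubes $Q_j^*$, with the classical bounds $\sum_j \phi_j \equiv 1$ on $\R^{n+1}\setminus K$ and $|D^\alpha \phi_j| \leq C_{\alpha,n}\,\diam(Q_j)^{-|\alpha|}$. For each $j$ I would fix a base point $Z_j \in K$ realizing (up to constants) the distance from $Q_j^*$ to $K$, so that $|X-Z_j| \leq C_n \diam(Q_j)$ for every $X \in Q_j^*$.

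Next, I would define the extension
\[
F(X) := \begin{cases} f(X), & X \in K, \\ \displaystyle\sum_{j} \phi_j(X)\, P_{Z_j}(X), & X \notin K. \end{cases}
\]
Off $K$ this is automatically $C^\infty$ since the sum is locally finite (at most $C_n$ cubes meet any point) and each $P_{Z_j}$ is a polynomial; the identity $F|_K \equiv f$ then follows from hypothesis (i) once continuity across $K$ is verified. The heart of the argument is showing that $P_{X_\circ}$ is the order-$\ell$ Taylor polynomial of $F$ at every $X_\circ \in K$ with uniform H\"older-$\beta$ remainder. For $X \notin K$ near such $X_\circ$, I would write
\[
F(X) - P_{X_\circ}(X) = \sum_j \phi_j(X)\bigl(P_{Z_j}(X) - P_{X_\circ}(X)\bigr),
\]
Taylor-expand each polynomial difference around $Z_j$, and invoke hypothesis (ii) to estimate $|D^\alpha(P_{Z_j}-P_{X_\circ})(Z_j)| \leq C|Z_j - X_\circ|^{\ell+\beta-|\alpha|}$. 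Since only cubes with $|Z_j - X_\circ| \lesssim |X-X_\circ|$ contribute at $X$, this yields $|F(X) - P_{X_\circ}(X)| \leq C|X-X_\circ|^{\ell+\beta}$. A parallel computation using Leibniz's rule handles $D^\alpha F - D^\alpha P_{X_\circ}$ for $|\alpha| \leq \ell$: derivatives falling on $\phi_j$ produce a factor $\diam(Q_j)^{-1}$ that is exactly absorbed by the corresponding gain in the polynomial error estimate.

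The main obstacle will be managing the interplay between the varying scales of the Whitney cubes and the compatibility condition (ii), i.e., proving that the partition-of-unity sum does not lose regularity despite stitching together polynomials centered at many different base points; this is where (ii) is essential and cannot be weakened. Once the pointwise Taylor-type estimates above are secured at every $X_\circ \in K$, standard comparison arguments between pairs of nearby points\,---\,both in $K$, both off $K$, or one of each\,---\,together with the interior smoothness of $F$ on $\R^{n+1}\setminus K$, upgrade the pointwise bounds to the genuine $C^{\ell,\beta}$ norm control stated in the lemma.
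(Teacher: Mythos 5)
The paper does not prove this lemma at all: it is quoted as a known, classical result (a generalized Whitney extension theorem) with a pointer to the literature (Fefferman's survey and the references therein), so there is no in-paper argument to compare against. Your sketch is the standard classical Whitney construction, and in outline it is correct: Whitney decomposition of $\R^{n+1}\setminus K$, a partition of unity $\{\phi_j\}$ with $|D^\alpha\phi_j|\lesssim \diam(Q_j)^{-|\alpha|}$, nearest points $Z_j\in K$, the gluing $F=\sum_j\phi_j P_{Z_j}$ off $K$, and Taylor-type estimates driven by hypothesis (ii). Two points deserve explicit care if you write it out in full. First, hypothesis (ii) controls $D^\alpha(P_{Z_\circ}-P_{X_\circ})$ evaluated at $X_\circ$; in the gluing estimate you need it evaluated at $Z_j$, which you get either by swapping the roles of the two points in (ii) (the hypothesis is stated for all ordered pairs) or by Taylor-expanding the polynomial difference and absorbing constants\,---\,either way it is fine, but it should be said. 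Second, the delicate part of the classical proof is not the estimate at a single $X_\circ\in K$ but showing that $D^\alpha F$ exists and is continuous \emph{at} points of $K$, with the H\"older seminorm controlled uniformly for pairs of points lying in different Whitney cubes or on opposite sides of $K$; you flag this as ``standard comparison arguments,'' which is accurate, but it is where $\sum_j\phi_j\equiv 1$ and the bound $|Z_j-X_\circ|\lesssim|X-X_\circ|$ must be used quantitatively. With those details filled in, your route gives a self-contained proof of a statement the paper simply imports as a citation.
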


Now we state and prove a collection of results that, in aggregate, prove Theorem~\ref{thm.main1}.

\begin{theorem}
\label{thm: C11}
The set $\Sigma_\kap^{m,{\rm g}}$ is contained in the countable union of $m$-dimensional $C^{1,1}$ manifolds. 
\end{theorem}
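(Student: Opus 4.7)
The plan is to apply Whitney's extension theorem (Lemma~\ref{lem.wet}) with Taylor data given by the first blow-ups on the generic set, producing a $C^{\kappa,1}$ function on $\R^{n+1}$, and then to realize $\Sigma_\kap^{m,{\rm g}}$ locally as a $C^{1,1}$ manifold by the implicit function theorem applied to the $(\kappa-1)$-th derivatives of this extension. Stratifying $\Sigma_\kap^{m,{\rm g}}$ by countably many compact pieces on which the quantitative bounds below hold uniformly, it suffices to work on a fixed compact $K \subset \Sigma_\kap^{m,{\rm g}}$.

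The first step is to establish a uniform Taylor-type estimate on $K$: there exist $C_K, r_K > 0$ such that
\[
\|u(X_\circ+\cdot) - p_{*,X_\circ}\|_{L^\infty(B_r)} \le C_K\, r^{\kappa+1} \qquad\text{for every }X_\circ \in K \text{ and } 0<r<r_K.
\]
For $X_\circ \in K$, the definition of the generic stratum gives $N(0^+,v_{X_\circ}) \ge \kappa+1$, so by Lemma~\ref{lem.HMon} the Monneau-type quantity $r \mapsto H_{\kappa+1}(r,v_{X_\circ})$ is non-decreasing and hence bounded on $(0,1)$ by its value at $1$. Two facts yield a uniform bound on $H_{\kappa+1}(1,v_{X_\circ})$ across $K$: continuity of $u$ controls $\|u(X_\circ+\cdot)\|_{L^2(\partial B_1,|y|^a)}$, while Monneau's formula applied to $u$ itself (whose frequency at $X_\circ$ equals $\kappa$) combined with Proposition~\ref{prop: Almgren} bounds $\|p_{*,X_\circ}\|_{L^2(\partial B_1,|y|^a)}$, and hence the coefficients of $p_{*,X_\circ}$. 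The $L^\infty$--$L^2$ estimate of Lemma~\ref{lem:L2Linfty} applied to the rescalings $v_{X_\circ,r}$ then upgrades this weighted $L^2$ control into the displayed $L^\infty$ bound.

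Setting $P_{X_\circ}(X) := p_{*,X_\circ}(X-X_\circ)$ and applying the expansion at both $X_\circ, Y_\circ \in K$ with $r = 2|X_\circ - Y_\circ|$, the triangle inequality gives $\|P_{X_\circ} - P_{Y_\circ}\|_{L^\infty(B_r(X_\circ))} \le C_K\, r^{\kappa+1}$. Since $P_{X_\circ} - P_{Y_\circ}$ is a polynomial of degree at most $\kappa$, equivalence of norms on that finite-dimensional space converts this sup-norm bound into the pointwise inequality
\[
|D^\alpha P_{X_\circ}(Y_\circ) - D^\alpha P_{Y_\circ}(Y_\circ)| \le C_K\, |X_\circ-Y_\circ|^{\kappa+1-|\alpha|} \qquad\text{for all } |\alpha| \le \kappa,
\]
which is exactly the Whitney compatibility condition with $\ell = \kappa$ and $\beta = 1$. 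Lemma~\ref{lem.wet} then supplies $F \in C^{\kappa,1}_{\rm loc}(\R^{n+1})$ satisfying $D^\alpha F(X_\circ) = D^\alpha p_{*,X_\circ}(0)$ for every $X_\circ \in K$ and $|\alpha|\le \kappa$.

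To extract the manifold I would examine the derivatives $\{D^\alpha F\}_{|\alpha|=\kappa-1}$, which are $C^{1,1}$ and vanish identically on $K$ by the $\kappa$-homogeneity of $p_{*,X_\circ}$. Their gradients at $X_\circ$ coincide with the coefficient vectors of the linear polynomials $\{D^\alpha p_{*,X_\circ}\}_{|\alpha|=\kappa-1}$, whose common zero set in $\R^{n+1}$—by Lemma~\ref{lem: equiv} applied to $p_{*,X_\circ}$ viewed as a polynomial in $\R^{n+1}$, together with the observation that any nontrivial element of $\mathscr{P}_\kap$ must depend non-trivially on $y$ (otherwise its restriction to the thin space would be a non-trivial non-negative harmonic polynomial of positive degree, which cannot exist by the mean value property)—is exactly $L(p_{*,X_\circ})\times\{0\}$, an $m$-dimensional subspace of $\R^{n+1}$. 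Selecting $n+1-m$ multi-indices whose corresponding gradients at $X_\circ$ are linearly independent and invoking the $C^{1,1}$ implicit function theorem, the common zero set of $D^{\alpha_1}F,\dots,D^{\alpha_{n+1-m}}F$ is, in a neighborhood of $X_\circ$, an $m$-dimensional $C^{1,1}$ manifold containing the nearby portion of $K$. Covering $K$ by countably many such neighborhoods and $\Sigma_\kap^{m,{\rm g}}$ by countably many such $K$'s yields the claim. The principal obstacle is securing the uniform Taylor expansion in the first step; once this is available, the remainder of the argument is essentially formal.
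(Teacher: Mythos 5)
Your plan follows the paper's proof of Theorem~\ref{thm: C11} almost step for step: Monneau monotonicity (Lemma~\ref{lem.HMon}) at generic points gives the order-$(\kappa+1)$ closeness of $u(X_\circ+\cdot)$ to $p_{*,X_\circ}$, the triangle inequality plus equivalence of norms on the finite-dimensional space of degree-$\kappa$ polynomials gives the Whitney compatibility conditions with $\ell=\kappa$, $\beta=1$, and the manifold is extracted via the implicit function theorem applied to order-$(\kappa-1)$ derivatives of the Whitney extension. Your variants are fine: working in $L^\infty$ through Lemma~\ref{lem:L2Linfty} instead of staying at the $L^2$ level, bounding the coefficients of $p_{*,X_\circ}$ by Monneau applied to $u$ itself, and using $n+1-m$ multi-indices in $\R^{n+1}$ (which is dimensionally cleaner than the paper's $n-m$ directional derivatives along the thin space); for the identification of the common zero set with $L(p_{*,X_\circ})\times\{0\}$, note also that an invariant direction with nonzero $y$-component together with evenness in $y$ would force $\boldsymbol{e}_{n+1}$ into the spine, reducing to the $y$-independent case you already excluded.

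The genuine gap is the opening reduction: ``stratifying $\Sigma_\kap^{m,{\rm g}}$ by countably many compact pieces \dots it suffices to work on a fixed compact $K\subset\Sigma_\kap^{m,{\rm g}}$.'' Whitney's extension theorem as stated (Lemma~\ref{lem.wet}) requires $K$ compact, and $\Sigma_\kap^{m,{\rm g}}$ is in general not closed (generic points can accumulate at anomalous points, at other strata, or at free boundary points of different frequency), nor is an arbitrary subset of $\R^{n+1}$ a countable union of compact subsets of itself; so the existence of such an exhaustion is precisely what must be proved, not assumed. This is where the paper does its real bookkeeping: it writes $\Sigma_\kappa=\bigcup_h E_h$ with $E_h$ compact (the Garofalo--Petrosyan growth classes, via \cite{GP09,GR18}), uses the continuity of $X_\circ\mapsto p_{*,X_\circ}$ to get upper semicontinuity of $X_\circ\mapsto N(0^+,u(X_\circ+\cdot)-p_{*,X_\circ})$ on $E_h$, and applies Whitney on the compact sets $S^h_{\kappa,\kappa+1}=E_h\cap\{N\ge\kappa+1\}$, which contain $\Sigma_\kap^{m,{\rm g}}\cap E_h$ (the extension is built on these larger compact sets, with the $m$-stratum entering only at the implicit-function step). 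Alternatively, you could repair your version without $E_h$ by observing that your step-1 estimate, and hence the Whitney conditions, hold with a uniform constant on all of $\Sigma_\kap^{m,{\rm g}}\cap\overline{B_{1-\delta}}$, and that a Whitney field satisfying the conditions uniformly extends to the (compact) closure of this set, since the polynomials have uniformly bounded coefficients and condition (ii) pins down the limiting polynomials; then Lemma~\ref{lem.wet} and your implicit-function argument, applied near actual points of $\Sigma_\kap^{m,{\rm g}}$, go through. Either way, your closing remark that only the uniform Taylor expansion requires care is slightly misplaced: that estimate is the easy, pointwise-Monneau part, while the compactness/semicontinuity structure is where the paper spends its effort.
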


\begin{proof}
Let us define 
\begin{equation}
\label{eqn:Eh}
E_h := \{X_\circ \in \Sigma_\kap \cap \overline{B_{1-1/h}} : h^{-1} \rho^\kap \le \sup_{|X-X_\circ| = \rho}|u(X)|< h\rho^\kap, 0<\rho<1-|X_\circ|\}.
\end{equation}
From the continuity of the map 
\[
\Sigma_{\kappa} \ni X_\circ \mapsto p_{*,X_\circ}
\]
(see \cite[Proposition 4.6]{GR18}), we find that the map
\[
E_h \ni X_\circ \mapsto N(0^+, u(X_\circ + \,\cdot\,) - p_{*,X_\circ})
\]
is upper semicontinuous (it is the pointwise monotone decreasing limit of a sequence of continuous maps).
Here, the sets $E_h \subset \overline{B_{(h - 1)/h}}$ are closed and decompose $\Sigma_\kappa$ as follows:
\[
\Sigma_{\kappa} = \bigcup_{h=1}^{\infty} E_h
\] 
(this follows arguing exactly as in the proof of \cite[Lemma~1.5.3]{GP09}, using \cite[Lemma 4.5]{GR18}).
In turn, the set
\[
S_{\kappa,\lambda}^h := \{ X_\circ \in E_h : N(0^+, u(X_\circ + \,\cdot\,) - p_{*,X_\circ}) \geq \lambda \}
\] 
is compact in $\R^{n+1}$.

For each $X_\circ \in S_{\kappa,\lambda}^h$, define
\[
P_{X_\circ}(X) := p_{*,X_\circ}(X-X_\circ).
\]
We claim that $f \equiv 0, K = S_{\kappa,\lambda}^h$, and $\{P_{X_\circ}\}_{X_\circ \in K}$ satisfy the hypotheses of Whitney's Extension Theorem, Lemma~\ref{lem.wet}, with $\ell + \beta = \lambda$.

Clearly, (i) holds.

To show (ii) holds, first observe that Lemma~\ref{lem.HMon} implies that for all $X_\circ \in S_{\kappa,\lambda}^h$,
\[
\|u(X_\circ + r\,\cdot\,) -  p_{*,X_\circ}(r\,\cdot\,)\|_{L^2(B_{1},|y|^a)} \leq \frac{1}{n+1+a+2\lambda}\|u(X_\circ + r\,\cdot\,) -  p_{*,X_\circ}(r\,\cdot\,)\|_{L^2(\pa B_{1},|y|^a)}
\]
and
\[
\|u(X_\circ + r\,\cdot\,) -  p_{*,X_\circ}(r\,\cdot\,)\|_{L^2(\pa B_{1},|y|^a)} \leq C_h r^\lambda.
\]
So
\begin{equation}
\label{eqn : Monneau to Whitney}
\|u(X_\circ + r\,\cdot\,) -  p_{*,X_\circ}(r\,\cdot\,)\|_{L^2(B_{1},|y|^a)} \leq C_h r^\lambda
\quad \text{for all}\quad X_\circ \in S_{\kappa,\lambda}^h.
\end{equation}
(Of course, $r < 1 - |X_\circ|$.)
Now for any pair $Z_\circ,X_\circ \in S_{\kappa,\lambda}^h$,
\begin{equation}
\label{eqn: poly mod cty}
\|[P_{Z_\circ} - P_{X_\circ}](r\,\cdot\,)\|_{L^2(B_{1/2}(r^{-1}X_\circ),|y|^a)} \leq C_hr^\lambda
\end{equation}
where $r := 2|X_\circ - Z_\circ|$.
Indeed,
\[
\|[P_{Z_\circ} - P_{X_\circ}](r\,\cdot\,)\|_{L^2(B_{1/2}(r^{-1}X_\circ),|y|^a)} \leq {\rm I} + {\rm II}.
\]
with
\[
{\rm I} + {\rm II} := \|u(r\,\cdot\,) - P_{X_\circ}(r\,\cdot\,)\|_{L^2(B_{1/2}(r^{-1}X_\circ),|y|^a)} + \|u(r\,\cdot\,) - P_{Z_\circ}(r\,\cdot\,)\|_{L^2(B_{1/2}(r^{-1}X_\circ),|y|^a)}.
\]
Now assume that $r < h^{-1}$.
Then, by \eqref{eqn : Monneau to Whitney} applied at $X_\circ$ and $Z_\circ$,
\[
{\rm I} = \|u(X_\circ + r\,\cdot\,) - p_{*,X_\circ}(r\,\cdot\,)\|_{L^2(B_{1/2},|y|^a)} \leq \|u(X_\circ + r\,\cdot\,) - p_{*,X_\circ}(r\,\cdot\,)\|_{L^2(B_{1},|y|^a)} \leq  C_hr^\lambda
\]
and
\begin{align*}
{\rm II} &  = \|u(Z_\circ + r\,\cdot\,) - p_{*,Z_\circ}(r\,\cdot\,)\|_{L^2(B_{1/2}(r^{-1}(X_\circ-Z_\circ)),|y|^a)} \\
& \leq \|u(Z_\circ + r\,\cdot\,) - p_{*,Z_\circ}(r \,\cdot\,)\|_{L^2(B_{1},|y|^a)} \leq C_hr^\lambda.
\end{align*}
When $h^{-1}\leq r < 4$, \eqref{eqn: poly mod cty} is true by the triangle inequality, using that $p_{*,X_\circ}$ and $p_{*,Z_\circ}$ are homogeneous, and the bound $\|p_{*,X_\circ}\|_{L^2(B_1,|y|^a)}, \|p_{*,Z_\circ}\|_{L^2(B_1,|y|^a)} \leq C$.
Finally, since all norms are equivalent on the finite dimensional space of $\kappa$-homogeneous polynomials, \eqref{eqn: poly mod cty} implies that
\[
\|[P_{Z_\circ} - P_{X_\circ}](r\,\cdot\,)\|_{C^\ell(B_{1/2}(X_\circ/r))} \leq C_hr^\lambda
\]
for any $X_\circ,Z_\circ \in S_{\kappa,\lambda}^h$ with $r = 2|X_\circ-Z_\circ|$.
In turn,
\[
|D^\alpha P_{Z_\circ}(rX) - D^{\alpha}P_{X_\circ}(rX)| \leq C_hr^{\ell + \beta - |\alpha|} = C_h|X_\circ - Z_\circ|^{\ell + \beta - |\alpha|} \quad \text{for all}\quad X \in B_{1/2}(r^{-1}X_\circ).
\]
Taking $X = X_\circ/r$, we see that (ii) holds.

With our claim justified, applying Whitney's Extension Theorem, we find an $F \in C^{\ell,\beta}(\R^{n+1})$ such that
\[
|F(X) - P_{X_\circ}(X)| \leq C_{h}|X-X_\circ|^{\ell + \beta} \quad\text{for all}\quad X_\circ \in S_{\kappa,\lambda}^h.
\]
If $X_\circ \in S_{\kappa,\lambda}^h \cap \Sigma_\kappa^m$, by definition, there exist $n-m$ linearly independent unit vectors $\be_i \in \R^n$ and points $(x^i,0)$, $i = 1,\dots, n-m$, such that
\[
\pa_{\be_i}p_{*,X_\circ}(x^i,0) = \be_i \cdot \nabla_x p_{*,X_\circ}(x^i,0) \neq 0.
\]
Let $\bv_i$ be the unit vector parallel to $(x^i,0)$ and oriented so that $\bv_i \cdot x^i > 0$.
Then, we deduce that
\[
\pa_{\be_i}\pa^{(\kappa-1)}_{\bv_i}F(X_\circ) = \pa^{(\kappa-1)}_{\bv _i}\pa_{\be_i}p_{*,X_\circ}(0) \neq 0 \quad\text{for all}\quad i = 1,\dots n - m.
\]
On the other hand,
\[
\Sigma^m_{\kappa} \cap S_{\kappa,\lambda}^h \subset \bigcap_{i = 1}^{n-m} \{ \pa^{(\kappa-1)}_{\bv_i}F = 0 \}.
\]
Notice that $\pa^{(\kappa-1)}_{\bv_i}F \in C^{\ell - \kappa + 1,\beta}(\R^{n+1})$.
In turn, by the implicit function theorem, $\Sigma^m_{\kappa} \cap S_{\kappa,\lambda}^h$ is contained in an $m$-dimensional manifold of class $C^{\ell - \kappa + 1,\beta}$.

The theorem then follows by the definition of $\Sigma_\kap^{m,{\rm g}}$, which implies that $\ell = \kappa$ and $\beta = 1$.
\end{proof}

\begin{remark}
\label{rem.uniqmani}
In contrast to the classical (non-degenerate) obstacle problem, studied in \cite{FS18}, in the thin obstacle problem, singular points of many different orders may exist.
Their interaction (see Remark~\ref{rem.count}) makes it impossible to prove that $\Sigma_\kap^{m,{\rm g}}$ is contained in a single $m$-dimensional manifold.
But in the non-degenerate setting, this is ruled out, and only singular points of order 2 exist.
\end{remark}

\begin{theorem}
\label{thm: C11 2}
In the non-degenerate case, $\Sigma_{2}^{m, {\rm g}}$ is contained in a single $m$-dimensional $C^{1,1}$ manifold. 
\end{theorem}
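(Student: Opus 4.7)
The approach is to rerun the proof of Theorem~\ref{thm: C11} in the simpler non-degenerate setting, where two features conspire to produce a single covering manifold in place of a countable union. First, by \cite{BFR18}, every free boundary point has frequency in $\{1+s, 2\}$, so all singular points have order $\kap = 2$ and the set $\Sigma_{\ge \kap}$ appearing in \eqref{eqn:Eh} collapses to $\Sigma_2$. Second, the continuity of the map $X_\circ \mapsto p_{*,X_\circ}$ on $\Sigma_2$ (see \cite[Proposition~4.6]{GR18}) implies that on any compactly contained subset $K \subset \overline{\Sigma_2^{m,{\rm g}}} \cap \overline{B_{1-\delta}}$, the family $\{p_{*,X_\circ}\}_{X_\circ \in K}$ lies in a compact subset of $\mathscr{P}_2 \setminus \{0\}$, yielding uniform two-sided bounds $0 < c_1 \le \|p_{*,X_\circ}\|_{L^2(\pa B_1,|y|^a)} \le c_2$ for all $X_\circ \in K$.

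With these uniform bounds in hand, I would apply the Whitney construction of Theorem~\ref{thm: C11} directly to $K$ with $P_{X_\circ}(X) := p_{*,X_\circ}(X-X_\circ)$, $\ell = 2$, and $\beta = 1$. The generic condition $X_\circ \in \Sigma_2^{m,{\rm g}}$ gives $N(0^+, u(X_\circ + \,\cdot\,) - p_{*,X_\circ}) \ge 3$, so Lemma~\ref{lem.HMon} applied with $\lambda = 3$ delivers the uniform Monneau estimate
\[
\|u(X_\circ + r\,\cdot\,) - P_{X_\circ}(r\,\cdot\,)\|_{L^2(\pa B_1,|y|^a)} \le C r^3 \quad\text{for all}\quad X_\circ \in K,\ 0 < r < \delta/2,
\]
with a constant $C$ independent of $X_\circ$. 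The triangle-inequality argument from the proof of Theorem~\ref{thm: C11} then yields the Whitney compatibility condition (ii) of Lemma~\ref{lem.wet} with a uniform constant, and Whitney's extension theorem produces a single function $F \in C^{2,1}(\R^{n+1})$ agreeing with each $P_{X_\circ}$ to third order.

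To extract the covering manifold, I would fix $X_\circ \in \Sigma_2^m \cap K$: since $\dim L(p_{*,X_\circ}) = m$, one can choose $n - m$ linearly independent unit vectors $\be_i \in \R^n$ and vectors $\bv_i \in \R^n$ transverse to the spine with $\pa_{\be_i} \pa_{\bv_i} p_{*,X_\circ}(0) \neq 0$. By the uniform non-triviality $\|p_{*,X_\circ}\| \ge c_1$ together with the continuity $X_\circ \mapsto p_{*,X_\circ}$, the same choice of $\{\be_i, \bv_i\}_{i=1}^{n-m}$ remains admissible throughout an open $K$-neighborhood $U$ of $X_\circ$. The implicit function theorem applied to the $n-m$ functions $\pa_{\bv_i} F \in C^{1,1}$, whose gradients contain non-vanishing components in the linearly independent $\be_i$ directions, then produces a single $m$-dimensional $C^{1,1}$ manifold containing $\Sigma_2^{m,{\rm g}} \cap U$ (after restricting to the thin space, using that $F$ may be chosen even in $y$). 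Exhausting $B_1$ by compact sets $K$ gives the global conclusion.

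The principal technical hurdle is the uniform Monneau estimate on $K$: both the uniform lower bound $\|p_{*,X_\circ}\| \ge c_1$ (absent in the degenerate setting, which forces the $E_h$ decomposition in Theorem~\ref{thm: C11}) and the uniform frequency lower bound of three on $\Sigma_2^{m,{\rm g}}$ are needed to absorb the usual $X_\circ$-dependence of the Monneau constant. Once this is in place, the absence of higher-order singular points guaranteed by the non-degeneracy condition is what ultimately replaces the countable union of Theorem~\ref{thm: C11} by a single $C^{1,1}$ covering manifold, as in \cite{FS18}.
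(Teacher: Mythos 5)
Your proposal is correct in substance and follows the same Whitney--Monneau--implicit function theorem skeleton as the paper: both arguments amount to running the proof of Theorem~\ref{thm: C11} once, on a single compact set, rather than on the countable family $E_h$. The difference is how uniformity is obtained. The paper uses the quantitative non-degeneracy estimate of \cite[Lemma 3.1]{BFR18}, $\sup_{B_r(X_\circ)}u\ge c_\circ r^2$ for all $X_\circ\in\Sigma_2\cap B_{1-\eta}$, to place the whole singular set inside a single class $E_{h_\circ}$ from \eqref{eqn:Eh}, and then quotes the proof of Theorem~\ref{thm: C11} for the single compact set $S^{h_\circ}_{2,3}$. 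You bypass the $E_h$ decomposition with a softer compactness argument: since by \cite{BFR18} there are no singular points of order greater than two, ${\rm Sing}(u)=\Sigma_2$ is closed in $B_1$, so $\Sigma_2\cap\overline{B_{1-\delta}}$ is compact and the continuity of $X_\circ\mapsto p_{*,X_\circ}$ from \cite[Proposition 4.6]{GR18} yields the uniform bounds needed for one application of Lemma~\ref{lem.wet}. Your route uses non-degeneracy only through the qualitative structure it forces (order-two, closed singular set), whereas the paper also invokes it quantitatively; the paper's version is shorter because it recycles the $E_h$ machinery already built. One small inaccuracy in your closing remark: for the uniform Monneau constant coming from Lemma~\ref{lem.HMon} only the upper bound $\|p_{*,X_\circ}\|\le c_2$ (together with $\|u\|_{L^\infty}$ at the starting scale $r_0\sim\delta$) is used; the lower bound $c_1$ plays no role there, and what closedness/compactness really replaces is the uniform convergence that the $E_h$'s provide in the general case.

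One step you should state more carefully: you claim the Monneau estimate with exponent $3$ ``for all $X_\circ\in K$'' with $K\subset\overline{\Sigma_2^{m,{\rm g}}}\cap\overline{B_{1-\delta}}$, but the frequency bound $N(0^+,u(X_\circ+\,\cdot\,)-p_{*,X_\circ})\ge 3$ is only known on $\Sigma_2^{m,{\rm g}}$ itself, and a limit of generic points could a priori be anomalous. This is harmless but needs a sentence: either observe, as in the proof of Theorem~\ref{thm: C11}, that $X_\circ\mapsto N(0^+,u(X_\circ+\,\cdot\,)-p_{*,X_\circ})$ is upper semicontinuous on the compact set $\Sigma_2\cap\overline{B_{1-\delta}}$ (a pointwise decreasing limit of continuous maps), so that the superlevel set where the frequency is at least $3$ is compact, contains $\Sigma_2^{m,{\rm g}}\cap\overline{B_{1-\delta}}$, and is where Whitney should be applied; or verify the compatibility condition (ii) of Lemma~\ref{lem.wet} on $\Sigma_2^{m,{\rm g}}\cap\overline{B_{1-\delta}}$ only and pass it to the closure by continuity of $X_\circ\mapsto p_{*,X_\circ}$. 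With that adjustment, the argument is complete, and the remaining steps (uniform admissibility of the vectors $\be_i,\bv_i$ near each point, the implicit function theorem applied to $\pa_{\bv_i}F\in C^{1,1}$, and exhaustion in $\delta$) match the paper's.
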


\begin{proof}
In this setting, the singular set is closed. 
Consider $B_{1-\eta}\subset B_1$, for any $\eta \in (0,1)$. 
Thanks to the non-degeneracy condition (see Definition~\ref{def.nondeg}), there exists a constant $c_\circ > 0$, depending only on $n$, the non-degeneracy constant $c$, and $\eta$, such that 
\[
\sup_{B_r(X_\circ)} u \ge c_\circ r^2,
\]
for all small $r > 0$ and all $X_\circ \in \Sigma_2(u) \cap B_{1-\eta}$ (see \cite[Lemma 3.1]{BFR18}).
In particular, using the notation from the proof of Theorem~\ref{thm:  C11}, there exists some $h_\circ \ge \max\{c_\circ^{-1}, \eta^{-1}\}$ such that $\Sigma_2\cap B_{1-\eta} \subset E_{h_\circ}$. 
Thus, by the proof of Theorem~\ref{thm:  C11}, $\Sigma_2^m\cap S^{h_\circ}_{2, 3}$ is contained in a single $m$-dimensional manifold of class $C^{1, 1}$, and since this can be done for any $\eta >0$, we obtain that $\Sigma_2^m$ is locally contained in a single $m$-dimensional manifold. 
This concludes the proof. 
\end{proof}

\begin{proposition}
\label{prop: C2akappa}
If $a \in (-1,0)$, the set $\Sigma_{\kappa}^{n-1}$ is contained in a countable union of $(n-1)$-dimensional $C^{1,\alpha_\kappa}$ manifolds. Moreover, in the non-degenerate case, it is contained in a single $(n-1)$-dimensional $C^{1,\alpha}$ manifold, for some $\alpha > 0$ depending only on $n$. 
\end{proposition}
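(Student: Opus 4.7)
The strategy is to imitate the proof of Theorem~\ref{thm: C11}, but to replace the generic bound $\lambda_{*, X_\circ} \ge \kap+1$ with the improved bound $\lambda_{*, X_\circ} \ge \kap + \alpha_\kap$ furnished by Proposition~\ref{prop.main1}(iii). This is precisely the bound that Proposition~\ref{prop.case2} supplies uniformly on $\Sigma_\kap^{n-1}$ when $a \in (-1,0)$, \emph{without} any appeal to the generic/anomalous dichotomy.

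More precisely, stratify $\Sigma_\kap$ by the closed sets $E_h$ as in \eqref{eqn:Eh}, and let $S_{\kap, \kap+\alpha_\kap}^h := \{X_\circ \in E_h : N(0^+, u(X_\circ+\,\cdot\,) - p_{*, X_\circ}) \ge \kap+\alpha_\kap\}$. By the continuity of $X_\circ \mapsto p_{*, X_\circ}$ (\cite[Proposition 4.6]{GR18}) and upper semicontinuity of the frequency, $S_{\kap,\kap+\alpha_\kap}^h$ is compact. Moreover, by Proposition~\ref{prop.main1}(iii),
\[
\Sigma_\kap^{n-1} \cap E_h \subset S_{\kap,\kap+\alpha_\kap}^h,
\]
so it suffices to cover each set $S_{\kap, \kap+\alpha_\kap}^h$ by a single $(n-1)$-dimensional $C^{1,\alpha_\kap}$ manifold. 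Applying Lemma~\ref{lem.HMon} (the Monneau-type monotonicity formula) at each $X_\circ \in S_{\kap, \kap+\alpha_\kap}^h$ with $\lambda = \kap+\alpha_\kap$ yields
\[
\|u(X_\circ + r\,\cdot\,) - p_{*,X_\circ}(r\,\cdot\,)\|_{L^2(B_1,|y|^a)} \le C_h r^{\kap+\alpha_\kap},
\]
which is the analogue of \eqref{eqn : Monneau to Whitney} in the proof of Theorem~\ref{thm: C11}.

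Now set $P_{X_\circ}(X) := p_{*,X_\circ}(X-X_\circ)$ and apply Whitney's Extension Theorem (Lemma~\ref{lem.wet}) to the family $\{P_{X_\circ}\}_{X_\circ \in S_{\kap,\kap+\alpha_\kap}^h}$ with $\ell = \kap$ and $\beta = \alpha_\kap$. The verification of hypothesis (ii) follows verbatim the argument in the proof of Theorem~\ref{thm: C11}: the triangle inequality combined with the $L^2(B_1, |y|^a)$ bound above for both $X_\circ$ and $Z_\circ$ gives, for $r = 2|X_\circ - Z_\circ|$, an $L^2$ bound on $[P_{Z_\circ} - P_{X_\circ}](r\,\cdot\,)$ of order $r^{\kap+\alpha_\kap}$, which transfers to pointwise derivative bounds by equivalence of norms on the finite-dimensional space of $\kap$-homogeneous polynomials. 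Whitney thus produces an extension $F \in C^{\kap,\alpha_\kap}(\R^{n+1})$ agreeing with the Taylor jets $P_{X_\circ}$.

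Since $X_\circ \in \Sigma_\kap^{n-1}$, there is exactly one unit vector $\be \in \R^n$ (up to sign) with $\be \perp L(p_{*,X_\circ})$, and by Lemma~\ref{lem: equiv} there exists a unit vector $\bv$ with $\pa_\be \pa_\bv^{(\kap-1)} p_{*,X_\circ}(0) \neq 0$. Thus $\pa_\bv^{(\kap-1)} F \in C^{1,\alpha_\kap}$ has non-zero $\be$-derivative at $X_\circ$, while $S_{\kap,\kap+\alpha_\kap}^h \cap \Sigma_\kap^{n-1} \subset \{\pa_\bv^{(\kap-1)} F = 0\}$. By the implicit function theorem and a covering argument (passing from $\bv$ to a locally uniform choice, using continuity of $X_\circ \mapsto p_{*,X_\circ}$), this intersection is locally contained in a single $(n-1)$-dimensional $C^{1,\alpha_\kap}$ manifold. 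Summing over $h \in \N$ gives the countable cover. For the non-degenerate case, as in Theorem~\ref{thm: C11 2}, the lower bound $\sup_{B_r(X_\circ)} u \ge c_\circ r^2$ on compact subsets $B_{1-\eta}$ forces $\Sigma_2 \cap B_{1-\eta} \subset E_{h_\circ}$ for a single $h_\circ$, collapsing the countable union into one manifold. The main technical point, which is already done for us by Proposition~\ref{prop.case2}, is the uniform gap $\alpha_\kap > 0$ in the frequency jump on all of $\Sigma_\kap^{n-1}$; everything else is a direct adaptation of the Whitney/implicit-function machinery of Theorem~\ref{thm: C11}.
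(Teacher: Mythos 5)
Your proposal is correct and follows essentially the same route as the paper: the paper's proof of Proposition~\ref{prop: C2akappa} simply repeats the Whitney-extension/implicit-function argument of Theorem~\ref{thm: C11} with $\beta=\alpha_\kappa$ in place of $\beta=1$, justified by the uniform frequency gap $\lambda_*\ge\kappa+\alpha_\kappa$ from Proposition~\ref{prop.case2}, and handles the non-degenerate case exactly as in Theorem~\ref{thm: C11 2}. Your write-up just makes these substitutions explicit.
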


\begin{proof}
The proof follows that of Theorem~\ref{thm: C11} exactly, replacing $\beta =1$ with $\beta = \alpha_\kappa$; when $a \in (-1,0)$ and $m = n-1$, second blow-ups always have higher homogeneity: $\lambda_*\ge \kap+\alpha_\kap > \kap$ (see Proposition~\ref{prop.case2}).
In the non-degenerate case, we can proceed as in Theorem~\ref{thm:  C11 2} instead.
\end{proof}

Finally, we can proceed with the proof of one of our main results, Theorem~\ref{thm.main1}. 

\begin{proof}[Proof of Theorem~\ref{thm.main1}]
We separate each case. 
\begin{enumerate}[(i)]
\item Notice that we are in \ref{eq.Case1} (since $\kap = 2$).
So apply Lemma~\ref{lem: Sigma0k is isolated}.
\item $\Sigma_2^{m, {\rm a}}$ is lower dimensional by Proposition~\ref{prop.main2}, while $\Sigma_2^{m,{\rm g}}$ is covered by a countable union of $m$-dimensional $C^{1,1}$ manifolds by Theorem~\ref{thm: C11}.
\item Again, $\Sigma_\kap^{n-1, {\rm a}}$ is lower dimensional by Proposition~\ref{prop.main2}.
And $\Sigma_\kap^{n-1, {\rm g}}$ is covered by a countable union of $(n-1)$-dimensional $C^{1,1}$ manifolds by Theorem~\ref{thm: C11}.
\item This follows by Proposition~\ref{prop:  C2akappa}. 
\end{enumerate}
This completes the proof. 
\end{proof}

\section{The Main Results}
\label{sec.thirdorder}

In this section, we construct the second term in the expansion of $u$ at singular points, up to a lower dimensional set.
We start by defining a specific subset of the generic singular points at which the nodal set and spine of the first blow-up align.
These points will be those at which we produce the next term (of order exactly $\kappa + 1$) in the expansion of $u$ at a order $\kappa$ singular point, the goal of this work.

\begin{definition}
\label{defi.3rd}
Let $n \ge 2$ and $0\le m\le n-1$. 
We define the set $\Sigma_\kap^{m, {\rm nxt}}$ as the set of singular points $X_\circ \in \tilde{\Sigma}^{m, {\rm g}}_\kap$ for which there exists a sequence $r_\ell\downarrow 0$ as $\ell \to \infty$ such that the following holds:
there exists a $(\kap+1)$-homogeneous, $a$-harmonic polynomial $q_\circ$ such that
\begin{enumerate}[(i)]
\item \[
v_{r_\ell, \kap+1} := \frac{u(X_\circ + r_\ell \,\cdot\,) -p_{*,X_\circ}(r_\ell\,\cdot\,)}{r_\ell^{\kap+1}} \rightharpoonup q_\circ \quad \text{in}\quad W^{1, 2}(B_1,|y|^a);
\]
\item $D^\alpha q_\circ$ vanishes on $L(p_{*, X_\circ})$ for all $\alpha = (\alpha_1,\dots,\alpha_n,0)$ and $|\alpha| \le \kap -2$; and
\item 
\[
\| q_\circ \|_{L^2(\pa B_1, |y|^a)}^2 = H_{\kap+1}(0^+, u(X_\circ + \,\cdot\,) -p_{*,X_\circ}).
\]
\end{enumerate}
\end{definition}

In the first set of results of this section, we estimate the size of $\Sigma_\kappa^{m,{\rm nxt}}$ for certain pairs of $\kappa$ and $m$.

\begin{lemma}
\label{lem.3rd_1}
Let $n\ge 2$ and $a \in [0,1)$.
Then, $\dim_{\mathcal{H}} \Sigma^{n-1}_\kap\setminus \Sigma^{n-1, {\rm nxt}}_{\kap} \le n-2$. 
\end{lemma}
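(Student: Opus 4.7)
The plan is to decompose the exceptional set
\[
E := \Sigma^{n-1}_\kap \setminus \Sigma^{n-1,{\rm nxt}}_\kap
\]
as $E = \Sigma_\kap^{n-1,{\rm a}} \cup B$ with $B \subset \Sigma_\kap^{n-1,{\rm g}}$. Because $L(p_{\ast,X_\circ})$ has dimension $n-1$, the polynomial $p_{\ast,X_\circ}|_{\{y=0\}}$ depends on a single variable and takes the form $c(\be_\ast\cdot x)^\kap$, so $\mathcal{N}(p_{\ast,X_\circ}) = L(p_{\ast,X_\circ})$ and $\tilde\Sigma_\kap^{n-1} = \Sigma_\kap^{n-1}$. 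The anomalous stratum is controlled by Proposition~\ref{prop.main2}(iii), giving $\dim_\mathcal{H}\Sigma_\kap^{n-1,{\rm a}} \le n-2$, so it remains to bound $\dim_\mathcal{H} B$.

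I would first isolate the role of $\lambda_* := N(0^+, u(X_\circ+\,\cdot\,) - p_{\ast,X_\circ}) \ge \kap+1$. When $\lambda_* > \kap+1$, the monotonicity of $H_{\lambda_*}(r,v_{\ast,X_\circ})$ in Lemma~\ref{lem.HMon} yields $\|v_{\ast,X_\circ}\|^2_{L^2(\pa B_r,|y|^a)} \le C r^{n+a+2\lambda_*}$, so $H_{\kap+1}(r,v_{\ast,X_\circ}) \le C r^{2(\lambda_*-\kap-1)} \to 0$ and $v_{r,\kap+1}\to 0$ in $W^{1,2}(B_1,|y|^a)$; the trivial choice $q_\circ \equiv 0$ then satisfies (i)--(iii) of Definition~\ref{defi.3rd} and places $X_\circ$ in $\Sigma_\kap^{n-1,{\rm nxt}}$. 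Consequently, $B \subset \{X_\circ \in \Sigma_\kap^{n-1,{\rm g}} : \lambda_*(X_\circ) = \kap+1\}$.

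The core is a density/accumulation argument. Suppose for contradiction that $\mathcal{H}^\beta_\infty(B) > 0$ for some $\beta > n-2$, and use Lemma~\ref{lem.lem35}(i) to pick a density point of $B$, which after translation I take to be $X_\circ = 0$, with associated sequence $r_j \downarrow 0$. Extract further so that $\tilde v_{r_j} \rightharpoonup q$ in $W^{1,2}(B_1,|y|^a)$; since $a\in[0,1)$ puts us in \ref{eq.Case1}, Proposition~\ref{prop.case1} gives that $q$ is a non-trivial, $(\kap+1)$-homogeneous, $a$-harmonic polynomial of unit $L^2(\pa B_1,|y|^a)$ norm. Lemma~\ref{lem.lem35}(ii) then produces an accumulation set $\mathcal{A} = \mathcal{A}_B \subset \overline{B_{1/2}}$ with $\mathcal{H}^\beta_\infty(\mathcal{A}) > 0$. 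For each $Z\in\mathcal{A}$, by definition there is a subsequence $r_{j_\ell}$ of $r_j$ and points $X_\ell \in B \subset \Sigma_{\ge\kap}$ with $|X_\ell|\le r_{j_\ell}/2$ and $X_\ell/r_{j_\ell}\to Z$; since the weak limit of $\tilde v_{r_j}$ is inherited by any subsequence, Lemma~\ref{lem: accumulation pts} applies and gives $Z\in L_\ast$ together with $D^\alpha q(Z) = 0$ for every multi-index $\alpha = (\alpha',0)$ with $|\alpha|\le \kap-2$.

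The argument closes with a polynomial-dimension comparison: $\mathcal{A} \subset L_\ast \cong \R^{n-1}$ has $\mathcal{H}^\beta_\infty(\mathcal{A}) > 0$ with $\beta > n-2$, while the zero set in $L_\ast$ of the restriction $D^\alpha q|_{L_\ast}$, if not all of $L_\ast$, is an algebraic subvariety of dimension at most $n-2$; hence $D^\alpha q|_{L_\ast} \equiv 0$ for every admissible $\alpha$. Setting $q_\circ := \sqrt{H_{\kap+1}(0^+, v_{\ast,X_\circ})}\,q$ produces a polynomial fulfilling all of Definition~\ref{defi.3rd}: (i) and (iii) follow from $\|v_{r_j,\kap+1}\|_{L^2(\pa B_1,|y|^a)}^2 = H_{\kap+1}(r_j,v_{\ast,X_\circ}) \to H_{\kap+1}(0^+,v_{\ast,X_\circ})$ and the strong $L^2(\pa B_1,|y|^a)$ trace convergence of $\tilde v_{r_j}$, and (ii) is exactly what was just proved, yielding $X_\circ\in\Sigma_\kap^{n-1,{\rm nxt}}$ and the desired contradiction. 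The key technical obstacle is synchronizing the $\mathcal{H}^\beta$-density sequence with the weak-$W^{1,2}$ convergence of $\tilde v_{r_j}$, so that a single polynomial $q$ simultaneously plays the role of the second blow-up and vanishes along every direction furnished by $\mathcal{A}$; this compatibility is built into Lemma~\ref{lem.lem35}.
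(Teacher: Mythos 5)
Your proposal is correct and takes essentially the same route as the paper's proof: split off the anomalous stratum, take a density point via Lemma~\ref{lem.lem35}, use Proposition~\ref{prop.case1} and Lemma~\ref{lem: accumulation pts} to place the accumulation set inside $L_\ast\cap\{D^\alpha q=0,\ |\alpha|\le\kappa-2\}$, and build $q_\circ=H_{\kappa+1}^{1/2}(0^+,v_\ast)\,q$ to contradict exclusion from $\Sigma_\kappa^{n-1,{\rm nxt}}$. The only difference is organizational: you dispose of the case $\lambda_\ast>\kappa+1$ up front (trivial $q_\circ\equiv0$) and run the last step in contrapositive form, whereas the paper derives $\lambda_\ast=\kappa+1$ and the proper inclusion $D_{\kappa-2}(q)\cap L_\ast\subsetneq L_\ast$ inside one contradiction argument.
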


\begin{proof}
We proceed as in Proposition~\ref{prop: spine is nodal set}. 
Notice that by Proposition~\ref{prop: spine is nodal set}, $\Sigma_\kap^{n-1, {\rm a}}$ is already lower dimensional.
So we restrict our attention to $\Sigma^{n-1, {\rm g}}_\kap$.  
Let 
\[
\Sigma := \Sigma^{n-1, {\rm g}}_\kap \setminus \Sigma^{n-1, {\rm nxt}}_{\kap}
\]
and suppose that $\mathcal{H}_\infty^\beta(\Sigma) > 0$ for some $\beta > n-2$. 
Then, there exists a point $X_\circ \in \Sigma$ and a sequence $r_j \downarrow 0$ such that 
\[
\frac{\mathcal{H}_\infty^\beta(\Sigma\cap B_{r_j}(X_\circ))}{r_j^{\beta}} \ge c_{n,\beta} > 0.
\]
Up to a translation, assume that $X_\circ = 0$. 
By assumption, 
\[
\lambda_*:= N(0^+, v_\ast) \ge \kap+1,
\]
and, up to a subsequence $r_{\ell} = r_{j_\ell}$, 
\[
\tilde v_{r_\ell} \rightharpoonup q\quad \text{in}\quad W^{1, 2}(B_1,|y|^a),
\]
where $\tilde v_{r_\ell}$ is defined as in \eqref{eq.vtilde2}, and $q$ is $a$-harmonic and at least $(\kappa + 1)$-homogeneous.
Moreover, by Lemma~\ref{lem.lem35}(ii),
\[
\mathcal{H}_\infty^\beta(\mathcal{A}) > 0,
\]
where $\mathcal{A} = \mathcal{A}_\Sigma$. 
Now if $Z\in \mathcal{A}$, then there are sequences $X_\ell \in \Sigma$ and $r_\ell\downarrow 0$ such that $|X_\ell|\le r_\ell$ and $X_\ell/2r_\ell\to Z/2$. 
By Lemma~\ref{lem:  accumulation pts}, if we denote 
\[
D_{\kap-2}(q) := \{ X = (x,0) : D^\alpha q(X) = 0 \text{ for all } \alpha = (\alpha_1,\dots,\alpha_n,0) \text{ with } |\alpha|\le \kap-2 \},
\]
then $Z\in L_*\cap D_{\kap-2}(q)$, so that
\[
\mathcal{A} \subset \overline{B_1} \cap L_*\cap D_{\kap-2}(q).
\]

Now, using the monotonicity of $H_{\kap+1}(r_\ell, v_\ast)$ (see Lemma~\ref{lem.HMon}), we have that $H_{\kap+1}^{1/2}(0^+, v_\ast)$ exists.
So let 
\[
q_\circ := H_{\kap+1}^{1/2}(0^+, v_\ast)q
\]
and notice that
\[
v_{r_\ell,\kap+1} := \frac{u(r_\ell \,\cdot\,) -p_{*}(r_\ell \,\cdot\,)}{r_\ell^{\kap+1}} = \tilde v_{r_\ell} \frac{\|v_{r_\ell}\|_{L^2(\pa B_1)}}{r_\ell^{\kap+1}} = \tilde v_{r_\ell} H_{\kap+1}^{1/2}(r_\ell, v_\ast).
\]
In turn,
\[
v_{r_\ell,\kap+1} \rightharpoonup q_\circ \quad \text{in}\quad W^{1, 2}(B_1,|y|^a).
\]
Additionally, 
\begin{equation}
\label{eqn: l2 norm cond}
\|q_\circ\|_{L^2(\pa B_1, |y|^a)}^2 =  H_{\kap+1}(0^+, v_\ast)
\end{equation}
since $\|q\|_{L^2(\pa B_1, |y|^a)} = 1$.

If $\lambda_* > \kappa + 1$, then $\|v_*\|^2_{L^2(\pa B_r,|y|^a)} = o(r^{2(\kappa + 1)+n+a})$.
And so $H_{\kap+1}(0^+, v_\ast) = 0$, which, by \eqref{eqn: l2 norm cond}, implies that $q_\circ \equiv 0$.
But this is impossible: 
\begin{equation}
\label{eqn : 0 not in Sigm3rd}
0 \notin \Sigma^{n-1,{\rm nxt}}_\kappa.
\end{equation}
(In this case, $q_\circ$ is trivially $(\kappa + 1)$-homogeneous and $D_{\kappa-2}(q_\circ) = \R^n \times \{ 0 \}$.)
In turn, $\lambda_* = \kappa + 1$ and $q_\circ \not\equiv 0$.
Thus, by \eqref{eqn : 0 not in Sigm3rd},
\[
D_{\kappa-2}(q) \cap L_\ast = D_{\kappa-2}(q_\circ) \cap L_\ast \subsetneq L_\ast.
\]
Hence, by the analyticity of $q$,
\[
\dim_\mathcal{H} D_{\kappa-2}(q) \cap L_\ast \leq n-2.
\]
But then, $\mathcal{H}_\infty^\beta(\mathcal{A}) = 0$, a contradiction. 
\end{proof}

Notice that $\Sigma^{0}_2 = \Sigma^{0, {\rm nxt}}_{2}$ since $\Sigma^{0, {\rm a}}_2$ is empty and $\tilde{\Sigma}_2 = \Sigma_2$.

\begin{lemma}
\label{lem.3rd_2}
Let $n\ge 3$. 
Suppose that $1 \le m \le n-2$. 
Then, $\dim_{\mathcal{H}} \Sigma^{m}_2\setminus \Sigma^{m, {\rm nxt}}_{2} \le m-1$.
\end{lemma}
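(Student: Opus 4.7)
The plan is to adapt the argument of Lemma~\ref{lem.3rd_1} to the case $\kappa = 2$ and intermediate dimension $m \leq n-2$. Two features of this regime simplify matters: first, $\tilde\Sigma_2^{m,\mathrm{a}} = \Sigma_2^{m,\mathrm{a}}$ (nodal set equals spine for $2$-homogeneous first blow-ups), and Proposition~\ref{prop.main2}(ii) already gives $\dim_{\mathcal{H}} \Sigma_2^{m,\mathrm{a}} \leq m-1$, so it suffices to bound $\dim_{\mathcal{H}} (\Sigma_2^{m,\mathrm{g}} \setminus \Sigma_2^{m,\mathrm{nxt}})$. Second, because $m < n-1$ and $\kappa=2$, we sit squarely in \ref{eq.Case1}, so Proposition~\ref{prop.case1} applies: second blow-ups $q$ are $\lambda_\ast$-homogeneous $a$-harmonic polynomials with $\lambda_\ast \geq 3$ on $\Sigma_2^{m,\mathrm{g}}$, and the Case~1 accumulation lemma, Lemma~\ref{lem: accumulation pts}, is available.

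I would argue by contradiction: assume $\mathcal{H}^\beta_\infty(\Sigma_2^{m,\mathrm{g}} \setminus \Sigma_2^{m,\mathrm{nxt}}) > 0$ for some $\beta > m-1$, apply Lemma~\ref{lem.lem35}(i) to pick a density point (translated to $0$) and a sequence $r_j \downarrow 0$ along which the accumulation set $\mathcal{A}$ has positive $\mathcal{H}^\beta_\infty$ measure by Lemma~\ref{lem.lem35}(ii). Pass to a subsequence so that $\tilde v_{r_\ell} \rightharpoonup q$ in $W^{1,2}(B_1,|y|^a)$ with $q$ as above. For any $Z \in \mathcal{A}$, supplied by points $X_\ell \in \Sigma_2^{m,\mathrm{g}} \setminus \Sigma_2^{m,\mathrm{nxt}}$, Lemma~\ref{lem: accumulation pts} (applied with radii $2 r_{j_\ell}$) yields $Z \in L_\ast$ and $D^\alpha q(Z) = 0$ for every horizontal multi-index with $|\alpha| \leq \kappa - 2 = 0$; that is, $q(Z) = 0$. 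Hence $\mathcal{A} \subset \overline{B_1} \cap L_\ast \cap \{q = 0\}$.

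Next I would normalize the second blow-up to compare it with the definition of $\Sigma_2^{m,\mathrm{nxt}}$: since $r \mapsto H_3(r, v_\ast)$ is non-decreasing (Lemma~\ref{lem.HMon}), setting
\[
q_\circ := H_3(0^+, v_\ast)^{1/2}\, q,
\]
we obtain $v_{r_\ell, 3} \rightharpoonup q_\circ$ in $W^{1,2}(B_1, |y|^a)$ with $\|q_\circ\|_{L^2(\partial B_1, |y|^a)}^2 = H_3(0^+, v_\ast)$, exactly as in Lemma~\ref{lem.3rd_1}. If $\lambda_\ast > 3$, the Monneau-type monotonicity forces $H_3(0^+, v_\ast) = 0$, so $q_\circ \equiv 0$ is trivially a $3$-homogeneous $a$-harmonic polynomial vanishing on $L_\ast$, which would put $0 \in \Sigma_2^{m,\mathrm{nxt}}$, a contradiction. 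Therefore $\lambda_\ast = 3$ and $q_\circ \not\equiv 0$, whence $\{q_\circ = 0\} = \{q = 0\}$.

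The main obstacle is to rule out that $q_\circ$ vanishes identically on $L_\ast$: if it did, $q_\circ$ would satisfy conditions (i)--(iii) in Definition~\ref{defi.3rd}, forcing $0 \in \Sigma_2^{m,\mathrm{nxt}}$, against the standing assumption. Hence $q_\circ|_{L_\ast}$ is a non-trivial real-analytic polynomial on $L_\ast \cong \R^m$, so $\{q_\circ = 0\} \cap L_\ast$ is a proper real-algebraic subvariety of $L_\ast$ and therefore has Hausdorff dimension at most $m-1$. This contradicts $\mathcal{H}^\beta_\infty(\mathcal{A}) > 0$ with $\beta > m-1$, concluding the proof. The only delicate point is verifying that the $\ell$-dependent subsequences in the density-point construction can be chosen to simultaneously realize the weak limit $q$ and the convergence $X_\ell / r_{j_\ell} \to Z$; this is handled, as in Lemma~\ref{lem.3rd_1}, by a standard diagonal extraction.
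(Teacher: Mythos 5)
Your proposal is correct and takes essentially the same route as the paper: the paper's proof of this lemma consists precisely of repeating the argument of Lemma~\ref{lem.3rd_1} for $\kappa=2$ and $1\le m\le n-2$, after noting that $\tilde\Sigma_2^{m,{\rm a}}=\Sigma_2^{m,{\rm a}}$ is already lower dimensional (Proposition~\ref{prop: spine is nodal set}). Your steps—density point and accumulation set via Lemma~\ref{lem.lem35}, the inclusion $\mathcal{A}\subset \overline{B_1}\cap L_\ast\cap\{q=0\}$ from Lemma~\ref{lem: accumulation pts} (here $\kappa-2=0$), the normalization $q_\circ=H_{3}^{1/2}(0^+,v_\ast)\,q$ forcing $\lambda_\ast=3$ and $q_\circ\not\equiv 0$, and the observation that $q_\circ|_{L_\ast}\not\equiv 0$ (otherwise $0\in\Sigma_2^{m,{\rm nxt}}$) so that $\{q_\circ=0\}\cap L_\ast$ has dimension at most $m-1$—match the paper's argument.
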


\begin{proof}
The proof is identical to the proof of Lemma~\ref{lem.3rd_1}.
Notice that $\Sigma_2^{m, {\rm a}}$ is lower dimensional by Proposition~\ref{prop: spine is nodal set}, and $\tilde \Sigma_2^{m, {\rm a}} = \Sigma_2^{m, {\rm a}}$.
\end{proof}

\begin{lemma}
\label{lem.3rd_3}
Let $n \geq 2$.
Suppose that $a \in (-1,0)$.
Then, $\dim_{\mathcal{H}} \Sigma^{n-1}_\kap \setminus \Sigma^{n-1, {\rm nxt}}_{\kap} \le n-2$.
\end{lemma}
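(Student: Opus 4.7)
The plan is to mirror the strategy of Lemma~\ref{lem.3rd_1} (and of Proposition~\ref{prop: top anom is lower d}), but replacing the polynomial second blow-ups of \ref{eq.Case1} with the \ref{eq.Case2} structural analysis provided by Lemma~\ref{lem: accumulation pts 2} and Remarks~\ref{rem.onlypoly} and \ref{rem.qeven}. First, I would reduce to the relevant set. By Proposition~\ref{prop.main2}(iii) the anomalous part satisfies $\dim_\mathcal{H} \Sigma_\kap^{n-1,{\rm a}} \le n-2$, so it suffices to control $\Sigma_\kap^{n-1,{\rm g}} \setminus \Sigma_\kap^{n-1,{\rm nxt}}$. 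Moreover, if $X_\circ \in \Sigma_\kap^{n-1,{\rm g}}$ has $\lambda_{*,X_\circ} > \kap+1$, then the growth $\|v_{*,X_\circ}\|_{L^2(\pa B_r,|y|^a)}^2 = o(r^{2(\kap+1)+n+a})$ (cf.\ Lemma~\ref{lem.HMon}) forces $H_{\kap+1}(0^+, v_{*,X_\circ}) = 0$, so $v_{r_\ell,\kap+1} \rightharpoonup 0$ and $q_\circ \equiv 0$ trivially meets Definition~\ref{defi.3rd}, putting $X_\circ \in \Sigma_\kap^{n-1,{\rm nxt}}$. It is therefore enough to bound the dimension of
\[
\Sigma := \{X_\circ \in \Sigma_\kap^{n-1,{\rm g}} \setminus \Sigma_\kap^{n-1,{\rm nxt}} : \lambda_{*,X_\circ} = \kap+1\}.
\]

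Arguing by contradiction, I would assume $\mathcal{H}^\beta_\infty(\Sigma) > 0$ for some $\beta > n-2$. After translating a density point to the origin (Lemma~\ref{lem.lem35}), I extract radii $r_\ell \downarrow 0$ for which the accumulation set $\mathcal{A}$ satisfies $\mathcal{H}^\beta_\infty(\mathcal{A}) > 0$ and, by Proposition~\ref{prop.case2}, $\tilde v_{r_\ell} \rightharpoonup q$ weakly in $W^{1,2}(B_1, |y|^a)$ along a subsequence, with $q$ a $(\kap+1)$-homogeneous solution to the very thin obstacle problem on $L_*$. The crucial simplification is that every $X \in \Sigma$ has $\lambda_{*,X} = \kap+1$ exactly, so along any sequence $X_\ell \in \Sigma$ realizing a direction $Z \in \mathcal{A}$ the frequencies are constantly $\kap+1$, and the hypothesis $\lambda = \lambda_* = \kap+1$ of Lemma~\ref{lem: accumulation pts 2} is automatic. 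For each such $Z \in \mathcal{A}$, Remark~\ref{rem.qeven} then yields
\[
q^{\rm even}(X + \tau Z) = q^{\rm even}(X) + \tau c_Z \Ext_a((\be_\ast \cdot x)^\kap) \qquad \text{for all } \tau \in \R.
\]
Since $\mathcal{A} \subset L_*$ and $\mathcal{H}^\beta_\infty(\mathcal{A}) > 0$ with $\beta > n-2$, I select $n-1$ linearly independent directions $Z_1,\dots,Z_{n-1} \in \mathcal{A}$; in coordinates where $L_* = \operatorname{span}(e_1,\dots,e_{n-1})$ and $\be_\ast = e_n$, the shift relations imply $\partial_{e_j} q^{\rm even} = \tilde c_j \Phi$ for $j = 1, \dots, n-1$ with $\Phi := \Ext_a(x_n^\kap)$, and integration produces the ansatz
\[
q^{\rm even}(x, y) = \Big(\sum_{j=1}^{n-1} \tilde c_j x_j\Big) \Phi(x_n, y) + g(x_n, y),
\]
where $g(x_n, y) := q^{\rm even}(0,\dots,0,x_n,y)$ is two-dimensional.

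The main step, and the principal obstacle, is to show $g \equiv 0$, so that $q^{\rm even}$, and hence $q = q^{\rm even} + q^{\rm odd}$ (the odd part already being an $a$-harmonic polynomial by Step~2 of the proof of Lemma~\ref{lem: accumulation pts 2}), is an $a$-harmonic polynomial. Restricting the very thin obstacle problem structure of $q^{\rm even}$ to the slice $\{x_1 = \cdots = x_{n-1} = 0\}$ (which meets $L_*$ only at the origin), $g$ becomes a two-dimensional very thin obstacle problem solution of homogeneity $\kap+1 > 2$, so Lemma~\ref{lem: 2d class} forces $g$ to be a polynomial. A clean parity argument then kills it: $g$ inherits evenness in $x_n$ (by the definition of $q^{\rm even}$) and evenness in $y$ (from the original symmetry of $u$) while being $(\kap+1)$-homogeneous with $\kap+1$ odd, and no monomial $x_n^i y^j$ with $i, j$ both even and $i + j$ odd exists, whence $g \equiv 0$.

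With $q$ now a $(\kap+1)$-homogeneous $a$-harmonic polynomial, I would apply Remark~\ref{rem.onlypoly} at every $Z \in \mathcal{A}$ to conclude $D^\alpha q(Z) = 0$ for all multi-indices $\alpha = (\alpha', 0)$ with $|\alpha| \le \kap - 2$. Since $\mathcal{A} \subset L_* \cong \R^{n-1}$ has positive $\beta$-Hausdorff measure with $\beta > n-2$, it is not contained in any proper algebraic subvariety of $L_*$, and so each polynomial $D^\alpha q|_{L_*}$ must vanish identically. Consequently $q_\circ := H_{\kap+1}^{1/2}(0^+, v_*)\, q$ satisfies all three conditions of Definition~\ref{defi.3rd}, placing $0 \in \Sigma_\kap^{n-1,{\rm nxt}}$ and contradicting $0 \in \Sigma$.
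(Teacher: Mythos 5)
Your proposal is correct and follows essentially the same route as the paper's proof: density point and accumulation set, the \ref{eq.Case2} blow-up from Proposition~\ref{prop.case2}, the shift identity of Remark~\ref{rem.qeven} along $n-1$ independent directions of $\mathcal{A}$, reduction of the remainder to a two-dimensional homogeneous solution killed by Lemma~\ref{lem: 2d class} and parity, and finally Remark~\ref{rem.onlypoly} plus Definition~\ref{defi.3rd} to reach the contradiction. The only differences are cosmetic reorganizations (front-loading the $\lambda_*>\kap+1$ case, which makes the frequency-continuity hypothesis automatic, and phrasing the last step via algebraic subvarieties), which match the paper's argument in substance.
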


\begin{proof}
We proceed as in Lemma~\ref{lem.3rd_1}. 
By Proposition~\ref{prop: top anom is lower d}, $\Sigma_\kap^{n-1, {\rm a}}$ is lower dimensional, so we define
\[
\Sigma := \Sigma^{n-1, {\rm g}}_\kap \setminus \Sigma^{n-1, {\rm nxt}}_{\kap}
\]
and suppose that $\mathcal{H}_\infty^\beta(\Sigma) > 0$ for some $\beta > n-2$. 
We can assume that at 0 and for some $r_j\downarrow 0$,
\[
\frac{\mathcal{H}_\infty^\beta(\Sigma\cap B_{r_j}(X_\circ))}{r_j^{\beta}} \ge c_{n,\beta} > 0 \quad\text{and}\quad
\lambda_*= N(0^+, v_{*})\ge \kap+1.
\]
Furthermore, up to a subsequence $r_{\ell} = r_{j_\ell}$, 
\[
\tilde v_{r_\ell} \rightharpoonup q\quad\text{in}\quad W^{1, 2}(B_1,|y|^a),
\]
where $\tilde v_{r_\ell}$ is defined as in \eqref{eq.vtilde2}, and $q$ is a global homogeneous solution to the very thin obstacle problem with homogeneity $\lambda_*\ge \kap+1$. 
Moreover, by Lemma~\ref{lem.lem35}(ii),
\[
\mathcal{H}_\infty^\beta(\mathcal{A}) > 0,
\]
where $\mathcal{A} = \mathcal{A}_\Sigma$, and
\[
\mathcal{A}\subset \overline{B_1}\cap L_*
\] 
by Lemma~\ref{lem: accumulation pts 2}.

Arguing as in Lemma~\ref{lem.3rd_1}, since $0\in \Sigma$, if we set
\[
q_\circ := H_{\kap+1}^{1/2}(0^+, v_\ast)q,
\]
we find that $q_\circ$ and $q$ are $(\kap+1)$-homogeneous and non-zero. 

Let us decompose $q$ into its odd and even parts with respect to $L_*$ as defined in Lemma~\ref{lem: accumulation pts 2}: $q = q^{\rm odd}+q^{\rm even}$.
Without loss of generality and for simplicity, assume that 
\[
L_* = \{x_n = y = 0\}.
\]

On one hand, by the proof of Lemma~\ref{lem: accumulation pts 2}, $q^{\rm odd}$ is an $a$-harmonic, $(\kap+1)$-homogeneous function, which by Liouville's theorem (\cite[Lemma 2.7]{CSS08}), is a polynomial.
On the other hand, since $\mathcal{H}_\infty^\beta(\mathcal{A}) > 0$, there are $n-1$ elements in $\mathcal{A}$, $Y_1,\dots,Y_{n-1}$, such that $\Span(Y_1,\dots,Y_{n-1}) = L_*$. 
By Remark~\ref{rem.qeven}, $q$ is then a polynomial. 
Indeed, for each $Y_i$, there exists a sequence $\{X_i^\ell\}_{\ell\in \N}$ with $X_i^\ell\in \Sigma$ such that $|X_i^\ell|\le r_\ell$ and $Y_i^\ell := X_i^\ell/r_\ell \to Y_i$ as $\ell\to \infty$. 
In addition, if we let 
\[
f(X) := N(0^+, u(X+\,\cdot\,) - p_{*, X})\quad\text{for}\quad  X\in \R^n \times \{ 0\},
\]
then $\kap+1\le f(X_i^\ell)$ (since $X_i^\ell\in \Sigma$).
Also, $\limsup_{\ell \to \infty} f(X_i^\ell) \le \lambda_* = \kap+1$ ($f$ is upper semicontinuous).
So Almgren's frequency at $0^+$ is continuous along the sequences $\{X_i^\ell\}_{\ell\in \N}$ and $i \in \{1,\dots,n-1\}$. 
Therefore, the hypotheses of Remark~\ref{rem.qeven} hold, and we have that
\begin{equation}
\label{eq.vit}
q^{\rm even}(X + \tau Y_i) = q^{\rm even}(X)  + \tau c_i\Ext_a(x_n^\kappa),
\end{equation}
(recall, $L_* = \{ x_n = y = 0\}$). 
Since $\{Y_i\}_{1\le i \le n-1}$ spans $L_*$, for any $X = (x',x_n,x_{n+1})\in \R^{n+1}$, 
\begin{align*}
X = (0,\dots,0,x_n,x_{n+1}) + (\ba_1\cdot x')Y_{1}+\dots+(\ba_{n-1}\cdot x')Y_{n-1},
\end{align*}
for some fixed vectors $\ba_j' \in \R^{n-1}$ for $j\in\{1,\dots,n-1\}$. 
Now applying \eqref{eq.vit} iteratively, we deduce that
\begin{align*}
q^{\rm even}(X) & = (c_1(\ba_1' \cdot x')+\dots c_{n-1}(\ba_{n-1}' \cdot x'))\Ext_a(x_n^\kappa) + \bar q(x_n, x_{n+1})\\
& = (\ba' \cdot x')\Ext_a(x_n^\kappa) + \bar q(x_n, x_{n+1}).
\end{align*}
Here, $\bar q(x_n, x_{n+1}) = q^{\rm even}(0,\dots,0,x_n,x_{n+1})$ and $\ba' \in\R^{n-1}$. 
As $q^{\rm even}$ is a solution to the very thin obstacle problem which is $(\kap+1)$-homogeneous and $(\ba' \cdot x')\Ext_a(x_n^\kappa)$ is $(\kap+1)$-homogeneous, $a$-harmonic, and vanishes on $L_*$, we find that $\bar q$ is a $(\kap+1)$-homogeneous solution to the very thin obstacle problem. 
By Lemma~\ref{lem: 2d class}, since $\bar q$ is two-dimensional and $(\kap+1)$-homogeneous, it is a polynomial. 
But $\bar q$ is also even in both $x_n$ and $y$, which implies $\bar q \equiv 0$. 
Therefore, $q^{\rm even}$ is also a polynomial. 
That is, $q$ is a $(\kap+1)$-homogeneous polynomial since both $q^{\rm odd}$ and $q^{\rm even}$ are polynomials.

To conclude, observe that because $q$ is a $(\kap+1)$-homogeneous polynomial, for any $Z\in \mathcal{A}$, there are sequences $X_\ell \in \Sigma$ and $r_\ell\downarrow 0$ such that $|X_\ell|\le r_\ell$ and $X_\ell/r_\ell\to Z$.  
By Remark~\ref{rem.onlypoly}, $D^\alpha q(Z) =  0$ for all $\alpha =(\alpha_1,\dots,\alpha_n, 0)$ such that $|\alpha|\le \kap-2$, i.e., 
\[
\mathcal{A}\subset \overline{B_1} \cap L_*\cap D_{\kap-2}(q) = \overline{B_1} \cap L_*\cap D_{\kap-2}(q_\circ).
\]
Since $0 \notin \Sigma^{n-1,{\rm nxt}}_\kappa$,
\[
D_{\kappa-2}(q) \cap L_\ast = D_{\kappa-2}(q_\circ) \cap L_\ast \subsetneq L_\ast,
\]
and by the analyticity of $q$,
\[
\dim_\mathcal{H} D_{\kappa-2}(q) \cap L_\ast \leq n-2.
\]
But then, $\mathcal{H}_\infty^\beta(\mathcal{A}) = 0$, a contradiction. 
\end{proof}

In some of the end point cases, we can say more.

\begin{corollary}
\label{cor.countC2}
\
\begin{enumerate}[(i)]
\item If $n = 2$ and $a \in [0,1)$, then $\Sigma^{1}_\kap\setminus \Sigma^{1, {\rm nxt}}_{\kap}$ is countable.
\item If $n = 2$ and $a \in (-1,0)$, then $\Sigma^{1}_\kap\setminus \Sigma^{1, {\rm nxt}}_{\kap}$ is countable.
\item If $n \ge 3$ and $m = 1$, then $\Sigma^{1}_2\setminus \Sigma^{1, {\rm nxt}}_{2}$ is countable.
\end{enumerate}
\end{corollary}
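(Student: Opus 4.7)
\emph{Plan.} I will argue by contradiction, mirroring the structure of Lemmas~\ref{lem.3rd_1}, \ref{lem.3rd_2}, and \ref{lem.3rd_3}, but exploiting the one-dimensionality of $L(p_{\ast,X_\circ})$ (i.e.\ $m=1$) to strengthen ``lower dimensional'' to ``at most countable''. Since Lemma~\ref{lem.sigma1} shows $\Sigma_\kap^{1,{\rm a}}$ is discrete in parts (i) and (iii) and Lemma~\ref{lem.discreten-1} shows it is at most countable in part (ii), it suffices to show that $\Sigma_\kap^{1,{\rm g}}\setminus\Sigma_\kap^{1,{\rm nxt}}$ is at most countable. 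If not, an uncountable set in $\R^{n+1}$ must have a condensation point in itself, yielding $X_\ell \in \Sigma_\kap^{1,{\rm g}}\setminus\Sigma_\kap^{1,{\rm nxt}}$ converging to some $X_\circ$ in the same set, which I translate to the origin. Thus $\lambda_* := N(0^+,v_*) \ge \kap+1$, and the goal becomes to produce a $(\kap+1)$-homogeneous, $a$-harmonic polynomial $q_\circ$ witnessing $0 \in \Sigma_\kap^{1,{\rm nxt}}$.

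Setting $r_\ell := 2|X_\ell|$, I extract the $W^{1,2}(B_1,|y|^a)$ weak limit $q$ of $\tilde v_{r_\ell}$ given by Proposition~\ref{prop.main1} and define $q_\circ := H_{\kap+1}(0^+,v_*)^{1/2}\,q$. The identity $v_{r_\ell,\kap+1} = \tilde v_{r_\ell}\,H_{\kap+1}(r_\ell,v_*)^{1/2}$ together with Lemma~\ref{lem.HMon} gives condition (i) of Definition~\ref{defi.3rd} for $q_\circ$, while $\|q\|_{L^2(\pa B_1,|y|^a)}=1$ gives (iii). When $\lambda_* > \kap+1$, the monotonicity of $H_{\kap+1}(\cdot,v_*)$ forces $H_{\kap+1}(0^+,v_*) = 0$, so $q_\circ \equiv 0$ and condition (ii) holds trivially, contradicting $0 \notin \Sigma_\kap^{1,{\rm nxt}}$. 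So I may assume $\lambda_* = \kap+1$; in part (ii) I additionally apply Lemma~\ref{lem: almost cty} to the map $X \mapsto N(0^+,u(X+\,\cdot\,)-p_{*,X})$ to pass, after discarding a further countable subset, to a subsequence along which $\lambda_{*,X_\ell} \to \lambda_*$.

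The central step is to identify $q$ as an $a$-harmonic polynomial. In parts (i) and (iii) we are in \ref{eq.Case1}, so this is immediate from Proposition~\ref{prop.case1}. The main obstacle is part (ii), where $q$ is a priori only a $(\kap+1)$-homogeneous solution to the very thin obstacle problem on $L_* \subset \R^3$. I will decompose $q = q^{\rm even} + q^{\rm odd}$ with respect to reflection across $L_*$ in coordinates where $L_* = \{x_2 = y = 0\}$ and $\be_*=e_2$. The proof of Lemma~\ref{lem: accumulation pts 2} already shows $q^{\rm odd}$ is an $a$-harmonic polynomial; for $q^{\rm even}$, Remark~\ref{rem.qeven} applied with $\lambda = \lambda_* = \kap+1$ yields
\[
q^{\rm even}(X+\tau Z_\infty) = q^{\rm even}(X) + \tau c_\infty\, \Ext_a(x_2^\kap) \quad\text{for all}\quad \tau \in \R,
\]
with $Z_\infty \in L_*\cap \pa B_{1/2}$, so iterating along $e_1$ gives $q^{\rm even}(x_1,x_2,y) = \tilde q(x_2,y) + x_1\tilde c\, \Ext_a(x_2^\kap)$. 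A direct check (using $L_a q^{\rm odd} \equiv 0$ together with $\pa_{x_1 x_1} q^{\rm even} = 0$ and $L_a \Ext_a(x_2^\kap) = 0$) identifies $\tilde q := q^{\rm even}|_{\{x_1=0\}}$ as a $(\kap+1)$-homogeneous solution to the two-dimensional very thin obstacle problem on $\R^2$ with obstacle at the origin, so Lemma~\ref{lem: 2d class} (which applies since $\kap+1 \ge 3 > 2$) forces $\tilde q$, hence $q^{\rm even}$, hence $q$ itself, to be an $a$-harmonic polynomial.

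With $q$ an $a$-harmonic polynomial in every case, I apply Lemma~\ref{lem: accumulation pts} in parts (i) and (iii) and Remark~\ref{rem.onlypoly} in part (ii) to the sequence $X_\ell \subset \Sigma_{\ge \kap}$: up to a subsequence, $X_\ell/r_\ell \to Z_\infty \in L_* \cap \pa B_{1/2}$ and $D^\alpha q(Z_\infty) = 0$ for every $\alpha = (\alpha',0)$ with $|\alpha|\le \kap-2$. Since $L_*$ is one-dimensional and $Z_\infty \ne 0$, the $(\kap+1-|\alpha|)$-homogeneity of $D^\alpha q$ propagates this vanishing to all of $L_*$, giving condition (ii) of Definition~\ref{defi.3rd} for $q_\circ$. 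This places $0 \in \Sigma_\kap^{1,{\rm nxt}}$, the contradiction sought. The main obstacle is the polynomial-upgrade step in part (ii); it is precisely the one-dimensionality of $L_*$ that lets Lemma~\ref{lem: 2d class} close the argument, and this is also the structural reason no analogous sharp countability statement can be expected when $m = n-1 \ge 2$.
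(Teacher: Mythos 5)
Your proof is correct and takes essentially the same route as the paper, which establishes this corollary precisely by rerunning the arguments of Lemmas~\ref{lem.3rd_1}, \ref{lem.3rd_2}, and \ref{lem.3rd_3} at an accumulation point of $\Sigma^{1}_\kap\setminus \Sigma^{1, {\rm nxt}}_{\kap}$ (the one-dimensional spine, together with Lemma~\ref{lem: 2d class} when $a<0$, replacing the positive-$\mathcal{H}^\beta$-measure step), combined with Lemmas~\ref{lem.sigma1} and \ref{lem.discreten-1} for the anomalous set. One cosmetic remark: your appeal to Lemma~\ref{lem: almost cty} in part (ii) is unnecessary (and, strictly speaking, that lemma produces some approximating sequence rather than a subsequence of a given one), since $X_\ell$ generic forces $\lambda_{*,X_\ell}\ge \kap+1$ while upper semicontinuity of the frequency gives $\limsup_\ell \lambda_{*,X_\ell}\le \lambda_*=\kap+1$, so the frequency converges along your original sequence, exactly as in the proof of Lemma~\ref{lem.3rd_3}.
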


\begin{proof}
We separate each case.
\begin{enumerate}[(i)]
\item Notice that if $n = 2$ and $a \in [0,1)$, then $\Sigma_\kap^{1,{\rm a}}$ is discrete by Lemma~\ref{lem.sigma1}. 
Repeating the proof of Lemma~\ref{lem.3rd_1}, but assuming, to the contrary, that $\Sigma_\kap^{1, {\rm g}}\setminus \Sigma_\kap^{1,{\rm nxt}}$ has accumulation points, we deduce that $\Sigma_\kap^{1, {\rm g}}\setminus \Sigma_\kap^{1,{\rm nxt}}$ is discrete as well.
The result follows.
\item By Lemma~\ref{lem.discreten-1}, we see that $\Sigma_\kap^{1,{\rm a}}$ is countable. 
In addition, repeating the arguments used to prove Lemma~\ref{lem.3rd_3}, we deduce that $\Sigma_\kap^{1, {\rm g}}\setminus\Sigma_\kap^{1,{\rm nxt}}$ cannot have accumulation points.
\item Following the proof of (i), but using Lemma~\ref{lem.3rd_2}, we conclude.
\end{enumerate}
This completes the proof.
\end{proof}

The next pair of statement concern the almost monotonicity of a Monneau-type energy and the uniqueness and continuity of second blow-ups at points in $\Sigma_\kappa^{m,{\rm nxt}}$.

\begin{lemma}
\label{lem.qp}
Let $X_\circ \in \Sigma^{m, {\rm nxt}}_\kap \cap K$ for some compact set $K \subset B_1 \cap \{ y = 0 \}$, $q_\circ$ be as in Definition~\ref{defi.3rd}, and $H_\lambda$ be as in \eqref{eq.HMon}. 
Then, 
\[
\frac{\d}{\d r} H_{\kap+1} (r, u(X_\circ+ \,\cdot\,) -p_{\ast,X_\circ}-q_\circ) \ge -C_K\bigg\|\frac{q_\circ^\kap}{p_{\ast,X_\circ}^{\kap-1}}\bigg\|_{L^\infty(B_1 \cap \{ y = 0\})}.
\]
\end{lemma}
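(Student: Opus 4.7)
Plan: Normalize $X_\circ = 0$ and set $p := p_{*,0}$, $q := q_\circ$, and $w := u - p - q$. Since $p$ and $q$ are $a$-harmonic, $L_a w = L_a u$ is a non-positive measure supported on $\Lambda(u) \subset \{y = 0\}$, where $u \equiv 0$ and hence $w = -(p+q)$. The starting point is the standard identity, derived as in the proof of Lemma~\ref{lem.HMon},
\[
\frac{d}{dr} H_{\kap+1}(r, w) = \frac{2}{r}W_{\kap+1}(r, w) + \frac{2}{r^{n+a+2(\kap+1)}}\int_{B_r} w\,L_a w.
\]

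The heart of the proof is the $L_a$ integral. Using $w = -(p+q)$ on $\Lambda(u)$ and writing $|L_a u| := -L_a u \ge 0$ yields $\int_{B_r} w\,L_a w = \int_{B_r}(p+q)\,|L_a u|$. Young's inequality with exponents $\kap/(\kap-1)$ and $\kap$ gives $q^- \le \tfrac{\kap-1}{\kap} p + \tfrac{1}{\kap}\tfrac{(q^-)^\kap}{p^{\kap-1}}$, so that $p + q \ge \tfrac{p}{\kap} - \tfrac{(q^-)^\kap}{\kap p^{\kap-1}}$. Since $p \ge 0$, $|L_a u| \ge 0$, and $(q^-)^\kap \le q^\kap$ (as $\kap$ is even),
\[
\int_{B_r}(p+q)\,|L_a u| \;\ge\; -\tfrac{1}{\kap}\int_{B_r}\tfrac{q^\kap}{p^{\kap-1}}\,|L_a u|.
\]
The ratio $q^\kap/p^{\kap-1}$ is $2\kap$-homogeneous and continuous up to $\mathcal{N}(p) = L(p)$ (Definition~\ref{defi.3rd}(ii) forces $q$ to vanish on $L(p)$ at rate $\kap - 1$, matching the vanishing of $p$ at rate $\kap$), so on $B_r \cap \{y=0\}$ it is bounded by $r^{2\kap}\|q^\kap/p^{\kap-1}\|_{L^\infty(B_1 \cap \{y=0\})}$.

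Combined with the rate bound $|L_a u|(B_r) \le C_K\, r^{n+a+\kap}$---which follows from the divergence identity $|L_a u|(B_r) = -\int_{\pa B_r}|y|^a \pa_\nu u = -\int_{\pa B_r}|y|^a \pa_\nu w$ (using $L_a p = L_a q = 0$), Cauchy--Schwarz, and the uniform Almgren/Monneau bound on $D(r, w)$ available on compact $K$---this yields
\[
\frac{2}{r^{n+a+2(\kap+1)}}\int_{B_r} w\,L_a w \;\ge\; -C_K\, r^{\kap-2}\,\bigl\|\tfrac{q^\kap}{p^{\kap-1}}\bigr\|_{L^\infty(B_1 \cap \{y=0\})},
\]
which is $\ge -C_K \|q^\kap/p^{\kap-1}\|_{L^\infty}$ for $r \le 1$ and $\kap \ge 2$. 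For the remaining Weiss term $\tfrac{2}{r}W_{\kap+1}(r, w)$, I will use the decomposition $u = p + q + w$ together with the Weiss monotonicity of $W_{\kap+1}(r, u)$ (valid since $u\,L_a u \equiv 0$) and the orthogonality of homogeneous $a$-harmonic polynomials of distinct degrees (which makes cross terms in $W_{\kap+1}(r, p+q)$ vanish), reducing the estimate to controlling the cross term $\int_{\pa B_r} w\,p\,|y|^a$, which is in turn handled by the $L^2$-normalization of $q_\circ$ in Definition~\ref{defi.3rd}(iii).

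The principal obstacle is extracting the correct growth rate for $|L_a u|(B_r)$ and carefully balancing the Weiss and $L_a$ contributions; both rely crucially on the $a$-harmonicity of $p$ and $q$ and on the extremality of $q_\circ$ as the $L^2$-best approximation of the rescaled $u - p$ at frequency $\kap + 1$.
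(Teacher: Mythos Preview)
Your decomposition
\[
\frac{\d}{\d r} H_{\kap+1}(r,w)=\frac{2}{r}W_{\kap+1}(r,w)+\frac{2}{r^{n+a+2(\kap+1)}}\int_{B_r} w\,L_a w
\]
is correct, and your treatment of the $L_a$-term via Young's inequality is essentially the paper's argument (the paper uses the equivalent elementary inequality $1-\xi+\xi^\kap\ge 0$). The gap is in your handling of the Weiss term.

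\textbf{The Weiss term.} Your plan is to expand $W_{\kap+1}(r,w)$ through $u=p+q+w$, invoke monotonicity of $W_{\kap+1}(r,u)$, and absorb the residual cross term $\int_{\pa B_r} wp\,|y|^a$ via Definition~\ref{defi.3rd}(iii). This does not close. Since $N(0^+,u)=\kap<\kap+1$, one has $W_{\kap+1}(r,u)\to -\infty$ like $-c/r^2$; the compensating term $-W_{\kap+1}(r,p+q)=-W_{\kap+1}(r,p)=\|p\|^2/r^2$ also diverges, and you would need a delicate cancellation of three $O(r^{-3})$ quantities (after dividing by $r$). Neither Weiss monotonicity of $u$ nor the $L^2$ normalization of $q_\circ$ provides this. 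The paper's route is both simpler and is exactly what ``arguing as for \eqref{eqn: key ineq 2}'' means here: because $q_\circ$ is $(\kap+1)$-homogeneous and $a$-harmonic, a direct integration by parts gives
\[
W_{\kap+1}(r,w)=W_{\kap+1}(r,v_\ast-q_\circ)=W_{\kap+1}(r,v_\ast),
\]
and this is $\ge 0$ since $N(r,v_\ast)\ge \lambda_\ast\ge \kap+1$ (Proposition~\ref{prop: Almgren}). Thus the Weiss term is simply dropped, and only the $L_a$-term remains.

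\textbf{The mass bound.} Your ``divergence identity + Cauchy--Schwarz on $\pa B_r$ + $D(r,w)$'' does not quite work as written: Cauchy--Schwarz on $\pa B_r$ produces $\int_{\pa B_r}|y|^a|\pa_\nu w|^2$, a \emph{surface} quantity, which $D(r,w)$ (a volume integral) does not control. The paper instead tests $L_a\tilde v_r$ against a cutoff to get the uniform tightness $\int_{B_{1/2}}|L_a\tilde v_r|\le C$ (as in the proof of Proposition~\ref{prop.case1}), whence $|L_a u|(B_r)\le C\,r^{n+a-1+\lambda_\ast}$; combined with the $2\kap$-homogeneity of $q_\circ^\kap/p_\ast^{\kap-1}$ this gives the stated bound since $\lambda_\ast\ge\kap+1\ge 3$. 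Your cutoff-free route can be repaired by the same device (test against a cutoff supported in $B_{2r}$ and use Cauchy--Schwarz in the \emph{volume}, with $v_\ast$ rather than $w$), which recovers exactly this rate.
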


\begin{proof}
Without loss of generality, assume that $X_\circ = 0 \in \Sigma^{m, {\rm nxt}}_\kap$.
Set 
\[
w := v_{\ast}-q_\circ.
\]
Since
\[
\frac{\d}{\d r} H_{\lambda} (r, w) = \frac{2}{r^{n+a+ 2\lambda + 1}}\int_{\pa B_r} w(\nab w \cdot X - \lambda w)|y|^a,
\]
arguing as we did to show \eqref{eqn: key ineq 2}, we find that
\[
\frac{\d}{\d r} H_{\kap+1} (r, w) \geq \frac{2}{r^{n+a+ 2\kap+2}}\int_{B_r} w\,L_a w.
\]

Now observe that
\[
w \,L_a w = -(p_{\ast}+q_\circ)\,L_a u.
\]
From the numerical inequality $1 - \xi + \xi^\kappa \geq 0$ for all $\xi \geq 0$ and as $q_\circ|_{\R^n \times \{0\}} = 0$ when $p_{\ast}|_{ \R^n \times \{0\}} = 0$, we see that
\[
p_{\ast} + q_\circ \geq p_{\ast} - |q_\circ| \geq - \frac{q_\circ^\kap}{p_{\ast}^{\kap-1}} \quad \text{on}\quad \R^n \times \{0\}
\]
(recall that $\kappa$ is even and $p_{\ast} \geq 0$ on $ \R^n \times \{0\}$).
Therefore, using that $L_a u$ is a non-positive measure supported on $B_1 \cap \{ y = 0 \}$, we deduce that
\[
\frac{\d}{\d r} H_{\kap+1} (r, w) \geq - \frac{1}{r^{n+a+2\kap+2}}\int_{B_r} \frac{q_\circ^\kap}{p_{\ast}^{\kap-1}}\,L_a u.
\]

Because $D^\alpha q_\circ$ vanishes for all $\alpha = (\alpha_1,\dots,\alpha_n,0)$ with $|\alpha| \leq \kap - 2$ on $L_\ast$, we have that $q_\circ^\kap/p_{\ast}^{\kap-1}$ is locally bounded on $\R^n \times \{ 0\}$.
Moreover, $q_\circ^\kap/p_{\ast}^{\kap-1}$ is a $2\kap$-homogeneous polynomial.
Thus,
\[
- \frac{1}{r^{n+a+2\kap+2}}\int_{B_r} \frac{q_\circ^\kap}{p_{\ast}^{\kap-1}}L_a u 
\geq - \frac{C}{r^3}\bigg\|\frac{q_\circ^\kap}{p_{\ast}^{\kap-1}}\bigg\|_{L^\infty(B_1 \cap \{y =0\} )} \int_{B_{1/2}} |L_a u_{2r}|,
\]
as $r^{2-a}L_a u(rX) = L_a u_r(X)$. 
Now from the proof of Proposition~\ref{prop.case1}, we know that
\[
-\int_{B_{1/2}} |L_a u_{2r}| = -\|v_{2r} \|_{L^2(\partial B_1)}  \int_{B_{1/2}} |L_a \tilde v_{2r}| \geq -C\|v_{2r} \|_{L^2(\partial B_1)}.
\]
Moreover, thanks to Lemma~\ref{lem.HMon},
\[
-\|v_{2r} \|_{L^2(B_1)} \geq -C\|v_{2r} \|_{L^2(\partial B_1)} \geq -Cr^{\lambda_*}.
\]
In turn,
\[
\frac{\d}{\d r} H_{\kap+1} (r, w)\geq -Cr^{\lambda_*-3}\bigg\|\frac{q_\circ^\kap}{p_{\ast}^{\kap-1}}\bigg\|_{L^\infty(B_1 \cap \{ y=0\})},
\]
which, after recalling that $\lambda_\ast \geq \kappa + 1 \geq 3$, proves the lemma.
\end{proof}

\begin{proposition}
\label{prop: !limit + loc uni conv}
For every $X_\circ \in \Sigma^{m, {\rm nxt}}_{\kappa}$, there exists a unique $(\kap+1)$-homogeneous, $a$-harmonic polynomial $q_{*, X_\circ}$ such that 
\begin{equation}
\label{eqn: ! weak lim}
\frac{u(X_\circ+r\,\cdot\,)-p_{*,X_\circ}(r\,\cdot\,)}{r^{\kap +1}}\rightharpoonup q_{*, X_\circ} \quad \text{in}\quad W^{1, 2}(B_1) \text{ as } r\downarrow 0,
\end{equation}
$D^\alpha q_{*, X_\circ}$ vanishes on $L(p_{*, X_\circ})$ for any $\alpha = (\alpha_1,\dots,\alpha_n,0)$ with $|\alpha| \le \kap -2$, and 
\[
\| q_{*, X_\circ} \|^2_{L^2(\pa B_1, |y|^a)} = H_{\kap+1}(0^+, u(X_\circ + \,\cdot\,) - p_{*,X_\circ}).
\]
Moreover, the convergence in \eqref{eqn: ! weak lim} is uniform on compact subsets of $B_1 \cap \{ y = 0\}$, and the map 
\[
\Sigma^{m,{\rm nxt}}_{\kappa} \ni X_\circ \mapsto q_{*, X_\circ}
\]
is continuous.
\end{proposition}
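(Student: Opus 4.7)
\medskip

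\textbf{Plan of proof.} Existence of at least one polynomial $q_\circ$ satisfying (i)--(iii) is, by construction, part of the very definition of $\Sigma_\kap^{m,{\rm nxt}}$ (Definition~\ref{defi.3rd}). The heart of the matter is therefore to promote the given subsequential weak limit to a \emph{full} limit, thereby obtaining uniqueness; and then to upgrade this to locally uniform convergence and continuous dependence on $X_\circ$. Translate so that $X_\circ = 0$, write $v_\ast = u - p_\ast$ and $v_{r,\kap+1}(X) := r^{-\kap-1}v_\ast(rX)$, and observe that by $(\kap+1)$-homogeneity of $q_\circ$,
\[
H_{\kap+1}(r, v_\ast - q_\circ) = \|v_{r,\kap+1} - q_\circ\|_{L^2(\pa B_1, |y|^a)}^2 =: \eta(r).
\]
By Lemma~\ref{lem.qp}, $r\mapsto\eta(r) + C_\circ r$ is non-decreasing in $r$, for some finite $C_\circ$ proportional to $\|q_\circ^\kap/p_\ast^{\kap-1}\|_{L^\infty(B_1\cap\{y=0\})}$ --- the quotient being a genuine polynomial on $\R^n\times\{0\}$ precisely because $D^\alpha q_\circ$ vanishes on $L_\ast$ for $|\alpha|\le\kap-2$. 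Along the subsequence $r_\ell\downarrow 0$ supplied by Definition~\ref{defi.3rd}, the weak $W^{1,2}(B_1,|y|^a)$ convergence $v_{r_\ell,\kap+1}\rightharpoonup q_\circ$ upgrades to \emph{strong} $L^2(\pa B_1,|y|^a)$ convergence of traces via the compactness of the weighted trace operator (see \cite{NLM88,Kim07}, as invoked in the proof of Proposition~\ref{prop.case2}). Hence $\eta(r_\ell)\to 0$, and the monotonicity of $\eta+C_\circ r$ forces the full limit $\eta(r)\to 0$ as $r\downarrow 0$. Applying the same argument to two candidate limits $q_\circ^1$, $q_\circ^2$ and using the triangle inequality yields $\|q_\circ^1 - q_\circ^2\|_{L^2(\pa B_1,|y|^a)} = 0$; we call this unique polynomial $q_{\ast,X_\circ}$.

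\medskip

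\textbf{Locally uniform convergence.} Lemma~\ref{lem.HMon} applied with $\lambda = \kap+1\le\lambda_\ast$ gives a uniform bound on $\|v_{r,\kap+1}\|_{L^2(\pa B_1,|y|^a)}$ and, by integration in $r$, on $\|v_{r,\kap+1}\|_{L^2(B_1,|y|^a)}$. Combined with the $L^\infty$--$L^2$ estimate of Lemma~\ref{lem:L2Linfty} (each $v_{r,\kap+1}$ is itself a scaled thin obstacle solution, cf.\ Remark~\ref{rmk:v soln to TOP}) and standard interior $C^{0,\alpha}$-theory for $L_a$-harmonic functions off the contact set, we obtain uniform local H\"older bounds on $\{v_{r,\kap+1}\}_{r > 0}$. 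Arzel\`a--Ascoli together with the $L^2(\pa B_1,|y|^a)$ convergence above then promotes the convergence to $v_{r,\kap+1}\to q_{\ast,X_\circ}$ locally uniformly on $B_1\cap\{y=0\}$.

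\medskip

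\textbf{Continuity and main obstacle.} Take a compact $K\subset\Sigma_\kap^{m,{\rm nxt}}$ and $X_k\to X_\circ$ in $K$. By Definition~\ref{defi.3rd}(iii), Lemma~\ref{lem.HMon}, and the continuity $X\mapsto p_{\ast,X}$ of \cite[Proposition~4.6]{GR18}, the polynomials $q_{\ast,X_k}$ are uniformly bounded in the finite-dimensional space of $(\kap+1)$-homogeneous $a$-harmonic polynomials, so a subsequence converges to some $q_\infty$. A diagonal argument based on the full-limit convergence at each $X_k$ (Step 1) and the continuity of $X\mapsto v_{r,\kap+1}^X$ in $L^2(\pa B_1,|y|^a)$ at fixed $r$ (continuity of $u$ and of $X\mapsto p_{\ast,X}$) produces radii $r_k\downarrow 0$ along which $v_{r_k,\kap+1}^{X_\circ}$ is simultaneously $L^2$-close to $q_{\ast,X_k}\to q_\infty$ and, by uniqueness at $X_\circ$, to $q_{\ast,X_\circ}$; hence $q_\infty = q_{\ast,X_\circ}$. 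The principal technical hurdle in the entire argument is the compactness of the weighted trace operator used in Step 1: it is exactly this ingredient that converts the almost-monotonicity of Lemma~\ref{lem.qp} (a subsequential statement) into a genuine full limit --- once granted, the rest follows along the lines of the Monneau strategy of \cite{FS18}.
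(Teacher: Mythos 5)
Your proposal is correct and takes essentially the same route as the paper: uniqueness of the limit is obtained from the almost-monotonicity of $H_{\kap+1}(r, v_\ast - q_\circ)$ in Lemma~\ref{lem.qp} together with the strong $L^2(\pa B_1,|y|^a)$ convergence of the rescalings along the sequence from Definition~\ref{defi.3rd} (the paper gets this strong convergence from Definition~\ref{defi.3rd}(iii) and weak convergence of traces, so the trace compactness you single out as the ``principal hurdle'' is not really needed as an extra ingredient). The locally uniform convergence and the continuity of $X_\circ \mapsto q_{\ast,X_\circ}$ are handled in the paper only by deferring to \cite[Proposition 4.5]{FS18}, and your sketch of those steps is consistent with that scheme.
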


\begin{proof}
Without loss of generality, we take $X_\circ = 0$.
Let $q_\circ$ denote the limit along the sequence $r_\ell$ given by Definition~\ref{defi.3rd}.
Let $\tilde{q}_\circ$ be another limit taken through another sequence, $\tilde{r}_{\ell}$, such that (after relabelling if necessary) $\tilde{r}_{\ell}\le r_\ell$. 
Then, by Lemma~\ref{lem.qp}, we have that 
\[
H_{\kap+1}(r_\ell, v_* - q_\circ)\ge H_{\kap+1}(\tilde{r}_{\ell}, v_*-q_\circ) - C\bigg\|\frac{q_\circ^\kap}{p_{*}^{\kap-1}}\bigg\|_{L^\infty(B_1 \cap \{ y = 0\})}|r_\ell-\tilde{r}_{\ell}| \quad\text{for all}\quad \ell \in \N.
\] 
Thus, using that $r_\ell^{-\kappa-1}v_{r_\ell} \to q_\circ$ strongly in $L^2(\pa B_1, |y|^a)$,
\begin{align*}
0 & = \lim_{\ell\to \infty} \int_{\pa B_1} (r_\ell^{-\kap-1}v_{r_\ell} - q_\circ)^2 |y|^a \\
& \ge \lim_{\ell\to \infty} \left(\int_{\pa B_1} (\tilde{r}_{\ell}^{-\kap-1}v_{\tilde{r}_{\ell}} - q_\circ)^2 |y|^a - C\bigg\|\frac{q^\kap}{p_{*}^{\kap-1}}\bigg\|_{L^\infty(B_1 \cap \{ y = 0 \})}|\tilde{r}_{\ell} - r_\ell| \right) \\
&= \int_{\pa B_1} (\tilde{q}_\circ-q_\circ)^2 |y|^a.
\end{align*}
And so, $\tilde{q}_\circ = q_\circ$, and the limit is unique.
The remainder of the proof follows the proof of \cite[Proposition 4.5]{FS18}. 
\end{proof}

\begin{remark}
Thanks to Proposition~\ref{prop: !limit + loc uni conv}, Definition~\ref{defi.3rd} can be amended to say {\it for every sequence} $r_\ell\downarrow 0$, instead of just {\it a sequence}. 
\end{remark}

An important consequence of Proposition~\ref{prop: !limit + loc uni conv}, particularly, the uniform convergence in compact sets of the limit \eqref{eqn: ! weak lim}, is the following: for each compact set $K \subset B_1 \cap \{ y = 0\}$, we have a modulus of continuity $\omega_K$ such that
\[
H_{\kap+1}(r, u(X_\circ + \,\cdot\,) - p_{*,X_\circ} - q_{*,X_\circ}) \leq \omega_K(r) \quad\text{for all}\quad X_\circ \in K \cap \Sigma_{\kappa}^{m,{\rm nxt}}.
\]
This modulus of continuity allows us to prove the following regularity result, a precursor to our main results. 

\begin{theorem}
\label{thm: C2}
The set $\Sigma_\kap^{m,{\rm nxt}}$ is contained in the countable union of $m$-dimensional $C^{2}$ manifolds. 
\end{theorem}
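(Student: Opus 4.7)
The plan is to upgrade the Whitney-extension strategy used in the proof of Theorem~\ref{thm: C11} by incorporating the next-order term produced by Proposition~\ref{prop: !limit + loc uni conv}. Concretely, at any $X_\circ \in \Sigma_\kap^{m,{\rm nxt}}$ I would replace the degree $\kappa$ Taylor polynomial $X \mapsto p_{*,X_\circ}(X-X_\circ)$ by the degree $(\kappa+1)$ polynomial
\[
P_{X_\circ}(X) := p_{*,X_\circ}(X-X_\circ) + q_{*,X_\circ}(X-X_\circ).
\]
Arguing as in Theorem~\ref{thm: C11}, a $C^{\kappa+1}$ Whitney extension $F$ agreeing with these polynomials up to order $\kappa+1$ on the set of singular points will give, by applying the implicit function theorem to the $(\kappa-1)$st directional derivatives $\partial_{\bv_i}^{(\kappa-1)}F \in C^{2}$, a local covering by $m$-dimensional $C^2$ manifolds.

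The first step is to stratify. For each $h,j\in \N$, I would consider
\[
F_{h,j} := \{ X_\circ \in \Sigma_\kap^{m,{\rm nxt}} \cap E_h : H_{\kap+1}(r, u(X_\circ+\,\cdot\,)-p_{*,X_\circ}-q_{*,X_\circ}) \le \omega_j(r) \text{ for all } r<1-|X_\circ|\},
\]
where $E_h$ is as in \eqref{eqn:Eh} and $\omega_j$ is a fixed modulus of continuity; by the uniform convergence in Proposition~\ref{prop: !limit + loc uni conv}, $\Sigma_\kap^{m,{\rm nxt}}$ is the union of the (relatively compact) sets $F_{h,j}$. On each such compact piece, the bound
\[
\|u(X_\circ+r\,\cdot\,)-p_{*,X_\circ}(r\,\cdot\,)-q_{*,X_\circ}(r\,\cdot\,)\|_{L^2(B_1,|y|^a)} \le \tilde\omega_j(r)\, r^{\kap+1}
\]
holds uniformly in $X_\circ \in F_{h,j}$, exactly as \eqref{eqn : Monneau to Whitney} in Theorem~\ref{thm: C11} but with modulus $\tilde\omega_j\downarrow 0$ in place of a constant.

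Next, I would verify hypothesis (ii) of Whitney's extension theorem with $\ell = \kappa+1$ and $\beta$ replaced by a modulus of continuity. For $X_\circ,Z_\circ \in F_{h,j}$ and $r := 2|X_\circ-Z_\circ|$, the triangle inequality exactly as in \eqref{eqn: poly mod cty} gives
\[
\|[P_{Z_\circ}-P_{X_\circ}](r\,\cdot\,)\|_{L^2(B_{1/2}(r^{-1}X_\circ),|y|^a)} \le C_h\,\tilde\omega_j(r)\,r^{\kap+1},
\]
and the equivalence of norms on the finite-dimensional space of polynomials of degree at most $\kap+1$ transfers this to the pointwise bound
\[
|D^\alpha P_{Z_\circ}(X_\circ)-D^\alpha P_{X_\circ}(X_\circ)| \le C_h\,\tilde\omega_j(|X_\circ-Z_\circ|)\,|X_\circ-Z_\circ|^{\kap+1-|\alpha|}
\]
for all $|\alpha|\le \kap+1$. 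This is precisely the Whitney compatibility at order $\kap+1$ with a modulus, so the modulus-of-continuity version of Lemma~\ref{lem.wet} (see \cite{Fef09}) produces $F \in C^{\kap+1}(\R^{n+1})$ that coincides with $f\equiv 0$ on $F_{h,j}$ and whose order $\kap+1$ Taylor polynomial at $X_\circ\in F_{h,j}$ is $P_{X_\circ}$.

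Finally, since $X_\circ \in \Sigma_\kap^m$ produces, as in the proof of Theorem~\ref{thm: C11}, unit vectors $\be_i,\bv_i$ ($i=1,\dots,n-m$) with $\partial_{\be_i}\partial_{\bv_i}^{(\kap-1)}F(X_\circ)\ne 0$ while $\partial_{\bv_i}^{(\kap-1)}F\equiv 0$ on $F_{h,j}$, the implicit function theorem applied to the $C^2$ functions $\partial_{\bv_i}^{(\kap-1)}F$ realises $F_{h,j}$ locally as an intersection of $n-m$ $C^2$ graphs and thus inside an $m$-dimensional $C^2$ manifold. Summing over $h,j$ gives the desired countable covering. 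The main obstacle is the uniform modulus in the Whitney comparison: the expansion of $u$ yields control of $u-P_{X_\circ}$ and of $u-P_{Z_\circ}$ only on balls around $X_\circ$ and $Z_\circ$ respectively, so one must be careful to intersect these with a ball of comparable radius (here $B_{1/2}(r^{-1}X_\circ)$ after rescaling) to extract a genuine $L^2$ comparison of $P_{Z_\circ}-P_{X_\circ}$ with the correct modulus, as was already the case in Theorem~\ref{thm: C11}.
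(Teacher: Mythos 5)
Your core argument is the paper's own Step 2 almost verbatim: the order-$(\kappa+1)$ jets $P_{X_\circ}=p_{*,X_\circ}(\cdot-X_\circ)+q_{*,X_\circ}(\cdot-X_\circ)$, Whitney compatibility with a modulus coming from the uniform Monneau-type bound recorded after Proposition~\ref{prop: !limit + loc uni conv}, equivalence of norms on the finite-dimensional space of degree-$(\kappa+1)$ polynomials, and the implicit function theorem applied to $\pa_{\bv_i}^{(\kappa-1)}F\in C^{2}$, exactly as in Theorem~\ref{thm: C11}.

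Where you diverge from the paper, and where your write-up is loose, is the preliminary reduction to sets on which Whitney's theorem can actually be invoked. Your sets $F_{h,j}$ are relatively compact but in general not closed: their definition involves $q_{*,X_\circ}$, which only exists for $X_\circ\in\Sigma_\kap^{m,{\rm nxt}}$, and a limit of points of $F_{h,j}$ may leave $\Sigma_\kap^{m,{\rm nxt}}$ (it can fall into the anomalous set or another stratum), so Lemma~\ref{lem.wet}, stated for compact $K$, does not apply verbatim. The paper's Step 1 exists precisely to manufacture closed sets: it excises open neighborhoods $\mathcal{O}_j$ of $E_{h,m}\setminus E_{h,m}^{{\rm nxt}}$ (of small Hausdorff content, via Lemmas~\ref{lem.3rd_1}--\ref{lem.3rd_3}) and $\mathcal{U}_j$ of $\overline{E}_{h,m}\setminus E_{h,m}$, producing compact $K_j\subset E_{h,m}^{{\rm nxt}}$ that exhaust $E_{h,m}$ up to an $(m-1)$-dimensional set (this also yields the slightly stronger covering of all of $\Sigma^m_\kap$ used later). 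Your route is repairable without that construction: since your compatibility inequalities hold with a uniform modulus on $F_{h,j}$, the polynomials $P_{X_\circ}$ extend continuously to $\overline{F_{h,j}}$, hypotheses (i)--(ii) pass to the closure, Whitney applies on the compact set $\overline{F_{h,j}}$, and the implicit-function step only needs points of $F_{h,j}$; but this closure step should be said explicitly. Finally, note that given the uniform modulus on compact sets stated after Proposition~\ref{prop: !limit + loc uni conv}, your index $j$ is redundant (one modulus per $E_h$ already covers all of $E_h\cap\Sigma_\kap^{m,{\rm nxt}}$); if you only had pointwise convergence of $H_{\kap+1}$ to $0$, a fixed countable family of moduli would not exhaust $\Sigma_\kap^{m,{\rm nxt}}$, so the appeal to the uniform statement is essential, not cosmetic.
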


\begin{proof}
The proof will be completed in two steps.

\smallskip
\noindent{\bf Step 1:}
Let $E_h$ be the compact sets defined in the proof of Theorem~\ref{thm: C11}, and set
\[
E_{h,m} := \Sigma_{\kappa}^m \cap E_h \quad\text{and}\quad E_{h,m}^{{\rm nxt}} := \Sigma_{\kappa}^{m,{\rm nxt}} \cap E_h.
\]
Observe that by Lemma~\ref{lem.3rd_1} and Lemma~\ref{lem.3rd_2},
\[
\dim_\mathcal{H} E_{h,m} \setminus E_{h,m}^{{\rm nxt}} \leq m - 1
\]
when $m \geq 1$.
Hence, for any $j \in \mathbb{N}$, we can find a family of balls $\{ \hat{B}_i \}_{i = 1}^{\infty}$ such that
\[
E_{h,m} \setminus E_{h,m}^{{\rm nxt}} \subset \mathcal{O}_j := \bigcup _{i = 1}^{\infty} \hat{B}_i \quad\text{and}\quad \sum_{i = 1}^\infty \diam(\hat{B}_i)^{m - 1 + \frac{1}{j}} < \frac{1}{j}.
\]
In particular,
\[
\mathcal{H}^{m - 1 + \frac{1}{j}}_\infty(\mathcal{O}_j) < \frac{1}{j}.
\]
Now define
\[
\mathcal{U}_j := \bigg\{ X \in \R^{n+1} : \dist(X,\overline{E}_{h,m} \setminus E_{h,m}) < \frac{1}{j} \bigg\}
\quad\text{and}\quad
K_j := E_{h,m} \setminus (\mathcal{O}_j \cup \mathcal{U}_j).
\]
Notice that $\mathcal{O}_j$ and $\mathcal{U}_j$ are open, $K_j$ is closed, and
\[
K_j \subset E_{h,m}^{{\rm nxt}}.
\]
Moreover, we have that
\[
\bigcup_{j=1}^{\infty} K_j = E_{h,m} \setminus  \bigcap_{j=1}^{\infty} \mathcal{O}_j.
\]
Indeed, using the continuity of the map $\Sigma_\kappa \ni X_\circ \mapsto p_{*,X_\circ}$ (see \cite[Proposition 4.6]{GR18}) and that $E_{h}$ is closed and contained in $\Sigma_{\kappa}$, we find that
\[
\overline{\overline{E}_{h,m} \setminus E_{h,m}} \subset \bigcup_{d \geq m+1} E_{h,d},
\]
so that $\bigcap_{j=1}^{\infty} \mathcal{U}_j$ is disjoint from $E_{h,m}$.
Finally, by construction, $\mathcal{H}^{\beta}_\infty(\bigcap_{j=1}^{\infty} \mathcal{O}_j) = 0$ for all $\beta > m - 1$, which implies that $\dim_\mathcal{H} \bigcap_{j=1}^{\infty} \mathcal{O}_j \leq m - 1$.
In turn, if we can show that $E_{h,m}$ is contained in a $m$-dimensional $C^2$ manifold, then $\Sigma_{\kappa}^{m}$ can be covered by a countable collection of $m$-dimensional $C^2$ manifolds along with a set of Hausdorff dimension at most $m -1$.

\smallskip
\noindent{\bf Step 2:}
This step is essentially identical to the second half of the proof of Theorem~\ref{thm: C11}.
For completeness, however, we provide some details regarding the justification of hypothesis (ii) in the statement of Whitney's Extension Theorem.

For $X_\circ \in K_j$, define
\[
P_{X_\circ}(X) := p_{\ast,X_\circ}(X-X_\circ) + q_{\ast,X_\circ}(X-X_\circ).
\] 
Now let $X_\circ,Z_\circ \in K_j$ and $r := 2|X_\circ - Z_\circ|$.
Arguing as we did in the proof of Theorem~\ref{thm: C11}, but using Proposition~\ref{prop: !limit + loc uni conv}, we see that there exists a modulus of continuity $\omega_{K_j}$ such that
\[
\begin{split}
\frac{1}{r^{\kappa+1}}\|[P_{Z_\circ} - P_{X_\circ}](r\,\cdot\,)\|_{L^2(B_{1/2}(r^{-1}X_\circ),|y|^a)} 
\leq 2\omega_{K_j}(r). 
\end{split}
\]
So since all norms are equivalent on the finite dimensional space of polynomials of degree less than or equal to $\kappa + 1$, 
\[
\|[P_{Z_\circ} - P_{X_\circ}](r\,\cdot\,)\|_{C^{\kappa+1}(B_{1/2}(r^{-1}X_\circ))} \leq 2\omega_{K_j}(r)r^{\kappa+1},
\]
for any $X_\circ,Z_\circ \in K_j$ with $r = 2|X_\circ-Z_\circ|$.
In turn, given $|\alpha| \in \{0,\dots,\kappa+1\}$,
\[
|D^\alpha P_{Z_\circ}(X_\circ) - D^{\alpha}P_{X_\circ}(X_\circ)| \leq 2\omega_{K_j}(|X_\circ - Z_\circ|)|X_\circ - Z_\circ|^{\kappa + 1 - |\alpha|} \quad\text{for all}\quad X_\circ \in K_j.
\]
Thus, thanks to the Whitney's Extension Theorem, Lemma~\ref{lem.wet}, we conclude.
\end{proof}

\begin{theorem}
\label{thm: C2 nondeg}
In the non-degenerate case, the set $\Sigma_\kap^{m,{\rm nxt}}$ is contained in a single $m$-dimensional $C^{2}$ manifold. 
\end{theorem}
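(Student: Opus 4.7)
The plan is to mimic the relationship between Theorem~\ref{thm: C11} and Theorem~\ref{thm: C11 2}, reducing the proof of Theorem~\ref{thm: C2} to a single value of the exhaustion parameter $h$. In the non-degenerate case, \cite{BFR18} forces every singular point to have frequency $\kappa = 2$, and the quantitative non-degeneracy bound \cite[Lemma~3.1]{BFR18} furnishes, for each $\eta \in (0,1)$, a single $h_\circ = h_\circ(\eta)$ with
\[
\Sigma_2 \cap \overline{B_{1-\eta}} \subset E_{h_\circ},
\]
where $E_h$ is as in \eqref{eqn:Eh}. Thus, inside $B_{1-\eta}$, the set $\Sigma_2^{m,{\rm nxt}}$ is entirely contained in $E_{h_\circ,m}^{{\rm nxt}} := \Sigma_2^{m,{\rm nxt}} \cap E_{h_\circ}$, replacing the countable family $\{E_h\}_{h \in \N}$ used in the proof of Theorem~\ref{thm: C2}.

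Next, I would run Steps~1 and~2 of the proof of Theorem~\ref{thm: C2} with this single $h_\circ$. That is, build the open sets $\mathcal{O}_j$ covering $\Sigma_2^{m,{\rm a}} \cap E_{h_\circ}$ with $\mathcal{H}_\infty^{m-1+1/j}(\mathcal{O}_j) < 1/j$ (using $\dim_\mathcal{H} \Sigma_2^{m,{\rm a}} \le m-1$ from Proposition~\ref{prop.main2}) and the $1/j$-neighborhoods $\mathcal{U}_j$ of $\overline{E_{h_\circ,m}} \setminus E_{h_\circ,m}$; set $K_j := E_{h_\circ,m} \setminus (\mathcal{O}_j \cup \mathcal{U}_j)$, which is compact and sits inside $E_{h_\circ,m}^{{\rm nxt}}$; and apply Whitney's extension theorem (Lemma~\ref{lem.wet}) with $\ell = \kappa = 2$ and the modulus afforded by Proposition~\ref{prop: !limit + loc uni conv}, using the polynomials $P_{X_\circ}(X) := p_{*,X_\circ}(X-X_\circ) + q_{*,X_\circ}(X-X_\circ)$. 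The resulting $C^2$ extension $F_j$ together with the implicit function theorem---applied to an $(n-m)$-tuple of $(\kappa-1)$st-order tangential derivatives of $F_j$ whose gradients are transverse along the spine---yields an $m$-dimensional $C^2$ manifold $M_j$ containing $\Sigma_2^m \cap K_j$.

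Finally, I would glue these manifolds together locally via IFT-uniqueness. Every $X_\circ \in \Sigma_2^{m,{\rm nxt}} \cap B_{1-\eta}$ belongs to $K_j$ for all $j$ large enough, with the only possible exceptions forming a subset of $\bigcap_j \mathcal{O}_j$ of Hausdorff dimension at most $m-1$. For any $X_\circ \in K_j \cap K_{j'}$, the Whitney conclusion forces the order-$(\kappa+1)$ Taylor expansions of $F_j$ and $F_{j'}$ at $X_\circ$ to agree with $P_{X_\circ}$; hence the $(\kappa-1)$st-order tangential derivatives used to define $M_j$ and $M_{j'}$ via IFT share their values and Jacobians at $X_\circ$, so the local uniqueness of the implicit function shows that $M_j$ and $M_{j'}$ coincide in a neighborhood of $X_\circ$. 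Consequently $\bigcup_j M_j$ is, locally, a single $m$-dimensional $C^2$ manifold containing $\Sigma_2^{m,{\rm nxt}} \cap B_{1-\eta}$, and letting $\eta \downarrow 0$ gives the stated containment throughout $B_1$.

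The main technical obstacle is that one cannot apply Whitney's theorem directly to the relative closure of $E_{h_\circ,m}^{{\rm nxt}}$: higher-stratum points $\Sigma_2^{m'}$ with $m' > m$ may accumulate onto $\Sigma_2^m$, and at such limit points $q_{*,X_\circ}$ is not defined compatibly with the Whitney hypothesis. The neighborhoods $\mathcal{U}_j$ are tailored to excise these accumulation points, which is precisely what inherits the countable decomposition from the proof of Theorem~\ref{thm: C2}. Collapsing this countable cover back into a single local manifold via IFT-uniqueness is therefore the crux of the argument.
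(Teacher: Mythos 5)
Your opening move (a single $h_\circ$ from the quantitative non-degeneracy bound of \cite{BFR18}, so that $\Sigma_2\cap\overline{B_{1-\eta}}\subset E_{h_\circ}$ and $\kappa=2$ is the only order) is exactly the modification the paper imports from Theorem~\ref{thm: C11 2}. The genuine gap is in your final gluing step. From Lemma~\ref{lem.wet} you know that the $(\kappa+1)$-order Taylor expansions of $F_j$ and $F_{j'}$ at a point $X_\circ\in K_j\cap K_{j'}$ both coincide with $P_{X_\circ}$, hence the maps $\pa^{(\kappa-1)}_{\bv_i}F_j$ and $\pa^{(\kappa-1)}_{\bv_i}F_{j'}$ share their values and Jacobians at $X_\circ$. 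But the uniqueness in the implicit function theorem concerns one fixed map; two \emph{different} $C^2$ maps with matching jets at a single point need not have the same zero set in any neighborhood (already $y$ versus $y+x_1^{3}$ have identical $2$-jets at the origin and distinct zero sets), so $M_j$ and $M_{j'}$ are merely tangent at $X_\circ$, not locally equal. Since $\Sigma_2^m\cap K_j\cap K_{j'}$ near $X_\circ$ may be just the point itself, nothing forces the sheets to agree, and a union of countably many sheets that only osculate pairwise need not be, or lie in, a single $C^2$ manifold. The paper's intent is precisely to avoid any such gluing: as in Theorem~\ref{thm: C11 2}, where the countable exhaustion is replaced by the single compact set $S^{h_\circ}_{2,3}$ and Whitney plus the implicit function theorem are applied once, here one runs Step~2 of Theorem~\ref{thm: C2} a single time, using that the compatibility estimates come with one modulus $\omega_K$ valid simultaneously for all points of $\Sigma_2^{m,{\rm nxt}}$ in a compact set (the consequence of Proposition~\ref{prop: !limit + loc uni conv}), so that one extension $F$ and hence one manifold is produced.

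Two further, smaller, problems. First, for $K_j\subset E^{{\rm nxt}}_{h_\circ,m}$ you must excise all of $E_{h_\circ,m}\setminus E^{{\rm nxt}}_{h_\circ,m}$, not only the anomalous points: the generic-but-not-nxt points are controlled by Lemmas~\ref{lem.3rd_1}, \ref{lem.3rd_2}, and \ref{lem.3rd_3}, whereas Proposition~\ref{prop.main2} only bounds $\Sigma_2^{m,{\rm a}}$. Second, even granting the gluing, your union $\bigcup_j M_j$ omits $\Sigma_2^{m,{\rm nxt}}\cap\bigcap_j\mathcal{O}_j$, so it does not contain the full set in the statement you set out to prove; if you organize the argument around a single Whitney application on (the closure of) $\Sigma_2^{m,{\rm nxt}}\cap\overline{B_{1-\eta}}$, this loss does not occur.
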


\begin{proof}
The proof follows the proof of Theorem~\ref{thm: C2}, but with the same modifications as the proof of Theorem~\ref{thm:  C11 2}. 
\end{proof}

To finish, we present the proofs of our two main results.

\begin{proof}[Proof of Theorem~\ref{thm: nondeg main}]
We prove each case separately. 
\begin{enumerate}[(i)]
\item As in the proof of Theorem~\ref{thm.main1}(i), this case holds by Lemma~\ref{lem: Sigma0k is isolated}.
\item $\Sigma_2^{1}\setminus\Sigma_2^{1, {\rm nxt}}$ is countable by Corollary~\ref{cor.countC2}.
So the proof follows from Theorem~\ref{thm: C2 nondeg}.

\item $\Sigma_2^{m, {\rm nxt}}$ is contained in an $m$-dimensional $C^2$ manifold by Theorem~\ref{thm:  C2 nondeg}. 
On the other hand, $\dim_{\mathcal{H}} \Sigma_2^{m}\setminus\Sigma_2^{m, {\rm nxt}} \le m-1$ by Lemmas~\ref{lem.3rd_1}, \ref{lem.3rd_2}, and \ref{lem.3rd_3}. 
\item See Proposition~\ref{prop: C2akappa}. 
\end{enumerate}
This concludes the proof.
\end{proof}

\begin{proof}[Proof of Theorem~\ref{thm: deg main}]
We consider each case separately.
\begin{enumerate}[(i)]
\item See Lemma~\ref{lem: Sigma0k is isolated}.
\item See the proof of Theorem~\ref{thm: nondeg main}(ii), but consider Theorem~\ref{thm:  C2} instead of Theorem~\ref{thm:  C2 nondeg}.
\item See the proof of Theorem~\ref{thm: nondeg main}(iii), but consider Theorem~\ref{thm:  C2} instead of Theorem~\ref{thm:  C2 nondeg}.
\item $\Sigma_\kap^{1}\setminus\Sigma_\kap^{1, {\rm nxt}}$ is countable by Corollary~\ref{cor.countC2}, and $\Sigma_\kap^{1, {\rm nxt}}$ is contained in the countable union of one-dimensional $C^2$ manifolds by Theorem~\ref{thm:  C2}. 
\item $\Sigma_\kap^{n-1, {\rm nxt}}$ is contained in the countable union of  $(n-1)$-dimensional $C^2$ manifolds by Theorem~\ref{thm:  C2}. 
On the other hand, $\dim_{\mathcal{H}} \Sigma_\kap^{n-1}\setminus\Sigma_\kap^{n-1, {\rm nxt}} \le n-2$ by Lemmas~\ref{lem.3rd_1} and \ref{lem.3rd_3}.
\item See Theorem~\ref{thm.main1}(iv). 
\end{enumerate}
This concludes the proof.
\end{proof}

\section{The Very Thin Obstacle Problem}
\label{sec.VTOP}

This section is dedicated to studying, what we have called, the very thin obstacle problem for $L_a$ when $a < 0$: a minimization problem like \eqref{eqn: Signorini problem}, but for $a \in (-1,0)$ and subject to a codimension two obstacle constraint.
Alternatively (see Section~\ref{sssec.FOP}), this problem corresponds to the fractional thin obstacle problem.
Namely, we consider
\begin{equation}
\label{eq.minpb}
\min_{w \in \mathscr{C}} \bigg\{ \int_{B_1} |\nab w|^2|y|^a \bigg\}, \quad\text{with}\quad  a \in (-1,0),
\end{equation}
where $\mathscr{C}$ is the convex subset of the Sobolev space $W^{1, 2}(B_1 , |y|^a)$ defined by 
\[
\mathscr{C} := \{ w \in W^{1,2}_0(B_1, |y|^a) + g : w(x',0, 0) \geq 0 \text{ and } w(x,-y) = w(x,y) \},
\]
given some boundary data $g \in C(B_1)$ (even with respect to $y$) such that $g|_{\pa B_1 \cap \{ x_n = y = 0 \}} \geq 0$.
The condition that $w$ is non-negative on the {\it very thin space} $\R^{n-1} \times \{ 0 \} \times \{ 0 \}$ needs to be understood in the trace sense, a priori.
Notice that since $a < 0$, the condition $ w \ge 0$ on $B_1'\times\{0\}\times\{0\}$ is relevant; the very thin space has non-zero $a$-harmonic capacity if and only if $a \in (-1,0)$.
Indeed, recalling the proof of Proposition~\ref{prop.case2}, the (``double'') trace operator $\tau : W^{1,2}(B_1,|y|^a)\to W^{s-\frac12,2}(B_1')\subset L^2(B_1')$ is well-defined and continuous. 

In this setting, the Euler--Lagrange equations characterizing the unique solution $u$ to \eqref{eq.minpb} are
\begin{equation}
\label{eq.VTOP}
\left\{ 
\begin{array}{rcll}
u(x', x_n,y) &\geq& 0 &\text{in } B_1 \cap \{ x_n = y = 0\}\\
L_a u &\leq& 0 &\text{in } B_1\\
u\,L_a u &=& 0 &\text{in } B_1\\
L_a u &=& 0 &\text{in } B_1 \setminus \Lambda(u) \\
u(x, y)& =& u(x, -y)& \text{in } B_1
\end{array}\right.
\end{equation}
where, as expected, 
\[
\Lambda(u) := \{ (x',0, 0) : u(x', 0,0) = 0 \}
\]
is the the {\it contact set}.
The {\it free boundary} here is the topological boundary in $\R^{n-1}$ of $\Lambda(u)$:
\[
\Gamma(u) := \pa \Lambda(u) \subset \R^{n-1} \times \{ 0\} \times \{0 \}.
\]

We close this introduction with a lemma that proves an analogous representation of $u$ to that in \eqref{eqn: Lap of u} for the solution to the thin obstacle problem.
Recall that (as defined in Subsection~\ref{ssec.notation}) $D_r$ denotes the two-dimensional disc centered at the origin of radius $r > 0$.

\begin{lemma}
\label{lem.operator}
Let $u$ be such that $L_a u = 0$ in $B_1\setminus\{x_n = y = 0\}$. 
Then, 
\[
L_a u (x', x_n, y)= f_a (x') \mathcal{H}^{n-1}\mres B_1',
\]
where 
\begin{align*}
f_a(x') & := \lim_{\vep\downarrow 0} \int_{\pa D_\vep} u_\nu |y|^a \, \d \sigma(x_n, y)\\
& = \lim_{\vep\downarrow 0} \int_{\pa D_\vep} \left(\frac{x_n}{\vep} \pa_n u(x', x_n, y) +\frac{y}{\vep} \pa_y u(x', x_n, y) \right)|y|^a \, \d \sigma(x_n, y).
\end{align*}
In particular, if $u$ is the solution to \eqref{eq.VTOP}, then 
\[
L_a u (x', x_n, y)= f_a(x')\mathcal{H}^{n-1}\mres \Lambda(u).
\]

\end{lemma}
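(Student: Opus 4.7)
The plan is to identify $L_a u$ as a distribution by testing against an arbitrary $\varphi \in C_c^\infty(B_1)$ even in $y$. Introduce the $\vep$-tube around the very thin space,
\[
V_\vep := \{(x',x_n,y) \in B_1 : (x_n,y) \in D_\vep\},
\]
and split
\[
\langle L_a u, \varphi \rangle = -\int_{B_1} |y|^a \nab u \cdot \nab \varphi = -\int_{B_1 \setminus V_\vep} |y|^a \nab u \cdot \nab \varphi - \int_{V_\vep} |y|^a \nab u \cdot \nab \varphi.
\]
The task reduces to analyzing each piece in the limit $\vep \downarrow 0$.

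The contribution from $V_\vep$ vanishes: computing in polar coordinates in the $(x_n,y)$-plane yields $\int_{V_\vep \cap \spt \varphi} |y|^a \,\d X = O(\vep^{2+a})$ (here $a > -1$), and Cauchy--Schwarz together with the local $W^{1,2}(B_1,|y|^a)$-bound on $u$ imply
\[
\bigg|\int_{V_\vep} |y|^a \nab u \cdot \nab \varphi\bigg| \le C\Big(\int_{V_\vep \cap \spt \varphi} |y|^a\Big)^{1/2}\Big(\int_{B_1\cap\spt\varphi} |y|^a |\nab u|^2\Big)^{1/2} \longrightarrow 0.
\]
On $B_1 \setminus V_\vep$, where $L_a u = 0$ by hypothesis, the divergence theorem, combined with the fact that the outward normal to $V_\vep$ in the $(x_n, y)$-plane is $(x_n/\vep, y/\vep)$, gives
\[
-\int_{B_1 \setminus V_\vep} |y|^a \nab u \cdot \nab \varphi = \int_{\pa V_\vep \cap \spt\varphi} |y|^a\, \varphi\, u_\nu\, \d S,
\]
with $u_\nu = \frac{x_n}{\vep} \pa_n u + \frac{y}{\vep} \pa_y u$ the outward radial derivative from the very thin space.

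By Fubini, the boundary integral equals $\int_{B_1'} \big(\int_{\pa D_\vep} |y|^a \varphi(x',x_n,y) u_\nu\, \d\sigma(x_n, y)\big)\, \d x'$. Splitting $\varphi(x',x_n,y) = \varphi(x',0,0) + O(\vep)$ on $\pa D_\vep$, the principal part produces the sought expression $\int_{B_1'} \varphi(x',0,0) f_a(x')\,\d x'$, with the limit defining $f_a$. The main obstacle is to show that the remainder is negligible: bounding the correction on $\pa V_\vep$ by $C\vep$ and applying Cauchy--Schwarz on $\pa V_\vep$ (using $\int_{\pa V_\vep}|y|^a\,\d S = O(\vep^{1+a})$) gives
\[
\bigg|\int_{\pa V_\vep} |y|^a[\varphi - \varphi(x',0,0)] u_\nu\,\d S\bigg| \le C\, \vep^{(3+a)/2} A_\vep^{1/2}, \qquad A_\vep := \int_{\pa V_\vep} |y|^a |\nab u|^2\, \d S.
\]
The co-area identity $\int_0^{\vep_\circ} A_\vep\, \d\vep = \int_{V_{\vep_\circ}} |y|^a |\nab u|^2 < \infty$ makes $A_\vep$ integrable near $0$, and a standard measure-theoretic selection then produces a sequence $\vep_k \downarrow 0$ with $\vep_k A_{\vep_k} \to 0$; since $3 + a > 2$, this forces $\vep_k^{(3+a)/2} A_{\vep_k}^{1/2} \to 0$. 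Passing to the limit along $\vep_k$ establishes the first identity in the lemma.

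For the second assertion, the complementarity condition $u\,L_a u = 0$ from \eqref{eq.VTOP} together with the formula just derived forces the density $f_a$ to vanish $\mathcal{H}^{n-1}$-a.e.\ on $\{u(x',0,0) > 0\}$, so $L_a u = f_a(x')\mathcal{H}^{n-1}\mres\Lambda(u)$, completing the proof.
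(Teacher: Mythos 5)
Your proposal is correct and follows essentially the same route as the paper's proof: test $L_a u$ against $\varphi\in C_c^\infty(B_1)$, excise the $\vep$-tube around $\{x_n=y=0\}$ (whose contribution vanishes since $|y|^a\nabla u\cdot\nabla\varphi$ is integrable), integrate by parts on the complement where $L_a u=0$, and pass to the limit of the resulting flux integral over $\pa D_\vep$ via Fubini, with the complementarity/support condition giving the restriction to $\Lambda(u)$. Your treatment is in fact more detailed than the paper's (which records only the chain of equalities), notably in the quantitative control of the $\varphi-\varphi(x',0,0)$ remainder via the co-area selection of radii.
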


\begin{proof}
For every $\varphi\in C_c^\infty(B_1)$,
\begin{align*}
\langle L_a u, \varphi \rangle & := -\int_{B_1} \nabla  u\cdot \nabla \varphi |y|^a\\
& = -\lim_{\vep \downarrow 0} \int_{B_1\cap \{x_n^2+y^2\ge \vep^2\}} \nabla  u\cdot \nabla \varphi |y|^a\\
& = \lim_{\vep \downarrow 0} \int_{B_1\cap \{x_n^2+y^2 =\vep^2\}}  u_\nu \, \varphi|y|^a\\
& = \int_{B_1'} f_a(x')\varphi(x',0,0)\, \d x',
\end{align*}
recalling that $L_a u = 0$ in $B_1\setminus\{x_n = y = 0\}$.
\end{proof}

In the following subsections, we prove a collection of results on the very thin obstacle problem, \eqref{eq.minpb} or, equivalently, \eqref{eq.VTOP}.

\subsection{A Non-local Operator} 

It is now well-known that the fractional Laplacian, or $s$-Laplacian, of a function $v$ defined on $\R^n$ can be reinterpreted as a weighted normal derivative of the $a$-harmonic extension of $v$ to the upper half-space $\R^{n+1}_+$ (see \cite{MO69,CSS08}).
In particular, if we let $\bar{v}$ denote this extension,
\[
- c_{n,s}(-\Delta)^s v(x) = \lim_{y \downarrow 0} y^a\pa_y \bar{v}(x,y).
\] 
This reinterpretation has been extremely useful in studying the thin obstacle problem (see \cite{CS07} and cf. \eqref{eqn: Lap of u}). 

In this subsection, we show that an analogous reinterpretation exists for a non-local operator of a function $v$ defined on $\R^{n-1}$ as a weighted normal derivative of an $a$-harmonic extension of $v$ to $\R^{n+1}$, and in the next subsection, we will use it to help us prove a collection of regularity results on the solution to \eqref{eq.minpb}.
For a given (sufficiency smooth) function $u : \R^{n+1} \to \R$, define
\begin{equation}
\label{eq.F0}
\mathcal{F}_a(u)(x') := \lim_{\vep\downarrow 0} \int_{\pa D_\vep} u_\nu(x', x_n, y) |y|^a \, \d \sigma(x_n,y).
\end{equation}
Hence, if $\bar{v} : \R^{n+1} \to \R$ is the unique $a$-harmonic extension that vanishes at infinity to $\R^{n+1}$ of a given function $v : \R^{n-1} \to \R$ that vanishes at infinity:
\begin{equation}
\label{eq.tildeuEL}
\left\{ 
\begin{array}{rcll}
L_a \bar{v} & = & 0 &\text{in } \R^{n+1}\setminus\{x_n = y = 0\}\\
\bar{v}(x', 0, 0) &=& v(x') &\text{on } \R^{n-1}\\
\lim_{|X|\to \infty} \bar{v}(X)&=& 0,
\end{array}\right.
\end{equation}
then we can define the non-local operator $\mathcal{I}_a$ on $v : \R^{n-1} \to \R$ by 
\begin{equation}
\label{eq.F}
\mathcal{I}_a(v) := \mathcal{F}_a(\bar{v}).
\end{equation}
Notice that $\bar{v}$ can be constructed as the unique solution to the following minimization problem:
\[
\min_{\mathscr{K}} \left\{\int_{\R^{n+1}} |\nab w|^2|y|^a \right\},\quad\text{with}\quad a \in (-1,0),
\]
where 
\[
\mathscr{K} := \{ w \in W^{1,2}(\R^{n+1},|y|^a) : w = v \text{ on } \{x_n = y = 0\},~\lim_{|X|\to \infty} w(X) = 0 \}.
\]

An important and interesting fact is $\mathcal{I}_a$ is nothing but the $-\frac{a}{2}$-Laplacian:

\begin{proposition}
\label{prop.mathcalF}
Let $\mathcal{I}_a$ be defined as in \eqref{eq.F}. 
Then, 
\[
\mathcal{I}_a = c_{n, a}(-\Delta)^{-\frac{a}{2}} \equiv c_{n, a} (-\Delta)^{s-\frac{1}{2}},
\]
for some positive constant $c_{n, a}$ depending only on $n$ and $a$. 
\end{proposition}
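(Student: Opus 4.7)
The plan is to exploit the rotational symmetry in the $(x_n, y)$-plane to reduce $\mathcal{I}_a$ to the classical Caffarelli--Silvestre extension identifying the fractional Laplacian on $\R^{n-1}$, just in one fewer codimension. First, I would note that $\bar v$ is the unique minimizer of $\int_{\R^{n+1}}|\nabla w|^2|y|^a$ over $\mathscr{K}$; since this functional and the admissible class are invariant under rotations in the $(x_n,y)$-plane (the boundary data $v=v(x')$ depends only on $x'\in \R^{n-1}$), uniqueness forces $\bar v(x',x_n,y)=\tilde v(x',r)$ with $r=\sqrt{x_n^2+y^2}$. Writing $\partial_n \bar v=(x_n/r)\partial_r \tilde v$ and $\partial_y \bar v=(y/r)\partial_r \tilde v$, a direct computation turns $\Delta\bar v+(a/y)\partial_y\bar v=0$ into
\[
\Delta_{x'}\tilde v+\partial_r^2\tilde v+\frac{1+a}{r}\partial_r\tilde v=0\quad\text{in}\quad \R^{n-1}\times(0,\infty),
\]
with $\tilde v(x',0)=v(x')$ and $\tilde v\to 0$ at infinity. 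Setting $a':=1+a\in(0,1)$ and $s':=(1-a')/2=-a/2\in(0,1/2)$, this is exactly the classical Caffarelli--Silvestre extension problem for $(-\Delta)^{s'}$ on $\R^{n-1}$ with weight $r^{a'}$.

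Next, I would rewrite $\mathcal{F}_a(\bar v)$ in these polar coordinates: parametrizing $\pa D_\varepsilon$ by $(\varepsilon\cos\theta,\varepsilon\sin\theta)$ gives $|y|^a\,\d\sigma=\varepsilon^{1+a}|\sin\theta|^a\,\d\theta$ and $\bar v_\nu=\partial_r\tilde v(x',\varepsilon)$, so
\[
\mathcal{F}_a(\bar v)(x')=\left(\int_0^{2\pi}|\sin\theta|^a\,\d\theta\right)\lim_{\varepsilon\downarrow 0}\varepsilon^{a'}\partial_r\tilde v(x',\varepsilon),
\]
the angular integral being finite since $a>-1$. By the classical Caffarelli--Silvestre identity (see \cite{CS07,CSS08}), the remaining weighted normal derivative limit equals $-d_{n-1,s'}(-\Delta)^{s'}v(x')$ for an explicit positive constant $d_{n-1,s'}$. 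Combining these identifications yields $\mathcal{I}_a v=c_{n,a}(-\Delta)^{-a/2}v$, as desired. It suffices to verify the identity for $v$ in a dense subclass such as $\mathcal{S}(\R^{n-1})$ and extend by continuity, as both sides act boundedly between appropriate fractional Sobolev spaces.

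The main technical point is justifying the passage to the limit as $\varepsilon\downarrow 0$ in \eqref{eq.F0}: one must know that $\partial_r\tilde v(x',\varepsilon)\sim -c_\ast \varepsilon^{-a'}$ with sufficient uniformity in $\theta$ to interchange the limit and the angular integration. This follows from the boundary regularity theory for the degenerate equation with $r^{a'}$ weight in the Caffarelli--Silvestre setting; see \cite{CS07}. A closely related, but easier, subtlety is the rigorous proof that $\bar v$ is rotationally symmetric, which follows immediately from the variational characterization above once one works in the natural energy space for \eqref{eq.tildeuEL}.
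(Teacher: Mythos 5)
Your overall route is in substance the same as the paper's: reduce, via rotational symmetry in the $(x_n,y)$-plane, to the Caffarelli--Silvestre characterization of $(-\Delta)^{-a/2}$ on $\R^{n-1}$ with weight exponent $1+a$ (the paper phrases this through the explicit Poisson kernel \eqref{eq.Poissonkernel}, which is radial in $(x_n,y)$, noting that $P_a(x',0,y)$ is the extension kernel of order $1+a$ in $n-1$ variables). However, the step on which your whole reduction rests fails as written: the energy $\int_{\R^{n+1}}|\nabla w|^2|y|^a$ is \emph{not} invariant under rotations of the $(x_n,y)$-plane, because the weight $|y|^a$ depends on $y$ alone and not on $r=\sqrt{x_n^2+y^2}$. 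So "uniqueness of the minimizer plus invariance of the functional and of $\mathscr{K}$" does not force $\bar v(x',x_n,y)=\tilde v(x',r)$, and the same objection applies to your closing claim that the symmetry "follows immediately from the variational characterization." This is a genuine gap, not a technicality: without the radial symmetry of $\bar v$, neither your polar-coordinate computation of $\mathcal{F}_a(\bar v)$ nor the identification $\bar v_\nu=\partial_r\tilde v$ on $\pa D_\vep$ is available.

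The gap is repairable, and your own computation essentially contains the repair. Take the Caffarelli--Silvestre extension $\tilde v(x',r)$ of $v$ with weight $r^{1+a}$ on $\R^{n-1}\times(0,\infty)$, set $w(x',x_n,y):=\tilde v\bigl(x',\sqrt{x_n^2+y^2}\bigr)$, and check, by the identity you already wrote (on radial functions, $\Delta_{x_n,y}+\tfrac{a}{y}\pa_y$ acts as $\pa_r^2+\tfrac{1+a}{r}\pa_r$, the $y$-dependence cancelling), that $L_a w=0$ off $\{x_n=y=0\}$, that $w$ is even in $y$, has trace $v$ on the very thin space and decays; then identify $w=\bar v$ by uniqueness of the decaying solution of \eqref{eq.tildeuEL} in the energy space (an energy or comparison argument, using that the very thin space has positive $a$-capacity for $a<0$), or simply verify directly, as the paper does, that $\bar v=v\ast_{x'}P_a$ with the explicit radial kernel. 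Once radial symmetry is in hand, the interchange of the $\vep$-limit with the angular integration that you flag as the "main technical point" is vacuous, since the integrand is $\theta$-independent apart from the factor $|\sin\theta|^a$, exactly as in the paper's proof. One further bookkeeping point: your displayed identity gives $\mathcal{I}_a(v)=\bigl(\int_0^{2\pi}|\sin\theta|^a\,\d\theta\bigr)\cdot\bigl(-d_{n-1,s'}\bigr)(-\Delta)^{s'}v$, since the outward radial direction in \eqref{eq.F0} corresponds to $+\pa_y$ in the Caffarelli--Silvestre convention; you then silently drop the minus sign in the final line. Either carry the sign through or state the conclusion as equality up to a nonzero constant depending only on $n$ and $a$, which is all that the later applications require.
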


Before proving Proposition~\ref{prop.mathcalF}, notice that the Poisson kernel associated to \eqref{eq.tildeuEL} is 
\begin{equation}
\label{eq.Poissonkernel}
P_a(x', x_n, y) = C_{n, a} \frac{(x_n^2+y^2)^{-\frac{a}{2}}}{(|x'|^2+x_n^2+y^2)^{\frac{n-1-a}{2}}}.
\end{equation}
That is, if $v : \R^{n-1} \to \R$, then $v \ast_{x'} P(\,\cdot\,, x_n, y) = \bar{v}$.
Indeed, it is easy to see that $L_a P_a(x', x_n, y) = 0$ when $x_n^2+y^2 > 0$ and $P_a(x', 0, 0)$ is concentrated at $ x' = 0$.
Furthermore, since $P_a(x', r\cos\theta, r\sin\theta) = r^{-n+1}P_a(r^{-1}x', \cos\theta, \sin\theta)$, we deduce that $P_a$ is a multiple of the Dirac delta of the right dimensionality as $x_n^2+y^2 \downarrow 0$.

The intuition behind \eqref{eq.Poissonkernel} is as follows: the Poisson kernel for the fractional Laplacian can be thought as the Poisson kernel regular Laplacian extended to a fractional number of additional dimensions, $+a$ dimensions. 
In our case, we extend an additional dimension, not only in $y$, but also in $x_n$. 
So we are considering an $(1+a)$-dimensional extension starting from $n-1$ dimensions. 
That is, \eqref{eq.Poissonkernel} can be recovered from the Poisson kernel for the fractional Laplacian (see \cite{CS07}) by renaming the variable $y$ to $|(x_n, y)|$ (by Pythagoras) and replacing $a$ with $1+a$ and $n$ with $n-1$.

\begin{proof}[Proof of Proposition~\ref{prop.mathcalF}]
Thanks to \eqref{eq.Poissonkernel}, we have that 
\[
\bar{v}(x', x_n, y) = \int_{\R^{n-1}} v(z')P_a(x'-z', x_n, y) \,\d z'.
\]
In turn, 
\[
\mathcal{I}_a(v)(x') = \lim_{\vep\downarrow 0} \int_{\pa D_\vep} \pa_\nu \left(\int_{\R^{n-1}} v(z')P_a(x'-z', x_n, y) \,\d z'\right) |y|^a \, \d\sigma(x_n,y).
\]
Now since $P_a$ is radially symmetric in the $(x_n, y)$ variables,
\begin{align*}
\mathcal{I}_a(v)(x') & = \lim_{\vep\downarrow 0}  \pa_y \left(\int_{\R^{n-1}} v(z')P_a(x'-z', 0, y)\,\d z'\right)\bigg|_{y = \vep}\int_{\pa D_\vep}|y|^a \,\d\sigma(x_n,y)  \\
& = C \lim_{\vep\downarrow 0} \vep^{1+a}  \pa_y \left(\int_{\R^{n-1}} v(z')P_a(x'-z', 0, y)\,\d z'\right)\bigg|_{y = \vep} \\
&= C(-\Delta)^{-\frac{a}{2}} v(x'),
\end{align*}
where, in the last step, we have used that $P_a(x', 0, y)$ is the Poisson kernel for the fractional Laplacian of order $1+a$ in $n-1$ dimensions (see \cite[Sections 1 and 2]{CS07}).
\end{proof}

Thanks to Proposition~\ref{prop.mathcalF}, we can construct some useful H\"older regular barriers.

\begin{lemma}[H\"older Barriers]
\label{lem.HoldBarrier}
Let $\zeta(r):[0, \infty)\to [0, 1]$ be a smooth function with the following properties: $\zeta(r) \equiv 1$ for $0\le r\le 2$, $\zeta(r) \equiv 0$ for $r > 3$, and $\zeta'\le 0$. 
Set $h_\beta(x') := |x'|^\beta \zeta(|x'|)$ and define  
\begin{equation}
\label{eq.HoldBarrier}
\bar{h}_\beta(x', x_n, y) := \int_{\R^{n-1}} h_\beta(z')P_a(x'-z', x_n, y).
\end{equation}
Then, 
\[
\left\{ 
\begin{array}{rcll}
L_a\bar{h}_\beta &=& 0 &\text{in } \R^{n+1}\setminus\{ x_n = y = 0\}\\
\bar{h}_\beta(x', 0, 0) &=& |x'|^\beta &\text{on } B_1'\\
\bar{h}_\beta &\geq& c &\text{in } \pa B_1,
\end{array}\right.
\]
for some constant $c$ depending only on $n$ and $a$.
Moreover, $\bar{h}_\beta \in C^\gamma(B_1)$ for $\gamma := \min\{-a, \beta\}$.
\end{lemma}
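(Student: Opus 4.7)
The plan is to verify the four assertions in sequence, the last being the only substantial one.

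For the PDE $L_a \bar{h}_\beta = 0$ outside the very thin space, I would differentiate under the integral sign. This is justified by the compact support of $h_\beta$ together with local uniform bounds on $P_a$ and its derivatives away from $\{x_n = y = 0\}$. Since $L_a P_a(\,\cdot\,, x_n, y) = 0$ there (as used in constructing $P_a$ in the discussion preceding Proposition~\ref{prop.mathcalF}), one concludes at once. For the pointwise boundary value, the scaling identity $P_a(rx', rx_n, ry) = r^{-(n-1)} P_a(x', x_n, y)$ together with $\int_{\R^{n-1}} P_a(x', x_n, y) \, dx' = 1$ shows that $P_a(\,\cdot\,, x_n, y)$ is an approximate identity in $x'$ as $(x_n, y) \to 0$; hence $\bar{h}_\beta(x', 0, 0) = h_\beta(x')$, which equals $|x'|^\beta$ on $B_1'$ because $\zeta \equiv 1$ there. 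The positivity of $\bar{h}_\beta$ on $\pa B_1$ is immediate from $h_\beta \ge 0$ (not identically zero) and $P_a > 0$ off the very thin space, and the uniform lower bound then follows by continuity (supplied by the regularity step below) and compactness of $\pa B_1$.

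The $C^\gamma$ regularity is the heart of the matter. My plan is first to prove the Hölder estimate at singular points $X_0 = (x'_0, 0, 0) \in B_1$, then propagate to all of $B_1$. Using the normalization $\int P_a(\,\cdot\,, x_n, y) \, dx' = 1$, I would rewrite
\[
\bar{h}_\beta(X) - \bar{h}_\beta(X_0) = \int_{\R^{n-1}} \bigl[h_\beta(z') - h_\beta(x'_0)\bigr] P_a(x' - z', x_n, y) \, dz'
\]
for $X = (x', x_n, y)$. Since $h_\beta \in C^\beta(\R^{n-1})$, this integrand is bounded in absolute value by $C|z' - x'_0|^\beta P_a(x' - z', x_n, y)$. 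Splitting the domain of integration into $\{|z' - x'_0| \le \rho\}$ and its complement, with $\rho := |X - X_0|$, the near part contributes $C\rho^\beta$ by the normalization. For the far part, using the explicit bound $P_a(\xi, x_n, y) \le C (x_n^2 + y^2)^{-a/2} |\xi|^{-(n-1-a)}$ valid when $|\xi| \ge C \sqrt{x_n^2 + y^2}$, the integral reduces in polar coordinates to $C r^{-a} \int_\rho^{\infty} s^{\beta + a - 1} \, ds$ with $r := \sqrt{x_n^2 + y^2} \le \rho$; balancing against $r \le \rho$ separates the cases $\beta + a < 0$ (yielding $\rho^\beta$) and $\beta + a \ge 0$ (yielding $\rho^{-a}$), so together the bound is $C\rho^{\min(\beta, -a)} = C\rho^\gamma$. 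To pass from singular-point estimates to a global $C^\gamma$ bound, I would combine this with interior gradient estimates for $a$-harmonic functions in balls disjoint from the very thin space (where $\bar{h}_\beta$ is smooth), via the standard dichotomy based on whether $|X - Y|$ is smaller or larger than $\dist(X, \{x_n = y = 0\})$.

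The main obstacle is producing the sharp exponent $\gamma = \min(\beta, -a)$ in the singular-point estimate. The natural exponent $\beta$ is straightforward from the $C^\beta$ regularity of $h_\beta$ and the approximate-identity property of $P_a$, but the ceiling $-a$ emerges only when one tracks the far-field tail of the convolution against the weight factor $(x_n^2+y^2)^{-a/2}$ in $P_a$ itself, which is Hölder of class $-a$ (and no better) at the very thin space. Carefully accounting for the interplay between the regularity of $h_\beta$ and the anisotropic singularity of $P_a$ is the technical heart of the argument.
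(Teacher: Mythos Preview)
Your approach is correct and genuinely different from the paper's. The paper exploits the radial symmetry of $\bar h_\beta$ in the $(x_n,y)$ variables to reduce the regularity question to the slice $\{x_n=0\}$: there $\bar h_\beta(x',0,y)$ is precisely the $L_{1+a}$-harmonic (codimension-one) extension of $h_\beta$ to $\R^n$, because $P_a(x',0,y)$ coincides with the Poisson kernel for $L_{1+a}$ in $n$ variables. The paper then quotes standard regularity for such extensions (\cite{JN17}) to read off the $C^{-a}$ ceiling in $y$ and the $C^\beta$ inherited from the datum in $x'$, and transports this back via the rotational symmetry in $(x_n,y)$. Your direct convolution estimate is more hands-on and self-contained; the paper's argument is shorter but requires spotting the dimensional-reduction trick and invoking external results.

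One small gap to close in your far-field step: as written, the integral $r^{-a}\int_\rho^\infty s^{\beta+a-1}\,ds$ diverges when $\beta+a\ge 0$. What rescues the bound is that $h_\beta$ is bounded (indeed compactly supported), so the integrand in the far region is really $\min(s^\beta,C)\cdot r^{-a}s^{a-1}$; capping the $s^\beta$ growth at $s\sim 1$ gives $r^{-a}\bigl(\int_\rho^1 s^{\beta+a-1}\,ds+\int_1^\infty s^{a-1}\,ds\bigr)\le Cr^{-a}\le C\rho^{-a}$ when $\beta+a>0$. You should make this explicit. In the borderline case $\beta=-a$ the same computation produces a logarithmic loss $\rho^{-a}\log(1/\rho)$; this is harmless for the applications in the paper (one can always work with a slightly smaller $\beta$), but if you want the sharp exponent there you would need a finer splitting or a different argument.
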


\begin{proof}
First, observe that by the definition of $P_a$, $L_a \bar{h}_\beta = 0$ in $\R^{n+1}\setminus \{x_n = y = 0\}$ and $\bar{h}_\beta(x', 0, 0 ) = |x'|^\beta$ in $B_1'$. 
Second, by continuity, $\bar{h}_\beta \geq c > 0$ on $\pa B_1$ since $\bar{h}_\beta > 0$ on $\pa B_1 \cap \{x_n^2+y^2 > 0\}$ and $\bar{h}_\beta = 1$ on $\pa B_1\cap \{x_n^2+y^2 = 0\}$.
Finally, notice that if we fix $x_n = 0$, then $\bar{h}_\beta(x',0,y)$ is the $(1+a)$-harmonic extension of $h_\beta$ to $\R^n$ (see the proof of Proposition~\ref{prop.mathcalF}); note that $1+a \in (0,1)$. 
Namely, $\bar{h}_\beta(x', 0, y)$ is such that 
\begin{equation}
\label{eq.tildeuEL_2}
\left\{ 
\begin{array}{rcll}
L_{1+a} \bar{h}_\beta(x', 0, y) & = & 0 &\text{in } \R^{n}\setminus\{ y = 0\}\\
\bar{h}_\beta(x', 0, 0) &=& h_\beta(x') &\text{on } \R^{n-1}\\
\lim_{|(x', y)|\to \infty} \bar{h}_\beta(x', 0, y)&=& 0. 
\end{array}\right.
\end{equation}
Now by \cite[Proposition 2.3]{JN17}, we have that $\bar{h}_\beta$ is (locally) smooth in $x'$ and is (locally) $(-a)$-H\"{o}lder in the $y$ up to $\{ y = 0 \}$. 
Therefore, since $\bar{h}_\beta$ is radially symmetric in the $(x_n, y)$ variables, $\bar{h}_\beta \in C^\gamma(B_1)$, as desired.
\end{proof}

We conclude this subsection with a higher regularity result.

\begin{lemma}
\label{lem.continside}
If $u\in L^\infty(B_1)$ is such that $L_a u = 0$ in $B_1\setminus\{x_n = y = 0\}$ and $u(\,\cdot\,, 0, 0)\in C^{k+\beta}({B_{1}'})$ for $k\in \N\cup \{0\}$ and $\beta \in (0, 1]$, then for $\gamma := \min\{-a, \beta\}$, 
\[
[D^{k}_{x'}  u]_{C^\gamma(B_{1/2})}\le C\left(\|u\|_{L^\infty(B_1)} + \|u(\cdot,0,0)\|_{C^{k+\beta}(B_1')}\right),
\]
for some constant $C$ depending only on $n$, $a$, $k$, and $\beta$.
Moreover, if $u(\cdot, 0, 0)$ is continuous, then $u$ is continuous. 
\end{lemma}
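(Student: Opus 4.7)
The plan is to run a barrier comparison centered at points of the very thin set $\{x_n = y = 0\}$, using the explicit $C^\gamma$ barriers of Lemma~\ref{lem.HoldBarrier}, then to combine with standard interior estimates for $L_a$-harmonic functions away from the singular set, and finally to iterate tangentially for arbitrary $k$. For the base case $k=0$, I would fix $X_0 = (x_0', 0, 0)$ with $x_0' \in B_{1/2}'$, set $v := u(\,\cdot\,, 0, 0)$, and subtract the constant $v(x_0')$ to assume the thin data vanishes at $x_0'$. Since $v - v(x_0') \in C^\beta$ with $(v - v(x_0'))(x_0') = 0$, the barrier $\bar h_\beta(\,\cdot\, - X_0)$ dominates $|v - v(x_0')|$ on the thin set up to the factor $\|v\|_{C^\beta}$, while a multiple of $\|u\|_{L^\infty(B_1)}$ suffices to dominate on $\pa B_1$.

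Concretely, set
\[
\phi(X) := c^{-1}\|u\|_{L^\infty(B_1)} + \|v\|_{C^\beta(B_1')}\,\bar h_\beta(X - X_0),
\]
with $c$ the lower bound on $\bar h_\beta|_{\pa B_1}$ from Lemma~\ref{lem.HoldBarrier}. Since $L_a\phi = 0 = L_a(u - v(x_0'))$ in the slit domain $B_1 \setminus \{x_n = y = 0\}$ and $\phi \ge |u - v(x_0')|$ on $\pa B_1$ as well as on the thin slit within $B_1$, the comparison principle for $L_a$ on $B_1 \setminus \{x_n = y = 0\}$ (valid because the very thin set has positive $a$-capacity for $a \in (-1,0)$, see \cite[Corollary~2.12]{Kil94}, so its boundary traces are detected by the weighted Dirichlet energy) yields $|u - v(x_0')| \le \phi$ throughout $B_1$. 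Evaluating at $X$ close to $X_0$ and using $\bar h_\beta \in C^\gamma$ with $\bar h_\beta(0)=0$ from Lemma~\ref{lem.HoldBarrier} produces the pointwise bound $|u(X) - v(x_0')| \le C(\|u\|_{L^\infty} + \|v\|_{C^\beta})|X - X_0|^\gamma$. For points of $B_{1/2}$ away from the thin set, classical interior estimates for $L_a$-harmonic functions (e.g.\ \cite[Proposition~2.3]{JN17}) give uniform bounds at every scale, and combining both regimes via the standard dichotomy based on distance to $\{x_n = y = 0\}$ yields the full $C^\gamma(B_{1/2})$ estimate for $k = 0$; the continuity statement when $v$ is merely continuous follows from the same comparison with the H\"older modulus of $v$ replaced by its modulus of continuity at $x_0'$.

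For $k \ge 1$, I would proceed by induction. Because $|y|^a$ is independent of $x'$, the operator $L_a$ commutes with tangential incremental quotients $\delta^h_{x'_i}$, so $\delta^h_{x'_i}u$ is $a$-harmonic on $B_{1-|h|}\setminus\{x_n = y = 0\}$ with thin trace $\delta^h_{x'_i}v \in C^{k-1+\beta}(B_{1-|h|}')$ and a bound on $\|\delta^h_{x'_i}u\|_{L^\infty}$ uniform in $h$ coming from the base case applied to $u$. The inductive hypothesis on a slightly shrunken ball, followed by the limit $h \downarrow 0$, delivers the $C^\gamma$ estimate for $D^k_{x'}u$. The main technical obstacle is the justification of the comparison principle on the slit domain when boundary data on the slit is prescribed only through the trace $v$: one must use the continuous trace operator $\tau : W^{1,2}(B_1, |y|^a)\to L^2(B_1')$ (available precisely because $s = (1-a)/2 > 1/2$ for $a \in (-1,0)$, as in the proof of Proposition~\ref{prop.case2}) to check that $\phi - (u - v(x_0'))$ has non-negative trace on $\{x_n = y = 0\}$ in a strong enough sense, and then invoke the positive $a$-capacity of the slit to rule out hidden interior minima. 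Once this ingredient is in place, the induction and the continuity assertion go through cleanly.
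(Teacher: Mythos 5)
Your base case follows the paper's route (the barrier $\bar h_\beta$ of Lemma~\ref{lem.HoldBarrier}, interior estimates for $L_a$, and the standard dichotomy in the distance to the slit), but the comparison function you actually wrote down does not deliver the conclusion: in $\phi(X)=c^{-1}\|u\|_{L^\infty(B_1)}+\|v\|_{C^\beta(B_1')}\bar h_\beta(X-X_0)$ the term $c^{-1}\|u\|_{L^\infty(B_1)}$ is an additive constant, so $\phi(X_0)\ge c^{-1}\|u\|_{L^\infty(B_1)}>0$ and the inequality $|u-v(x_0')|\le \phi$ gives no modulus of continuity at $X_0$. The constant has to multiply the barrier, i.e.\ one should take $\phi=\big(Cc^{-1}\|u\|_{L^\infty(B_1)}+\|v\|_{C^\beta(B_1')}\big)\,\bar h_\beta(X-X_0)$, using that $\bar h_\beta$ is bounded below away from its center; this is precisely the paper's choice of a single constant $\bar C$ with $\bar C\ge [u(\cdot,0,0)]_{C^\beta}$ and $\bar C\bar h_\beta\ge\|u\|_{L^\infty}$ on the outer boundary. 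This is a fixable slip, and your treatment of the continuity statement (comparison against the modulus of continuity, rather than the paper's decomposition $u=v*P_a+w$) is fine once the barrier is corrected; the point you raise about the sense in which the slit values are attained is legitimate but harmless in the applications, where $u$ lies in the energy space.

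The genuine gap is in the induction for $k\ge 1$. You claim a bound on $\|\delta^h_{x_i'}u\|_{L^\infty}$ uniform in $h$ ``coming from the base case applied to $u$'', but the base case only gives $[u]_{C^\gamma(B_{1/2})}$ with $\gamma=\min\{-a,\beta\}<1$, and a $C^\gamma$ bound controls the difference quotient only by $C h^{\gamma-1}\to\infty$ as $h\downarrow 0$. Interior estimates cannot rescue this either: at distance $\rho$ from the slit they only give $|\nabla_{x'}u|\lesssim \rho^{\gamma-1}$, and this blow-up is genuinely possible for functions satisfying only the $k=0$ hypotheses --- for instance, single-layer potentials $W(X)=\int_{\R^{n-1}}\Gamma_a(x'-z',x_n,y)\,\mu(z')\,\d z'$ with $\mu$ bounded but rough are bounded, $a$-harmonic off the slit, have merely $C^{-a}$ thin trace, and their tangential gradient grows like $\rho^{-1-a}$. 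So the uniform control of tangential difference quotients must use the $C^{k+\beta}$ regularity of the thin data at all nearby slit points simultaneously; it is exactly the content of the $k\ge1$ statement, and as written your induction is circular. To close it you need a different mechanism: e.g.\ subtract the Poisson extension $(\zeta v)*P_a$ of the cut-off thin data, whose tangential derivatives are the Poisson extensions of $D^\alpha_{x'}(\zeta v)\in C^\beta$ and hence controlled by the $k=0$ argument, and then show the zero-trace remainder is tangentially regular (its slit density is as smooth as its vanishing trace forces it to be); alternatively, follow the paper's scheme of applying the $k=0$ estimate iteratively to $D^\alpha_{x'}u$, after first establishing that these derivatives exist and are bounded near the slit.
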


\begin{proof} 
The proof follows simply by combining interior estimates for the operator $L_a$ and a barrier argument on $\{x_n = y = 0\}$, with the barrier $\bar{h}_\beta$ constructed in Lemma~\ref{lem.HoldBarrier}. 

Suppose $k = 0$ and let $\bar C$ be a constant such that
\[
\bar{C} \geq [u(\cdot, 0, 0)]_{C^\beta(B_1')} \qquad\text{and}\qquad \bar{C} \bar{h}_\beta \geq \|u\|_{L^\infty(B_1)} \quad\text{on}\quad \pa B_{1/2}.
\]
Then, $\bar{C}\bar{h}_\beta$ serves both as a barrier from above and from below at any point $x'\in B_{1/2}'$. 
This barrier combined with interior estimates for $a$-harmonic functions (see, e.g, \cite[Proposition 2.3]{JN17}) directly yields the desired estimate (as in \cite{MS06}, for instance). 

If $k \ge 1$, we apply the previous result iteratively, starting with $\beta = 1$, to the derivatives $D_{x'}^\alpha u$, up to a ball $B_{2^{-k-1}}$, and finish by a covering lemma. 

To prove the last part, let us suppose that $u(\cdot, 0, 0)$ is continuous.
We want to show that $u$ is continuous as well. 
Let us extend $u$ to the whole space with any cutoff function and consider $v(x', x_n, y) := u(\cdot, 0, 0)*P_a(\cdot, x_n, y)$. 
Notice that since $u(\cdot, 0, 0)$ is continuous, $v$ is continuous as well. 
Then, $u = v + w$ where $w$ satisfies $w(\cdot, 0, 0) \equiv 0$ and $L_a w = 0$ in $B_1\setminus \{x_n = y = 0\}$. 
Thus, by the above result, $w$ is smooth and therefore, $u$ is continuous. 
\end{proof} 

\begin{corollary}
\label{cor.ucontVTOP}
Let $u\in L^\infty(B_1)$ be a solution to \eqref{eq.VTOP}. 
Then, $u$ is continuous in $B_1$. 
\end{corollary}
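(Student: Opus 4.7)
The plan is to apply the continuity statement of Lemma~\ref{lem.continside}. Since $\Lambda(u) \subset \{x_n = y = 0\}$, the EL system \eqref{eq.VTOP} forces $L_a u = 0$ in $B_1 \setminus \{x_n = y = 0\}$, so it suffices to show that the trace $u(\cdot, 0, 0)$ is continuous on $B_1'$. I would then pass to the lower-semicontinuous representative of $u$, which exists because $L_a u$ is a non-positive Radon measure and $|y|^a$ lies in the Muckenhoupt class $A_2$, so the standard weighted super-harmonic potential theory applies.

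At a non-contact trace point $x_0' \in B_1' \setminus \Lambda(u)$ we have $u(x_0', 0, 0) > 0$, and lower semicontinuity produces an $\R^{n+1}$-neighborhood $V$ of $(x_0', 0, 0)$ on which $u > 0$. This $V$ is disjoint from $\Lambda(u)$, so $L_a u = 0$ on $V$, and interior regularity for $L_a$-harmonic functions yields continuity of $u$, in particular of its trace, at $x_0'$. This handles the easy portion of $B_1'$, namely $B_1' \setminus \Lambda(u)$, and reduces matters to continuity of the trace at contact points.

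The main step is continuity of the trace at $x_0' \in \Lambda(u)$, where $u(x_0', 0, 0) = 0$. Lower semicontinuity plus the obstacle condition give $\liminf_{x' \to x_0'} u(x', 0, 0) \geq 0$ for free, so only the matching $\limsup \leq 0$ remains. The plan is a Hölder barrier argument based on Lemma~\ref{lem.HoldBarrier}: fix $\beta \in (0, -a)$, translate and scale $\bar{h}_\beta$ to produce, for each small $\rho > 0$, an $L_a$-harmonic function off $\{x_n = y = 0\}$ that dominates $u$ on $\partial B_\rho(x_0', 0, 0)$, vanishes at $(x_0', 0, 0)$, and is controlled pointwise by $C\|u\|_{L^\infty(B_1)} (|X - (x_0', 0, 0)|/\rho)^\beta$; the maximum principle off the very thin space then transfers this decay to $u$ and forces continuous vanishing of the trace at $x_0'$.

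The principal obstacle is that the naive barrier built from $\bar{h}_\beta$ dominates $u$ on $\partial B_\rho(x_0', 0, 0)$ but, a priori, need not dominate $u$ on $\{x_n = y = 0\} \cap B_\rho(x_0', 0, 0)$, where $u(\cdot, 0, 0)$ is precisely the unknown that we are trying to control; a direct comparison argument is therefore circular. To close this loop I would exploit the reformulation afforded by Proposition~\ref{prop.mathcalF} and Lemma~\ref{lem.operator}: the trace $u(\cdot, 0, 0)$ solves a fractional obstacle problem on $B_1'$ for the operator $\mathcal{I}_a = c_{n,a}(-\Delta)^{s-\frac{1}{2}}$ of order $s - \tfrac{1}{2} \in (0, \tfrac{1}{2})$, whose solutions are classically H\"older continuous. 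An $L_a$-harmonic replacement in $B_\rho(x_0', 0, 0) \setminus \{x_n = y = 0\}$ with trace data $u(\cdot, 0, 0)|_{B_\rho'}$, combined with Lemma~\ref{lem.HoldBarrier}, then transports this local H\"older modulus on the very thin space to a full two-sided modulus on $B_\rho(x_0', 0, 0)$, verifying the hypothesis of Lemma~\ref{lem.continside} and completing the proof.
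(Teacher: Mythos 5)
Your outer reduction coincides with the paper's: establish continuity of the trace $u(\cdot,0,0)$ on $B_1'$ and then invoke the last part of Lemma~\ref{lem.continside}; your treatment of non-contact points (lower semicontinuous super-$a$-harmonic representative, so $u>0$ in a full neighborhood, hence $L_a u=0$ there and interior regularity applies) is fine. The gap is at contact points, exactly at the step you introduce to break the circularity you correctly identified. Proposition~\ref{prop.mathcalF} identifies $\mathcal{F}_a(\bar v)$ with $c_{n,a}(-\Delta)^{s-\frac12}v$ only when $\bar v$ is the \emph{global, decaying Poisson extension} of $v$; the local solution $u$ in $B_1$ is not the Poisson extension of its trace, so it is not true as stated that $u(\cdot,0,0)$ solves a fractional obstacle problem for $(-\Delta)^{s-\frac12}$ on $B_1'$ (Lemma~\ref{lem.operator} only gives the sign and support of $\mathcal{F}_a(u)$, not its identification with a fractional Laplacian of the trace). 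Making the reformulation rigorous requires the cutoff-and-correction argument the paper performs later in Theorem~\ref{thm.C1tau} (the auxiliary $\hat w$ with vanishing trace), and what one then obtains is an obstacle problem for a \emph{corrected} function with a right-hand side $-\mathcal{F}_a(\hat w)$ that is only known to be smooth locally.

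Even granting that reformulation, the regularity input you lean on is not available at this stage. The paper's own use of it cites \cite[Proposition 2.2]{CRS17}, which requires Lipschitz and semiconvexity of the trace; in this paper those are Lemma~\ref{lem.Lip}, whose proof explicitly uses the continuity asserted in this very corollary, so that route is circular. The ``classical'' continuity statements for the fractional obstacle problem are global results for regular obstacles, and their basic continuity step is proved by precisely the elementary argument the paper invokes here: $u$ is super-$a$-harmonic (hence has an lsc representative), and the missing bound $\limsup_{x'\to x_0'} u(x',0,0)\le 0$ at contact points follows from the comparison/mean-value argument of \cite{Caf98} adapted to $L_a$ on the very thin space, after which Lemma~\ref{lem.continside} upgrades trace continuity to continuity in $B_1$ (your final ``transport'' step is just a re-derivation of that lemma). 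As written, your proposal defers the one genuinely nontrivial estimate to a citation that is either circular within the paper's architecture or not applicable without substantial additional work; supplying the direct supersolution/mean-value argument at contact points is what is needed to close the proof.
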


\begin{proof}
The continuity on the very thin space follows from a standard argument in obstacle type problems (see \cite[Theorem~1]{Caf98}) using super-$a$-harmonicity of the solution and the mean value formula on the thin space for the operator $L_a$. 
The continuity in $B_1$ then follows from Lemma~\ref{lem.continside}. 
\end{proof}

\subsection{Basic Estimates}

In this subsection, we prove some regularity properties of solutions to \eqref{eq.minpb}.
Our first result contains two classical estimates: an energy estimate and an $L^\infty$ estimate.

\begin{lemma}
\label{lem.boundedsolution}
Let $u$ be a solution to \eqref{eq.minpb} and \eqref{eq.VTOP}. 
Then, 
\begin{equation}
\label{eq.solisbounded0}
\|u\|_{W^{1, 2}(B_{1/2}, |y|^a)}\leq C\|u\|_{L^2(B_1,|y|^a)}
\end{equation}
and
\begin{equation}
\label{eq.solisbounded}
\|u\|_{L^\infty(B_{1/2})}\leq C\|u\|_{L^2(B_1,|y|^a)},
\end{equation}
for some constant $C$ depending only on $n$ and $a$. 
\end{lemma}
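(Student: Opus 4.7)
The plan is to prove both estimates by essentially the same approach used in Lemma~\ref{lem:L2Linfty}, adapted to the codimension two obstacle on the very thin space $\{x_n = y = 0\}$. First I would handle the energy (Caccioppoli) estimate \eqref{eq.solisbounded0} directly from the variational characterization \eqref{eq.minpb}. Choose a cutoff $\eta \in C_c^\infty(B_1)$, even in $y$, with $\eta \equiv 1$ on $B_{1/2}$, $0 \leq \eta \leq 1$. The function $w := u(1-\eta^2)$ lies in $\mathscr{C}$: it agrees with $u$ on $\partial B_1$, is even in $y$, and is non-negative on $\{x_n=y=0\}$ because $u$ is (and $1-\eta^2 \geq 0$). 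By minimality,
\[
\int_{B_1} |\nabla u|^2 |y|^a \leq \int_{B_1} |\nabla w|^2 |y|^a.
\]
Expanding $\nabla w = (1-\eta^2)\nabla u - 2u\eta\nabla\eta$, using the algebraic identity $1 - (1-\eta^2)^2 \geq \eta^2$, and absorbing the cross term via Young's inequality yields
\[
\int_{B_1} \eta^2 |\nabla u|^2 |y|^a \leq C \int_{B_1} u^2 \big(|\nabla\eta|^2 + \eta^2|\nabla\eta|^2\big) |y|^a \leq C\|u\|_{L^2(B_1,|y|^a)}^2,
\]
which is \eqref{eq.solisbounded0}.

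For the $L^\infty$ estimate \eqref{eq.solisbounded}, I would argue, as in Lemma~\ref{lem:L2Linfty}, that both $u^+$ and $u^-$ are sub-$a$-harmonic in $B_1$; the conclusion then follows from the local boundedness of subsolutions for $L_a$ (cf.~\cite[Proposition~2.1]{JN17}) together with a covering argument. The function $u^- = \max\{-u, 0\}$ is immediately sub-$a$-harmonic in $B_1$ as the maximum of the two sub-$a$-harmonic functions $-u$ (since $L_a u \leq 0$) and $0$. For $u^+$, I would mimic the perturbation argument in Lemma~\ref{lem:L2Linfty}: for $0 < \varepsilon < \delta$ and $\eta \in C_c^\infty(B_1)$ with $0 \leq \eta \leq 1$, let $h_\delta$ be a smooth approximation of the Heaviside function and consider $u_\varepsilon := u - \varepsilon \eta h_\delta(u)$. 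The key check is that $u_\varepsilon$ still satisfies the codimension two obstacle constraint $u_\varepsilon \geq 0$ on $\{x_n=y=0\}$: when $u \geq \delta$, $u_\varepsilon = u - \varepsilon\eta \geq \delta - \varepsilon > 0$; when $0 \leq u < \delta$, $u_\varepsilon = u(1 - \varepsilon\eta/\delta) \geq 0$. Hence $u_\varepsilon \in \mathscr{C}$, and minimality of $u$ gives $\int_{B_1} \nabla u \cdot \nabla(\eta h_\delta(u))|y|^a \leq 0$. Expanding and letting $\delta \downarrow 0$ as in Lemma~\ref{lem:L2Linfty} shows that $u^+$ is sub-$a$-harmonic in $B_1$.

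The only step requiring care is the verification that the perturbation $u_\varepsilon$ remains in the admissible class $\mathscr{C}$; the rest of the argument is a verbatim copy of Lemma~\ref{lem:L2Linfty}, so I do not expect any serious obstacle. Note that evenness in $y$ and the boundary data condition pass through both constructions trivially since the cutoffs are chosen compactly supported and even in $y$.
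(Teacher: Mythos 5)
Your proposal is correct and follows essentially the route the paper intends: the paper's proof is just the remark that the estimates are standard (citing \cite{AC04} and Lemma~\ref{lem:L2Linfty}), and your argument fills in exactly those details\,---\,a Caccioppoli estimate by competing $u(1-\eta^2)$ against $u$ in \eqref{eq.minpb}, and sub-$a$-harmonicity of $u^{\pm}$ via the perturbation $u-\varepsilon\eta h_\delta(u)$ exactly as in Lemma~\ref{lem:L2Linfty}, combined with local boundedness of $L_a$-subsolutions. Your verification that the perturbed functions remain admissible for the codimension two constraint is the right point to check, and it is done correctly.
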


\begin{proof}
This is standard (see \cite{AC04} or Lemma~\ref{lem:L2Linfty}). 
\end{proof} 

Next, we prove the solutions are Lipschitz and semiconvex in the directions parallel to the very thin space. 

\begin{lemma}
\label{lem.Lip}
Let $u$ be a solution to \eqref{eq.VTOP}. 
Then, for all $\be \in \{x_n = y = 0\} \cap \mathbb{S}^n$,
\begin{equation}
\label{eq.Lipschitz}
\|\pa_{\be} u\|_{L^\infty(B_{1/4})} \leq C\|u\|_{L^2(B_1,|y|^a)}
\end{equation}
and
\begin{equation}
\label{eq.semiconv2}
\inf_{B_{1/8}} \pa_{\be\be} u \geq -C\|u\|_{L^2(B_1,|y|^a)},
\end{equation}
for some constant $C$ depending only on $n$ and $a$.
\end{lemma}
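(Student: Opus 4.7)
The plan is to run the Bernstein-type arguments from Lemmas~\ref{lem: Bern} and \ref{lem: Bern X2} essentially verbatim, exploiting the simple but crucial fact that a direction $\be \in \{x_n=y=0\}\cap\mathbb{S}^n$ is tangent to the very thin space where $\Lambda(u)$ sits. Since $u\ge 0$ on $\{x_n=y=0\}$ and $u\equiv 0$ on $\Lambda(u)$, every point of $\Lambda(u)$ is an interior minimum of $u|_{\{x_n=y=0\}}$, which forces $\pa_\be u$ to vanish on (the interior of) $\Lambda(u)$. This is the precise analogue of the boundary condition that made the maximum-principle step in Lemma~\ref{lem: Bern} work for the thin obstacle problem.

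For the Lipschitz estimate \eqref{eq.Lipschitz}, I would pick $\eta\in C_c^\infty(B_{1/2})$ even in $y$ with $\eta\equiv 1$ on $B_{1/4}$, and consider $\psi := \eta^2(\pa_\be u)^2 + \mu u^2$. Since $L_a u = 0$ in $B_{1/2}\setminus \Lambda(u)$, taking $\be$-derivatives gives $L_a \pa_\be u = 0$ there, and the identical pointwise computation to that in Lemma~\ref{lem: Bern} yields $L_a \psi \ge 0$ in $B_{1/2}\setminus\Lambda(u)$ provided $2\mu \ge C_\eta$. On the boundary of this domain one has $\eta\equiv 0$ on $\pa B_{1/2}$ while $\psi\equiv 0$ on $\Lambda(u)$ (by the tangentiality observation above together with $u\equiv 0$ on $\Lambda(u)$). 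The maximum principle then gives $\sup_{B_{1/4}}(\pa_\be u)^2 \le \mu\sup_{B_{1/2}} u^2$, and Lemma~\ref{lem.boundedsolution} upgrades this to an $L^2$-based bound, yielding \eqref{eq.Lipschitz} after a covering argument.

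For the semiconvexity estimate \eqref{eq.semiconv2}, I would follow Lemma~\ref{lem: Bern X2} essentially word for word. First, approximate $u$ by $u_\gamma$, the solution to \eqref{eq.VTOP} in $B_{7/8}$ with boundary data $u+\gamma$; continuity of $u$ (Corollary~\ref{cor.ucontVTOP}) gives $u_\gamma\downarrow u$ uniformly and $u_\gamma>0$ in some annulus $B_{7/8}\setminus B_{7/8-\beta(\gamma)}$, making $u_\gamma$ $a$-harmonic there. Next, introduce the truncations $g^\gamma_{\vep,h,\be}:=\min\{\delta^2_{\be,h}u_\gamma,-\vep\}$; the super-$a$-harmonicity of $\delta^2_{\be,h}u_\gamma$ outside $\Lambda(u_\gamma)$ together with the fact that $\delta^2_{\be,h}u_\gamma\ge 0$ on $\Lambda(u_\gamma)$ (so that $g^\gamma_{\vep,h,\be}=-\vep$ near the contact set) give $L_a g^\gamma_{\vep,h,\be}\le 0$ in $B_{7/8}$. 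Tangential interior estimates on the harmonic annulus bound $g^\gamma_{\vep,h,\be}$ uniformly in $\vep,h$, and passing to the limit in $\vep,h$ produces $f_\gamma := (\pa_{\be\be}u_\gamma)^-$ sub-$a$-harmonic in $B_{3/4}$ with $f_\gamma\equiv 0$ on $\Lambda(u_\gamma)$. Finally, I would apply the Bernstein scheme above to $\psi_\gamma := \eta^2 f_\gamma^2 + \mu(\pa_\be u_\gamma)^2$, using $f_\gamma=0=\pa_\be u_\gamma$ on $\Lambda(u_\gamma)$ to kill boundary contributions, obtain an estimate uniform in $\gamma$ controlled by $\|u_\gamma\|_{L^2(B_1,|y|^a)}$, and then let $\gamma\downarrow 0$ (semiconvexity passes to uniform limits).

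The main obstacle is the same one handled in Lemma~\ref{lem: Bern X2}: making the boundary behavior of $\pa_\be u$ and $\pa_{\be\be}u$ on $\Lambda(u)$ rigorous even though the contact set here is codimension two. The $u_\gamma$-regularization is designed precisely to resolve this, and the only thing to verify is that the same truncation/interior estimate procedure for $g^\gamma_{\vep,h,\be}$ still produces uniform bounds in the VTOP setting; this should follow directly from the fact that $u_\gamma$ is $a$-harmonic in an annular collar around $\pa B_{7/8}$, just as in the thin obstacle proof, since nothing in that argument used the codimension of the contact set.
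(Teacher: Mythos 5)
Your semiconvexity argument follows the paper's intended route (regularize by $u_\gamma$, truncate the second incremental quotients, pass to $f_\gamma=(\pa_{\be\be}u_\gamma)^-$, then Bernstein), and that part is fine. The gap is in the Lipschitz estimate \eqref{eq.Lipschitz}: you run the Bernstein argument of Lemma~\ref{lem: Bern} directly on $\psi=\eta^2(\pa_\be u)^2+\mu u^2$ and justify the boundary condition $\psi=\mu u^2$ on $\Lambda(u)$ by saying that points of $\Lambda(u)$ are interior minima of $u$ restricted to the very thin space, so $\pa_\be u=0$ there. That step presupposes that $\pa_\be u$ exists and is continuous up to $\Lambda(u)$, which is exactly what is not available at this stage of the theory: for the very thin obstacle problem the only regularity established so far is boundedness and continuity of $u$ (Corollary~\ref{cor.ucontVTOP}); the $C^{1,s}$-type regularity that legitimized ``$\pa_\be u=0$ on $\Lambda(u)$'' in the thin obstacle setting has no counterpart here (tangential $C^{1,\tau}$ regularity, Theorem~\ref{thm.C1tau}, is proved later and uses the present lemma). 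Moreover, the maximum principle on $B_{1/2}\setminus\Lambda(u)$ requires control of $\limsup\psi$ as one approaches $\Lambda(u)$, including at free boundary points; without knowing a priori that $\pa_\be u$ stays bounded near the contact set, the conclusion ``$\psi$ attains its maximum on $\pa B_{1/2}\cup\Lambda(u)$, where it equals $\mu u^2$'' is circular, since ruling out a blow-up of $\pa_\be u$ at $\Lambda(u)$ is precisely the content of \eqref{eq.Lipschitz}.

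The paper's proof flags and fixes exactly this point: instead of $\pa_\be u$ one works with the negative parts of the first-order incremental quotients, $\bigl((u(\,\cdot\,+h\be)-u)/h\bigr)^-$ and $\bigl((u(\,\cdot\,-h\be)-u)/h\bigr)^-$, in the spirit of Lemma~\ref{lem: Bern X2}. These are well defined for merely continuous $u$; since $L_a u\le 0$ with equality off $\Lambda(u)$, each quotient is super-$a$-harmonic off $\Lambda(u)$, and it is nonnegative on $\Lambda(u)$ (because $u\ge 0$ on the very thin space and $u=0$ on $\Lambda(u)$), so after the truncation-and-limit procedure its negative part is sub-$a$-harmonic in the whole ball and vanishes on $\Lambda(u)$, using only the continuity of $u$. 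Running your Bernstein scheme on these negative parts gives bounds uniform in $h$, and the two one-sided bounds (for $+\be$ and $-\be$) combine to the full Lipschitz estimate. With this replacement your outline matches the paper's proof; without it, the key boundary step in the Lipschitz part is unjustified.
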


\begin{proof}
The proofs of these estimates are identical to the proofs of Lemmas~\ref{lem: Bern} and \ref{lem: Bern X2}. 
That said, to get \eqref{eq.Lipschitz}, we need to use the incremental quotients $((u(x+h\be) - u(x))/h)^-$ and $((u(x-h\be) - u(x))/h)^-$, in the spirit of Lemma~\ref{lem: Bern X2}, and the continuity of $u$ (proved in Corollary~\ref{cor.ucontVTOP}). 
\end{proof}

An easy corollary of Lemma~\ref{lem.Lip} is that $u$ is $C^{-a}$.

\begin{corollary}
\label{cor.ca}
Let $u$ be the solution to \eqref{eq.VTOP}. 
Then,
\begin{equation}
[u]_{C^{-a}(B_{1/2})} \le C\|u\|_{L^\infty(B_1)},
\end{equation}
for some constant $C$ depending only on $n$ and $a$. 
\end{corollary}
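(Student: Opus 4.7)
The plan is to combine the Lipschitz estimate along directions parallel to the very thin space (Lemma~\ref{lem.Lip}) with the boundary-to-interior H\"older regularity transfer of Lemma~\ref{lem.continside}.

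First, by \eqref{eq.Lipschitz}, for every unit vector $\be \in \{x_n = y = 0\} \cap \mathbb{S}^n$,
\[
\|\pa_{\be} u\|_{L^\infty(B_{1/4})} \le C \|u\|_{L^2(B_1, |y|^a)} \le C \|u\|_{L^\infty(B_1)}.
\]
Since every direction tangent to the very thin space arises this way, the restriction $u(\,\cdot\,, 0, 0)$ is Lipschitz on $B_{1/4}'$ with seminorm bounded by $C \|u\|_{L^\infty(B_1)}$. By applying Lemma~\ref{lem.Lip} at translated centers and invoking the homogeneous scaling of $L_a$ (namely, if $\tilde u(X) := u(\rho X)$, then $L_a \tilde u(X) = \rho^{2-a}(L_a u)(\rho X)$, which preserves the class of solutions to \eqref{eq.VTOP} up to dilation), the same bound is propagated to a larger very-thin ball, say $B_{3/4}'$, with a new constant depending only on $n$ and $a$.

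Second, since $u$ solves \eqref{eq.VTOP}, $L_a u = 0$ in $B_1 \setminus \Lambda(u) \supset B_1 \setminus \{x_n = y = 0\}$. Hence the hypotheses of Lemma~\ref{lem.continside} hold with $k = 0$ and $\beta = 1$, producing the exponent $\gamma = \min\{-a, 1\} = -a$ (since $a \in (-1,0)$). Applying Lemma~\ref{lem.continside} (after a mild rescaling so the boundary datum is controlled on the full unit ball of the very thin space), we conclude
\[
[u]_{C^{-a}(B_{1/2})} \le C \left( \|u\|_{L^\infty(B_1)} + \|u(\,\cdot\,, 0, 0)\|_{C^{0,1}(B_{3/4}')} \right) \le C \|u\|_{L^\infty(B_1)},
\]
which is the claimed estimate.

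The only technical point is matching the radii in Lemmas~\ref{lem.Lip} and~\ref{lem.continside}; this is handled by the routine rescaling described above (or, equivalently, by a standard covering argument around points of $B_{1/2}$, using that local $C^{-a}$ bounds combine with the $L^\infty$ bound to produce a global $C^{-a}$ estimate). The substantive content lies entirely in the two cited lemmas: once $u|_{\{x_n = y = 0\}}$ is seen to be Lipschitz, Lemma~\ref{lem.continside} delivers precisely the $C^{-a}$ regularity claimed.
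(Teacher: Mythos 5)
Your proposal is correct and follows essentially the same route as the paper, whose proof is precisely the combination of Lemma~\ref{lem.Lip} (Lipschitz regularity along the very thin space) with Lemma~\ref{lem.continside} (transfer of the trace regularity to $C^{-a}$ regularity in the bulk, with $\gamma=\min\{-a,1\}=-a$). The radius-matching via translation, scaling, and covering that you spell out is exactly the routine step the paper leaves implicit in calling the corollary an immediate consequence of those two lemmas.
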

\begin{proof}
This is an immediate consequence of Lemmas~\ref{lem.Lip} and \ref{lem.continside}. 
\end{proof}

Using Corollary~\ref{cor.ca}, we now prove an $L^\infty$ estimate on $\mathcal{F}_a(u)$.

\begin{lemma}
\label{lem.boundedoperator}
Let $u$ be the solution to \eqref{eq.VTOP} and $\mathcal{F}_a$ be as in \eqref{eq.F0}.
Then, 
\[
\|\mathcal{F}_a(u)\|_{L^\infty(B_{1/2}')}\le C \|u\|_{L^\infty(B_1)},
\]
for some constant $C$ depending only on $n$ and $a$. 
That is, $L_a u$ is a locally bounded, absolutely continuous measure, with respect to $\mathcal{H}^{n-1}$, supported on $\{x_n = y = 0\}$. 
\end{lemma}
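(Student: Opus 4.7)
By \eqref{eq.VTOP} we have $L_au \le 0$ with support in $\Lambda(u)\subset\{x_n=y=0\}$, so Lemma~\ref{lem.operator} gives $\mathcal{F}_a(u)\le 0$ on $B_{1/2}'$ and $\mathcal{F}_a(u)\equiv 0$ on $B_{1/2}'\setminus\Lambda(u)$. Thus the task reduces to the lower bound $\mathcal{F}_a(u)(x_\circ') \ge -C\|u\|_{L^\infty(B_1)}$ at each contact point $x_\circ'\in \Lambda(u)\cap B_{1/2}'$.

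My strategy is to split $u$ into a very-thin Poisson piece, whose $\mathcal{F}_a$ is explicitly controlled via Proposition~\ref{prop.mathcalF}, plus a correction with zero $\mathcal{F}_a$. Fix $\eta\in C_c^\infty(B_{3/4}')$ with $\eta\equiv 1$ on $B_{1/2}'$, and set $g := \eta\,u(\cdot,0,0)\in\mathrm{Lip}_c(\R^{n-1})$. By Lemma~\ref{lem.Lip}, $\|g\|_{C^{0,1}(\R^{n-1})}\le C\|u\|_{L^\infty(B_1)}$. Let $\bar g:=g*_{x'}P_a$ be its Poisson extension via the kernel $P_a$ from \eqref{eq.Poissonkernel}. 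By Proposition~\ref{prop.mathcalF},
\[
\mathcal{F}_a(\bar g) = c_{n,a}(-\Delta)^{-a/2}g,
\]
and since $-a/2\in(0,1/2)$ and $g$ is Lipschitz with compact support, the singular-integral representation of the fractional Laplacian yields $\|\mathcal{F}_a(\bar g)\|_{L^\infty(\R^{n-1})}\le C\|u\|_{L^\infty(B_1)}$ (split the defining integral into a near-field piece, controlled by Lipschitz regularity, and a far-field piece, controlled by compactness of support and boundedness of $g$).

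Now set $w:=u-\bar g$. Since $\eta\equiv 1$ on $B_{1/2}'$, we have $w\equiv 0$ on $\{x_n=y=0\}\cap B_{1/2}'$, and $L_aw\equiv 0$ in $B_{1/2}\setminus\{x_n=y=0\}$. I will conclude by showing $\mathcal{F}_a(w)\equiv 0$ on $B_{1/2}'$, so that $\mathcal{F}_a(u)=\mathcal{F}_a(\bar g)$ inherits the desired $L^\infty$ bound. To verify $\mathcal{F}_a(w)(x_\circ')=0$, I combine iterated applications of Lemma~\ref{lem.continside} (which give smoothness of $w$ in the $x'$-directions, since $w(\cdot,0,0)\equiv 0$) with a separation-of-variables argument near the axis $\{(x_\circ',0,0)\}$. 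Writing $(x_n,y)=(\rho\cos\theta,\rho\sin\theta)$, the modes of $w$ near the axis take the form $\rho^{\lambda}f(\theta)$, where $f$ solves a singular Sturm--Liouville problem whose smallest positive eigenvalue $\lambda_{\min}$ satisfies $\lambda_{\min}+a>0$; hence $w=O(\rho^{\lambda_{\min}})$ and $\nabla_{(x_n,y)}w=O(\rho^{\lambda_{\min}-1})$ near the axis, so that
\[
\Bigl|\int_{\partial D_\epsilon}w_\nu\,|y|^a\,\d\sigma(x_n,y)\Bigr|\le C\epsilon^{\lambda_{\min}+a}\longrightarrow 0\quad\text{as }\epsilon\downarrow 0,
\]
which is $\mathcal{F}_a(w)(x_\circ')=0$ by definition \eqref{eq.F0}.

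The main obstacle is the asymptotic expansion and, in particular, the spectral bound $\lambda_{\min}+a>0$ for the axial Sturm--Liouville problem. This reduces to an ODE on the circle with weight $|\sin\theta|^a$ and even-in-$y$/vanishing-on-axis boundary conditions; it can be settled either by an explicit computation (the ODE is of Jacobi/hypergeometric type and $\lambda_{\min}$ is the first positive root of a transcendental relation involving $-a$) or by a compactness/Liouville-type argument exploiting the scale invariance of $L_a$ together with the fact that a non-trivial $L_a$-harmonic function which is even in $y$ and vanishes on a codimension-two linear subspace must decay at that subspace strictly faster than the critical rate $\rho^{-a}$.
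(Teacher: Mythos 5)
There is a genuine gap, and it sits exactly where you flagged the ``main obstacle'': the spectral bound $\lambda_{\min}+a>0$ is false, and with it the claim $\mathcal{F}_a(w)\equiv 0$. For the two-dimensional operator $\DIV_{x_n,y}(|y|^a\nabla_{x_n,y}\cdot)$ the profile $\rho^{-a}$, $\rho=|(x_n,y)|$, is bounded, even in $y$, vanishes on the axis, and is $L_a$-harmonic away from it; this is precisely the extra homogeneity $\lambda=-a$ recorded in Lemma~\ref{lem: 2d class}. Its weighted flux is
\[
\int_{\pa D_\vep}\pa_\nu\big(\rho^{-a}\big)\,|y|^a\,\d\sigma(x_n,y)=(-a)\,\vep^{-a-1}\cdot\vep^{1+a}\int_0^{2\pi}|\sin\theta|^a\,\d\theta>0,
\]
independent of $\vep$. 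Hence $\lambda_{\min}=-a$ and $\lambda_{\min}+a=0$: boundedness, zero trace, and $L_a$-harmonicity off the very thin space do \emph{not} imply that the flux in \eqref{eq.F0} vanishes; for $a<0$ the very thin space has positive capacity, yet the ``fundamental'' profile $\rho^{-a}$ vanishes continuously on it, so it is not removable in your class. Worse, the identity you are aiming for, $\mathcal{F}_a(u)=\mathcal{F}_a(\bar g)$ on $B_{1/2}'$, is false for the local problem: at points of $B_{1/2}'\setminus\Lambda(u)$ the left-hand side is zero while $c_{n,a}(-\Delta)^{-a/2}(\eta\,u(\cdot,0,0))$ has no reason to vanish (that identity encodes the Euler--Lagrange equation of the global problem \eqref{eq.Glob3} and is destroyed by the cutoff and by the boundary data on $\pa B_1$); and at interior contact points the remainder $w=u-\bar g$ is exactly the piece carrying the $\rho^{-a}$-type mass that the lemma is meant to bound. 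So the strategy cannot be repaired by refining the Sturm--Liouville analysis: the decomposition assigns the whole quantity of interest to a term you then declare to be zero.

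For comparison, the paper's proof is a short, purely local estimate with no decomposition. At a contact point $x'\in B_{1/2}'$ one uses the $C^{-a}$ regularity of $u$ (Corollary~\ref{cor.ca}) to get $\osc_{B_\vep}u\le C\vep^{-a}\|u\|_{L^\infty(B_1)}$, then interior estimates for $a$-harmonic functions in balls of radius comparable to $\vep$ centered at points of $\{x'\}\times\pa D_\vep$ to obtain $|\pa_n u|\le C\vep^{-1-a}\|u\|_{L^\infty(B_1)}$ and $\big||y|^a\pa_y u\big|\le C\vep^{-1}\|u\|_{L^\infty(B_1)}$ on $\pa D_\vep$; since $\int_{\pa D_\vep}|y|^a\,\d\sigma\simeq\vep^{1+a}$, the flux in \eqref{eq.F0} is bounded by $C\|u\|_{L^\infty(B_1)}$ uniformly in $\vep$. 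Your preliminary steps (reduction to $\Lambda(u)$, and the bound $\|(-\Delta)^{-a/2}(\eta\,u(\cdot,0,0))\|_{L^\infty}\le C\|u\|_{L^\infty(B_1)}$ via Lemma~\ref{lem.Lip} and Proposition~\ref{prop.mathcalF}) are correct, but they concern $\bar g$, not the part of $u$ the lemma is actually about.
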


\begin{proof}
Recall, if $L_a u = 0$ in $B_{r}(X_\circ)$, then 
\begin{equation}
\label{eq.est1}
\|\nabla_x u\|_{L^\infty(B_{r/2}(X_\circ))} \le C r^{-1}\osc_{B_{r}(X_\circ)} u
\end{equation}
and 
\begin{equation}
\label{eq.est2}
\||y|^a\pa_y u\|_{L^\infty(B_{r/2}(X_\circ))} \le C r^{a-1}\osc_{B_{r}(X_\circ)} u.
\end{equation}
(See, e.g., \cite[Proposition 2.3]{JN17}.)
Now let $x'\in B_{1/2}'$.
And assume that $\mathcal{F}_a(u)(x')< 0$, so that $u(x') = 0$ (otherwise, there is nothing to prove). 
We claim that 
\begin{equation}
\label{eq.weobtain}
\lim_{\vep\downarrow 0} \left|\int_{\pa D_\vep} \left(\frac{x_n}{\vep} \pa_n u(x', x_n, y) +\frac{y}{\vep} \pa_{y}u(x', x_n, y) \right)|y|^a\,\d\sigma(x_n, y)\right|\le C\|u\|_{L^\infty(B_1)}.
\end{equation}
From \eqref{eq.est1} and \eqref{eq.est2} and by Corollary~\ref{cor.ca}, rescaled to $B_\vep(x',x_n,y)$, we have that
\[
\sup_{\pa D_\vep}|\pa_n u|\le C\vep^{-1-a}\|u\|_{L^\infty(B_1)}\quad\textrm{ and }\quad\sup_{\pa D_\vep} ||y|^a \pa_y u|\le C\vep^{-1}\|u\|_{L^\infty(B_1)}.
\]
Hence, \eqref{eq.weobtain}, as desired. 
\end{proof}

The following theorem proves that $u$ is $C^{1, \tau}$ in the directions parallel to $\{x_n = y = 0\}$. 

\begin{theorem}
\label{thm.C1tau}
Let $u$ be the solution to \eqref{eq.VTOP}. 
Then, for all $\boldsymbol{e}'\in \{x_n = y = 0\}\cap \mathbb{S}^n$,
\[
[\pa_{\boldsymbol{e}'} u]_{C^{\tau}(B_{1/2})} \le C \|u\|_{L^2(B_1, |y|^a)},
\] 
for some constants $\tau>0$ small and $C$ depending only on $n$ and $a$. 
\end{theorem}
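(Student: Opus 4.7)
The strategy is a three-step argument. First, reduce the problem to the fractional obstacle problem on the very thin space $B_1'$, using the identification of the trace operator $\mathcal{I}_a$ with a fractional Laplacian from Proposition~\ref{prop.mathcalF}. Second, apply the classical $C^{1,\alpha}$ theory for the fractional obstacle problem to obtain Hölder regularity of the tangential derivatives of the trace. Third, propagate this Hölder regularity off the very thin space using the barrier estimate Lemma~\ref{lem.continside}.

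Concretely, let $\bar u(x') := u(x', 0, 0)$ and set $w := \pa_{\boldsymbol{e}'} u$. Since $u$ is $a$-harmonic in $B_1 \setminus \{x_n = y = 0\}$, Lemma~\ref{lem.operator} combined with Proposition~\ref{prop.mathcalF} identifies the distribution $L_a u$ restricted to the very thin space with $c_{n,a}(-\Delta)^{-a/2}\bar u \cdot \mathcal{H}^{n-1}\mres B_1'$. The Euler--Lagrange system \eqref{eq.VTOP} then translates into the fractional obstacle problem on $B_1'$,
\begin{equation*}
\bar u \ge 0, \quad (-\Delta)^{-a/2}\bar u \ge 0, \quad \bar u\,(-\Delta)^{-a/2}\bar u = 0,
\end{equation*}
with fractional order $-a/2 \in (0, 1/2)$. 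Classical regularity theory for the fractional obstacle problem (\cite{CSS08}) then yields $\bar u \in C^{1,\tau'}_{\rm loc}(B_1')$ for some $\tau' = \tau'(n, a) > 0$, with the quantitative bound $\|\bar u\|_{C^{1,\tau'}(B_{3/4}')} \le C\|u\|_{L^2(B_1,|y|^a)}$, invoking Lemma~\ref{lem.boundedsolution}. Now, since $\boldsymbol{e}' \perp (x_n, y)$ and $L_a$ is translation-invariant in $x'$, differentiating $L_a u = 0$ yields $L_a w = 0$ in $B_1 \setminus \{x_n = y = 0\}$; Lemma~\ref{lem.Lip} gives $w \in L^\infty(B_{1/4})$, and the preceding step gives $w(\,\cdot\,, 0, 0) \in C^{\tau'}(B_{3/4}')$, both with bounds controlled by $\|u\|_{L^2(B_1,|y|^a)}$. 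Applying Lemma~\ref{lem.continside} with $k = 0$ and $\beta = \tau'$ (after a harmless rescaling), we conclude $[w]_{C^\tau(B_{1/2})} \le C\|u\|_{L^2(B_1,|y|^a)}$ for $\tau := \min\{-a, \tau'\}$.

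The principal technical point is the rigorous identification of the restricted distribution $L_a u$ in the first step. This amounts to decomposing $u$ as the $a$-harmonic Poisson extension of $\bar u$ (for which Proposition~\ref{prop.mathcalF} directly gives the identity with $\mathcal{I}_a = c_{n,a}(-\Delta)^{-a/2}$) plus a correction that is $a$-harmonic in $B_1 \setminus \{x_n = y = 0\}$ and vanishes on the very thin space; by the second part of Lemma~\ref{lem.continside}, such a correction is smooth there and its contribution to $\mathcal{F}_a$ on $B_1'$ vanishes. Once this identification is in place, everything else is standard, and the decomposition $\tau = \min\{-a, \tau'\}$ is automatic from Lemma~\ref{lem.continside}.
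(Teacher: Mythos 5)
Your overall strategy --- reduce to an obstacle problem for $(-\Delta)^{-a/2}$ on the very thin space, obtain $C^{1,\tau}$ regularity of the trace there, and propagate it off the very thin space with Lemma~\ref{lem.continside} --- is the same as the paper's, and the final propagation step is fine. The gap is in the step you yourself single out as the principal technical point. It is \emph{not} true that a bounded function $h$ which is $a$-harmonic in $B_1\setminus\{x_n=y=0\}$ and vanishes on the very thin space has vanishing contribution to $\mathcal{F}_a$: the function $h(X)=(x_n^2+y^2)^{-a/2}=|(x_n,y)|^{-a}$ is bounded, even in $y$, vanishes on $\{x_n=y=0\}$, satisfies $L_a h=0$ off the very thin space, and yet $\mathcal{F}_a(h)$ is a nonzero constant --- this is precisely the $(-a)$-homogeneous profile appearing in Lemma~\ref{lem: 2d class}, and precisely the manifestation of the positive $a$-harmonic capacity of the very thin space when $a<0$. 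Lemma~\ref{lem.continside} gives tangential smoothness and $C^{-a}$ regularity across the very thin space, not $L_a h=0$ across it. Consequently your reduced problem is wrong as stated: $\bar u$ does not solve the clean problem $\bar u\ge 0$, $(-\Delta)^{-a/2}\bar u\ge 0$, $\bar u\,(-\Delta)^{-a/2}\bar u=0$ on $B_1'$; the correction contributes a nontrivial inhomogeneity. What is true (and what the paper proves for its analogous correction $\hat w$) is that this contribution is a smooth function of $x'$, so the trace solves an obstacle problem with a smooth right-hand side, as in \eqref{eq.U}.

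This changes the regularity input needed in the middle step. With a right-hand side (equivalently, after subtracting a solution of the linear equation, with a nonzero obstacle) and with the complementarity conditions only available on $B_1'$, a direct citation of the global theory of \cite{CSS08} does not apply. The paper instead multiplies $u$ by a cutoff that is radial in $(x_n,y)$ --- which preserves the sign and support structure of $\mathcal{F}_a$ --- subtracts a globally defined correction $\hat w$ with zero very thin trace and decay at infinity (so that the identification $\mathcal{F}_a=\mathcal{I}_a$ of Proposition~\ref{prop.mathcalF} is legitimate for $\hat u-\hat w$), and then applies \cite[Proposition 2.2]{CRS17}, whose hypotheses are exactly the Lipschitz \emph{and semiconvexity} estimates of Lemma~\ref{lem.Lip}. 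Your proposal uses only boundedness and the Lipschitz bound and never invokes semiconvexity, which is the key structural hypothesis making the $C^{1,\tau}$ step work; so even after correcting the reduction, the regularity step of your argument is not yet justified as written.
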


\begin{proof}
Define the cut-off function $\xi(X) := \zeta(|x'|^2)\zeta(x_n^2+y^2)$ where
\[
\zeta: [0, \infty)\to [0, 1],\quad\zeta' \le 0, \quad\zeta \equiv 1 \text{ in } [0, 1/8], \quad\text{and}\quad \zeta \equiv 0 \text{ in } [1/4, \infty),
\]
and set $\hat{u}(X) := u(X)\xi(X)$ in $B_1$ and $\hat{u}(X) \equiv 0$ outside of $B_1$.  
Notice that
\[
L_a \hat{u} = u L_a \xi  + |y|^a\nabla \xi \cdot \nabla u =: |y|^a \hat{f}(X) \quad\text{in}\quad \R^{n+1}\setminus \{x_n = y = 0\}.
\]

Now let $\hat{w}$ be such that 
\begin{equation}
\label{eq.w}
\left\{ 
\begin{array}{rcll}
L_a \hat{w} & = & |y|^a \hat{f} &\text{in } \R^{n+1}\setminus\{ x_n = y = 0\}\\
\hat{w}(x', 0, 0) &=& 0 &\text{on } \R^{n-1}\\
\lim_{|X|\to \infty} \hat{w}(X)&=& 0. 
\end{array}\right.
\end{equation}
Clearly, $L_a \hat{w} = 0$ in $B_{1/8} \setminus\{x_n = y = 0\}$, so that by Lemma~\ref{lem.continside}, $\hat{w}$ is smooth in $B_{1/16}$.
Hence, $\mathcal{F}_a(\hat{w})$ is smooth in $B_{1/16}'$.

Observe that 
\[
\left\{ 
\begin{array}{rcll}
L_a (\hat{u} - \hat{w}) & = & 0 &\text{in } \R^{n+1}\setminus\{ x_n = y = 0\}\\
\hat{u} - \hat{w}  &\geq& 0 &\text{on } \R^{n-1} \times \{ 0\} \times \{0\}. 
\end{array}\right.
\]
Moreover, by the symmetries of $\xi$ in the $(x_n, y)$ directions, we have that
\[
\left\{ 
\begin{array}{rcll}
\mathcal{F}_a (u\xi)(x') & = & 0 &\text{if } u(x', 0, 0) > 0\\
\mathcal{F}_a (u\xi)(x')  &\leq& 0 &\text{if } u(x', 0, 0) = 0;
\end{array}\right.
\] 
so 
\[
\left\{ 
\begin{array}{rcll}
\mathcal{F}_a (\hat{u} -\hat{w})(x') & = & -\mathcal{F}_a (\hat{w})(x') &\text{if } (\hat{u}-\hat{w})(x', 0, 0) > 0\\
\mathcal{F}_a (\hat{u} -\hat{w})(x')  &\leq& -\mathcal{F}_a (\hat{w})(x') &\text{if } (\hat{u}-\hat{w})(x', 0, 0) = 0.
\end{array}\right.
\] 
Alternatively, thanks to Proposition~\ref{prop.mathcalF}, $U(x') := (\hat{u} - \hat{w})(x', 0, 0)$ solves the following obstacle problem 
\begin{equation}
\label{eq.U}
\left\{ 
\begin{array}{rcll}
U & \ge & 0 & \text{in } \R^{n-1},\\
(-\Delta)^{-\frac{a}{2}} U & = & -C\mathcal{F}_a(\hat{w}) &\text{in } \{x': U(x') > 0\},\\
(-\Delta)^{-\frac{a}{2}} U & \le  & -C\mathcal{F}_a(\hat{w}) &\text{in } \R^{n-1}\\
\lim_{|x'|\to \infty} U(x')&=& 0. 
\end{array}\right.
\end{equation}
By \cite[Proposition 2.2]{CRS17}, recalling that $\mathcal{F}_a(\hat{w})$ is smooth in $B_{1/16}'$ and that $u$ is Lipschitz \eqref{eq.Lipschitz} and semiconvex \eqref{eq.semiconv2}, we deduce that $U\in C^{1,\tau}(B_{1/32}')$. 
And via a simple covering argument, $U \in C^{1,\tau}(B_{3/4}')$.

The theorem now follows from Lemma~\ref{lem.continside}. 
\end{proof}

The last result of this subsection is a H\"{o}lder regularity result for the $X$-directional derivative of $u$ for $X \in B_1$.

\begin{corollary}
\label{cor.welldefined}
Let $u$ be the solution to \eqref{eq.VTOP}. 
Then, $X \cdot \nabla u$ is continuous in $B_1$. In particular, 
\[
\|X \cdot \nabla u\|_{C^{\bar\tau}(B_{1/2})} \leq C\|u\|_{L^\infty(B_1)},
\]
for some constants $\bar\tau >0$ small and $C$ depending only on $n$ and $a$.
\end{corollary}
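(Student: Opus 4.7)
The plan is to apply Lemma~\ref{lem.continside} to the function $v(X) := X \cdot \nabla u(X)$, which I will show is bounded, $a$-harmonic off the very thin space, and extends continuously to $\{x_n = y = 0\}$ with H\"older continuous boundary values. Since $u$ is real-analytic in $B_1 \setminus \{x_n = y = 0\}$, differentiating the scaling identity $L_a(u(\lambda\,\cdot\,))(X) = \lambda^{2-a}(L_a u)(\lambda X)$ at $\lambda = 1$ yields
\[
L_a(X \cdot \nabla u) = (2-a) L_a u + X \cdot \nabla L_a u,
\]
which vanishes in $B_1 \setminus \{x_n = y = 0\}$.

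Next I would establish both the $L^\infty$ bound on $v$ and its continuous extension to the very thin space. Writing $v = x' \cdot \nabla_{x'} u + x_n \pa_n u + y\, \pa_y u$, the first summand is uniformly bounded on $B_{1/4}$ by Lemma~\ref{lem.Lip} and converges to $x' \cdot \nabla_{x'} u(x', 0, 0)$ as $(x_n, y) \to 0$ by Theorem~\ref{thm.C1tau}. For the remaining terms, I would fix $X = (x', x_n, y)$ with $\rho := \sqrt{x_n^2 + y^2} > 0$ small and apply the interior estimates \eqref{eq.est1}--\eqref{eq.est2} on $B_{\rho/2}(X) \subset B_1 \setminus \{x_n = y = 0\}$, combined with the universal bound $\osc_{B_{\rho/2}(X)} u \le C\rho^{-a}\|u\|_{L^\infty(B_1)}$ coming from Corollary~\ref{cor.ca}, to get
\[
|x_n\, \pa_n u(X)| + |y\,\pa_y u(X)| \le C\rho^{-a}\|u\|_{L^\infty(B_1)}.
\]
Since $-a \in (0, 1)$, this quantity is uniformly bounded on $B_{1/4}$ and tends to zero as $\rho \downarrow 0$, so $v$ extends continuously to $\{x_n = y = 0\}$ with the claimed trace and $\|v\|_{L^\infty(B_{1/4})} \le C\|u\|_{L^\infty(B_1)}$.

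Finally, Theorem~\ref{thm.C1tau} gives that the boundary trace $x' \mapsto v(x', 0, 0) = x' \cdot \nabla_{x'} u(x', 0, 0)$ lies in $C^\tau(B_{1/4}')$ with norm controlled by $\|u\|_{L^\infty(B_1)}$. Thus, after a standard rescaling, Lemma~\ref{lem.continside} applied to $v$ with $k = 0$ and $\beta = \tau$ yields $v \in C^{\bar\tau}(B_{1/2})$ with $\bar\tau := \min\{-a, \tau\}$ and the stated estimate. The principal obstacle is the $L^\infty$ bound and continuous extension of $v$ across $\{x_n = y = 0\}$: this rests on the cancellation in the radial combination $x_n\, \pa_n u + y\,\pa_y u$, which is controlled only by combining the $C^{-a}$ regularity of $u$ with the weighted anisotropic gradient estimate \eqref{eq.est2}, and which would fail for $a \geq 0$ where the very thin space has zero capacity.
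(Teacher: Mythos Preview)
Your proposal is correct and follows essentially the same route as the paper's own proof: both decompose $X\cdot\nabla u = x'\cdot\nabla_{x'}u + (x_n\pa_n u + y\,\pa_y u)$, control the tangential part via Theorem~\ref{thm.C1tau}, and bound the radial part by $C\rho^{-a}$ using the interior estimates \eqref{eq.est1}--\eqref{eq.est2} together with Corollary~\ref{cor.ca}. The only difference is packaging---the paper references the barrier argument of \cite{MS06} directly, while you route the conclusion through Lemma~\ref{lem.continside} (which encapsulates that same barrier argument); this makes your version slightly more self-contained.
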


\begin{proof}
Let $X_\circ \in \Lambda(u)$.
By \eqref{eq.est1}, \eqref{eq.est2}, and Corollary~\ref{cor.ca},
\[
\sup_{B_{r/2}(X_\circ)} |x_n \pa_{n}u| + |y\pa_yu| \leq Cr^{-a}.
\]
This, Theorem~\ref{thm.C1tau}, the $C^{1, \tau}$ regularity of $u$ in $x'$, and interior estimates for $a$-harmonic functions in $B_1 \setminus \Lambda(u)$ (see, e.g., \cite{JN17}) yield the desired result (again, as in \cite{MS06}, for instance).
\end{proof}

\subsection{Monotonicity Formulae}

In this subsection, we prove that $u$ has the same monotonicity properties as its cousin, the solution to the thin obstacle problem.
We start with Almgren's frequency function.

\begin{lemma}
\label{lem.MonFreq}
Let $u$ be the solution to \eqref{eq.VTOP} and $0 \in \Lambda(u)$. 
Then, Almgren's frequency function on $u$
\[
r\mapsto N(r, u) := \frac{r\int_{B_r} |\nabla u|^2|y|^a}{\int_{\pa B_r} u^2|y|^a}
\]  
is non-decreasing for $0< r < 1$. 
Moreover, $N(u, r)\equiv \lambda$ if and only if $u$ is homogeneous of degree $\lambda$ in $B_1$, i.e., $x\cdot \nabla u - \lambda u = 0$ in $B_1$.
\end{lemma}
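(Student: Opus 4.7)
\medskip

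\noindent\textbf{Proof Plan.} The plan is to mirror the computation done in the proof of Proposition~\ref{prop: Almgren}, but for a genuine solution to the very thin obstacle problem rather than the difference $v = u-p$. The crucial simplification is that now $u\,L_a u \equiv 0$ in $B_1$ by the very definition of a solution to \eqref{eq.VTOP}, so the ``error terms'' that appeared in Proposition~\ref{prop: Almgren} disappear.

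First I would set, as in \eqref{eq.D}--\eqref{eq.H},
\[
D(r) := \frac{1}{r^{n+a-1}}\int_{B_r}|\nabla u|^2 |y|^a, \qquad H(r) := \frac{1}{r^{n+a}}\int_{\partial B_r} u^2|y|^a,
\]
so that $N(r,u) = D(r)/H(r)$. By the scaling $N(\rho, u_r) = N(r\rho, u)$ it suffices to verify that
\[
D'(1)H(1)-H'(1)D(1) \ge 0,
\]
with equality at every $r$ characterizing the homogeneous case. Using integration by parts exactly as in the proof of Proposition~\ref{prop: Almgren}, together with Corollary~\ref{cor.welldefined} (which ensures $X\cdot \nabla u$ is continuous up to $\Lambda(u)$, so the boundary manipulations are justified), I obtain
\[
D'(1)=2\int_{\partial B_1}u_\nu^2 |y|^a -2\int_{B_1}L_a u\,(X\cdot \nabla u), \qquad H'(1) = 2\int_{\partial B_1} u u_\nu|y|^a.
\]
By Lemma~\ref{lem.operator}, $L_a u$ is a non-positive measure supported on $\Lambda(u) \subset \{x_n=y=0\}$. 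For any $X=(x',0,0)\in \Lambda(u)$, one has $X\cdot\nabla u(X)=x'\cdot\nabla_{x'} u(x',0,0)$; since $u(\,\cdot\,,0,0)\ge 0$ in $B_1'$ and vanishes on the (relatively closed) contact set, its tangential gradient vanishes there, and by the continuity from Corollary~\ref{cor.welldefined} this extends to all of $\Lambda(u)$. Hence $\int_{B_1} L_a u\,(X\cdot\nabla u) = 0$.

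Combining this with the identity
\[
D(1) = \int_{B_1}|\nabla u|^2 |y|^a = \int_{\partial B_1}u u_\nu|y|^a - \int_{B_1} u\,L_a u = \int_{\partial B_1}u u_\nu|y|^a,
\]
where $u\,L_a u\equiv 0$ by \eqref{eq.VTOP}, and applying the Cauchy--Schwarz inequality on $\partial B_1$ with weight $|y|^a$, I obtain
\[
D'(1)H(1) = 2\int_{\partial B_1}u_\nu^2|y|^a \int_{\partial B_1}u^2|y|^a \ge 2\Bigl(\int_{\partial B_1}u u_\nu |y|^a\Bigr)^2 = H'(1)D(1),
\]
which is precisely the desired monotonicity. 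The rigidity statement then follows in the standard way: if $N(r,u)\equiv \lambda$ on some interval, then equality must hold in the Cauchy--Schwarz step at every $r$, giving $u_\nu = (\lambda/r)u$ on $\partial B_r$, i.e.\ $X\cdot \nabla u = \lambda u$ in $B_1$; conversely, any $\lambda$-homogeneous $u$ plainly satisfies $N(r,u)\equiv \lambda$.

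\medskip

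\noindent\textbf{Main obstacle.} The only delicate point is justifying the vanishing of $\int_{B_1} L_a u\,(X\cdot\nabla u)$, since $L_a u$ is only a measure. This is handled by the $C^{\bar\tau}$ regularity of $X\cdot\nabla u$ from Corollary~\ref{cor.welldefined} combined with the description of $\mathrm{supp}(L_a u)\subset \Lambda(u)$ from Lemma~\ref{lem.operator}; everything else is the classical Almgren computation, now cleaner than in Proposition~\ref{prop: Almgren} because no auxiliary polynomial $p$ is subtracted off.
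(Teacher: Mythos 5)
Your proposal is correct and follows essentially the same route as the paper's proof: the Almgren computation with $D'$, $H'$, and Cauchy--Schwarz, where the key terms $\int u\,L_a u$ and $\int (X\cdot\nabla u)\,L_a u$ vanish by the complementarity condition together with Corollary~\ref{cor.welldefined} and the fact that $L_a u$ is a finite measure supported on $\Lambda(u)$. The paper handles the low-regularity bookkeeping (absolute continuity of $D$, pointwise finiteness of $H'$) slightly more explicitly and cites \cite[Lemma 1]{ACS08} for the rigidity case, but your argument, including the standard equality analysis in Cauchy--Schwarz, is the same in substance.
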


\begin{proof}
The proof of this lemma is standard, and follows the lines of the proof that Almgren's frequency function is monotone on solutions the thin obstacle problem. 
Nonetheless, some of the steps now require justification because of the inherent lower regularity of the very thin obstacle problem.
Justifying these steps is where Theorem~\ref{thm.C1tau}\,---\,more precisely, Corollary~\ref{cor.welldefined}\,---\,comes into play. 

Set, for $0<r < 1$,
\[
D(r) = D(r, u) := \int_{B_r} |\nabla u |^2|y|^a \quad\text{and}\quad H(r) = H(r, u) := \int_{\pa B_r}u^2|y|^2, 
\]
so that $N(r) := N(r, u) = rD(r)/H(r)$. 
Notice that both quantities are pointwise defined, since $u\in W^{1, 2}(B_1, |y|^a)\cap C^{-a}_{\rm loc}(B_1)$, and in particular, $N(r)$ is continuous. 
Following the proof of Proposition~\ref{prop: Almgren} (where we remark that $D$ and $H$ were defined differently), we immediately find that
\[
H'(r) = \frac{n+a}{r} H(r) + 2 \int_{\pa B_r} u u_\nu |y|^a 
\]
and 
\begin{equation}
\label{eq.secterm}
D'(r) = \frac{n+a-1}{r} D(r) + \frac{2}{r}\int_{B_r}\nabla u\cdot \nabla (X\cdot \nabla u)|y|^a.
\end{equation}
By Corollary~\ref{cor.welldefined}, the quantity $H'(r)$ is well-defined pointwise (and finite). 
On the other hand, $D(r)$ is absolutely continuous, being the integral in $B_r$ of an integrable function, so that its derivative is well-defined pointwise and  almost everywhere finite (and non-negative). 
Thus, $N(r, u)$ is locally absolutely continuous.

Integrating by parts in the second term of \eqref{eq.secterm}, we deduce that 
\[
\frac{1}{r}\int_{B_r}\nabla u\cdot \nabla (X\cdot \nabla u)|y|^a =  \int_{\pa B_r} u_\nu^2|y|^a - \frac{1}{r}\int_{B_r}  (X\cdot \nabla u) L_a u.
\]
Now notice that $L_a u$ is a finite measure concentrated on $\{x_n = y = 0\}$ (see Lemma~\ref{lem.boundedoperator}), and $X\cdot \nabla u $ is continuous (see Corollary~\ref{cor.welldefined}). 
Moreover, by the proof of Corollary~\ref{cor.welldefined}, $X\cdot \nabla u  = 0$ whenever $L_a u< 0$. 
In turn, the second term above vanishes. 
On the other hand, by the continuity of $X\cdot \nabla u$, the first term is well-defined pointwise. 
Hence, 
\[
D'(r) = \frac{n+a-1}{r} D(r) + 2 \int_{\pa B_r} u^2_\nu |y|^a.
\]

Integrating by parts again, observe that 
\[
D(r) = \int_{B_r} |\nabla u|^2|y|^a = \int_{\pa B_r} u u_\nu |y|^a  - \int_{B_r} u L_a u = \int_{\pa B_r} u u_\nu |y|^a,
\]
where the term $\int_{B_r} u L_a u = 0$ arguing as before: $u$ is continuous (Corollary~\ref{cor.ca}) and vanishes whenever $L_a u < 0 $, and $L_a u$ is a finite measure concentrated on $\{x_n = y = 0\}$ (Lemma~\ref{lem.boundedoperator}). 

Combing the above estimates, we determine that 
\[
\frac{N'(r)}{N(r)} = \frac{D'(r)}{D(r)}-\frac{H'(r)}{H(r)}+\frac{1}{r} = 2\left(\frac{\int_{\pa B_r} u_\nu^2|y|^a}{\int_{\pa B_r} u u_\nu|y|^a}-\frac{\int_{\pa B_r} u u_\nu|y|^a}{\int_{\pa B_r} u^2|y|^a}\right)\ge 0,
\]
by the Cauchy--Schwarz inequality, which yields the monotonicity of $N(r, u)$. 
Analyzing the equality case, we see that if $N(r)$ is constant, then $u$ is homogeneous of degree $N(r)$ (see, e.g., \cite[Lemma 1]{ACS08}). 
\end{proof}

Next we prove a Monneau-type monotonicity formula. 

\begin{lemma}
\label{lem.HMon_VTOP}
Let $u$ be the solution to \eqref{eq.VTOP} and $0 \in \Lambda(u)$.
Given $\lambda \geq 0$, define
\begin{equation}
\label{eq.HMon_VTOP}
H_\lambda(r, u) := \frac{1}{r^{n+a+2\lambda}}\int_{\pa B_r} u^2|y|^a.
\end{equation}
For all $0\le \lambda\le N(0^+, u)$, the map $r\mapsto H_\lambda(r, u)$ is non-decreasing. 
\end{lemma}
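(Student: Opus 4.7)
The plan is to mimic the proof of Lemma~\ref{lem.HMon} (the Monneau-type monotonicity for the thin obstacle problem) in the very thin setting, carefully justifying the integrations by parts using the regularity results of the previous subsection. Since the rescaling $u_r(X) := u(rX)$ behaves well under the $a$-weighted norms, by scaling it suffices to compute $H_\lambda'/H_\lambda$ at $r=1$ and show it is non-negative.

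First I would differentiate $H_\lambda(r,u) = r^{-2\lambda} H(r,u)$, where $H(r,u) := r^{-n-a}\int_{\partial B_r} u^2|y|^a$, to obtain
\[
\frac{H_\lambda'(r)}{H_\lambda(r)} = \frac{2\int_{\partial B_r} u(X\cdot\nabla u)|y|^a}{r\int_{\partial B_r} u^2|y|^a} - \frac{2\lambda}{r}.
\]
The boundary term on $\partial B_r$ is well defined pointwise thanks to Corollary~\ref{cor.welldefined} (continuity of $X\cdot \nabla u$) and the continuity of $u$ (Corollary~\ref{cor.ucontVTOP}). The next step, which is the main technical point, is to perform the integration by parts
\[
\int_{\partial B_r} u(X\cdot \nabla u)|y|^a = \int_{B_r} |\nabla u|^2|y|^a + \int_{B_r} u\, L_a u,
\]
and to argue that the last term vanishes. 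This is where the very thin obstacle structure enters: by Lemma~\ref{lem.boundedoperator}, $L_a u$ is a locally bounded measure (absolutely continuous w.r.t.\ $\mathcal{H}^{n-1}$) supported on $\Lambda(u) \subset \{x_n=y=0\}$, and by the Euler--Lagrange conditions in \eqref{eq.VTOP}, $u\equiv 0$ on $\Lambda(u)$. Since $u$ is continuous (Corollary~\ref{cor.ucontVTOP}), the pairing $\int_{B_r} u\, L_a u$ makes sense and equals $0$. This is exactly the argument used in the proof of Lemma~\ref{lem.MonFreq} to handle $\int_{B_r} u\, L_a u$ and $\int_{B_r} (X\cdot\nabla u)\, L_a u$, so I would refer to it rather than reproduce the details.

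Combining the two displays yields
\[
\frac{H_\lambda'(r)}{H_\lambda(r)} = \frac{2}{r}\bigl( N(r,u) - \lambda \bigr).
\]
To conclude, I would invoke Lemma~\ref{lem.MonFreq}: since $r \mapsto N(r,u)$ is non-decreasing, $N(r,u) \geq N(0^+,u) \geq \lambda$ for all $r \in (0,1)$. Hence $H_\lambda'(r) \geq 0$ wherever $H_\lambda(r) > 0$, and $H_\lambda$ is non-decreasing as claimed (with the usual caveat that if $H_\lambda(r_0) = 0$ at some $r_0$, then $u \equiv 0$ on $\partial B_{r_0}$, which via a unique continuation/homogeneity argument forces $H_\lambda \equiv 0$ on $(0,r_0]$, preserving monotonicity).

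The main obstacle is purely one of regularity bookkeeping: all algebraic identities are classical, but we must be sure each integration by parts is valid given that $u$ is only $C^{-a}$ across the very thin space and $L_a u$ is a singular measure. The resolution is already contained in the proof of Lemma~\ref{lem.MonFreq}, so the novel content is just checking that the same regularity inputs (Corollaries~\ref{cor.ca} and \ref{cor.welldefined}, Lemma~\ref{lem.boundedoperator}) also suffice for the Monneau-type computation.
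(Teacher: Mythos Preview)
Your proposal is correct and follows essentially the same approach as the paper: compute $\frac{H_\lambda'}{H_\lambda}(r,u) = \frac{2}{r}(N(r,u)-\lambda)$ by using the key identity $\int_{B_r} u\,L_a u = 0$ (justified via the regularity results of Lemma~\ref{lem.MonFreq}), and then conclude by the monotonicity of the frequency. The paper's proof is just a terse version of what you wrote, referring back to the proofs of Lemma~\ref{lem.MonFreq} and Lemma~\ref{lem.HMon} rather than spelling out the integration by parts.
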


\begin{proof}
Arguing as in the proof of Lemma~\ref{lem.MonFreq}, using $\int_{B_r} u L_a u = 0$, we compute that
\begin{equation}
\label{eq.HMon2_2}
\frac{H_\lambda'}{H_\lambda}(r, u) = \frac{2}{r}(N(r, u) -\lambda).
\end{equation} 
(See, also, the proof of Lemma~\ref{lem.HMon}.) 
The lemma then follows from Lemma~\ref{lem.MonFreq}: $N(r, u) \ge N(0^+, u) \ge \lambda$.
\end{proof}

Now we move to the Weiss energies.

\begin{lemma}
\label{lem.WMon_VTOP}
Let $u$ be the solution to \eqref{eq.VTOP} and $0 \in \Lambda(u)$. 
Given $\lambda \geq 0$, define
\begin{equation}
\label{eq.WMon_VTOP}
W_\lambda(r, u) := H_\lambda(r,u)(N(r,u)-\lambda).
\end{equation}
For all $\lambda \geq 0$, the map $r\mapsto W_\lambda(r, u)$ is non-decreasing.
\end{lemma}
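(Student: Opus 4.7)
The plan is to follow the two-case structure used in the proof of Lemma \ref{lem.HMon}, exploiting the cleaner identity $u\,L_a u \equiv 0$ available in the very thin setting (as opposed to $v\,L_a v \geq 0$ in the thin setting).

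First, I would handle the range $0 \leq \lambda \leq N(0^+, u)$. Here the factorization $W_\lambda(r,u) = H_\lambda(r,u)(N(r,u)-\lambda)$ expresses $W_\lambda$ as a product of two non-negative non-decreasing functions: $H_\lambda(\cdot,u)$ is non-negative and non-decreasing by Lemma \ref{lem.HMon_VTOP}, and $N(\cdot,u)-\lambda$ is non-decreasing by Lemma \ref{lem.MonFreq} and non-negative by the hypothesis $\lambda \leq N(0^+,u) \leq N(r,u)$. Monotonicity of $W_\lambda$ follows immediately.

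Next, for $\lambda > N(0^+,u)$ the factorization no longer controls signs and one must differentiate $W_\lambda$ directly. The cleanest route is to pass to the rescaling $u_r(X) := r^{-\lambda}u(rX)$, in terms of which
\[
W_\lambda(r,u) = \int_{B_1}|\nabla u_r|^2|y|^a - \lambda\int_{\pa B_1} u_r^2|y|^a.
\]
Using $\pa_r u_r = \tfrac{1}{r}(X\cdot \nabla u_r - \lambda u_r)$ and integrating by parts, a short computation reduces $W_\lambda'(r)$ to a nonnegative boundary term plus the bulk contribution
\[
-\tfrac{2}{r}\int_{B_1}(X\cdot\nabla u_r - \lambda u_r)\, L_a u_r.
\]
Both pieces of this bulk integral vanish: $\int_{B_1} u_r\, L_a u_r = 0$ directly from the third equation in \eqref{eq.VTOP}, while $\int_{B_1}(X\cdot\nabla u_r)\,L_a u_r = 0$ since $L_a u_r$ is a measure concentrated on $\Lambda(u_r)\subset\{x_n = y = 0\}$ (Lemma \ref{lem.boundedoperator}), where $X$ is tangential and $\nabla_{x'} u_r$ vanishes on the contact set. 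This is the same argument used in the proof of Lemma \ref{lem.MonFreq}, and the continuity of $X\cdot \nabla u_r$ from Corollary \ref{cor.welldefined} is what makes it rigorous. The outcome is
\[
\tfrac{d}{dr}W_\lambda(r,u) = \tfrac{2}{r}\int_{\pa B_1}(u_{r,\nu} - \lambda u_r)^2|y|^a \geq 0.
\]

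The only genuine obstacle is the justification of the vanishing bulk term in the integration by parts, and as indicated above this is handled by exactly the regularity package ($u \in C^{-a}$ from Corollary \ref{cor.ca}, $X\cdot \nabla u \in C^{\bar\tau}$ from Corollary \ref{cor.welldefined}, and the measure-theoretic description of $L_a u$ from Lemma \ref{lem.boundedoperator}) already used for Lemma \ref{lem.MonFreq}. I note in passing that the second-case computation actually works for every $\lambda \geq 0$ without invoking $N(0^+,u)$, but splitting into two cases keeps the parallel with Lemma \ref{lem.HMon} transparent and avoids any need to discuss the sign of $N(r,u) - \lambda$ inside the integration by parts.
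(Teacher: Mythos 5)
Your proof is correct and, in its second case, is exactly the paper's argument: the paper differentiates $W_\lambda$ directly for all $\lambda \geq 0$, using $\int_{B_r} u\,L_a u = 0$ and the vanishing of the term involving $(X\cdot\nabla u)\,L_a u$ as in the proof of Lemma~\ref{lem.MonFreq}, to obtain $\frac{\d}{\d r} W_\lambda(r,u) = \frac{2}{r^{n+1+a+2\lambda}}\int_{\pa B_r} (X\cdot\nabla u - \lambda u)^2 |y|^a \geq 0$, which is your boundary identity written on $\pa B_r$ instead of for the rescaled $u_r$ on $\pa B_1$. The preliminary case split for $0 \leq \lambda \leq N(0^+,u)$, modeled on Lemma~\ref{lem.HMon}, is harmless but redundant, as you yourself observe.
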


\begin{proof}
Arguing as in the proof of Lemma~\ref{lem.MonFreq}, using $\int_{B_r} u L_a u = 0$, an explicit computation directly yields
\[
\frac{\d}{\d r} W_\lambda(r, u) = \frac{2}{r^{n+1+a+2\lambda}}\int_{\pa B_r} (X \cdot \nab u - \lambda u)^2 |y|^a \ge 0,
\]
as desired. 	
\end{proof}

We close this subsection with a useful limit.

\begin{lemma}
\label{lem.HMon_VTOP 2}
Let $u$ be the solution to \eqref{eq.VTOP} and $0 \in \Lambda(u)$.
Suppose that $N(0^+, u) = \lambda^*$. 
Given $\lambda > \lambda^\ast$,
\[
\lim_{r \downarrow 0} H_\lambda(r, u) = +\infty.
\]
\end{lemma}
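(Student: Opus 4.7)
The plan is to work directly with the logarithmic derivative identity \eqref{eq.HMon2_2}, which was established in the proof of Lemma~\ref{lem.HMon_VTOP}: namely,
\[
\frac{H_\lambda'}{H_\lambda}(r, u) = \frac{2}{r}\bigl(N(r, u) - \lambda\bigr).
\]
Integrating this identity between $r$ and some fixed $r_0 \in (0,1)$ converts the desired blow-up of $H_\lambda$ into a divergence statement for an explicit integral involving $N(\rho, u) - \lambda$.

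First I would use Lemma~\ref{lem.MonFreq} to deduce that $r \mapsto N(r, u)$ is non-decreasing and that $N(r, u) \downarrow \lambda^*$ as $r \downarrow 0$. Given $\lambda > \lambda^*$, setting $\delta := (\lambda - \lambda^*)/2 > 0$, there exists $r_0 > 0$ such that $N(\rho, u) \leq \lambda^* + \delta$ for all $\rho \in (0, r_0)$; equivalently, $\lambda - N(\rho, u) \geq (\lambda - \lambda^*)/2$ on this interval. In particular, $H_\lambda'(\rho, u) < 0$ there, so $H_\lambda$ is strictly decreasing in $\rho$ (i.e., strictly increasing as $r \downarrow 0$), which also reassures us that $\lim_{r \downarrow 0} H_\lambda(r, u)$ exists in $(0, +\infty]$.

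Next I would integrate the logarithmic derivative from $r$ to $r_0$ (for $r < r_0$) to obtain
\[
\log H_\lambda(r_0, u) - \log H_\lambda(r, u) \;=\; 2 \int_{r}^{r_0} \frac{N(\rho, u) - \lambda}{\rho}\, \d\rho \;\leq\; -(\lambda - \lambda^*) \int_r^{r_0} \frac{\d\rho}{\rho} \;=\; -(\lambda - \lambda^*)\log(r_0/r).
\]
Rearranging,
\[
\log H_\lambda(r, u) \;\geq\; \log H_\lambda(r_0, u) + (\lambda - \lambda^*)\log(r_0/r),
\]
and letting $r \downarrow 0$, the right-hand side tends to $+\infty$, whence $H_\lambda(r, u) \to +\infty$, as claimed.

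There is no real obstacle here: the argument is essentially a one-line ODE comparison once the key identity \eqref{eq.HMon2_2} is in hand. The only subtlety is ensuring that $H_\lambda(\rho, u)$ is positive on $(0, r_0)$ (so that the logarithm makes sense); this follows because $u$ is not identically zero in any neighborhood of the origin (otherwise $N(0^+, u)$ would not be defined as a finite number), hence $H_\lambda(\rho, u) > 0$ for all $\rho > 0$, and the monotonicity just derived shows it stays positive as $r \downarrow 0$.
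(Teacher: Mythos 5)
Your argument is correct, but it follows a genuinely different route from the paper. The paper argues by contradiction: assuming $H_\lambda(r_\ell,u)\le C$ along a sequence $r_\ell\downarrow 0$, it picks an intermediate exponent $\mu\in(\lambda^*,\lambda)$, notes $H_\mu(r_\ell,u)\to 0$, uses the bound $W_\mu(r,u)\ge-\mu H_\mu(r,u)$ together with the monotonicity of the Weiss energy (Lemma~\ref{lem.WMon_VTOP}) to force $N(r_\ell,u)\ge\mu>\lambda^*$, contradicting $N(0^+,u)=\lambda^*$. You instead integrate the logarithmic-derivative identity \eqref{eq.HMon2_2} directly, using only the frequency monotonicity of Lemma~\ref{lem.MonFreq} and the fact that $N(r,u)\downarrow\lambda^*$; this is a more quantitative argument, yielding the explicit rate $H_\lambda(r,u)\ge H_\lambda(r_0,u)(r_0/r)^{\lambda-\lambda^*}$ rather than mere divergence, and it bypasses the Weiss energy altogether (it is in the spirit of the Monneau-type argument the paper invokes from Figalli--Serra for the thin-obstacle analogue). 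What the paper's softer contradiction argument buys is that it only quotes already-stated monotonicity lemmas, whereas your integration implicitly requires that $\log H_\lambda(\cdot,u)$ be absolutely continuous on $(0,r_0)$ and that $H_\lambda>0$ there; both points are fine (the paper itself differentiates $H$ pointwise and integrates such identities in Lemma~\ref{lem.HMon_VTOP}), but your justification of positivity is a bit quick: the cleanest way to finish it is to observe that if $H(\rho,u)=0$ for some $\rho$, then $u\equiv 0$ on $\pa B_\rho$, and by minimality of $u$ in $B_\rho$ with zero boundary data and zero obstacle one gets $u\equiv 0$ in $B_\rho$, contradicting the finiteness of $N(0^+,u)$.
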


\begin{proof}
Suppose, to the contrary, we can find a sequence of radii $r_\ell \downarrow 0$ such that $H_\lambda(r_\ell,u) \leq C$ for all $\ell \in \mathbb{N}$.
Then, for $\mu \in (\lambda^\ast,\lambda)$, $H_\mu(r_\ell,u) \to 0$ as $\ell \to \infty$.
Hence, as $W_\mu(r,u) \geq -\mu H(r,u)$ for all $r > 0$,
\[
\liminf_{\ell \to \infty} W_\mu(r_\ell,u) \geq \liminf_{\ell \to \infty} - \mu H_\mu(r_\ell,u) = 0.
\]
By the monotonicity of $r \mapsto W_\mu(r,u)$, Lemma~\ref{lem.WMon_VTOP}, we find that
\[
N(r_\ell,u) \geq \mu,
\]
for all $\ell \in \mathbb{N}$.
But this is impossible: $\mu > \lambda^\ast := N(0^+,u)$.
\end{proof}

\subsection{Blow-up Analysis and Consequences}

This subsection is dedicated to the analysis of blow-ups of $u$ at points $X_\circ \in \Lambda(u)$.
As such, for $X_\circ \in \Lambda(u)$, define
\begin{equation}
\label{eq.def:VTOP BU}
u_{X_\circ,r}(X) := u(X_\circ + rX) \quad\text{and}\quad \tilde{u}_{X_\circ,r} := \frac{u_{X_\circ,r}}{\|u_{X_\circ,r}\|_{L^2(\pa B_1, |y|^a)}}.
\end{equation}

We start by showing that blow-ups exists and are global, homogeneous solutions to \eqref{eq.VTOP}.

\begin{lemma}
\label{lem: blowups VTOP}
Let $u$ be the solution to \eqref{eq.VTOP} and suppose that $X_\circ \in \Lambda(u)$.
Let $\tilde{u}_{X_\circ,r}$ be as in \eqref{eq.def:VTOP BU}.
Then, for every sequence $r_j \downarrow 0$, there exists a subsequence $r_{j_\ell}\downarrow 0$ such that 
\begin{equation}
\label{eq.blowuptilde}
\tilde{u}_{X_\circ,r_{j_\ell}} \rightharpoonup \tilde u_{X_\circ,0} \quad \text{in}\quad W^{1,2}(B_1, |y|^a) \quad\text{as}\quad \ell \to \infty
\end{equation}
for some $\tilde  u_{X_\circ,0} \in W^{1,2}(B_1, |y|^a)$. 
Moreover, $\tilde  u_{X_\circ,0}\not\equiv 0$ is a global, homogeneous solution to a very thin obstacle problem with zero obstacle.
If, in addition, $u$ is homogeneous, then $\tilde u_{X_\circ,0}$ is translation invariant with respect to $X_\circ$.
\end{lemma}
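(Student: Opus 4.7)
My plan is to mirror the blow-up strategy developed in Section~\ref{sec:Blow-up Analysis} (Propositions~\ref{prop.case1} and \ref{prop.case2}), but applied directly to $u$, which is simpler because $u$ itself already solves \eqref{eq.VTOP}. Without loss of generality I take $X_\circ = 0$ and write $\tilde u_r := \tilde u_{0,r}$. First I would establish $W^{1,2}(B_1, |y|^a)$-compactness: the normalization gives $\|\tilde u_r\|_{L^2(\partial B_1, |y|^a)} = 1$, and the scaling identity $N(\rho,\tilde u_r) = N(r\rho, u)$ together with Almgren's monotonicity (Lemma~\ref{lem.MonFreq}) yields
\begin{equation*}
\int_{B_1} |\nabla \tilde u_r|^2 |y|^a = N(1,\tilde u_r) = N(r,u) \le N(1,u),
\end{equation*}
so the family is bounded in $W^{1,2}(B_1, |y|^a)$. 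A subsequence $\tilde u_{r_{j_\ell}}\rightharpoonup \tilde u_{0,0}$ can be extracted, and a standard compact trace embedding for the $A_2$-weight $|y|^a$ gives strong $L^2(\partial B_1, |y|^a)$-convergence, so $\|\tilde u_{0,0}\|_{L^2(\partial B_1, |y|^a)} = 1$ and the limit is nontrivial.

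Next I would upgrade this to locally uniform convergence, as in Step~3 of Proposition~\ref{prop.case2}, but easier here: Lemma~\ref{lem.boundedsolution} yields a uniform $L^\infty_{\mathrm{loc}}$ bound on $\tilde u_r$, which Corollary~\ref{cor.ca} promotes to a uniform $C^{-a}_{\mathrm{loc}}$ bound, so $\tilde u_{r_{j_\ell}}\to \tilde u_{0,0}$ in $C^0_{\mathrm{loc}}(B_1)$. With this in hand, each defining condition of \eqref{eq.VTOP} passes to the limit. Non-negativity on the very thin space is preserved via the continuous double trace operator $\tau: W^{1,2}(B_1,|y|^a)\to L^2(B_1')$ recalled in Proposition~\ref{prop.case2}. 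The distributional inequality $L_a \tilde u_{r} \le 0$ and the support condition $\mathrm{supp}(L_a \tilde u_r)\subset\{x_n = y = 0\}$ pass to the limit once we verify that $\{L_a \tilde u_r\}$ has locally uniformly bounded total variation (this comes, as in the proof of Proposition~\ref{prop.case1}, from the uniform $W^{1,2}$ bound by testing against a cut-off); we then take a further subsequence so that $L_a \tilde u_{r_{j_\ell}} \rightharpoonup L_a \tilde u_{0,0}$ weakly-$\ast$ as measures on $B_1$.

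The main technical point will be the complementarity condition $\tilde u_{0,0}\, L_a \tilde u_{0,0} = 0$. Since $\tilde u_r L_a \tilde u_r = 0$ in $B_1$ for every $r$, this reduces to showing that we may pair a $C^0_{\mathrm{loc}}$-convergent sequence against a weakly-$\ast$ convergent sequence of measures; the uniform $C^{-a}_{\mathrm{loc}}$ bound established above is exactly what is needed to justify this pairing against any compactly supported continuous test function. Homogeneity then follows cleanly from the Almgren and Monneau formulas: $\lambda_\ast := N(0^+, u)$ exists by Lemma~\ref{lem.MonFreq}, and $N(\rho,\tilde u_r) = N(r\rho,u) \to \lambda_\ast$ as $r\downarrow 0$ for each fixed $\rho$; lower semicontinuity of the Dirichlet energy gives $N(\rho,\tilde u_{0,0})\le \lambda_\ast$, while passing the bound $H_{\lambda_\ast}(\rho,\tilde u_r)\le H_{\lambda_\ast}(1,\tilde u_r) = 1$ from Lemma~\ref{lem.HMon_VTOP} to the limit prevents $N(0^+, \tilde u_{0,0}) < \lambda_\ast$ via Lemma~\ref{lem.HMon_VTOP 2}. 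Combined with the monotonicity of $N(\cdot,\tilde u_{0,0})$ this forces $N(\rho,\tilde u_{0,0})\equiv \lambda_\ast$, whence homogeneity by Lemma~\ref{lem.MonFreq}.

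Finally, for the translation-invariance statement, I would exploit that if $u$ is $\lambda$-homogeneous then
\begin{equation*}
u_{X_\circ, r}(X + \tau X_\circ) = u((1+r\tau)X_\circ + rX) = (1+r\tau)^\lambda\, u_{X_\circ, r'}(X), \qquad r' := \tfrac{r}{1+r\tau}.
\end{equation*}
Dividing by the normalization constant and using that $r'/r \to 1$ and $(1+r\tau)^\lambda \to 1$ as $r\downarrow 0$, the ratio $\|u_{X_\circ, r'}\|_{L^2(\partial B_1, |y|^a)}/\|u_{X_\circ,r}\|_{L^2(\partial B_1, |y|^a)}$ tends to $1$ along the chosen subsequence (this uses the $\lambda_\ast$-homogeneity of $\tilde u_{0,0}$ just proved, which yields $\|u_{X_\circ,r}\|_{L^2(\partial B_1, |y|^a)} = r^{\lambda_\ast}(c_\ast + o(1))$ with $c_\ast > 0$). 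Passing to the limit in the displayed identity along $r_{j_\ell}$ yields $\tilde u_{0,0}(X + \tau X_\circ) = \tilde u_{0,0}(X)$ for all $\tau\in\R$, completing the proof.
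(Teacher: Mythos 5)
Your proposal is correct in substance, and for the first three steps (uniform $W^{1,2}$ bound via Lemma~\ref{lem.MonFreq}, nontriviality via strong boundary-trace convergence, locally uniform convergence via Corollary~\ref{cor.ca}, passage of the constraints and of the complementarity condition using the weak$^*$ convergence of the measures $L_a\tilde u_{X_\circ,r}$) it coincides with the paper's proof. You deviate in two places. For homogeneity, you run the lower-semicontinuity plus Monneau argument of Proposition~\ref{prop.case1}, ruling out $N(0^+,\tilde u_{X_\circ,0})<\lambda_\ast$ by passing the bound $H_{\lambda_\ast}(\rho,\tilde u_{X_\circ,r})\le 1$ to the limit and invoking Lemma~\ref{lem.HMon_VTOP 2}; the paper instead uses Corollary~\ref{cor.welldefined} (uniform continuity of $X\cdot\nabla\tilde u_{X_\circ,r}$) to pass the boundary representation of the Dirichlet energy to the limit and obtain $N(\rho,\tilde u_{X_\circ,0})=\lambda_{X_\circ}$ for every $\rho$ directly. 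Both work; the paper's route yields convergence of the frequency as a byproduct, yours reuses machinery already needed for the second blow-ups. For translation invariance, you rescale the base point along the ray through $X_\circ$, whereas the paper passes the Euler identity $\nabla\tilde u_{X_\circ,r}(X)\cdot(X_\circ+rX)=\lambda r\,\tilde u_{X_\circ,r}(X)$ to the weak limit to get $X_\circ\cdot\nabla\tilde u_{X_\circ,0}=0$ a.e., and then upgrades by Corollary~\ref{cor.welldefined} and homogeneity.

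There is, however, one unjustified claim in your last step: you assert $h_r:=\|u_{X_\circ,r}\|_{L^2(\pa B_1,|y|^a)}=r^{\lambda_\ast}(c_\ast+o(1))$ with $c_\ast>0$. Lemma~\ref{lem.HMon_VTOP} only gives that $H_{\lambda_\ast}(r,u(X_\circ+\,\cdot\,))$ is monotone, hence has a limit $c_\ast^2\ge 0$; its \emph{positivity} is a non-degeneracy statement that is neither proved in the paper nor implied by the $\lambda_\ast$-homogeneity of the blow-up (the frequency gap $N(s)-\lambda_\ast$ need not be Dini integrable, so $H_{\lambda_\ast}(0^+)$ may vanish, e.g.\ through logarithmic corrections to the rate, while the normalized blow-up is still homogeneous). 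Fortunately you do not need it: with $r'=r/(1+r\tau)$ one has $\log\bigl(h_r^2/h_{r'}^2\bigr)=\int_{r'}^{r}\tfrac{2}{s}N(s,u(X_\circ+\,\cdot\,))\,\d s\le C\log(1+r\tau)\to 0$, since the frequency is locally bounded, which gives $h_{r'}/h_r\to 1$ unconditionally; even more simply, homogeneity of $u$ gives $\tilde u_{X_\circ,r}(X+\tau X_\circ)=(1+r\tau)^{\lambda}\,\tilde u_{X_\circ,r}\bigl(\tfrac{X}{1+r\tau}\bigr)$, so the normalizing constants cancel and locally uniform convergence together with equicontinuity passes the identity to the limit; or one can simply use the paper's weak-limit argument. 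With any of these replacements your proof is complete.
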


\begin{proof}
By Lemma~\ref{lem.MonFreq}, we see that given any sequence $r_j \downarrow 0$, the family $\{ \tilde{u}_{X_\circ,r_{j}} \}_{j \in \mathbb{N}}$ is uniformly bounded in $W^{1,2}(B_1, |y|^a)$.
Hence, there is a subsequence $r_{j_\ell}\downarrow 0$ such that
\[
\tilde{u}_{X_\circ,r_{j_\ell}} \rightharpoonup \tilde u_{X_\circ,0} \quad \text{in}\quad W^{1,2}(B_1, |y|^a).
\]
As $\|\tilde{u}_{X_\circ,r_{j_\ell}}\|_{L^2(\pa B_1, |y|^a)} = 1$,
\[
\|\tilde u_{X_\circ,0}\|_{L^2(\pa B_1, |y|^a)} = 1.
\]
Clearly, $\tilde u_{X_\circ,0} \not\equiv 0$.

Since the family of functions $\{ \tilde{u}_{X_\circ,r_{j_\ell}} \}_{\ell \in \N}$ is locally uniformly H\"{o}lder continuous (by Corollary~\ref{cor.ca}), we have that $\tilde{u}_{X_\circ,r_{j_\ell}} \to \tilde u_{X_\circ,0}$ locally uniformly.
Moreover, $L_a\tilde{u}_{X_\circ,r_{j_\ell}} \rightharpoonup L_a \tilde u_{X_\circ,0}$ (which is non-positive) weakly* as measures (see, e.g., the proof of Proposition~\ref{prop.case1}).
Therefore, for every $\rho > 0$,
\begin{equation}
\label{eqn: uLau VTOP BU}
0 = \int_{B_\rho} \tilde{u}_{X_\circ,r_{j_\ell}} L_a \tilde{u}_{X_\circ,r_{j_\ell}} |y|^a \to \int_{B_\rho} \tilde u_{X_\circ,0} L_a \tilde u_{X_\circ,0} |y|^a \quad\text{as}\quad \ell \to \infty,
\end{equation}
so that, since $\tilde u_{X_\circ,0} L_a \tilde u_{X_\circ,0}\le 0$,
\[
\tilde u_{X_\circ,0} L_a \tilde u_{X_\circ,0} = 0 \quad\text{in}\quad \R^{n+1}.
\]
This, together with the uniform convergence of $\tilde u_{X_\circ, r_{j_\ell}}$ and the weak$^*$ convergence of $L_a\tilde u_{X_\circ, r_{j_\ell}}$ to $L_a\tilde  u_{X_\circ, 0}$ directly yields that $\tilde u_{X_\circ, 0}$ is a global solution to the very thin obstacle problem with zero obstacle. 

Furthermore, from the local uniform continuity of $X \cdot \nab \tilde{u}_{x_\circ,r_\ell} $ given by Corollary~\ref{cor.welldefined},
\[
\int_{\pa B_\rho} \tilde{u}_{X_\circ,r_{j_\ell}} (X \cdot \nab \tilde{u}_{X_\circ,r_{j_\ell}})|y|^a \to \int_{\pa B_\rho}\tilde  u_{X_\circ,0} (X \cdot \nab \tilde u_{X_\circ,0})|y|^a  \quad\text{as}\quad \ell \to \infty.
\]
Consequently, for all $\rho > 0$,
\[
N(\rho,\tilde u_{X_\circ,0}) = \lim_{r_{j_\ell} \downarrow 0} N(\rho,\tilde{u}_{X_\circ,r_{j_\ell}}),
\]
and, in particular,
\[ 
N(\rho,\tilde u_{X_\circ,0})  = N(0^+,u(X_\circ + \,\cdot\,)) =: \lambda_{X_\circ}
\]
for all $\rho > 0$.
(By scaling, $\lim_{r_{j_\ell} \downarrow 0} N(\rho,\tilde{u}_{X_\circ,r_{j_\ell}}) = \lim_{r_{j_\ell} \downarrow 0} N(\rho r_{j_\ell},u(X_\circ + \,\cdot\,))$.)
Hence, by Lemma~\ref{lem.MonFreq}, $\tilde u_{X_\circ,0}$ is $\lambda_{X_\circ}$-homogeneous, and the first part of the proof is complete.  

Now assume that $u$ is $\lambda$-homogeneous. 
Then, 
\[
\int_{E} \nabla \tilde{u}_{X_\circ,r_{j_\ell}}(X) \cdot (X_\circ + r_{j_\ell}X)|y|^a = \int_{E} \lambda r_{j_\ell} \tilde{u}_{X_\circ,r_{j_\ell}}(X)|y|^a
\]
for any compact set $E \subset B_1$.
In turn, as $\tilde{u}_{X_\circ,r_{j_\ell}} \rightharpoonup \tilde u_{X_\circ,0}$ weakly in $W^{1,2}(B_1, |y|^a)$, taking $r_{j_\ell} \downarrow 0$, we find that
\begin{equation}
\label{eqn: trans inv}
X_\circ \cdot \nab \tilde u_{X_\circ,0}(X) = 0
\end{equation}
for almost every $X \in B_1$.
Finally, by Corollary~\ref{cor.welldefined} and the $\lambda_{X_\circ}$-homogeneity of $\tilde u_{X_\circ,0}$ established above, we see that \eqref{eqn: trans inv} holds for all $X \in \R^{n+1}$.
\end{proof}

Just as we did in the thin obstacle setting, we define the nodal set of a solution $u$ to \eqref{eq.VTOP}:
\begin{equation}
\label{eq.nodal VTOP}
\mathcal{N}(u) := \{ (x',0,0) : u(x',0,0) = |\nab_{x'}u(x',0,0)| = f_a(x') = 0 \}
\end{equation}
where $f_a$ is defined as in Lemma~\ref{lem.operator}.

In the following result, we prove an estimate on the size of the points whose blow-ups have spines 
\[
L(\tilde u_{X_\circ,0}) := \{ \xi' \in \R^{n-1} : \xi' \cdot \nab_{x'} \tilde u_{X_\circ,0}(x',0,0) = 0 \text{ for all }x' \in \R^{n-1} \}
\]
with a certain dimensional bound.

\begin{proposition}
\label{prop.claim}
Let $u$ be a solution to \eqref{eq.VTOP}. Then, 
\begin{equation}
\label{eq.claimHaus}
\dim_\mathcal{H}(\{ X_\circ \in \mathcal{N}(u) : \dim L(\tilde u_{X_\circ,0}) \leq d \text{ for all blow-ups } \tilde u_{X_\circ,0} \}) \leq d,
\end{equation}
for any $d\in \{0,\dots,n-1\}$. Moreover, if $d = 0$, the previous set is countable. 
\end{proposition}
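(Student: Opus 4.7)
\textbf{Proof proposal for Proposition~\ref{prop.claim}.} The plan is to adapt a standard Federer-type dimension reduction argument to this setting, leveraging the blow-up existence and homogeneity furnished by Lemma~\ref{lem: blowups VTOP} and the monotonicity formulae of the previous subsection. Set
\[
\Sigma^{d}(u) := \{ X_\circ \in \mathcal{N}(u) : \dim L(\tilde{u}_{X_\circ,0}) \leq d \text{ for all blow-ups } \tilde u_{X_\circ,0} \}.
\]
The first step is to verify that blow-ups at nodal points are genuinely very-thin-obstacle-type solutions with non-trivial structure: at $X_\circ \in \mathcal{N}(u)$, any blow-up $\tilde u_{X_\circ,0}$ is a non-zero, $\lambda_{X_\circ}$-homogeneous global solution to \eqref{eq.VTOP} with zero obstacle, where $\lambda_{X_\circ} := N(0^+,u(X_\circ+\cdot))$ satisfies $\lambda_{X_\circ} > 0$ (since $\tilde u_{X_\circ,0} \not\equiv 0$ and vanishes at the origin with its gradient).

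Next I would carry out the dimension reduction. The key observation, analogous to the spine-propagation arguments in Section~\ref{sec:accumulation}, is that if $X_\ell = (x'_\ell,0,0) \in \mathcal{N}(u) \cap \Sigma^{d}(u)$ converges to $0 \in \Sigma^{d}(u)$ along radii $r_\ell \downarrow 0$ with $|X_\ell| \leq r_\ell/2$ and $Z_\ell := X_\ell/r_\ell \to Z_\infty \neq 0$, then $Z_\infty$ lies in $L(\tilde u_{0,0})$ for any weak $W^{1,2}(B_1,|y|^a)$-limit $\tilde u_{0,0}$ of $\tilde u_{0,r_\ell}$. This is proved by exploiting Lemma~\ref{lem.HMon_VTOP} applied at $X_\ell$ (the frequency is at least $\lambda_0$ there, since $X_\ell$ is a nodal point with $\lambda_{X_\ell} \geq \lambda_0$ along a subsequence by upper semicontinuity) together with the strong $L^2(\pa B_1,|y|^a)$-convergence of the rescalings guaranteed by Corollary~\ref{cor.ca}, to conclude that $\tilde u_{0,0}(Z_\infty + \cdot)$ is again $\lambda_0$-homogeneous, forcing $Z_\infty \in L(\tilde u_{0,0})$.

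Given this spine-propagation property, I would invoke a Federer-type iteration: using Lemma~\ref{lem.lem35}, if $\mathcal{H}^\beta_\infty(\Sigma^{d}(u)) > 0$ with $\beta > d$, then at an $\mathcal{H}^\beta$-density point $X_\circ \in \Sigma^{d}(u)$ (which we may take to be $0$) the accumulation set $\mathcal{A} \subset \overline{B_{1/2}}$ at $0$ satisfies $\mathcal{H}^\beta_\infty(\mathcal{A}) > 0$. By the previous step, $\mathcal{A} \subset L(\tilde u_{0,0})$, which has dimension at most $d < \beta$, a contradiction. This gives $\dim_{\mathcal{H}} \Sigma^d(u) \leq d$. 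The $d=0$ case then follows by the same argument: if $0 \in \Sigma^0(u)$ is an accumulation point of $\Sigma^0(u)$ (rather than just of $\mathcal{N}(u)$), the accumulation set would contain a non-zero vector forced to lie in $L(\tilde u_{0,0}) = \{0\}$, again a contradiction; hence $\Sigma^0(u)$ is discrete and thus countable.

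The main obstacle I anticipate is the rigorous implementation of the spine-propagation step, since, unlike in the thin obstacle case treated by Lemma~\ref{lem: accumulation pts}, the second-blow-up machinery is not available in a stripped-down form here, and one must work directly with the frequency and Monneau formulae from Lemmas~\ref{lem.MonFreq}--\ref{lem.HMon_VTOP}; in particular, one needs the upper semicontinuity of $X \mapsto \lambda_X$ on $\mathcal{N}(u)$, which follows from the continuity of the map $X \mapsto N(r,u(X+\cdot))$ for each fixed $r > 0$ together with the monotonicity of the frequency. Once this is in place, the Federer iteration is routine.
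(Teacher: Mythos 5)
There is a genuine gap at the heart of your argument: the ``spine-propagation'' step. You assert that if nodal points $X_\ell\in\Sigma^d(u)$ accumulate at $0$ with $Z_\ell=X_\ell/r_\ell\to Z_\infty$, then $Z_\infty\in L(\tilde u_{0,0})$, and you justify it by saying that ``the frequency is at least $\lambda_0$ at $X_\ell$ \ldots by upper semicontinuity.'' Upper semicontinuity gives exactly the opposite inequality, $\limsup_\ell \lambda_{X_\ell}\le \lambda_0$, and without the lower bound $\lambda_{X_\ell}\ge\lambda_0$ you cannot run the Monneau-type formula of Lemma~\ref{lem.HMon_VTOP} at $X_\ell$ with exponent $\lambda_0$ (it requires $\lambda\le N(0^+,\cdot)$ at the point where it is applied). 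This is not a technicality: the set $\Sigma^d(u)$ is defined by the spine dimension of blow-ups, not by a frequency level, so the approaching points may well have strictly smaller frequency than the base point, and lower-frequency points accumulating to a higher-frequency point are not constrained to approach along the spine (compare Remark~\ref{rem.count} for the thin obstacle problem). One also cannot repair this by slicing $\Sigma^d(u)$ into frequency level sets: a priori the frequency could take uncountably many values at nodal points (the restriction to $\N\cup\{-a\}$ outside a codimension-two set, Corollary~\ref{cor: int freq}, is itself a consequence of this proposition), and Hausdorff dimension does not behave well under uncountable unions. Your claim that $\Sigma^0(u)$ is \emph{discrete} for $d=0$ rests on the same unjustified propagation and is stronger than what is asserted (and likely than what is true); the proposition only claims countability.

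The paper circumvents precisely this difficulty by not running a naive Federer reduction at all: it verifies the hypotheses (conicality of the blow-up frequency functions, compactness of the class $\mathscr{G}_{x_\circ'}$, and the semicontinuity-type structural conditions linking the frequency $f$ to the blow-up frequencies) of the abstract stratification theorem of White \cite{W97}, following the scheme of Focardi--Spadaro \cite{FoSp18}, using only the monotonicity of $N$ (Lemma~\ref{lem.MonFreq}), the blow-up analysis of Lemma~\ref{lem: blowups VTOP}, and upper semicontinuity in the direction it actually holds. That abstract machinery is what absorbs the variation of the frequency across $\Sigma^d(u)$ and simultaneously yields both the bound $\dim_{\mathcal H}\le d$ and the countability at $d=0$. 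To make your approach work you would either need to reproduce such an abstract argument (quantifying ``almost equal'' frequencies along the accumulating sequence) or restrict to genuinely constant-frequency subsets, neither of which your proposal supplies.
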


\begin{proof}
The proof follows the first half of the proof of \cite[Theorem 1.3]{FoSp18}; and so, we have to check that the assumptions of \cite[Theorem 3.2]{W97} are fulfilled.
In particular, we argue in parallel to \cite[Section 8.1]{FoSp18}. 

Define the upper semicontinuous function $f : B'_1 \to \R^+$ by
\[
f(x_\circ') := 
\begin{cases}
N(0^+,u(X_\circ + \,\cdot\,)) &\text{if } X_\circ \in \mathcal{N}(u)\\
0 &\text{if } X_\circ \notin \mathcal{N}(u),
\end{cases}
\]
and for any $x_\circ' \in B_1'$, let $\mathscr{G}_{x_\circ'}$ be the family of upper semicontinuous functions $g: \R^{n-1} \to \R^+$ given by
\[
g(z') :=
\begin{cases}
N(0^+,\tilde u_{X_\circ,0}(Z + \,\cdot\,)) &\text{if } X_\circ \in \mathcal{N}(u)\\
0 &\text{if } X_\circ \notin \mathcal{N}(u)
\end{cases}
\]
where $\tilde u_{X_\circ,0}$ is a possible blow-up limit of $u$ at $X_\circ = (x_\circ',0,0)$ (as produced in Lemma~\ref{lem: blowups VTOP}), and of course, $Z = (z',0,0)$.
Observe, arguing as in \cite[Lemma 5.2]{FoSp18}, that for all $g \in \mathscr{G}_{x_\circ'}$,
\[
\text{if $g(z') = g(0)$, then $g(z' + \tau x') = g(z' + x')$ for all $x' \in \R^{n-1}$ and $\tau > 0$};
\]
that is, $g$ is {\it conical}, following the definitions used in \cite[Section 8.1]{FoSp18} and \cite{FMS15}. 

Furthermore, let $\{g_j\}_{j \in \N} \subset \mathscr{G}_{x_\circ'}$.
For each $g_j$, we have an associated blow-up $\tilde u_{X_\circ,0,j}$ which has $L^2(\pa B_1,|y|^a)$-norm equal to $1$.
And arguing as in Lemma~\ref{lem: blowups VTOP} and then applying a diagonal argument, we can find a subsequence $\{\tilde u_{X_\circ,0,j_\ell}\}_{\ell \in \N}$ that converges weakly in $W^{1,2}(B_1,|y|^a)$ and locally uniformly in $C^{-a}(B_1)$ to a blow-up of $u$ at $X_\circ$.
Call $ \tilde u_{X_\circ,0}^{(\infty)}$ this blow-up and define
\[
g_\infty(z') := 
\begin{cases}
N(0^+,\tilde u_{X_\circ,0}^{(\infty)}(Z + \,\cdot\,)) &\text{if } X_\circ \in \mathcal{N}(u)\\
0 &\text{if } X_\circ \notin \mathcal{N}(u).
\end{cases}
\]
By construction, $g_\infty \in \mathscr{G}_{x_\circ'}$.
Now given any convergent sequence $x'_\ell \to x'_\infty \in \R^{n-1}$ as $\ell \to \infty$, by Lemma~\ref{lem.MonFreq} and the upper semicontinuity of the frequency,
\[
\begin{split}
\limsup_{\ell \to \infty} N(0^+,\tilde u_{X_\circ,0,j_\ell}(X_\ell+\,\cdot\,)) &\leq \inf_{\rho > 0} \limsup_{\ell \to \infty}N(\rho, \tilde u_{X_\circ,0,j_\ell}(X_\ell+\,\cdot\,))\\
&= \inf_{\rho > 0} N(\rho, \tilde u_{X_\circ,0}^{(\infty)}(X_\infty+\,\cdot\,))\\
&= N(0^+, \tilde u_{X_\circ,0}^{(\infty)}(X_\infty+\,\cdot\,)).
\end{split}
\]
In turn,
\[
\limsup_{\ell \to \infty} g_{j_\ell}(x'_\ell) \leq g_\infty(x'_\infty),
\]
and $\mathscr{G}_{x_\circ'}$ is a class of {\it compact conical} functions (see \cite[Section 8.1]{FoSp18} and \cite[Definition 3.3]{FMS15}).
Like before, $X_\ell = (x_\ell',0,0)$ and $X_\infty = (x_\infty',0,0)$.

In addition, we need to check the structural hypotheses of \cite[Theorem 3.2]{W97}, which we do as in \cite[Section 8.1(i) and (ii)]{FoSp18}. 
For all $g \in \mathscr{G}_{x_\circ'}$, from the proof of Lemma~\ref{lem: blowups VTOP},
\[
g(0) = f(x_\circ').
\]
Moreover, suppose $r_j \downarrow 0$. 
By Lemma~\ref{lem: blowups VTOP}, we can find a subsequence $r_{j_\ell} \downarrow 0$ and element $g_\infty \in \mathscr{G}_{x_\circ'}$ so that for any convergent sequence $x'_\ell \to x'_\infty \in B_1'$ as $\ell \to \infty$,
\[
\limsup_{\ell \to \infty} f(x_\circ' + r_{j_\ell}x'_\ell) \leq g_\infty(x_\infty').
\]
In particular,
\[
g_\infty(z') := 
\begin{cases}
N(0^+,\tilde u_{X_\circ,0}(Z + \,\cdot\,)) &\text{if } X_\circ \in \mathcal{N}(u)\\
0 &\text{if } X_\circ \notin \mathcal{N}(u)
\end{cases}
\]
with $\tilde u_{X_\circ,0}$ being the weak $W^{1,2}(B_1,|y|^a)$ limit of $\tilde{u}_{X_\circ,r_{j_\ell}}$ (it is also the limit in $C^{-a}_{\rm loc}(B_1)$ of $\tilde{u}_{X_\circ,r_{j_\ell}}$).
Indeed,
\[
\begin{split}
\limsup_{\ell \to \infty} N(0^+, u(X_\circ + r_{j_\ell}X_\ell + \,\cdot\,)) &\leq \inf_{\rho > 0} \limsup_{\ell \to \infty}N(r_{j_\ell}\rho, u(X_\circ + r_{j_\ell}X_\ell + \,\cdot\,)) \\
&= \inf_{\rho > 0} \limsup_{\ell \to \infty} N(\rho, \tilde{u}_{X_\circ,r_{j_\ell}}(X_\ell + \,\cdot\,)) \\
&= \inf_{\rho > 0} N(\rho, \tilde u_{X_\circ,0}(X_\infty + \,\cdot\,))\\
&= N(0^+, \tilde u_{X_\circ,0}(X_\infty + \,\cdot\,)).
\end{split}
\]
(Again, $X_\ell = (x_\ell',0,0)$ and $X_\infty = (x_\infty',0,0)$.)
Hence, applying \cite[Theorem 3.2]{W97} (or see \cite[Section 8.1]{FoSp18}), we prove \eqref{eq.claimHaus}.  
\end{proof}

We close this section recalling the classification of two-dimensional homogeneous solutions to \eqref{eq.VTOP}, which was proved in \cite[Proposition A.1(i)]{FoSp18}, and an important consequence.

\begin{lemma}
\label{lem: 2d class}
Let $n = 1$.
Let $u$ be a $\lambda$-homogeneous solution to \eqref{eq.VTOP}, subject to its own boundary data.
Then,
\[
\lambda \in \{ -a, 1, 2, 3, \dots \}.
\]
In addition, when $\lambda \in \N$, $u$ is an $a$-harmonic polynomial in $\R^2$.
\end{lemma}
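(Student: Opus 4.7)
The plan is to pass to polar coordinates on $\R^2 = \{(x_1,y)\}$ and reduce to a one-dimensional eigenvalue problem. Writing $(x_1,y) = (r\cos\theta, r\sin\theta)$ and separating variables as $u(r,\theta) = r^\lambda \phi(\theta)$, and using $|y|^a = r^a|\sin\theta|^a$, the equation $L_a u = 0$ on $\R^2 \setminus \{0\}$ converts into the Sturm--Liouville ODE
\[
(|\sin\theta|^a \phi'(\theta))' + \lambda(\lambda+a)|\sin\theta|^a \phi(\theta) = 0 \qquad \text{on } (0,\pi),
\]
with $\phi$ extended by even reflection across $\theta = 0$ and $\theta = \pi$ (coming from the even-in-$y$ hypothesis on $u$).

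When $n = 1$, the very thin space is the single point $\{0\}$, so $\Lambda(u) \subseteq \{0\}$; by $\lambda$-homogeneity with $\lambda > 0$, $u(0) = 0$, so the constraints $u \ge 0$ at the origin and $u\,L_a u = 0$ are automatic. The remaining conditions $L_a u \le 0$ and $L_a u = 0$ outside $\Lambda(u)$, together with Lemma~\ref{lem.operator}, force $L_a u = -c_0\,\delta_0$ for some $c_0 \ge 0$. The crucial rigidity comes from homogeneity: the distribution $L_a u$ inherits scaling degree $\lambda + a - 2$ from $u$, while $\delta_0$ on $\R^2$ has degree $-2$. Hence either $c_0 = 0$, or $\lambda = -a$.

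If $c_0 = 0$, then $u$ is globally $a$-harmonic on $\R^2$, even in $y$, and $\lambda$-homogeneous, so by the Liouville-type result \cite[Lemma~2.7]{CSS08}, $u$ must be an $a$-harmonic polynomial, forcing $\lambda \in \N$; each $\lambda \in \N$ is attained, for instance by $\Ext_a(x_1^\lambda)$, giving the polynomial conclusion of the lemma. If instead $\lambda = -a$, the ODE degenerates to $(|\sin\theta|^a \phi')' = 0$, with general solution $\phi(\theta) = C_1 + C_2 \int_0^\theta |\sin s|^{-a}\,ds$; since $-a \in (0,1)$, the integrand is integrable and the weighted Neumann conditions $\phi'(0) = \phi'(\pi) = 0$ are automatic. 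A direct flux computation on $\pa B_\varepsilon$ then confirms that $C_1, C_2$ can be chosen so that $c_0 \ge 0$, exhibiting $\lambda = -a$ as a genuine admissible homogeneity.

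The main obstacle is the scaling argument that rigidifies the support and degree of $L_a u$ as a non-positive measure: once the dichotomy between ``globally $a$-harmonic'' and ``$\lambda = -a$'' is in hand, the integer case reduces to a Liouville theorem and the $\lambda = -a$ case to explicit ODE analysis. The result recovers \cite[Proposition~A.1(i)]{FoSp18}, which may alternatively be invoked directly.
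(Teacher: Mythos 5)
Your argument is correct in the direction that matters, but it takes a genuinely different route from the paper. The paper simply invokes the two-dimensional classification of \cite[Proposition A.1(i)]{FoSp18} (an angular ODE analysis) and observes that allowing homogeneities in $(0,1)$ adds only the value $-a$. Your proof is self-contained: for $n=1$ the very thin space is the single point $\{0\}$, so $\Lambda(u)\subseteq\{0\}$ and the conditions $L_a u\le 0$, $L_a u=0$ off $\Lambda(u)$ force $L_a u=-c_0\delta_0$ with $c_0\ge 0$ (a non-positive distribution supported at a point is a measure, hence a multiple of $\delta_0$); comparing homogeneities\,---\,$L_a u$ has degree $\lambda+a-2$ while $\delta_0$ in $\R^2$ has degree $-2$\,---\,gives the dichotomy $c_0=0$ or $\lambda=-a$. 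When $c_0=0$, $u$ is globally $a$-harmonic, even in $y$, with growth $|X|^{\lambda}$, so \cite[Lemma 2.7]{CSS08} makes it a polynomial and forces $\lambda\in\N$. This is precisely the content needed downstream (only the classification of possible $\lambda$ and polynomiality for integer $\lambda$ are used), and your route avoids relying on the homogeneity range restriction in \cite{FoSp18}; note that the separation-of-variables ODE is in fact never needed for this part of your argument.

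Two caveats. First, in the $\lambda=-a$ branch your side remark on the ODE is incorrect: the condition encoding $L_a u=0$ across $\{y=0,\ x_1\neq 0\}$ for an even function is the \emph{weighted} Neumann condition $\lim_{\theta\to 0^+}|\sin\theta|^{a}\phi'(\theta)=0$ (and similarly at $\theta=\pi$), not $\phi'(0^+)=0$; with $\phi'=C_2|\sin\theta|^{-a}$ this limit equals $C_2$, so it is not automatic and forces $C_2=0$. Hence the even, $(-a)$-homogeneous solutions are only the radial ones $C_1|X|^{-a}$ with $C_1\le 0$, not a two-parameter family. This does not harm the lemma, since attainment of the homogeneity $-a$ is not part of its logical content (and is anyway witnessed by $-|X|^{-a}$). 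Second, you implicitly assume $\lambda>0$ to conclude $u(0)=0$; nonzero $0$-homogeneous solutions (positive constants) are formally admissible but are excluded by the same implicit normalization present in the paper's statement, so this is not a substantive gap.
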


\begin{proof}
The possible values of $\lambda$ are classified in \cite[Proposition A.1(i)]{FoSp18}, whence $ \lambda \in \N$.
Moreover, these integrally homogeneous solutions are polynomials; in particular, they are $a$-harmonic.
That said, in \cite{FoSp18}, only homogeneities greater or equal than $1+s$ are considered.
Within the proof of \cite[Proposition A.1(i)]{FoSp18}, however, if homogeneities in $(0, 1)$ are also considered, then only one extra homogeneity appears: $-a$, by taking $\nu = -1+s$ (using the notation of \cite{FoSp18}).
\end{proof}

\begin{corollary}
\label{cor: int freq}
Let $n \ge 2$ and $u$ be the solution to \eqref{eq.VTOP}.
Then,
\[
\dim_{\mathcal{H}}( \{ X_\circ \in \Lambda(u) : N(0^+,u(X_\circ + \cdot)) \notin \N \cup \{-a\} \}) \leq n-2.
\]
\end{corollary}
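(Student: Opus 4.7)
The plan is to deduce the estimate from Proposition~\ref{prop.claim} applied with $d = n-2$. Set
\[
\mathcal{S} := \{X_\circ \in \Lambda(u) : \lambda_{X_\circ} \notin \N \cup \{-a\}\}, \qquad \lambda_{X_\circ} := N(0^+, u(X_\circ + \,\cdot\,)).
\]
The argument reduces to two claims: (a) $\mathcal{S} \subset \mathcal{N}(u)$; and (b) for every $X_\circ \in \mathcal{S}$ and every blow-up $\tilde u_{X_\circ,0}$ produced by Lemma~\ref{lem: blowups VTOP}, $\dim L(\tilde u_{X_\circ,0}) \leq n-2$. Both rest on a single structural statement: \emph{if a $\lambda$-homogeneous global solution $\tilde u$ to \eqref{eq.VTOP} on $\R^{n+1}$ has $\dim L(\tilde u) = n-1$, then $\lambda \in \N \cup \{-a\}$.}

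To prove this structural statement, first observe that $\dim L(\tilde u) = n-1$ together with $\lambda$-homogeneity (with $\lambda > 0$, which holds at any contact point) forces $\tilde u(\,\cdot\,,0,0) \equiv 0$. I would then decompose $\tilde u$ into separable modes
\[
\tilde u(x', x_n, y) = \sum_k g_k(x') f_k(x_n, y),
\]
where $g_k$ is a homogeneous harmonic polynomial on $\R^{n-1}$ of degree $d_k \geq 0$ and $f_k$ is even in $y$, $L_a$-harmonic on $\R^2 \setminus \{(0,0)\}$, $(\lambda - d_k)$-homogeneous, and vanishes at the origin. Applying Lemma~\ref{lem: 2d class} to each $f_k$ forces $\lambda - d_k \in \N \cup \{-a\}$. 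A direct calculation using Lemma~\ref{lem.operator} shows that only modes with $\lambda - d_k = -a$ contribute to the measure $L_a \tilde u$, each with density a positive multiple of $g_k$. The sign constraint $L_a \tilde u \leq 0$, together with the fact that any non-constant harmonic polynomial on $\R^{n-1}$ changes sign, forces each such $g_k$ to be a constant, whence $d_k = 0$ and $\lambda = -a$. The remaining modes combine into an $a$-harmonic polynomial on $\R^{n+1}$ of integer homogeneity $\lambda$. In both alternatives $\lambda \in \N \cup \{-a\}$.

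With the structural statement in hand, claim (a) follows: if $X_\circ \in \Lambda(u) \setminus \mathcal{N}(u)$, then since $u(\,\cdot\,,0,0) \geq 0$ attains its minimum $0$ at $x_\circ'$ with $C^{1,\tau}$ regularity (Theorem~\ref{thm.C1tau}), $\nabla_{x'} u(x_\circ',0,0) = 0$, which forces $f_a(x_\circ') \neq 0$. Local continuity of $f_a$, inherited from the fractional obstacle problem \eqref{eq.U} satisfied by the trace $U$, places a full neighbourhood of $x_\circ'$ in the very thin space inside $\Lambda(u)$, so any blow-up at $X_\circ$ vanishes on the entire very thin space, giving $\dim L = n-1$. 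The structural statement then yields $\lambda_{X_\circ} \in \N \cup \{-a\}$, contradicting $X_\circ \in \mathcal{S}$. Claim (b) is the contrapositive of the structural statement applied at points of $\mathcal{S} \subset \mathcal{N}(u)$. Finally, Proposition~\ref{prop.claim} with $d = n-2$ yields the desired bound.

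The main obstacle will be the rigorous justification of the separable-mode expansion together with the sign analysis of the modes: one needs completeness of the decomposition into spherical harmonics in $x'$ paired with the two-dimensional eigenfunctions in $(x_n, y)$, combined with a careful evaluation of $\mathcal{F}_a(g_k f_k)$ via Lemma~\ref{lem.operator}. Lemma~\ref{lem: 2d class} and the Poisson-kernel analysis developed earlier in Section~\ref{sec.VTOP} supply the main technical ingredients, but orchestrating these with the one-sidedness of $L_a \tilde u$ and the harmonicity of the coefficients $g_k$ is the delicate step.
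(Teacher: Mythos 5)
Your overall reduction is exactly the paper's: show that any $X_\circ\in\Lambda(u)$ admitting a blow-up with $(n-1)$-dimensional spine has frequency in $\N\cup\{-a\}$, so that the exceptional set is contained in the set of Proposition~\ref{prop.claim} with $d=n-2$, and conclude by Lemma~\ref{lem: 2d class}. (The paper treats a blow-up with full spine directly as a two-dimensional homogeneous solution and applies Lemma~\ref{lem: 2d class}; your claim (a), dealing with $\Lambda(u)\setminus\mathcal{N}(u)$, is a sensible supplement, although the continuity of $f_a$ you invoke there is asserted rather than proved.) The difference is that you try to prove the key implication through a ``structural statement'' established by a separable-mode expansion, and this is where the argument breaks.

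The expansion $\tilde u=\sum_k g_k(x')f_k(x_n,y)$, with $g_k$ homogeneous \emph{harmonic} polynomials on $\R^{n-1}$ and $f_k$ even in $y$ and $L_a$-harmonic on $\R^2\setminus\{0\}$, is not available: products of this type do not span global homogeneous solutions. A concrete counterexample is $\tilde u=\Ext_a(x_1^2x_n^2)$: it is a $4$-homogeneous, even in $y$, globally $a$-harmonic solution of \eqref{eq.VTOP} with identically vanishing trace on the very thin space (so $\dim L(\tilde u)=n-1$), yet by \eqref{eq.ext} its component of degree two in $x'$ is $x_1^2\,(x_n^2-c\,y^2)$ with $c>0$, and $x_1^2$ does not lie in the span of degree-two harmonic polynomials on $\R^{n-1}$ (these are trace-free for $n\ge 3$, and for $n=2$ there are none); since each product $g_kf_k$ is homogeneous in $x'$ of degree $d_k$, matching the degree-two-in-$x'$ part is impossible, so no decomposition of the claimed form exists. (The same example also shows that ``full trace-spine'' does not make the blow-up two-dimensional, which is precisely why you wanted a structural statement in the first place.) Moreover, even when such a splitting happens to exist, Lemma~\ref{lem: 2d class} cannot be applied factor by factor: it classifies homogeneous solutions of the two-dimensional obstacle problem, i.e.\ it uses the constraints $f\ge0$ at the contact point, $L_af\le 0$ and $f\,L_af=0$, none of which an individual mode need satisfy — only the sum does — and homogeneous $L_a$-harmonic profiles on $\R^2\setminus\{0\}$ that are singular at the origin are not covered by that lemma. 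Consequently both the completeness of the expansion and the mode-by-mode classification fail, and with them the sign argument forcing the $g_k$ to be constant. To fill this step one should argue on the blow-up as a whole, e.g.: if $L_a\tilde u\equiv 0$ then Liouville for $a$-harmonic functions gives a polynomial and $\lambda\in\N$; if the measure $L_a\tilde u=f_a\,\mathcal{H}^{n-1}$ is nontrivial, use its sign and homogeneity together with the vanishing of the trace (via the potential representation through $\Gamma_a$, or a further dimension reduction along the very thin space) to force $\lambda=-a$. As written, your proposal does not establish the structural statement, so the central step of the proof is missing.
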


\begin{proof}
If $Z_\circ \in \Lambda(u) \setminus \{ X_\circ \in \mathcal{N}(u) : \dim L(u_{X_\circ,0}) \leq n - 2 \text{ for all blow-ups } \tilde u_{X_\circ,0}\}$, then there exists a blow-up $\tilde u_{Z_\circ,0}$ such that $\dim L(\tilde u_{Z_\circ,0}) = n-1$. 
In turn, since two-dimensional homogeneous solutions to the very thin obstacle problem with zero obstacle are polynomials or a multiple of $|X|^{-a}$ (by Lemma~\ref{lem: 2d class}), we deduce that $N(0^+,u(Z_\circ + \,\cdot\,)) \in \N \cup \{-a\}$.
Hence, from Proposition~\ref{prop.claim}, we conclude.
\end{proof}


\section{Final Remark: Global Problems}
\label{sec.GlobPbs}
In this final section, we state three global obstacle problems\,---\,all equivalent\,---\,to provide some additional perspective on the very thin obstacle problem. 
Let 
\begin{equation}
\label{eq.obstacle}
\psi\in C^{1,1}(\R^{n-1})
\end{equation}
be our obstacle, which we assume decays rapidly at infinity. 

\bigskip
\noindent{\it The very thin obstacle problem for $L_a$ in $\R^{n+1}$ with $a \in (-1,0)$.} 
Our first problem is a global version of the very thin obstacle problem for $L_a$ with obstacle $\psi$ on $\{x_n = y = 0\}$. 
Namely, we can consider either the global minimizer of the energy \eqref{eq.minpb} among those functions that sit above the obstacle $\psi$ on $\{x_n = y = 0\}$ and go to zero at infinity or, 
equivalently, the solution to Euler--Lagrange equations
\begin{align}
\label{eq.Glob1} 
& \qquad
\left\{ 
\begin{array}{rcll}
w_{1}(x', 0,0) &\geq& \psi(x') &\text{in } \R^{n-1}\\
L_a w_1 &=& 0 &\text{in } \R^{n+1}\setminus\{(x',0,0) : w_1(x',0,0) = \psi(x')\}\\
L_a w_1 &\leq& 0 &\text{in } \R^{n+1}\\
\lim_{|X|\to \infty} w_1(X)&=& 0. 
\end{array}\right.
\end{align}
Since $a \in (-1,0)$, it makes sense to say that the solution sits above the $\psi$ on the set $\{x_n = y = 0\}$. 

\bigskip
\noindent{\it The thin obstacle problem for $(-\Delta)^s$ in $\R^n$ with $s \in (1/2,1)$} 
Our second problem is the fractional thin obstacle problem. 
That is, we consider
\begin{align}
\label{eq.Glob2}
& \qquad\left\{ 
\begin{array}{rcll}
w_2(x', 0) &\geq& \psi(x') &\text{in } \R^{n-1}\\
(-\Delta)^s w_2 &=& 0 &\text{in } \R^{n}\setminus\{(x',0) : w_2(x',0) = \psi(x')\}\\
(-\Delta)^s w_2 &\leq& 0 &\text{in } \R^{n}\\
\lim_{|x|\to \infty} w_2(x)&=& 0. 
\end{array}\right.
\end{align}

\bigskip
\noindent{\it The obstacle problem for $(-\Delta)^{s-\frac{1}{2}}$ in $\R^{n-1}$ with $s \in (1/2,1)$.} 
Our third and final problem is the obstacle problem for the fractional Laplacian $(-\Delta)^{s-\frac{1}{2}}$ in $\R^{n-1}$. 
This problem is classical already, and its Euler--Lagrange equations are
\begin{align}
\label{eq.Glob3}
&\qquad  \left\{ 
\begin{array}{rcll}
w_3(x') &\geq& \psi(x') &\text{in } \R^{n-1}\\
(-\Delta)^{s-\frac{1}{2}} w_3 &=& 0 &\text{in } \R^{n-1}\setminus\{x' : w_3(x') = \psi(x')\}\\
(-\Delta)^{s-\frac{1}{2}} w_3 &\leq& 0 &\text{in } \R^{n-1}\\
\lim_{|x'|\to \infty} w_3(x')&=& 0. 
\end{array}\right.
\end{align}

\bigskip
\begin{proposition}
If $w_1(x',x_n,y)$ is the solution to \eqref{eq.Glob1}, then $w_2(x', x_n) = w_1(x', x_n, 0)$ is the solution to \eqref{eq.Glob2}, and $w_3(x') = w_2(x', 0) = w_1(x', 0, 0)$ is the solution to \eqref{eq.Glob3}.
\end{proposition}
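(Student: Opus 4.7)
\medskip

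\noindent\textbf{Proof plan.} The strategy is to identify $w_1$ as a Caffarelli–Silvestre-type extension in two different ways, and then read off the Euler–Lagrange equations \eqref{eq.Glob2} and \eqref{eq.Glob3} from \eqref{eq.Glob1} via the Dirichlet-to-Neumann correspondence.

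\medskip

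First I would prove that $w_2(x',x_n) := w_1(x',x_n,0)$ solves \eqref{eq.Glob2}. The contact set of $w_1$ is contained in $\{x_n = y = 0\}$, so $w_1$ is $L_a$-harmonic in the open half-space $\{y>0\}$, is even in $y$, and decays at infinity. By the classical Caffarelli–Silvestre extension (with the parameter $a = 1-2s$), the trace $w_2$ satisfies
\[
(-\Delta)^s w_2(x',x_n) \;=\; -\,c_{n,s}\,\lim_{y\downarrow 0} y^a\,\partial_y w_1(x',x_n,y)
\]
in the distributional sense on $\R^n$. Because $L_a w_1$ is a non-positive measure supported on the contact set $\{(x',0,0) : w_1(x',0,0) = \psi(x')\}\subset\{x_n = y =0\}$, the right-hand side is $\le 0$ everywhere and $=0$ on $\{x_n \neq 0\}$, as well as at points $(x',0)$ with $w_2(x',0) = w_1(x',0,0) > \psi(x')$. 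Combined with $w_2(x',0) = w_1(x',0,0) \ge \psi(x')$ and the decay at infinity, this is exactly \eqref{eq.Glob2}.

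\medskip

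Second I would show $w_3(x') := w_1(x',0,0)$ solves \eqref{eq.Glob3}. Here the key observation is that $w_1$ is $L_a$-harmonic in all of $\R^{n+1}\setminus\{x_n=y=0\}$ (since the contact set sits inside this thin-in-codimension-two set), has trace $w_3$ on $\{x_n=y=0\}$, and vanishes at infinity. By uniqueness of such extensions (via the Poisson kernel $P_a$ of \eqref{eq.Poissonkernel}), $w_1$ coincides with the extension $\bar w_3$ used in the definition of $\mathcal{I}_a$ in \eqref{eq.F}. Then Proposition~\ref{prop.mathcalF} yields
\[
\mathcal{F}_a(w_1)(x') \;=\; \mathcal{I}_a(w_3)(x') \;=\; c_{n,a}\,(-\Delta)^{s-\frac{1}{2}} w_3(x').
\]
On the other hand, by (the global version of) Lemma~\ref{lem.operator}, $L_a w_1 = \mathcal{F}_a(w_1)\,\mathcal{H}^{n-1}\mres\{x_n=y=0\}$. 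Transcribing the three conditions in \eqref{eq.Glob1} to the $x'$-variable via this identification gives $w_3 \ge \psi$, $(-\Delta)^{s-\frac12}w_3 \le 0$ on $\R^{n-1}$, and $(-\Delta)^{s-\frac12}w_3 = 0$ on $\{w_3 > \psi\}$; together with $\lim_{|x'|\to\infty} w_3(x') = 0$, this is precisely \eqref{eq.Glob3}.

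\medskip

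The main obstacle, I expect, is the careful distributional bookkeeping in the second step: one must justify that $\mathcal{F}_a(w_1)$ — defined in \eqref{eq.F0} as a pointwise limit of surface integrals — agrees with the distribution $L_a w_1$ restricted to test functions supported near $\{x_n = y=0\}$, and that Proposition~\ref{prop.mathcalF} applies in the present global, non-smooth setting. This can be handled by the a priori regularity already established for solutions of the very thin obstacle problem (Corollaries~\ref{cor.ca} and \ref{cor.welldefined}, together with the bound on $\mathcal F_a$ in Lemma~\ref{lem.boundedoperator}), which guarantees that all boundary traces are taken in a classical sense and that the Poisson representation is valid. The uniqueness of the $L_a$-harmonic extension from $\{x_n=y=0\}$ follows from standard weighted-capacity arguments (using $a\in(-1,0)$ and the decay at infinity). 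Once these technical points are in place, the two identifications above complete the proof.
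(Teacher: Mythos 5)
Your proposal is correct and follows essentially the same route as the paper: the identification of $w_2$ via the Caffarelli--Silvestre extension of \cite{CS07}, and the identification of $w_3$ via Lemma~\ref{lem.operator} and Proposition~\ref{prop.mathcalF}, which is exactly how the paper argues. The extra distributional bookkeeping you flag (using Corollaries~\ref{cor.ca} and \ref{cor.welldefined} and Lemma~\ref{lem.boundedoperator}) is a reasonable elaboration of details the paper leaves implicit, not a different approach.
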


\begin{proof}
The fact that $w_2(x',x_n)$ is a solution to \eqref{eq.Glob2} comes from the extension problem for the fractional Laplacian (see \cite{CS07}).
The fact that $w_3(x')$ solves \eqref{eq.Glob3} is due to Lemma~\ref{lem.operator} and Proposition~\ref{prop.mathcalF}. 
\end{proof}



\end{document}